\definecolor{bwgreen}{rgb}{0.183,1,0.5}
\definecolor{bwmagenta}{rgb}{0.7,0.0,0.1}
\definecolor{bwblue}{rgb}{0.317,0.161,1}
\DeclareFontFamily{OT1}{rsfs}{}
\DeclareFontShape{OT1}{rsfs}{n}{it}{<-> rsfs10}{}
\DeclareMathAlphabet{\mathscr}{OT1}{rsfs}{n}{it}
\DeclareFontFamily{OT1}{pzc}{}
\DeclareFontShape{OT1}{pzc}{n}{it}{<->s*[2.2]pzc}{}
\DeclareMathAlphabet{\mathpzc}{OT1}{pzc}{b}{sl}
\newcommand{\Rmnum}[1]{\expandafter\@slowromancap\romannumeral #1@}
\DeclareMathOperator{\depth}{depth}
\DeclareMathOperator{\id}{id}
\DeclareMathOperator{\Frac}{Frac}
\DeclareMathOperator{\Hom}{Hom}
\DeclareMathOperator{\Ker}{Ker}
\DeclareMathOperator{\End}{End}
\DeclareMathOperator{\Gal}{Gal}
\DeclareMathOperator{\Spec}{Spec}
\DeclareMathOperator{\Spf}{Spf}
\DeclareMathOperator{\coker}{coker}
\DeclareMathOperator{\Fr}{Fr}
\DeclareMathOperator{\Cris}{Cris}
\DeclareMathOperator{\cris}{cris}
\DeclareMathOperator{\Lie}{Lie}
\DeclareMathOperator{\nr}{nr}
\DeclareMathOperator{\Mod}{Mod}
\DeclareMathOperator{\Fil}{Fil}
\DeclareMathOperator{\Filone}{Fil}
\DeclareMathOperator{\DF}{DF}
\DeclareMathOperator{\KR}{KR}
\DeclareMathOperator{\Dcat}{D}
\DeclareMathOperator{\Rep}{Rep}
\DeclareMathOperator{\sep}{sep}
\DeclareMathOperator{\im}{im}
\DeclareMathOperator{\Rad}{Rad}
\DeclareMathOperator{\Tor}{Tor}
\DeclareMathOperator{\pdiv}{pdiv}
\DeclareMathOperator{\pd}{pd}
\newcommand{\DDD}{\mathscr{D}}
\newcommand{\EEE}{\mathcal{E}}
\newcommand{\III}{\mathscr{I}}
\newcommand{\LLL}{\mathscr{L}}
\newcommand{\OOO}{\mathcal{O}}
\newcommand{\RRR}{\mathcal{R}}
\newcommand{\UUU}{\mathcal{U}}
\newcommand{\Fm}{\mathfrak{m}}
\newcommand{\FM}{\mathfrak{M}}
\newcommand{\FN}{\mathfrak{N}}
\newcommand{\FS}{\mathfrak{S}}
\newcommand{\EE}{{\mathbf{E}}}
\newcommand{\FF}{{\mathbb{F}}}
\newcommand{\GG}{{\mathbf{G}}}
\newcommand{\II}{{\mathbb{I}}}
\newcommand{\MM}{{\mathbb{M}}}
\newcommand{\QQ}{{\mathbf{Q}}}
\newcommand{\WW}{{\mathbb{W}}}
\newcommand{\ZZ}{{\mathbf{Z}}}
\newcommand{\nn}{{}_n}
\newcommand{\rr}{{}_r}
\newcommand*{\Z}{\ensuremath{\mathbf{Z}}}
\newcommand*{\s}{\mathfrak{S}}
\newcommand*{\C}{\mathbf{C}}
\newcommand*{\F}{\mathbf{F}}
\newcommand*{\scrF}{\mathscr{F}}
\newcommand*{\G}{\mathcal{G}}
\renewcommand*{\O}{\mathscr{O}}
\newcommand*{\scrHom}{\mathscr{H}\mathit{om}}
\newcommand*{\scrLie}{\mathscr{L}\mathit{ie}}      
\newcommand*{\scrD}{\mathscr{D}}          
\newcommand*{\D}{\ensuremath{\mathbf{D}}}
\renewcommand*{\H}{\ensuremath{\mathfrak{H}}}
\renewcommand*{\int}{\ensuremath{\mathrm{int}}}
\renewcommand*{\u}[1]{\underline{#1}}
\newcommand*{\wh}[1]{\widehat{#1}}
\renewcommand{\tilde}{\widetilde}
\renewcommand{\bar}{\overline}
\DeclareMathOperator{\Win}{Win}
\DeclareMathOperator{\BT}{BT}
\theoremstyle{plain}
	\newtheorem{thm}{Theorem}
  \newtheorem{theorem}{Theorem}
  \newtheorem{proposition}[theorem]{Proposition}
  \newtheorem{lemma}[theorem]{Lemma}
  \newtheorem{corollary}[theorem]{Corollary}
\theoremstyle{definition}
  \newtheorem{definition}[theorem]{Definition}
    \newtheorem{hypothesis}[theorem]{Hypothesis}
  \newtheorem{Example}[theorem]{Example}
\theoremstyle{remark}
  \newtheorem{example}[theorem]{Example}
  \newtheorem*{rem}{Remark}
  \newtheorem{remark}[theorem]{Remark}
  \newtheorem{remarks}[theorem]{Remarks}
  \newtheorem{point}[theorem]{}
\numberwithin{theorem}{subsection}  
\begin{document}
\bibliographystyle{plain}

\title[Dieudonn\'e crystals and Wach modules for $p$-divisible groups]{Dieudonn\'e crystals and Wach modules for $p$-divisible groups}

\author{Bryden Cais}
\address{Bryden Cais\\
Department of Mathematics\\
University of Arizona\\
Tucson, Arizona 85721\\
USA}
\email{cais@math.arizona.edu}

\author{Eike Lau}
\address{Eike Lau\\
Fakult\"at f\"ur Elektrotechnik, Informatik und Mathematik\\
Institut f\"ur Mathematik\\
Warburger Str. 100\\
33098 Paderborn\\
Germany}
\email{elau@math.uni-paderborn.de}

\thanks{Acknowledgements to be added}

\subjclass[2010]{Primary: 14L05, 
14F30 
Secondary: 11F80} 

\keywords{Frames and windows, $p$-divisible groups, Wach modules}
\date{April 19, 2016}

\begin{abstract}
	Let $k$ be a perfect field of characteristic $p>2$ and $K$ an
	extension of $F=\Frac W(k)$ contained in some $F(\mu_{p^r})$.
	Using crystalline Dieudonn\'e theory, we 
	provide a classification of $p$-divisible groups over $R=\OOO_K[[t_1,\ldots,t_d]]$
	in terms of finite height $(\varphi,\Gamma)$-modules over 
	$\FS:=W(k)[[u,t_1,\ldots,t_d]]$.     
	When $d=0$, such a classification is a consequence
	of (a special case of) the theory of Kisin--Ren; 
	in this setting, our construction
	gives an independent proof of this result, and moreover allows us to recover
	the Dieudonn\'e crystal of a $p$-divisible group 
	from the Wach module associated to its Tate module
	by Berger--Breuil or by Kisin--Ren. 
\end{abstract}

\maketitle

\vspace{-0.3in}

\tableofcontents

\section{Introduction}

\numberwithin{equation}{section}

Let $k$ be a perfect field of characteristic $p>2$ and let $K$ be a finite extension 
of $F=\Frac W(k)$ contained in $F_r=F(\mu_{p^r})$ for a fixed $r\ge 1$. 
Fix $d\ge 0$ and write $R=\OOO_K[[t_1,\ldots,t_d]]$
for the ring of power series in $d$ variables over the valuation ring of $K$.
Our main result is the following description of the category $\pdiv(R)$
of $p$-divisible groups over $R$ in terms of $(\varphi,\Gamma)$-modules.

Put $F_\infty=F(\mu_{p^{\infty}})$ and let $\Gamma_K=\Gal(F_\infty/K)$.
The ring $\FS=W(k)[[u,t_1,\ldots,t_d]]$ is equipped with the cyclotomic Frobenius
$\varphi(1+u)=(1+u)^p$ and $\varphi(t_i)=t_i^p$, and with an action of $\Gamma_F$ by 
$\gamma(1+u)=(1+u)^{\chi(\gamma)}$ where $\chi$ is the $p$-adic cyclotomic character,
while $\gamma(t_i)=t_i$.
We define $E=\varphi^r(u)/\varphi^{r-1}(u)$ and 
denote by $\BT(\u\FS)_{\Gamma_K}$ the category of finite free
$(\varphi,\Gamma_K)$-modules $\FM$ over $\FS$ such that the $\FS$-module
generated by $\varphi(\FM)$ contains $E\FM$ and $\Gamma_K$ acts trivially on $\FM/u\FM$.

\begin{thm}
\label{Th:Main}
There is a contravariant equivalence of categories
between $\pdiv(R)$ and $\BT(\u\FS)_{\Gamma_K}$
sending $G$ to $\FM(G)$.
\end{thm}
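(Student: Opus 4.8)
The plan is to construct the functor $G \mapsto \FM(G)$ by combining crystalline Dieudonné theory with descent along the cyclotomic tower, and then to prove it is an equivalence by exhibiting a quasi-inverse through a frame-and-window formalism. First I would recall the formalism of frames and windows (in the sense of Lau/Zink), and set up the relevant frames: on the one hand, a frame built from $\FS = W(k)[[u,t_1,\dots,t_d]]$ with its cyclotomic Frobenius $\varphi$ and the element $E = \varphi^r(u)/\varphi^{r-1}(u)$ playing the role of the distinguished divisor, and on the other hand a frame attached to $R = \OOO_K[[t_1,\dots,t_d]]$ that is adapted to crystalline Dieudonné theory — concretely, the PD-envelope-type frame whose windows classify $p$-divisible groups over $R$ via the Dieudonné crystal evaluated on a suitable PD-thickening. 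The finite-height condition ``the $\FS$-module generated by $\varphi(\FM)$ contains $E\FM$'' together with triviality of the $\Gamma_K$-action on $\FM/u\FM$ is exactly what is needed to produce a window after base change along the frame homomorphism $\FS \to$ (the crystalline period ring for $R$) that sends $u$ to the appropriate uniformizer-type element.

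The key steps, in order, are: (1) show that the category $\BT(\u\FS)_{\Gamma_K}$ is equivalent to a category of windows over the $\FS$-frame (this is essentially unwinding definitions, using that $E$ is the minimal polynomial-type element and that the $\Gamma_K$-action on $\FM/u\FM$ being trivial lets one descend the $\Gamma_F$-action); (2) construct a frame homomorphism from the $\FS$-frame to a frame $\underline{S}$ computing crystalline Dieudonné modules over $R$, and prove it is \emph{crystalline} in the technical sense (i.e. base change along it induces an equivalence on windows) — this uses a deformation/rigidity argument comparing the two frames modulo $p$ and along the divided-power filtration; (3) invoke the classification of $p$-divisible groups over $R$ by windows over $\underline{S}$, which follows from Zink's theory of displays together with the crystalline Dieudonné functor being an equivalence over the base (here one reduces to the case of a complete regular local ring with perfect residue field where these results are available); (4) assemble (1)–(3) into the stated contravariant equivalence, and separately check the compatibility — that $\FM(G)/u\FM(G)$ recovers the filtered module of $G$ so that the $\Gamma_K$-triviality is automatic on the image, confirming essential surjectivity lands in the right subcategory.

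The main obstacle will be step (2): proving that the frame homomorphism $\FS \to \underline{S}$ is crystalline, i.e. that passing from $(\varphi,\Gamma_K)$-modules over $\FS$ to windows over the crystalline frame loses no information and hits everything. The difficulty is that $\FS$ is ``smaller'' than the crystalline period ring — one is effectively descending both the coefficients (from a PD-completed ring down to $\FS$) and the Galois action (from $\Gamma_F$-equivariant data down to the $\Gamma_K$-quotient via the $u$-adic filtration). I would handle this by the standard technique of reducing the equivalence of window categories to a statement modulo $p$ and modulo powers of the augmentation ideal, then using that both frames have the same reduction and that the kernel of the frame map is generated by a regular sequence with divided powers, so that a dévissage along this sequence — combined with the rigidity of windows under nilpotent PD-thickenings — gives the result. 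A secondary subtlety, worth isolating as its own lemma, is checking that the condition involving $E = \varphi^r(u)/\varphi^{r-1}(u)$ rather than $u$ itself correctly matches the ramification of $K$ inside $F(\mu_{p^r})$; this is where the hypothesis $K \subseteq F_r$ and $p > 2$ genuinely enter.
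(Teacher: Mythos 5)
Your steps (1) and (2) do match the paper: the identification of $\BT(\u\FS)_{\Gamma_K}$ with windows over the $\u\FS$-frame is Lemma \ref{Le:WinKisinGamma}, and the crystallinity of the frame homomorphism $\u\FS\to\u S$ is Proposition \ref{Pr:lambda-crys}, proved by exactly the d\'evissage modulo powers of $p$ and deformation argument you sketch. But step (2) is not the main obstacle; the genuine gap is in your step (3). You invoke ``the classification of $p$-divisible groups over $R$ by windows over $\u S$'' as if it were available here. It is not: the Breuil--Zink--Lau type results (Proposition \ref{Pr:BT2Win}, and likewise Proposition \ref{Pr:WinConn}) require the divided derivative $(d\varphi)_1=p^{-1}d\varphi$ to be topologically nilpotent, and for the cyclotomic Frobenius $\varphi(1+u)=(1+u)^p$ one has $(d\varphi)_1(du)=(1+u)^{p-1}du$, so this hypothesis fails. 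Consequently $\pdiv(R)$ is never compared with plain $\u S$-windows, and the $\Gamma_K$-action cannot be relegated, as in your step (4), to a condition that is ``automatic on the image'': it is the essential extra structure that substitutes for the missing connection, and without it neither full faithfulness nor essential surjectivity of the window functor is known.

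The route the paper actually takes is absent from your proposal. One first shows that the filtered Dieudonn\'e crystal functor $\pdiv(R_r)\to\DF(\Spf R_r)$ is an equivalence (Corollary \ref{Co:BT2DF}) by temporarily switching to an auxiliary Frobenius lift with $\varphi(u_i)=u_i^p$, for which Propositions \ref{Pr:BT2Win} and \ref{Pr:WinConn} do apply; since $\DF(\Spf R_r)$ is independent of $\varphi$, this transports back to the cyclotomic lift, where $\DF(\Spf R_r)\cong\Win(\u S\rr)^{\nabla}$ holds for every $\varphi$ (Proposition \ref{Pr:DF2WinConn}). The hardest remaining step is to trade the connection for a strict $\Gamma_r$-action: full faithfulness of $\Win(\u S\rr)^{\nabla_0,\Gamma_r}\to\Win(\u S\rr)^{\nabla_0}$ uses rigidity of $p$-divisible groups (Proposition \ref{Pr:BT-DF-WinGamma}), and essential surjectivity of the forgetful functor $\Win(\u S\rr)^{\nabla_0,\Gamma_r}\to\Win(\u S\rr)^{\Gamma_r}$ requires differentiating the $\Gamma_r$-action, where the crucial integrality statement $N_M(M)\subseteq tM$ --- no denominators of $p$ --- rests on strictness and the estimates of Lemma \ref{Le:StrictM} (Proposition \ref{Le:WinSNM}, Corollary \ref{Cor:ForgetNabla}). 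Finally, the passage from $K=F_r$ to general $F\subseteq K\subseteq F_r$ is a separate Galois descent step via Tate's theorem and the universal deformation ring of the special fibre (Lemma \ref{Le:descent}, Proposition \ref{Pr:Final}), which your closing remark about $E$ and ramification gestures at but does not supply. As written, your argument would only prove the theorem for a Frobenius lift with topologically nilpotent divided derivative, which is not the cyclotomic situation of the statement.
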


Explicitly, this is proved in Corollary \ref{Co:Final}; 
see Proposition \ref{Pr:Final} for a slightly more general statement.
The equivalence will be derived from crystalline Dieudonn\'e theory, 
which allows to recover from $\FM(G)$ the Dieudonn\'e crystal of $G$.
Moreover it is compatible with duality and with change of $r$ and $K$ and also 
allows to recover from $\FM(G)$ the Tate module of $G$ as a representation of
the absolute Galois group of $\Frac R[t_1^{p^{-\infty}},\ldots,t_d^{p^{-\infty}}]$;
see Proposition \ref{Pr:Comp}.

In the case $d=0$ (so $R=\OOO_K$),
using Kisin's result \cite{KisinFcrystal} that every crystalline representation of
the absolute Galois group $\G_K$ 
with Hodge-Tate weights $0$ and $1$ comes from a $p$-divisible group over $\OOO_K$,
Theorem \ref{Th:Main} is a consequence of the description of lattices 
in crystalline representations of $\G_K$ in terms of Wach modules
by Wach \cite{Wach96}, Colmez \cite{Colmez:Crelle99}, Berger-Breuil \cite{BB}, 
or in terms of the modules of Kisin-Ren \cite{KisinRen}.
In fact, the latter generalise a variant of the former to the case of a 
Lubin-Tate extension in place of the cyclotomic extension,
and our modules $\FM(G)$ essentially
coincide with those of \cite[Corollary 3.3.8]{KisinRen}; 
see \S\ref{FinEHt}.

However, even for $d=0$, our approach gives more than just an independent proof
of this fact as it provides a link between Kisin-Ren or Wach modules and Dieudonn\'e crystals.
This can be important in applications, as for example in \cite{CaisHida2}.

\medskip

Let us outline the proof of Theorem \ref{Th:Main}.
For simplicity we assume that $K=F_r$, so $E$ is the minimal polynomial
of a prime element of $\OOO_K$.
Let $S$ be the $p$-adic completion of the divided power envelope of the ideal 
$E\FS$ of $\FS$.
We denote by $\Win(\u S)_{\Gamma_K}$ 
the category of strongly divisible modules $M$
in the sense of Breuil \cite{Breuil:Groupes}, called windows by Zink \cite{Zink:Windows},
equipped with an action of $\Gamma_K$ that is trivial on $M\otimes_{S}W(k)$.
There are natural functors
\begin{equation}
\label{Eq:Intro-Funct-1}
\pdiv(R)\longrightarrow\Win(\u S)_{\Gamma_K}\longleftarrow \BT(\u\FS)_{\Gamma_K},
\end{equation}
the first given by evaluating the Dieudonn\'e crystal of a $p$-divisible group,
and the second by a base change operation 
$\FM\mapsto M=\FM\otimes_{\FS,\varphi}S$.
The second functor in \eqref{Eq:Intro-Funct-1} is an equivalence by known results.
We will show that the first functor is an equivalence too,
which proves Theorem \ref{Th:Main}.

Let us go into a little more detail.
Without the action of $\Gamma_K$, the construction
makes sense for {\em any} $d+1$-dimensional 
complete regular local ring $R'$ of characteristic zero and residue field $k$.
Writing $R'$ as a quotient of the ring $\FS'=W(k)[[u_0,\ldots,u_{d}]]$
and equipping $\FS'$ with an arbitrary Frobenius lift $\varphi$,
we obtain functors
\begin{equation}
\label{Eq:Intro-Funct-2}
\pdiv(R')\longrightarrow\Win(\u S')\longleftarrow \BT(\u\FS')
\end{equation}
where $S'$ is again the $p$-adic completion of the divided power envelope of
the kernel of $\FS'\to R'$.
The second functor in \eqref{Eq:Intro-Funct-2}
is always an equivalence by Proposition \ref{Pr:lambda-crys} below; 
when $d=0$ ({\em i.e.}~$R'=\OOO_{K'}$ for a totally ramified extension $K'$ of $F$) 
this is proved by Caruso-Liu \cite{Caruso-Liu:Quasi}, and the case
$R'=\OOO_{K'}[[t_1,\ldots,t_d]]$ is covered by Kim \cite{Kim:FormallySmooth}.

We prove in Proposition \ref{Pr:BT2Win} below that the first functor in \eqref{Eq:Intro-Funct-2} is an equivalence when the divided
derivative $p^{-1}d\varphi$ of the Frobenius lift $\varphi$ on $\FS'$ is topologically nilpotent, 
which is for example satisfied when $\varphi(u_i)=u_i^p$, 
but not in the setting of Theorem \ref{Th:Main} with $\varphi(1+u)=(1+u)^p$.
In the case $R'=\OOO_{K'}$ and $\varphi(u)=u^p$, 
this equivalence is due to Breuil \cite{Breuil:Groupes};
the resulting equivalence between the categories $\pdiv(\OOO_{K'})$ and $\BT(\u\FS')$
was first proved by Kisin \cite{KisinFcrystal} using a different route.

In order to use  
the equivalence of Proposition \ref{Pr:BT2Win}
we have to switch between different choices of $\varphi$.
We consider the category $\DF(R')$ 
of Dieudonn\'e crystals over $\Spf R'$ equipped with an admissible
Hodge filtration as in Definition \ref{Def:DF} below.
For every choice of $\varphi$,
this category is equivalent to the category $\Win(\u S')^\nabla$ of windows with a connection. 
If the divided derivative $p^{-1}d\varphi$ is topologically nilpotent, the
forgetful functor $\Win(\u S')^\nabla\to\Win(\u S')$ is an equivalence.
It follows that the crystalline Dieudonn\'e functor $\pdiv(R')\to\DF(R')$ 
is an equivalence because both categories are equivalent to $\Win(\u S')$ when $\varphi(u_i)=u_i^p$.
This implies that for every $\varphi$, the functor $\pdiv(R')\to\Win(\u S')^\nabla$ 
is an equivalence.

It remains to see that in the ``cyclotomic'' situation of Theorem \ref{Th:Main}, 
the category $\Win(\u S)_{\Gamma_K}$ is equivalent to $\Win(\u S)^\nabla$.
The essential point is that $\nabla$ is determined by the differential operator 
$N=\nabla_{\partial/\partial u}$,
and the differential at the identity of an action of $\Gamma_K$ gives $N$.
This construction is well-known in the context of $(\varphi,\Gamma)$-modules
over the Robba ring, see \cite[\S4.1]{Berger:RepDiff}, 
but additional care is required in the present situation as we must 
control the denominators of $p$ which occur.

While the relation between the module $\FM(G)$ and the Dieudonn\'e crystal of $G$ is clear
from the construction, to recover the Tate module of $G$ from $\FM(G)$
we use a variant of Faltings' integral comparison isomorphism for $p$-divisible groups 
\cite[Theorem 7]{Faltings}.
It is then straightforward to relate $\FM(G)$ to the modules
of Kisin-Ren \cite{KisinRen} and Berger-Breuil \cite{BB} when $d=0$.


\numberwithin{equation}{subsection}

\section{\texorpdfstring{Frames, windows, and Dieudonn\'e crystals}{Frames, windows, and Dieudonn\'e
 crystals}}\label{FramesEtc}

Let $p$ be a prime.
In this section, we recall and elaborate on the relation between Dieudonn\'e crystals,
Breuil modules, and Kisin modules associated to $p$-divisible groups following 
\cite{BBM, Breuil:Groupes, deJong:Crystalline,  KisinFcrystal, Kim:FormallySmooth,
Lau:Dieudonne, Zink:Windows}. Most of this is well-known, but some aspects
appear to be new.
Technically we will use the notion of frames and windows, which we recall first.

\subsection{Frames and windows}\label{FrameRecall}

Frames and windows were introduced by Zink \cite{Zink:Windows} and generalised in \cite{Lau:Frames}. We use the definition of \cite{Lau:Frames} with some minor modifications.

\begin{definition}
\label{Def:Frame}
A \emph{frame} $\u{S}:=(S,\Filone S,R,\varphi,\varphi_1,\varpi)$ consists of a 
ring $S$, an ideal $\Filone S$ of $S$, the quotient ring $R:=S/\Filone S$, a ring endomorphism
$\varphi:S\rightarrow S$ reducing to the $p$-power map modulo $pS$, 
a $\varphi$-linear map $\varphi_1:\Filone S\rightarrow S$, and an element
$\varpi\in S$ such that $\varphi=\varpi\varphi_1$ on $\Filone S$.
We further require that $\Filone S+ pS \subseteq \Rad(S)$.
We call $\u S$ a {\em lifting frame} if in addition every finite projective $R$-module 
lifts to a finite projective $S$-module.
\end{definition}

\begin{remarks}
\label{Re:Frame}
In \cite{Lau:Frames} the following \emph{surjectivity condition} is also imposed:
The image of $\varphi_1$ generates the unit ideal of $S$.
Then the element $\varpi$ is determined by the rest of the data, namely if $1=\sum a_i\varphi_1(b_i)$
then $\varpi=\sum a_i\varphi(b_i)$. The surjectivity condition is satisfied in many examples,
but is not necessary in the theory; cf.\ \S 11 of the arxiv version of \cite{Lau:Frames}.
It is not present in Zink's notion of frames; see Definition \ref{Def:PDframe} and 
Remark \ref{Re:PDframe} below.
If the surjectivity condition holds we will sometimes omit $\varpi$ from the notation
and write $\u S=(S,\Filone S,\varphi,\varphi_1)$
in order to be consistent with the literature.

In this paper, we will only consider lifting frames.
All frames with $R$ local are lifting frames 
(as projective $R$-modules of finite type are free).
The condition $\Filone S+pS\subseteq \Rad(S)$ is automatic 
if $S$ is local and $p\in R$ is a non-unit.
\end{remarks}

\begin{definition}
\label{Def:Window}
A {\em window} $\u{M}:=(M,\Filone M,\Phi,\Phi_1)$ over a frame $\u{S}$
consists of a projective $S$-module $M$ of finite type,
an $S$-submodule $\Filone M \subseteq M$ and
$\varphi$-linear maps $\Phi:M\rightarrow M$ and $\Phi_1:\Filone M\rightarrow M$
such that:
\begin{enumerate}
	\item There exists a decomposition of $S$-modules 
	$M=L\oplus N$ with $\Filone M=L\oplus(\Filone  S)N$.
	\label{normaldecomp}
	\item If $s\in \Filone  S$ and $m\in M$ then $\Phi_1(sm) = \varphi_1(s)\Phi(m)$.\label{phirel}
	\item If $m\in\Filone M$ then $\Phi(m)=\varpi\Phi_1(m)$. \label{phirel2}
	\item $\Phi_1(\Filone M)+\Phi(M)$ generates $M$ as an $S$-module.\label{surj}
\end{enumerate}
Naturally, a homomorphism of windows is an $S$-linear map 
that preserves the filtration and commutes with $\Phi$ and with $\Phi_1$. 
We write $\Win(\u S)$ for the category of $\u S$-windows.
A short sequence of windows is called exact if the sequences of $M$'s and
of $\Filone M$'s are both exact.
\end{definition}

\begin{remarks}
\label{Re:Window}
If $\u S$ satisfies the surjectivity condition of Remarks \ref{Re:Frame},
then \eqref{phirel2} follows from \eqref{phirel}, 
condition \eqref{surj} means that $\Phi_1(\Filone M)$ generates $M$,
and the map $\Phi$ is determined by $\Phi_1$. 

When $\u{S}$ is a lifting frame, then (\ref{normaldecomp})
is equivalent to the simultaneous requirement that $(\Filone  S) M \subseteq \Filone  M$ 
and that $M/\Filone M$ is projective as an $R$-module.

A decomposition as in (\ref{normaldecomp}) is called a {\em normal decomposition}.
If $(M,\Filone M)$ and a normal decomposition $M=L\oplus N$ are given,
the set of pairs $(\Phi,\Phi_1)$ which define a window $\u M$ is in bijection
with the set of $\varphi$-linear isomorphisms $\Psi:L\oplus N\to M$ via
$\Psi(l+n):=\Phi_1(l)+\Phi(n)$.

Let us write $\varphi^*M:=\FS\otimes_{\varphi,\FS} M$
for the scalar extension of $M$ along $\varphi:\FS\rightarrow \FS$,
and denote by $F:\varphi^*M\rightarrow M$ the linearization of $\Phi$
determined by $F(s\otimes x)=s\Phi(x)$.  
There is a unique $S$-linear map 
$V:M\rightarrow \varphi^* M$
with $V(\Phi_1(m))=1\otimes m$ for $m\in\Fil M$ and $V(\Phi(m))=\varpi\otimes m$
for $m\in M$; here the last condition is automatic if $\u S$ satisfies the surjectivity
condition. The composition of $F$ and $V$
in either order is multiplication by $\varpi$. See \cite[Lemma 2.3]{Lau:Dieudonne}.
\end{remarks}

\begin{definition}\label{Def:FrameHom}
A {\em homomorphism of frames} $\alpha:\u{S}\to\u{S}'$ 
is a homomorphism of rings $\alpha: S\rightarrow S'$ 
that intertwines $\varphi$ with $\varphi'$ and carries $\Filone S$ into $\Filone S'$ 
and which satisfies
$\varphi'_1\alpha=c\cdot \alpha \varphi_1$ and $\alpha(\varpi)=c\varpi'$ for a unit $c\in S'$,
which is part of the data.
If we wish to specify $c$, we will say that $\alpha$ is a {\em $c$-homomorphism}.
A {\em strict} homomorphism of frames is simply a $1$-homomorphism.
\end{definition}

\begin{remark}\label{Rem:cdetermined}
If $\u S$ satisfies the surjectivity condition, then $c$ is uniquely determined by the relation
$\varphi_1'\alpha=c\cdot\alpha\varphi$, and the relation $\alpha(\varpi)=c\varpi'$ is a consequence.
\end{remark}

	Let $\alpha: \u{S}\rightarrow \u{S}'$ be a $c$-homomorphism of frames.
	If $\u M$ and $\u M'$ are windows over $\u S$ and $\u S'$,
	then an {\em $\alpha$-homomorphism of windows} $f:\u M\to\u M'$
	is a filtration-compatible homomorphism of $S$-modules $f:M\rightarrow M'$ 
	that intertwines $\Phi$ with $\Phi'$ and satisfies
	$\Phi_1' f = c\cdot f\Phi_1 $ on $\Filone  M$.	 
	Thus the morphisms in $\Win(\u S)$ are the $\id_{\u{S}}$-homomorphisms in this sense.
	There is a {\em base change functor}
\begin{equation}
	\xymatrix{
		{\alpha^*:\Win({\u{S}})} \ar[r] & {\Win({\u{S}'})}
		}\label{bcfunc}
\end{equation}
which is characterized by the universal property
\begin{equation}
	\Hom_{\u{S}'}(\alpha^*\u{M},\u{M}') = \Hom_{\alpha}(\u{M},\u{M}')\label{UnivPropBC}
\end{equation}
for $\u{M}\in \Win({\u{S}})$ and $\u{M}'\in \Win({\u{S}'})$;
here we write $\Hom_{\alpha}(\u{M},\u{M}')$ for the set of $\alpha$-homomorphisms.
Explicitly, if $\u M=(M,\Filone M,\Phi,\Phi_1)$ then the base change
$\alpha^*\u M=(M',\Filone M',\Phi',\Phi_1')$ has
$M'=S'\otimes_SM$ with $\Filone  M'$ the $S'$-submodule of $M'$ generated by
$(\Filone  S')M'$ and the image of $\Filone  M$.
The maps $\Phi'$ and $\Phi_1'$ are uniquely determined
by the requirement that $M\to M'$, $m\mapsto 1\otimes m$
is an $\alpha$-homomorphism; in particular $\Phi'(s'\otimes m)=\varphi'(s')\otimes\Phi(m)$. 
See \cite[\S2.1]{Lau:Dieudonne} or \cite[Lemma 2.9]{Lau:Frames}.
	The frame homomorphism $\alpha$ is called {\em crystalline}
	if the base change functor (\ref{bcfunc}) is an equivalence of categories.

\begin{definition}
The {\em dual} of an $\u{S}$-window $\u{M}$ is
$\u{M}^t:=(M^t,\Filone  M^t,\Phi^t,\Phi_1^t)$ where:
\begin{enumerate}
	\item $M^t:=\Hom_{S}(M,S)$ is the $S$-linear dual of $M$,
	\item $\Filone  M^t:=\{f\in M^t\ :\ f(\Filone  M)\subseteq \Filone  S\}$,
	\item \label{It:Phi1t}The maps $\Phi^t:M^t\to M^t$ and  $\Phi_1^t: \Filone  M^t\rightarrow M^t$ 
	are determined by the relations
	\begin{align*}
	\Phi_1^t(f)(\Phi_1(m)) & = \varphi_1(f(m))  &\text{for } & f\in\Filone M^t \text{ and } m\in\Filone M,  \\
	\Phi_1^t(f)(\Phi(m)) & = \varphi(f(m)) &\text{for }  & f\in\Filone M^t \text{ and } m\in M,  \\
	\Phi^t(f)(\Phi_1(m)) & = \varphi(f(m)) &\text{for }  & f\in M^t \text{ and } m\in\Filone M,  \\
	\Phi^t(f)(\Phi(m)) & = \varpi\varphi(f(m))  &\text{for } & f\in M^t \text{ and } m\in M.  
	\end{align*}
\end{enumerate} 
\end{definition}

\begin{remarks}
If the surjectivity condition of Remark \ref{Re:Frame} holds for $\u S$, then the first of the relations in \eqref{It:Phi1t} implies the others.

The existence of the pair of maps $(\Phi^t,\Phi_1^t)$ 
requires an argument using normal representations; see \cite[\S 2.1]{Lau:Dieudonne}
or \cite[\S2]{Lau:Frames}.

There is a canonical ``double duality" isomorphism
$\u{M}^{tt}\simeq\u M$ in $\Win(\u{S})$.

If $F:\varphi^*M\rightarrow M$ and $V:M\rightarrow \varphi^*M$
are the $S$-linear maps defined in Remark \ref{Re:Window}, then the corresponding
maps for $M^t$ are the $S$-linear duals of $V$ and $F$, respectively. 

The formation of duals commutes with base change along strict homomorphisms of frames,
and also along a $c$-homomorphism $\underline S\to\underline S'$
provided a unit $y\in S'$ with $c=y/\varphi'(y)$ is given:
there is always a canonical isomorphism $(\alpha^*\u M)^t\cong \alpha^*(\u M^t)_{c^{-1}}$
where the subscript means that $\Phi$ and $\Phi_1$ are multiplied by $c^{-1}$, and
multiplication by $y$ then gives an isomorphism $y:\alpha^*(\u M^t)_{c^{-1}}\to \alpha^*(\u M^t)$;
see \cite[Lemma 2.14]{Lau:Frames}.
\end{remarks}

\begin{example}
For every frame $\u S$ we have the windows $\u S=(S,\Filone S,\varphi,\varphi_1)$
and its dual $\u S^t=(S,S,\varpi\varphi,\varphi)$.
\end{example}

Let us recall the frames of \cite{Zink:Windows}, which we call {\em PD-frames} here:

\begin{definition}
	\label{Def:PDframe}
	Let $R$ be a $p$-adically complete ring.
	A {\em PD-frame for $R$} is a surjective ring homomorphism $S\to R$ together
	with a Frobenius lift $\varphi:S\to S$ where $S$ is $p$-adically complete without $p$-torsion
	such that the kernel of $S\to R$ is a PD-ideal.
\end{definition}

\begin{remark}
\label{Re:PDframe}
A PD-frame for $R$ satisfies $\varphi(\Filone S)\subseteq pS$ and therefore
extends uniquely to a frame $\u S=(S,\Filone S,R,\varphi,\varphi_1,p)$ 
in the sense of Definition \ref{Def:Frame} with
$\varphi_1:=p^{-1}\varphi$. 
Moreover $\u S$ is a lifting frame
because the kernel of $S/pS\to R/pR$ is a nilideal due to the divided powers.
We will also call the collection $\u S$ a PD-frame.
\end{remark}

Our second main example of frames is the following:

\begin{Example}
\label{Ex:frame-FS}
Assume that $\FS$ is a ring with a fixed non-zero divisor $E\in\FS$ such that
$p,E\in\Rad(\FS)$, and $\varphi:\FS\to\FS$ is an endomorphism 
that induces the $p$-power map on $\FS/p\FS$. 
We can form the frame
$$
\u\FS=(\FS,\Fil\FS,R,\varphi,\varphi_1')
$$
with $\Fil\FS=E\FS$ and $R=\FS/E\FS$ and $\varphi_1'(Ex)=\varphi(x)$ for $x\in\FS$.
The element $\varpi$ with $\varphi=\varpi\varphi_1'$ on $\Filone\FS$
is uniquely determined by $\varpi=\varphi(E)$ and is omitted from the notation;
cf.\ Remark \ref{Re:Frame}.
\end{Example}

\begin{definition}
\label{Def:KisinMod}
A Barsotti-Tate (BT) 
module over $\u\FS$ is a pair $(\FM,\varphi)$ where $\FM$ is a finite
projective $\FS$-module and $\varphi:\FM\to\FM$ is a $\varphi$-linear map
such that the cokernel of the associated linear map $1\otimes\varphi:\varphi^*\FM\to\FM$
is annihilated by $E$ and projective as an $R$-module.
The category of BT 
modules over $\u\FS$ is denoted by $\BT(\u\FS)$.
\end{definition}

\begin{remark}
If $R$ and $\FS$ are regular rings,
the projectivity of $M:=\coker(1\otimes\varphi)$ over $R$ is automatic:
Indeed, assume that $M\ne 0$.
Since $\depth_R(M)= \depth_{\FS}(M)$ and 
$\depth(R) = \dim(R) = \dim(\FS)-1 = \depth(\FS)-1$,
the Auslander-Buchsbaum formula gives $\pd_R(M) = \pd_{\FS}(M)-1$.
As $1\otimes\varphi$ becomes an isomorphism after
inverting $E$, it is injective, and it follows that 
$\varphi^*\FM\to\FM$ is a projective resolution of $\coker(1\otimes\varphi)$.
Thus $\pd_\FS(M)= 1$ and $\pd_R(M)=0$.
\end{remark}

\begin{remark}
\label{Rk:dual-BT}
One verifies that the category $\BT(\u\FS)$ can be equipped with the
following duality operation:
The dual of $(\FM,\varphi)$ is $(\FM^t,\varphi^t)$
where $\FM^t:=\Hom_{\FS}(\FM,\FS)$ and
$\varphi^t$ is determined by $\varphi^t(f)(\varphi(m))=E\varphi(f(m))$
for $f\in\FM^t$ and $m\in \FM$.
\end{remark}

\begin{lemma}
\label{Le:WinKisin}
If all finite projective $R$-modules lift to finite projective $\FS$-modules,
there is an equivalence of categories $\Win(\u\FS)\to\BT(\u\FS)$ given by
\[
\u M=(M,\Filone M,\Phi,\Phi_1)\mapsto (\Filone M,E\Phi_1).
\]
The equivalence preserves exactness and duality.
\end{lemma}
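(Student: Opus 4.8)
The plan is to construct a functor in each direction and check they are mutually inverse. Given a window $\u M = (M, \Filone M, \Phi, \Phi_1)$ over $\u\FS$, set $\FM := \Filone M$ and equip it with the $\varphi$-linear map $\varphi_{\FM} := E\Phi_1 \colon \Filone M \to M$. First I would check this lands in $\FM = \Filone M$: indeed $E\Phi_1(m) = \varpi\Phi_1(m) = \Phi(m)$ for $m \in \Filone M$ by window axiom \eqref{phirel2} (recall $\varpi = \varphi(E)$... no, wait — in Example \ref{Ex:frame-FS} the relation is $\varphi = \varpi\varphi_1'$ on $\Filone\FS$, so $\varpi = \varphi(E)$ only after dividing; more directly, $E\Phi_1(m)$ for $m \in \Filone M$ equals $\varpi\Phi_1(m) = \Phi(m) \in M$, and since $\Phi$ preserves $\Filone M$ is not automatic — rather one uses that the image of $\Phi$ on $\Filone M$ lies in $M$, and then that $(\Filone\FS)\Phi(M) \subseteq \Filone M$ together with axiom \eqref{surj} pins down where things go). The cleanest route: use a normal decomposition $M = L \oplus N$ with $\Filone M = L \oplus E N$, so that $\FM = \Filone M \cong L \oplus N$ as $\FS$-modules after multiplying the $N$-part by $E$; this shows $\FM$ is finite projective over $\FS$. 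The linearization $1\otimes\varphi_{\FM} \colon \varphi^*\FM \to \FM$ should be identified, via the normal decomposition and the $\varphi$-linear isomorphism $\Psi\colon L \oplus N \xrightarrow{\sim} M$ from Remark \ref{Re:Window}, with a map whose cokernel is $M/\Filone M \cong N/EN \otimes (\text{stuff})$, hence annihilated by $E$ and projective over $R$. So $(\FM, \varphi_{\FM}) \in \BT(\u\FS)$.

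For the inverse functor, given $(\FM, \varphi)$ in $\BT(\u\FS)$, I would set $M := \varphi^*\FM = \FS \otimes_{\varphi,\FS}\FM$, define $\Filone M$ as the preimage under $1\otimes\varphi\colon \varphi^*\FM \to \FM$ of $E\FM \subseteq \FM$ (equivalently, the $\FS$-submodule generated by $E\cdot\varphi^*\FM$ and the image of a section of $1\otimes\varphi$ over the projective quotient), set $\Phi_1 \colon \Filone M \to M$ appropriately so that $E\Phi_1$ recovers $1\otimes\varphi$ under the identification $\FM \cong \Filone M$, and let $\Phi := \varpi\Phi_1$ extended by the window relations. Verifying the window axioms \eqref{normaldecomp}–\eqref{surj} uses the lifting hypothesis (to lift the projective $R$-module $\coker(1\otimes\varphi)$ and a complementary summand to projective $\FS$-modules, producing a normal decomposition) plus the fact that $1\otimes\varphi$ becomes an isomorphism after inverting $E$, which gives surjectivity axiom \eqref{surj}. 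I would then exhibit natural isomorphisms showing the two functors are quasi-inverse: starting from $\u M$, passing to $\FM = \Filone M$ and back reproduces $\varphi^*(\Filone M) \cong M$ via the linearization of $\Phi_1$, which is an isomorphism precisely because of the normal decomposition.

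Exactness preservation is then essentially formal: a short sequence of windows is exact iff the sequences of $M$'s and of $\Filone M$'s are both exact (Definition \ref{Def:Window}), and under the equivalence $\FM = \Filone M$, while on the $\BT$ side exactness is exactness of the underlying $\FS$-modules together with the snake lemma forcing the cokernels to form a short exact sequence; the two notions match because $\varphi^*(-)$ is exact on finite projectives and $M = \varphi^*\FM$. For duality, I would compare the formula in Remark \ref{Rk:dual-BT}, $\varphi^t(f)(\varphi(m)) = E\varphi(f(m))$, with the window duality relations in Definition \ref{It:Phi1t}, specifically $\Phi_1^t(f)(\Phi_1(m)) = \varphi_1(f(m))$ and $\Phi_1^t(f)(\Phi(m)) = \varphi(f(m))$: applying the functor $\u M \mapsto (\Filone M, E\Phi_1)$ to $\u M^t$ and matching with the dual of $(\Filone M, E\Phi_1)$ is a direct computation with the pairing, using that $(\Filone M)^t$ in the appropriate sense is identified with $\Filone(M^t)$ up to the twist by $E$.

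The main obstacle I anticipate is the careful bookkeeping in constructing the inverse functor and checking the window axioms without a surjectivity condition on the frame — in particular, pinning down $\Phi$ on all of $M$ (not just $\Filone M$) from the $\BT$-module data, and verifying axiom \eqref{surj}, requires invoking the lifting hypothesis and the injectivity of $1\otimes\varphi$ in exactly the right way, much as in the remark following Definition \ref{Def:KisinMod}. Everything else is a matter of unwinding definitions and citing \cite[\S2.1]{Lau:Dieudonne} or \cite{Lau:Frames} for the existence of the dual maps via normal representations.
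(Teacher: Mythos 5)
Your route is the same as the paper's: the forward functor sends $\u M$ to $(\Filone M,E\Phi_1)$, with projectivity of $\Filone M$ and the BT condition read off from a normal decomposition (equivalently, from the fact that the linearization of $\Phi_1$ is an isomorphism $\varphi^*\Filone M\cong M$), and the quasi-inverse takes $(\FM,\varphi)$ to the window with $M=\varphi^*\FM$ and $\Filone M$ the preimage of $E\FM$ under the injective map $1\otimes\varphi$; the exactness and duality statements are the same routine verifications the paper leaves to the reader. So the architecture is correct, but three points in your write-up need repair.

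First, the identity $E\Phi_1(m)=\varpi\Phi_1(m)=\Phi(m)$ is false: in the frame of Example \ref{Ex:frame-FS} one has $\varpi=\varphi(E)$, not $E$. The correct (and much simpler) reason that $E\Phi_1$ carries $\Filone M$ into itself is $E\Phi_1(\Filone M)\subseteq EM=(\Filone\FS)M\subseteq\Filone M$. Second, the cokernel of the linearization of $E\Phi_1$ is not $M/\Filone M\cong N/EN$: since the linearization of $\Phi_1$ is an isomorphism $\varphi^*\Filone M\cong M$, the image of the linearization of $E\Phi_1$ is exactly $EM$, so the cokernel is $\Filone M/EM\cong L/EL$ (the paper states this as $\Fil M/EM$). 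Both candidate modules happen to be annihilated by $E$ and projective over $R$, so your conclusion that $(\Filone M,E\Phi_1)\in\BT(\u\FS)$ survives, but the identification itself is wrong and should be corrected; it is the other graded piece of $M\otimes_\FS R$ that occurs here, which matters when these cokernels are later matched with the Hodge filtration. Third, your anticipated difficulty about working ``without a surjectivity condition on the frame'' does not arise: $\varphi_1'(E)=1$, so $\u\FS$ satisfies the surjectivity condition of Remarks \ref{Re:Frame}, and hence $\Phi$ is determined by $\Phi_1$ via $\Phi(m)=\Phi_1(Em)$ (condition \eqref{phirel} with $s=E$, noting $EM\subseteq\Filone M$). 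This is exactly how $\Phi$ is pinned down on all of $M$ in the inverse construction, so no additional bookkeeping beyond the lifting hypothesis (needed for the normal decomposition of the constructed window) and the injectivity of $1\otimes\varphi$ is required.
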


In many examples the hypothesis of Lemma \ref{Le:WinKisin} 
is satisfied because $\FS$ is local or $E$-adically complete.

\begin{proof}
This is standard.
Let $\u M$ be an $\u\FS$-window.
Then the $\FS$-module $\FM:=\Fil M$ is projective, and the linearization of 
$\Phi_1:\FM\to M$ is an isomorphism $\varphi^*\FM\cong M$. 
The inclusion $\FM\to M$ composed with the inverse of this isomorphism
defines a linear map $\psi:\FM\to\varphi^*\FM$, and there is
a unique linear map $\tilde\varphi:\varphi^*\FM\to\FM$ with $\tilde\varphi\psi=E\cdot\id$.
The cokernel of $\tilde\varphi$ is isomorphic to $\Fil M/EM$, which is projective over $R$.
Let $\varphi(x)=\tilde\varphi(1\otimes x)$ for $x\in\FM$. Then $(\FM,\varphi)\in\BT(\u\FS)$,
and one verifies that $\varphi=E\Phi_1$.

Conversely, for $(\FM,\varphi)\in\BT(\u\FS)$ there is a unique linear map
$\psi:\FM\to\varphi^*\FM$ with $\tilde\varphi\psi=E\cdot\id$, and we get a window
$\u M=(M,\Filone M,\Phi,\Phi_1)$ by setting $M:=\varphi^*\FM$ and $\Filone M:=\psi(\FM)$ 
and $\Phi_1(\psi(x)):=1\otimes x$ for $x\in\FM$ and $\Phi(x):=1\otimes\varphi(x)$ for $x\in M$.
\end{proof}

\begin{example}
\label{Ex:WinKisin}
The window $\u\FS=(\FS,E\FS,\varphi,\varphi_1')$ corresponds to the BT 
module $(\FS,\varphi)$, and its dual $\u\FS^t=(\FS,\FS,\varphi(E)\varphi,\varphi)$ 
corresponds to $(\FS,E\varphi)$.
\end{example}

\subsection{\texorpdfstring{Lifting windows modulo powers of $p$}{Lifting windows modulo powers of p}}
\label{Se:LiftWin}

As a preparation for Proposition \ref{Pr:lambda-crys} below, 
we consider the following situation.  Let 
$$
\u S'=( S',\Fil S', R',\varphi',\varphi_1',\varpi')
\xrightarrow{\;\lambda\;}\u S=(S,\Fil S,R,\varphi,\varphi_1,\varpi)
$$ 
be a frame homomorphism such that the rings $ S', R',S,R$ are 
$p$-adically complete and $\ZZ_p$-flat. 
For each $n\ge 1$ we obtain frames  $\u S\nn=\u S\otimes\ZZ/p^n\ZZ$ and 
$\u S'_n=\u S'\otimes\ZZ/p^n\ZZ$, the tensor product taken componentwise, 
and a frame homomorphism $\lambda_n:\u S'_n\to\u S\nn$.
Recall that a frame homomorphism is called crystalline if it induces an equivalence of the
associated window categories.

\begin{proposition}
\label{Pr:lift-pn}
If $\lambda_1$ is crystalline, then all $\lambda_n$ and $\lambda$ are crystalline.
\end{proposition}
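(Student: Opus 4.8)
The statement is an inductive dévissage: we know $\lambda_1$ is crystalline, and we want to deduce first that each $\lambda_n$ is crystalline, then that the $p$-adic limit $\lambda$ is crystalline as well. The natural strategy is to compare windows over $\u{S}'_n$ (resp.\ $\u{S}'$) with windows over $\u{S}_n$ (resp.\ $\u{S}$) by reducing modulo $p$, using the given equivalence for $n=1$ as the base case, and controlling the deformation theory from $\u{S}'_n$ to $\u{S}'_{n+1}$ (and similarly on the $\u{S}$ side) by an obstruction/lifting argument. The key point is that the reduction maps $\u{S}'_{n+1}\to\u{S}'_n$ and $\u{S}_{n+1}\to\u{S}_n$ are themselves frame homomorphisms with nilpotent kernel (the kernel is $p^n S'/p^{n+1}S'$, which is killed by $p$ and — since $S'$ is $\ZZ_p$-flat — is a square-zero ideal for $n\ge 1$), so one is in the classical situation of deforming windows along a nilpotent thickening of frames.

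\textbf{First step: each $\lambda_n$ is crystalline.} I would argue by induction on $n$, the case $n=1$ being the hypothesis. Assume $\lambda_n$ is crystalline. Consider the commutative square of frame homomorphisms relating $\u{S}'_{n+1}, \u{S}'_n, \u{S}_{n+1}, \u{S}_n$. The vertical maps (reduction mod $p^n$) have square-zero kernel. For such a strict frame homomorphism $\u{T}'\to\u{T}$ with square-zero kernel $\Fa$, the fiber of the base-change functor $\Win(\u{T}')\to\Win(\u{T})$ over a fixed window $\u{M}$ is a torsor (when nonempty) under a group built from $\Fa$, the $R$-modules $\Fil M/pM$ and $M/\Fil M$, and the maps involved; and the obstruction to lifting lives in a similar group, functorially in the frame. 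One then checks that the natural base-change map from the lifting groupoid over $\u{S}'_{n+1}$ to the lifting groupoid over $\u{S}_{n+1}$ is an equivalence: this reduces, via the compatibility of $\lambda$ with the relevant ideals, to the claim that $\lambda$ induces isomorphisms on the pertinent $R'$- versus $R$-modules after tensoring with $\Fa$ — which is exactly the infinitesimal shadow of the hypothesis that $\lambda_1$ is crystalline. (More precisely: since $\lambda_1$ is crystalline, for any $R'$-module $\u{M}_1$ the base change to $R$ is an equivalence, and the deformation groups are built functorially out of $\Win(\u{S}_1)$-data; one transports the torsor/obstruction formalism across.) Combining this with the inductive hypothesis that $\lambda_n$ is crystalline, a diagram chase (five-lemma-style for groupoids) gives that $\lambda_{n+1}$ is crystalline.

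\textbf{Second step: passing to the limit.} Having all $\lambda_n$ crystalline, I would show $\lambda$ is crystalline by a limit argument. A window over $\u{S}'$ (with $S'$ being $\ZZ_p$-flat and $p$-adically complete, and $\Fil S' + pS'\subseteq\Rad(S')$) is determined by its reductions modulo $p^n$, compatibly: fullness and faithfulness of $\lambda^*$ follow from the corresponding statements for each $\lambda_n$ together with the fact that $\Hom$-modules of windows over $\u{S}'$ (resp.\ $\u{S}$) are the inverse limits of the $\Hom$-modules over $\u{S}'_n$ (resp.\ $\u{S}_n$), since $M' = \varprojlim M'/p^n M'$ and the filtrations and Frobenius structures are likewise $p$-adically separated and complete. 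For essential surjectivity: given $\u{M}\in\Win(\u{S})$, its reductions $\u{M}_n$ lift uniquely (up to coherent isomorphism) to windows $\u{M}'_n$ over $\u{S}'_n$; the uniqueness forces these to form a compatible system, whose limit is a finitely generated projective $S'$-module $M'$ (projectivity is preserved in the limit because $S'$ is $p$-adically complete and the $M'_n$ are projective of constant rank) carrying the inherited window structure, and base-changing it to $\u{S}$ recovers $\u{M}$ by construction. One should also check the normal-decomposition condition survives the limit, which is routine since a normal decomposition mod $p$ lifts (here the lifting-frame hypothesis, or just local freeness, is used) and then propagates $p$-adically.

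\textbf{Main obstacle.} The technical heart is the first step: setting up the deformation theory of windows along a square-zero extension of frames cleanly enough that the relevant obstruction and torsor groups are manifestly functorial in the frame homomorphism, so that the hypothesis "$\lambda_1$ crystalline" can be leveraged term by term in the induction. This is where one must be careful about which modules appear (the cotangent-type term $\Fa\otimes_R(M/\Fil M)$ versus $\Fa\otimes_R \Fil M$, and the role of $\varphi_1$ in identifying them), and about the fact that $\varpi$ and the $c$-twist play no essential role because the vertical maps are strict. Once that bookkeeping is in place — and much of it can be cited from \cite{Lau:Frames} and \cite{Lau:Dieudonne}, where window deformation theory along nilpotent thickenings is developed — the inductive step and the limit step are formal.
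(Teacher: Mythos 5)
Your strategy is the paper's: prove the inductive step ($\lambda_n$ crystalline $\Rightarrow$ $\lambda_{n+1}$ crystalline) by comparing the groupoids of lifts of a window along the square-zero reductions $\u S'_{n+1}\to\u S'_n$ and $\u S_{n+1}\to\u S_n$, and then pass to the limit (which the paper treats as standard and you spell out correctly). However, the one concrete justification you offer for the crucial step --- that matching the lifting groupoids ``reduces to the claim that $\lambda$ induces isomorphisms on the pertinent $R'$- versus $R$-modules after tensoring with $\Fa$'' --- is not what the hypothesis gives and is false in general: crystallinity of $\lambda_1$ is an equivalence of categories $\Win(\u S'_1)\to\Win(\u S_1)$ and yields no module-level isomorphisms between $R'$- and $R$-data (the maps $S'_1\to S_1$ and $R'\to R$ can be far from isomorphisms). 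The mechanism that actually lets the hypothesis bite, and which the paper supplies, is an identification of the lifting groupoid $\LLL(\u M_n)$ with a purely categorical invariant of the mod-$p$ window: after choosing a lift of the pair $(M,\Fil M)$ (always possible, so there is no obstruction group to worry about), the remaining choices of $(\tilde\Phi,\tilde\Phi_1)$ form a torsor under a group $D^1$ of pairs of $\varphi$-linear maps on $\u{\bar M}=\u M\otimes\ZZ/p\ZZ$, acted on by $D^0=\End(\bar M,\Fil\bar M)$, and the resulting quotient groupoid $[D^1/D^0]$ is precisely the groupoid of self-extensions $\EEE xt^1_{\u S_1}(\u{\bar M},\u{\bar M})$ inside $\Win(\u S_1)$; the same holds over $\u S'_1$, and the equivalence $\lambda_1^*$ then transports one extension groupoid onto the other. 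Your parenthetical (``the deformation groups are built functorially out of $\Win(\u S_1)$-data'') gestures at this, but without the extension-groupoid identification the claimed functoriality has no content; this is the gap you would need to fill.

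Two smaller points. In the ``five-lemma'' chase, fullness of $\lambda_{n+1}^*$ does not follow directly from the equivalence of lifting groupoids, since a homomorphism is not a lift of an identity; the paper reduces to fullness on isomorphisms by encoding $\alpha:M\to N$ as the automorphism $\left(\begin{smallmatrix}1&0\\ \alpha&1\end{smallmatrix}\right)$ of $M\oplus N$, and then uses the unique lift (available at level $n$ by the inductive hypothesis) of the reduction of a given isomorphism before applying the lifting equivalence. Faithfulness uses $\ZZ_p$-flatness to write a homomorphism killed by $\lambda_{n+1}^*$ as $p^n\beta$ with $\beta$ a homomorphism of the mod-$p$ windows, to which faithfulness of $\lambda_1^*$ applies. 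Your limit argument in the second step is fine and is indeed routine.
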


\begin{proof}
For a fixed $\u S\nn$-window $\u M\nn=(M,\Filone  M,\Phi,\Phi_1)$ 
we write $\LLL(\u M\nn)$ the category of lifts of $\u M\nn$
to an $\u S{}_{n+1}$-window $\u{\tilde M}\nn=(\tilde M,\Filone \tilde M,\tilde\Phi,\tilde\Phi_1)$;
the morphisms in this category are isomorphisms that induce the identity of $\u M\nn$.
Let $\u{\bar M}=\u M\otimes\ZZ/p\ZZ=(\bar M,\Filone \bar M,\bar\Phi,\bar\Phi_1)$.
We claim that there is an equivalence of categories
\begin{equation}
\LLL(\u M\nn)\cong\EEE xt^1_{\u S_1}(\u{\bar M},\u{\bar M}),
\label{Eq:equiv-D-E}
\end{equation}
depending on the choice of a base point in $\LLL(\u M\nn)$.
All lifts to $\u S{}_{n+1}$ of the pair $(M,\Filone  M)$ are isomorphic, 
and we choose one of them $(\tilde M,\Filone \tilde M)$.
Let $D^1$ be the set of pairs $(G,G_1)$
of $\varphi$-linear maps $G:\bar M\to\bar M$ and $G_1:\Filone \bar M\to\bar M$ 
that satisfy the relations $G_1(ax)=\varphi_1(a)G(x)$ for $a\in\Filone  S_1$ and $x\in\bar M$
and $G(x)=\varpi G_1(x)$ for $x\in\Filone\bar M$.
The set of pairs $(\tilde\Phi,\tilde\Phi_1)$ which complete the lift $\u{\tilde M}$ 
is a principally homogeneous set under the abelian group
$D^1$, where $(G,G_1)$ acts on $(\tilde\Phi,\tilde\Phi_1)$
by adding $(p^nG,p^n G_1)$.
The group of automorphisms of the chosen lift $(\tilde M,\Filone \tilde M)$
is isomorphic to $D^0:=\End(\bar M,\Filone \bar M)$
by sending $\alpha\in D_0$ to the automorphism $1+p^n\alpha$,
and the action of these automorphisms on the pairs $(\tilde F,\tilde F_1)$ is given by
$$
d:D^0\to D^1, \qquad d(\alpha)=(\bar\Phi\alpha-\alpha\bar\Phi,\bar\Phi_1\alpha-\alpha\bar\Phi_1).
$$
It follows that the category $\LLL(\u M\nn)$ is equivalent to the quotient groupoid $[D_1/D_0]$.
Similary, for an extension of $\u S_1$-windows 
$0\to\u{\bar M}\to\u N\to\u{\bar M}\to 0$,
the underlying pair of modules $(N,\Filone  N)$ is isomorphic to 
$(\bar M\oplus\bar M,\Filone \bar M\oplus\Filone \bar M)$.
The operators $\Phi_N$ and $\Phi_{1,N}$ of $\u N$ are then given by block matrices
$
\left(\begin{smallmatrix}\bar\Phi&G\\0&\bar\Phi\end{smallmatrix}\right)
$ 
and 
$
\left(\begin{smallmatrix}\bar\Phi_1&G_1\\0&\bar\Phi_1\end{smallmatrix}\right)
$
for some $(G,G_1)\in D_1$, the automorphism group of $(N,\Filone N)$
is isomorphic to $D^0$ by sending $\alpha\in D_0$ to the
automorphism $\left(\begin{smallmatrix}1&\alpha\\0&1\end{smallmatrix}\right)$,
and the action of automorphisms on pairs $(\Phi_N,\Phi_{1,N})$ is given by $d$
as above. The equivalence \eqref{Eq:equiv-D-E} follows.
For an $\u S'_n$-window $\u M'_n$ we get a similar
equivalence
$$
\LLL(\u M'_n)\cong\EEE xt^1_{\u S'_1}(\u{\bar M}{}',\u{\bar M}{}').
$$
We apply this for $\u M\nn=\lambda_n(\u M'_n)$ and deduce that 
since $\lambda_1$ is assumed to be crystalline, the functor
\begin{equation}
\label{Eq-win-lift}
\LLL(\u M'_n)\to\LLL(\u M\nn)
\end{equation}
induced by $\lambda_{n+1}$ is an equivalence of categories.

Now to prove the proposition it suffices to show that if 
$\lambda_n$ is crystalline, then so is $\lambda_{n+1}$.
Assume that the functor $\lambda_n^*$ is an equivalence.

a) The functor $\lambda_{n+1}^*$ is faithful:
Let $\u M'_{n+1}$ and $\u N'_{n+1}$ be two $\u S'_{n+1}$-windows, let 
$\u M'_i$ and $\u N'_i$ be their reductions
modulo $p^i$, and let $\u M_i$ and $\u N_i$ be the 
images under $\lambda_i^*$.
Assume that $\alpha:\u M'_{n+1}\to\u N'_{n+1}$
is a homomorphism with $\lambda_{n+1}^*(\alpha_{n+1})=0$.
Since the functor $\lambda_n^*$ is faithful we have $\alpha=p^n\beta$ for
a homomorphism $\beta:\u M'_1\to\u N'_1$.
Since the functor $\lambda_1^*$ is faithful we have $\beta=0$, thus $\alpha=0$.

b) The functor $\lambda_{n+1}^*$ is full:
Since a homomorphism $\alpha:M\to N$ can be encoded by the automorphism 
$\left(\begin{smallmatrix}1&0\\\alpha&1\end{smallmatrix}\right)$ of $M\oplus N$,
it suffices to show that $\lambda_{n+1}^*$ is full on isomorphisms.
We use the notation of a).
Let $\alpha_{n+1}:\u M{}_{n+1}\to\u N{}_{n+1}$ be a given isomorphism, and
let $\tilde\alpha_n:\u M'_n\to\u N'_n$ be the unique isomorphism
that lifts the reduction $\alpha_n:\u M\nn\to\u N\nn$.
Since $\tilde\alpha_n$ can be lifted to an isomorphism of pairs
$( M'_{n+1},\Filone  M'_{n+1})\cong( N'_{n+1},\Filone  N'_{n+1})$,
we can assume that $\u M'_n=\u N'_n$
and that $\alpha_n=\id$. Then the equivalence \eqref{Eq-win-lift}
implies that $\alpha$ comes from an isomorphism $\u M'_{n+1}\cong\u N'_{n+1}$.

c) The functor $\lambda_{n+1}^*$ is surjective on isomorphism classes:
This is clear by the equivalence \eqref{Eq-win-lift}.
\end{proof}

\subsection{\texorpdfstring{Descent of windows from $S$ to $\s$}{Descent of windows from S to FrakS}}
\label{Se:descent}

Let $\FS$ be a ring with a fixed element $E\in\FS$ such that
 $(p,E)$ is a regular sequence in $\FS$. We assume that
$\FS$ is complete for the $(p,E)$-adic topology, 
or equivalently that $\FS$ is $p$-adically complete and $\FS_1=\FS/p\FS$
is $E$-adically complete. Then $\FS$ is also $E$-adically complete
without $E$-torsion, and $R=\FS/E\FS$ is $p$-adically complete and $\ZZ_p$-flat.
Let $\varphi:\FS\to\FS$ be a lift of Frobenius. 
As in Example \ref{Ex:frame-FS} we form the frame
$\u\FS=(\FS,\Filone\FS,R,\varphi,\varphi'_1)$.

Let $S$ be the $p$-adic completion of the divided power
envelope of the ideal $E\FS\subseteq\FS$, 
let $\Filone S$ be the kernel of $S\to R$,
and let $\varphi:S\to S$ be the natural extension of $\varphi$.
The ring $S$ is $\Z_p$-flat (see Lemma \ref{Le:Sflat} below)
and is a PD-frame for $R$ in the sense of Definition \ref{Def:PDframe}.
Let
$
\u S=(S,\Filone S,R,\varphi,\varphi_1,p)
$
be the corresponding frame, i.e.\ $\varphi_1(a)=p^{-1}\varphi(a)$ for $a\in\Filone S$. 
With Remarks \ref{Re:Frame} and \ref{Rem:cdetermined} in mind, we assume:
\begin{hypothesis}
The element $c:=\varphi_1(E)$ is a unit of $S$. 
\end{hypothesis}
This guarantees that
the natural homomorphism of rings $\FS\to S$
extends to a $c$-homomorphism of frames $\lambda:\u\FS\to\u S$ in the sense of 
Definition \ref{Def:FrameHom}.
Let $\lambda_n:\u\FS\nn\to\u S\nn$ be its reduction mod $p^n$ as in \S \ref{Se:LiftWin}.

\begin{proposition}
\label{Pr:lambda-crys}
If $p\ge 3$ then the frame homomorphisms\/
$\lambda:\u\FS\to\u S$ 
and\/ $\lambda_n:\u\FS\nn\to\u S\nn$ 
for $n\ge 1$ are all crystalline.
\end{proposition}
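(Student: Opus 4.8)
The plan is to descend to characteristic $p$ and there to exploit the fact that, modulo $p$, the divided power envelope $S$ and its Frobenius become essentially independent of the chosen lift $\varphi$.

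First I would invoke Proposition~\ref{Pr:lift-pn}: the four rings $\FS$, $R$, $S$, $R$ attached to the frame homomorphism $\lambda$ are $p$-adically complete and $\ZZ_p$-flat — for $\FS$ and $R$ because $(p,E)$ is a regular sequence, for $S$ by Lemma~\ref{Le:Sflat} — so it suffices to prove that $\lambda_1\colon\u\FS\otimes\ZZ/p\ZZ\to\u S\otimes\ZZ/p\ZZ$ is crystalline. Write $\bar\FS=\FS/p\FS$, $\bar R=R/pR$ and $\bar S=S/pS$; the last is the divided power envelope of the principal ideal $\bar E\bar\FS$ in $\bar\FS$, since formation of the PD-envelope commutes with the base change $\FS\to\bar\FS$, along which $E$ stays a non-zero-divisor, and $p$-adic completion is vacuous here.

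Next I would record the structural facts about $\bar S$ that drive the argument. The endomorphism $\bar\varphi$ of $\bar S$ is the Frobenius of the $\FF_p$-algebra $\bar S$, and $\gamma_i(\bar E)^p=0$ in $\bar S$ for every $i\ge 1$ — reducing via the base-$p$ expansion to $i=p^j$, where $\gamma_{p^j}(x)^p=\tfrac{(p^{j+1})!}{(p^j)!^{\,p}}\gamma_{p^{j+1}}(x)$ and the coefficient has $p$-adic valuation exactly $1$. Hence $\bar\varphi$ annihilates the PD-ideal $\bar I:=\ker(\bar S\to\bar R)$, so that $\bar\varphi$ factors as $\bar S\twoheadrightarrow\bar R\to\bar S$, and likewise the ring map $\bar\FS\to\bar S$, $a\mapsto a^p$, factors through $\bar R$ (it kills $\bar E$, because $E^p\in pS$). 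For the divided Frobenius one computes $\varphi_1(\gamma_i(E))=p^{\,i-1}c^{\,i}/i!$ with $c=\varphi_1(E)\in S^\times$, and $v_p(p^{\,i-1}/i!)=i-1-v_p(i!)\ge 1$ for $i\ge 2$ when $p\ge 3$ (this is exactly where $p\ge 3$ enters; it fails for $(p,i)=(2,2)$), so $\bar\varphi_1$ kills all higher divided powers while $\bar\varphi_1(\bar E)=\bar c$ is a unit. None of this depends on the chosen Frobenius lift $\varphi$, which is what lets us treat an arbitrary $\varphi$ at once.

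It then remains to prove that $\lambda_1^*\colon\Win(\u\FS\otimes\ZZ/p\ZZ)\to\Win(\u S\otimes\ZZ/p\ZZ)$ is an equivalence, which I would do by exhibiting a quasi-inverse. Passing through Lemma~\ref{Le:WinKisin} (applicable since $\bar\FS$ is $\bar E$-adically complete), a $\u\FS\otimes\ZZ/p\ZZ$-window corresponds to a BT module $(\FM,\varphi)$, whose base change is a $\bar S$-window $\u N$ with underlying module $\bar S\otimes_{\bar\FS}\varphi^*\FM$; by the step above the relevant ring map $\bar\FS\to\bar S$ factors through $\bar R$, so this module depends only on $\bar\FM=\FM/\bar E\FM$, and the lattice $\FM$ together with its Frobenius is encoded entirely in the Hodge filtration $\Filone N$ and the operators $\Phi,\Phi_1$ of $\u N$. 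Conversely, for an arbitrary $\bar S$-window $\u N$ the operator $\Phi$ kills $\bar I N$ and hence factors through $N/\bar I N=N\otimes_{\bar S}\bar R$; using a normal decomposition together with the vanishing of $\bar\varphi_1$ on higher divided powers, one reconstructs from $(N,\Filone N,\Phi,\Phi_1)$ a finite projective $\bar\FS$-module $\FM$ and a map $\varphi\colon\FM\to\FM$ with $\coker(1\otimes\varphi)$ killed by $\bar E$, i.e.\ an object of $\BT(\u\FS\otimes\ZZ/p\ZZ)$, and one checks the two constructions are mutually inverse (faithfulness and fullness of $\lambda_1^*$ follow, or can be obtained directly by the same bookkeeping). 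The main obstacle is precisely this reconstruction: because $\bar\FS\to\bar S$ is far from flat or injective — it kills $\bar E^p$ and its $\bar\varphi$-twist factors through $\bar R$ — one cannot recover $\FM$ simply as a $\varphi$-stable lattice inside $N$, and one must instead track carefully how $\Filone N$ and $\Phi_1$ pin down the sublattice $(1\otimes\varphi)(\varphi^*\FM)\subseteq\FM$, controlling at each stage the denominators of $p$ that occur (here the PD-structure and the hypothesis $p\ge 3$ re-enter). This is the characteristic-$p$ incarnation of the key lemma of Breuil \cite{Breuil:Groupes} and Kisin \cite{KisinFcrystal}, carried out uniformly in the Frobenius lift (recovering the special cases of Caruso--Liu \cite{Caruso-Liu:Quasi} and Kim \cite{Kim:FormallySmooth}); combined with Proposition~\ref{Pr:lift-pn} it yields the statement.
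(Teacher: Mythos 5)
Your first step (reduction to the mod-$p$ frame homomorphism $\lambda_1\colon\u\FS_1\to\u S_1$ via Proposition \ref{Pr:lift-pn}, using $\ZZ_p$-flatness of $\FS,R,S$) is exactly the paper's, and your valuation computations ($\varphi$ kills the PD-ideal mod $p$; $\varphi_1(E^n/n!)=c^np^{n-1}/n!$ has positive $p$-valuation for $n\ge 2$ once $p\ge 3$) are correct and do appear in the paper's argument. But the heart of the proposition --- the proof that $\lambda_1$ is crystalline --- is not actually carried out. You describe a quasi-inverse you would like to build, observe (correctly) that the naive attempt fails because the $\varphi$-twisted base change $\bar\FS\to\bar S$ factors through $\bar R$ and so only sees $\FM/E\FM$, and then defer the reconstruction of $\FM$ from $(N,\Filone N,\Phi,\Phi_1)$ to ``the key lemma of Breuil and Kisin, carried out uniformly in the Frobenius lift.'' That is precisely the statement to be proved: the results of \cite{Breuil:Groupes}, \cite{KisinFcrystal}, \cite{Caruso-Liu:Quasi}, \cite{Kim:FormallySmooth} cover only $\FS=W(k)[[u]]$ or $R_0[[u]]$, whereas Proposition \ref{Pr:lambda-crys} is needed in this paper for $d+1$ variables and arbitrary Frobenius lifts (in particular the cyclotomic one), so the appeal is either circular or insufficient, and no argument ``controlling the denominators of $p$'' is actually given. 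This is a genuine gap, not a routine verification.

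The paper's proof supplies exactly the idea your sketch is missing and avoids constructing a quasi-inverse altogether: it factors both frames over the auxiliary frame $\u\FS_0$ with underlying ring $\FS_0=\FS/(p,E^p)=\FS_1/E^p\FS_1$. Since $\varphi_1'(xE^p)=x^pE^{p(p-1)}$ in $\FS_1$, the ideal $J=E^p\FS_1$ is $\varphi_1'$-stable with topologically nilpotent restriction (here $p\ge 3$ enters), so the strict projection $\u\FS_1\to\u\FS_0$ is crystalline by the deformation theorem \cite[Theorem 3.2]{Lau:Frames}. On the other side, equipping $E\FS_1/E^p\FS_1$ with the trivial divided powers yields a PD-homomorphism $\pi'\colon S_1\to\FS_0$ inducing an isomorphism $S_1/\Fil^pS_1\cong\FS_0$; your computation of $\varphi_1(E^n/n!)$ shows $\varphi_1$ vanishes on $\Fil^pS_1$, so $\pi'$ is a $c^{-1}$-homomorphism of frames $\u S_1\to\u\FS_0$, again crystalline by the same theorem. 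Since $\pi'\circ\lambda_1$ is the projection $\u\FS_1\to\u\FS_0$ and both that projection and $\pi'$ induce equivalences of window categories, so does $\lambda_1$, and Proposition \ref{Pr:lift-pn} concludes. If you wanted to salvage your route you would essentially have to redo the Caruso--Liu reconstruction in this general setting; the detour through $\u\FS_0$ and the quoted deformation theorem is what makes that unnecessary.
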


This generalises known results.
When $\FS=W(k)[[u]]$ for a perfect field $k$ of characteristic $p$ and when
$E$ is an Eisenstein polynomial so that $R$ is the ring of integers of a $p$-adic local field,
the equivalence between $\u\FS$-windows and $\u S$-windows follows from
\cite[2.2.7, A.6]{KisinFcrystal} when $\varphi(u)=u^p$ and, more directly, 
from \cite[2.2.1]{Caruso-Liu:Quasi} for general $\varphi$.
Using the method of \cite{Caruso-Liu:Quasi}, 
this equivalence is extended in \cite[Prop.~6.3]{Kim:FormallySmooth}
to the case $\FS=R_0[[u]]$ for a $p$-adically complete ring $R_0$ where
$E\in\FS$ is a generalisation of an Eisenstein polynomial.

\begin{proof}
By Proposition \ref{Pr:lift-pn} it suffices to show that the frame homomorphism
$\u\FS_1\to\u S_1$ is crystalline. We prove this using a variant of
\cite[Pr.~2.2.2.1]{Breuil:Construction} and  \cite[Th.~4.1.1]{Breuil:UneApplication}.

Let $\FS_0=\FS/(p,E^p)=\FS_1/(E^p)$. 
For $x\in\FS_1$ we have
$$
\varphi_1'(xE^p)=\varphi(x)\varphi(E)^{p-1}=x^pE^{p(p-1)}
$$
in $\FS_1$.
Thus $\varphi_1'$ preserves the ideal $J=E^p\FS_1$, and (since $p\ge 3$) the restriction
$\varphi_1':J\to J$ is topologically nilpotent.
It follows that (also for $p=2$) we have a frame
$$
\u\FS_0=(\FS_0,\Filone\FS_0,R_1,\varphi,\varphi_1')
$$
such that the projection $\pi:\FS_1\to\FS_0$ is a strict frame
homomorphism $\pi:\u\FS_1\to\u\FS_0$,
and for $p\ge 3$ this frame homomorphism is crystalline by
the general deformation result \cite[Theorem 3.2]{Lau:Frames}.

We can endow the ideal $\Filone\FS_0=E\FS_1/E^p\FS_1$ with the
trivial divided powers $\gamma$ determined by $\gamma_p(E)=0$.
The universal property of $S_1$ gives an extension of
$\pi$ to a divided power homomorphism $\pi':S_1\to\FS_0$, which
maps $\Fil^pS_1$ to zero. 
Now the identity of $\FS_0$ factors into
$$
\FS_0=\FS_1/E^p\FS_1\to S_1/\Fil^pS_1\to\FS_0.
$$
Here the first arrow is easily seen to be surjective, thus both
arrows are bijective. 
In $S$ we have the relation
$$
\varphi_1(E^n/n!)=c^{n}p^{n-1}/n!=\text{unit}\cdot p^{n-1-v_p(n!)},
$$
and $v_p(n!)\le(n-1)/(p-1)$. As $p\ge 3$,
the exponent $n-1-v_p(n!)$ is positive for $n\ge 2$.
Thus $\varphi_1:\Fil^pS_1\to S_1$ is zero, and 
the map $\varphi_1:\Filone S_1\to S_1$ factors over
$\Filone\FS_0\to S_1$ and in particular induces $\bar\varphi_1:\Fil\FS_0\to\FS_0$.
Using that $\lambda$ is a $c$-homomorphism of frames, we find that
$\bar\varphi_1=c\varphi_1'$. Thus $\pi'$ is a $c^{-1}$-homomorphism of frames
$\pi':\u S_1\to\u\FS_0$, which is crystalline by \cite[Theorem 3.2]{Lau:Frames} again.
\end{proof}

\subsection{\texorpdfstring{Dieudonn\'e crystals and the Hodge filtration}{Dieudonn\'e crystals and the Hodge filtration}}\label{DieuHodge}

Let $T$ be a scheme on which $p$ is nilpotent.
We denote by $\Dcat(T)$ the category of Dieudonn\'e crystals over $T$.
Let us recall the definition:

Set $\Sigma:=\Spec \Z_p$, endowed with the structure of a
PD-scheme via the canonical
divided powers on the ideal $p\Z_p$, 
and let $\Cris(T/\Sigma)$ be the big fppf crystalline site as in \cite{BBM}.
If $\scrF$ is a sheaf on $\Cris(T/\Sigma)$ and
$U\to Z$ is any object of $\Cris(T/\Sigma)$, we 
write $\scrF_{Z/U}$ for the corresponding fppf sheaf on $Z$.
In the case $U=Z$ we also write $\scrF_{Z/U}=\scrF_U$.
By definition, an object of $\Dcat(T)$ is a triple $(\scrD,F,V)$ where
$\scrD$ is a crystal of finite locally free $\O_{T/\Sigma}$-modules on $\Cris(T/\Sigma)$
and where $F:\varphi^*\scrD_0\to\scrD_0$ and $V:\scrD_0\to\varphi^*\scrD_0$ are
homomorphisms of crystals satisfying $FV=p$ and $VF=p$; 
here $\scrD_0$ denotes the pullback of $\scrD$ to $T_0=T\times\Spec\F_p$,
and $\varphi$ is the Frobenius endomorphism of $T_0$.
The {\em dual} of $(\scrD,F,V)$ is $(\scrD^t,V^t,F^t)$
for $\scrD^t:=\scrHom_{\O_{T/\Sigma}}(\scrD,\O_{T/\Sigma})$
with $V^t$ and $F^t$ the maps induced 
by precomposition with $V$ and $F$ on $\scrD_0$, respectively.

Let $\pdiv(T)$ denote the category of $p$-divisible groups over $T$.
We have a contravariant functor 
\begin{equation}
\label{Eq:Dfunctor}
\u\D:\pdiv(T)\to\Dcat(T) , \qquad\u\D(G)=(\D(G),F,V)
\end{equation}
where $\D(G)$ is the crystalline Dieudonn\'e module of $G$ defined
in \cite{Mazur-Messing} or \cite{BBM}, and where
$F$ and $V$ are induced by the Frobenius and Verschiebung homomorphism of $G_{T_0}$. 
The functor $\u\D$ is exact by \cite[Pr.~4.3.1 and Th.~3.3.3]{BBM} 
and compatible with duality by  \cite[Sec.~5.3]{BBM}.

We want to take into account the Hodge filtration
of a $p$-divisible group.
Let $(\scrD,F,V)$ be a Dieudonn\'e crystal over $T$.
For each object $(U\to Z)$ of $\Cris(T_0/\Sigma_0)$
we have an exact sequence of locally free $\O_Z$-modules
\begin{equation}
\label{Eq:SeqFV}
\scrD_{Z/U}\xrightarrow{V_{Z/U}}\varphi^*(\scrD_{Z/U})\xrightarrow{F_{Z/U}}
\scrD_{Z/U}\xrightarrow{V_{Z/U}}\varphi^*(\scrD_{Z/U})
\end{equation}
where $\varphi$ denotes the Frobenius endomorphism of $Z$, and
$\im(V_{Z/U})$ and $\im(F_{Z/U})$ are locally free as well.
Indeed, \eqref{Eq:SeqFV} is a complex of locally free modules whose
base change to perfect fields is exact, which implies the assertion.
Moreover, the Frobenius endomorphism $\varphi:Z\to Z$ factors into
$\bar\varphi:Z\to U$. 

\begin{definition}
\label{Def:DF}
We denote by $\DF(T)$ the category of {\em filtered Dieudonn\'e crystals} over $T$
whose objects are quadruples $(\scrD,F,V,\Fil\scrD_T)$
where $(\scrD,F,V)$ is a Dieudonn\'e crystal over $T$ and where 
$\Fil\scrD_T\subseteq\scrD_T$ is an $\O_T$-submodule which is locally a direct summand,
such that for each object $(U\to Z)$ of $
\Cris(T_0/\Sigma_0)$,
we have
\begin{equation}
\label{Eq:DF}
\bar\varphi^*((\Fil\scrD_T)_U)=\im(V_{Z/U})
\end{equation}
inside the locally free $\O_Z$-module $\bar\varphi^*\scrD_U=\varphi^*\scrD_{Z/U}$.
 A short sequence of filtered Dieudonn\'e crystals is called exact if the underlying sequences
of $\O_{T/\Sigma}$-modules $\scrD$ and of $\O_T$-modules $\Filone\scrD_T$ are exact.
The {\em dual} of $(\scrD,F,V,\Filone\scrD_T)$ is defined as $(\scrD^t,V^t,F^t,(\Filone\scrD_T)^\perp)$,
where $(\Filone\scrD_T)^\perp$ is the kernel of the canonical map
$$\scrD^t_T=\scrHom_{\O_T}(\scrD_T,\O_T)\to \scrHom_{\O_T}(\Filone\scrD_T,\O_T).$$
\end{definition}

\begin{remark}
The locally direct summands of $\DDD_T$ that satisfy \eqref{Eq:DF} are the 
filtrations considered in \cite[p.\ 113]{Grothendieck:Montreal}
as part of a so-called {\em admissible} $F$-$V$ crystal.
\end{remark}

For a $p$-divisible group $G$ over $T$ we have a natural exact sequence
of locally free $\O_T$-modules, called the Hodge filtration of $G$,
$$
0\longrightarrow\omega_G\longrightarrow\D(G)_T\longrightarrow\scrLie (G^{\vee})
\longrightarrow 0.
$$

\begin{proposition}
\label{Pr:BT2DF}
The functor\/ $\u\D$ of \eqref{Eq:Dfunctor} extends to a contravariant functor 
$$
\u\D:\pdiv(T)\to\DF(T),\qquad\u\D(G)=(\D(G),F,V,\Filone\D(G)_T)
$$
where $\Fil\D(G)_T$ is the image of $\omega_G$. 
This functor is again exact and compatible with duality.
\end{proposition}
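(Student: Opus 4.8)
The plan is to obtain all three assertions from the corresponding facts for the functor $\u\D\colon\pdiv(T)\to\Dcat(T)$ of \eqref{Eq:Dfunctor} --- exactness from \cite[Pr.~4.3.1, Th.~3.3.3]{BBM} and compatibility with duality from \cite[\S5.3]{BBM} --- together with elementary properties of the Hodge filtration. First one checks that $(\D(G),F,V,\Fil\D(G)_T)$ is a legitimate object of $\DF(T)$. That $\Fil\D(G)_T=\im(\omega_G)$ is locally a direct summand of $\D(G)_T$ is immediate from the Hodge filtration sequence $0\to\omega_G\to\D(G)_T\to\scrLie(G^\vee)\to 0$, all of whose terms are locally free. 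The only substantial point is condition \eqref{Eq:DF}.

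To verify \eqref{Eq:DF}, fix an object $(U\to Z)$ of $\Cris(T_0/\Sigma_0)$. Since $p=0$ on $Z$ the relations $FV=VF=p$ read $FV=VF=0$, so exactness of \eqref{Eq:SeqFV} gives $\im(V_{Z/U})=\ker(F_{Z/U})$, and \eqref{Eq:DF} becomes the equality $\bar\varphi^*((\Fil\D(G)_T)_U)=\ker(F_{Z/U})$ inside $\bar\varphi^*\D(G)_U=\varphi^*\D(G)_{Z/U}$. I would reduce this to the trivial thickening $U=Z$. The PD-ideal of $U$ in $Z$ is killed by $p$-th powers, so the absolute Frobenius of $Z$ factors as $Z\xrightarrow{\;\bar\varphi\;}U\hookrightarrow Z$, which is exactly the source of the identification $\bar\varphi^*\D(G)_U=\varphi^*\D(G)_{Z/U}$ used in Definition~\ref{Def:DF}. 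Both sides of the equality in question are sub-bundles of $\varphi^*\D(G)_{Z/U}$ that are horizontal for the connection attached to the crystal structure: $\ker(F_{Z/U})$ because $F$ is a morphism of crystals, so $F_{Z/U}$ is horizontal, its kernel is horizontal, and it is a sub-bundle since $F_{Z/U}$ has locally free image; and $\bar\varphi^*((\Fil\D(G)_T)_U)$ because the connection on the Frobenius pullback $\varphi^*\D(G)_{Z/U}=\bar\varphi^*\D(G)_U$ is the trivial one ($\mathrm{d}$ of a Frobenius vanishes), so any pullback along $\bar\varphi$ is horizontal. A horizontal sub-bundle is determined by its restriction along $U\hookrightarrow Z$, and on $U$ both sides restrict to the $(U\to U)$-version of \eqref{Eq:DF} for the $p$-divisible group $G_U$, namely
$$
\varphi^*\omega_{G_U}\;=\;\im\bigl(V\colon\D(G_U)_U\to\varphi^*\D(G_U)_U\bigr)\;=\;\ker\bigl(F\colon\varphi^*\D(G_U)_U\to\D(G_U)_U\bigr).
$$
So it suffices to prove this last identity for a $p$-divisible group $H$ over any $\F_p$-scheme $Y$. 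This is classical: the relative Frobenius $H\to H^{(p)}$ annihilates invariant differentials, so $\D$ of it kills $\omega_{H^{(p)}}=\varphi^*\omega_H\subseteq\D(H^{(p)})_Y=\varphi^*\D(H)_Y$, and a rank count identifies this sub-bundle with $\ker(F)$; equivalently, it is precisely the statement that $\omega_H$ makes $\D(H)$ into an admissible $F$-$V$-crystal in the sense of \cite[p.~113]{Grothendieck:Montreal}, and I would cite it from \cite{BBM} or \cite{Mazur-Messing}.

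Functoriality and exactness are then formal. A morphism $G\to H$ in $\pdiv(T)$ induces a morphism $\D(H)\to\D(G)$ of Dieudonn\'e crystals carrying $\omega_H$ into $\omega_G$, so $\u\D$ refines to a contravariant functor $\pdiv(T)\to\DF(T)$. For a short exact sequence $0\to G_1\to G_2\to G_3\to 0$ in $\pdiv(T)$, the sequence $0\to\D(G_3)\to\D(G_2)\to\D(G_1)\to 0$ is exact in $\Dcat(T)$ by \cite{BBM}, and dualizing the short exact sequence of locally free Lie algebras $0\to\scrLie G_1\to\scrLie G_2\to\scrLie G_3\to 0$ (exact because the dimensions add) gives a compatible short exact sequence $0\to\omega_{G_3}\to\omega_{G_2}\to\omega_{G_1}\to 0$; by Definition~\ref{Def:DF} the resulting sequence of filtered Dieudonn\'e crystals is exact.

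For compatibility with duality, \cite[\S5.3]{BBM} provides a canonical isomorphism $\u\D(G^\vee)\cong(\D(G)^t,V^t,F^t)$ in $\Dcat(T)$ under which the Hodge filtration $\omega_{G^\vee}\hookrightarrow\D(G^\vee)_T$ corresponds to the kernel of the $\O_T$-dual map $\D(G)^t_T\to\scrHom(\omega_G,\O_T)$ of the inclusion $\omega_G\hookrightarrow\D(G)_T$ --- this is the dual of the Hodge filtration sequence of $G$, using $\scrLie(G^\vee)^\vee=\omega_{G^\vee}$ and $\omega_G^\vee=\scrLie G$. That kernel is the annihilator of $\omega_G$, which is $(\Fil\D(G)_T)^\perp$ by definition, so the isomorphism upgrades to $\u\D(G^\vee)\cong\u\D(G)^t$ in $\DF(T)$. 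The main obstacle in all of this is the reduction of \eqref{Eq:DF} to trivial thickenings: one must be careful that the crystal-morphism structure of $F$ and the Frobenius factorization $\bar\varphi\colon Z\to U$ interact so that both sides of \eqref{Eq:DF} are horizontal sub-bundles agreeing on $U$; the remaining ingredients are either formal or direct citations of \cite{BBM}, \cite{Mazur-Messing} and \cite{Grothendieck:Montreal}.
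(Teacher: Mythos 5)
Your handling of functoriality, exactness (via exactness of $G\mapsto\scrLie(G)$) and duality is essentially the paper's: it cites \cite[Pr.~4.3.1, Th.~3.3.3, Pr.~5.3.6]{BBM} and \cite[Th.~3.3.13]{Messing} for exactly these points, and your identification of the $U=Z$ case of \eqref{Eq:DF} with the classical statement is also where the paper starts (it cites \cite[Pr.~4.3.10]{BBM}). The gap is in your reduction of a general object $(U\to Z)$ to the case $U=Z$. You claim $\ker(F_{Z/U})$ is a horizontal sub-bundle ``because $F$ is a morphism of crystals, so $F_{Z/U}$ is horizontal''. But for an arbitrary object of the big fppf crystalline site $\Cris(T_0/\Sigma_0)$ the value $\scrD_{Z/U}$ carries no canonical connection at all: the connection attached to a crystal lives on values at thickenings with a (formally) smooth ambient scheme, via the PD envelope of the diagonal, whereas $Z$ here is an arbitrary PD thickening, in general neither smooth nor Noetherian. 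Being a morphism of crystals only gives compatibility of $F_{Z/U}$ with the transition isomorphisms of the site; together with the local splitness coming from \eqref{Eq:SeqFV} this identifies the restriction of $\ker(F_{Z/U})$ along $U\hookrightarrow Z$ with $\ker(F_U)$, but that is not enough: a sub-bundle of $\bar\varphi^*\scrD_U$ is not determined by its restriction to $U$ without a horizontality or rigidity input (compare the spans of $(1,0)$ and $(1,\epsilon)$ in $\O_Z^2$ for $Z=U[\epsilon]$). So the horizontality of $\ker(F_{Z/U})$ for the canonical Frobenius-pullback connection on $\bar\varphi^*\scrD_U$ is precisely the point that needs proof and is not supplied; moreover, even granting it, the statement that a horizontal sub-bundle is determined by its restriction to $U$ needs a divided-power Taylor/parallel-transport argument (the PD ideal is only a nil ideal), which is also missing.

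The paper closes this step by a different and shorter device: locally on $U$ the group $G_U$ lifts to a $p$-divisible group $\tilde G$ over $Z$ (the ideal of $U$ in $Z$ is a nil PD ideal, so deformation theory applies); then the crystal property gives $\scrD_{Z/U}\cong\D(\tilde G)_Z$ compatibly with $F$ and $V$, and \eqref{Eq:DF} for $(U\to Z)$ becomes the known case $U=Z$ applied to $\tilde G$, using $\varphi_Z^*\omega_{\tilde G}=\bar\varphi^*(i^*\omega_{\tilde G})=\bar\varphi^*(\omega_{G_U})$. Note that this lifting argument in fact proves the horizontality you wanted as a by-product. I recommend replacing your horizontality step by this local lifting argument; salvaging your route would require both constructing the relevant connection (say after locally embedding $Z$ in a smooth thickening) and proving the rigidity statement, which is considerably more work for the same conclusion.
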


\begin{proof}
We have to verify that \eqref{Eq:DF} holds with this definition of $\Filone\D(G)_T$.
By \cite[Pr.~4.3.10]{BBM} the assertion holds when $U=Z$.
Since $G_U$ can locally in $U$ be lifted to a $p$-divisible group over $Z$,
the assertion follows in general. 
The extended functor $\u\D$ is exact because the functor $G\mapsto\scrLie(G)$
is exact; cf.\ \cite[Th.~3.3.13]{Messing}. 
The extended functor $\u\D$ is compatible with duality by \cite[Pr.~5.3.6]{BBM}.
\end{proof}

\begin{point}
If $T$ is a formal scheme over $\Spf\ZZ_p$ with ideal
of definition $\III\subseteq\O_T$, we define a (filtered) Dieudonn\'e crystal 
over $T$ to be a compatible system of such objects
over the schemes $T_n=V(\III^n)$ for $n\ge 1$. 
The preceding discussion extends to this case.
\end{point}

\subsection{\texorpdfstring{Dieudonn\'e crystals and windows}{Dieudonn\'e crystals and windows}}

Let $\u S=(S,\Filone S,R,\varphi,\varphi_1,p)$ be a PD-frame for a $p$-adically complete ring 
$R$ in the sense of Remark \ref{Re:PDframe}. 
If a Dieudonn\'e crystal $(\scrD,F,V)$ over $\Spf R$ together with a locally direct summand 
$\Filone\scrD_R\subseteq\scrD_R$ is given, one can try to define a window $\u M$ over $\u S$
as follows. Set $M=\scrD_{S}$,
 let $\Filone M\subseteq M$ 
be the inverse image of $\Filone\scrD_{R}\subseteq\scrD_R=M/(\Filone S)M$,
and let  $\Phi:M\to M$ be the $\varphi$-linear map induced by $F$.

\begin{lemma}
\label{Le:DF2Win}
The following conditions are equivalent.
\begin{enumerate}
\item\label{Cond:Win}
We have $\Phi(\Filone M)\subseteq pM$, and $\Phi(M)+p^{-1}\Phi(\Filone M)$ generates $M$ over $S$.
\item\label{Cond:DF}
The equality \eqref{Eq:DF} holds for $U=\Spec R/pR$ and $Z=\Spec S/pS$.
\end{enumerate}
\end{lemma}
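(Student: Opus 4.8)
The plan is to unwind both conditions into statements about the reduction modulo $p$ and show they literally coincide. The object $(U \to Z) = (\Spec R/pR \to \Spec S/pS)$ is a genuine PD-thickening in $\Cris(T_0/\Sigma_0)$ with $T = \Spf R$, because the kernel of $S \to R$ is a PD-ideal and $p$ is PD-nilpotent by hypothesis; so evaluating the crystal $\scrD$ on it gives $\scrD_{Z/U} = M/pM =: \bar M$, and the Frobenius $\varphi$ of $Z = \Spec S/pS$ factors through $\bar\varphi \colon Z \to U$ exactly as in the setup preceding Definition~\ref{Def:DF}. Under these identifications, $V_{Z/U} \colon \bar M \to \varphi^*\bar M$ is the reduction of $V$, and its image is the locally free submodule appearing in \eqref{Eq:SeqFV}; likewise $\bar\varphi^*((\Fil\scrD_R)_U)$ is the image in $\varphi^*\bar M = \bar\varphi^*\scrD_U$ of the locally direct summand $\Fil\scrD_R \otimes_R (S/pS)$, which is precisely $(\Filone M/pM)$ pushed forward along $\bar\varphi$. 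So condition \eqref{Cond:DF} says exactly that, inside $\varphi^*\bar M$, the submodule generated by the image of $\Fil M$ under $1\otimes(-)$ equals $\im(\bar V)$.

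First I would reformulate condition \eqref{Cond:Win}. Let $F \colon \varphi^*M \to M$ be the linearization of $\Phi$ and recall from the $VF = FV = p$ relation on the Dieudonn\'e crystal (evaluated on $S$) that there is $V \colon M \to \varphi^*M$ with $FV = p = VF$. The statement $\Phi(\Fil M) \subseteq pM$ is equivalent, after dividing by $p$ and using $F(V(m)) = pm$, to the assertion that $V(\Fil M) \subseteq p\,\varphi^*M$ fails — more precisely, I claim $\Phi(\Fil M) \subseteq pM$ holds iff $V(M) \subseteq \varphi^*M$ maps $\Fil M$ into... let me instead argue directly modulo $p$: since $F \bar V = 0 = \bar V F$ on $\bar M$, the sequence \eqref{Eq:SeqFV} for $(U \to Z)$ is the two-periodic complex, and $\im(\bar V) = \ker(\bar F)$ because the complex is exact over perfect residue fields and all terms are locally free (this is exactly the argument given after \eqref{Eq:SeqFV}). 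Then $\Phi(\Fil M) \subseteq pM$ is equivalent to $\bar F$ killing the image of $\Fil M$ in $\bar M$... no: I want the image under $1 \otimes -$ in $\varphi^* \bar M$. The clean route: $\Phi(\Fil M) \subseteq pM$ iff $F(1 \otimes \Fil M) \subseteq pM$ iff, applying $V$ and using $VF = p$ hence $V F = p$ on $\varphi^* M$, we get $p \cdot (1\otimes \Fil M) \subseteq V(pM) = pV(M)$, i.e. (as $\varphi^*M$ is $p$-torsion-free over the $\ZZ_p$-flat $S$) $1 \otimes \Fil M \subseteq V(M)$; reducing mod $p$, the image of $\Fil M$ in $\varphi^*\bar M$ lies in $\im(\bar V)$. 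Conversely if that containment holds mod $p$ then $F$ kills it mod $p$ since $FV=0$ on $\bar M$, giving $\Phi(\Fil M) \subseteq pM$. So the first half of \eqref{Cond:Win} is equivalent to the inclusion $\bar\varphi^*((\Fil\scrD_R)_U) \subseteq \im(\bar V)$.

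Next I would handle the surjectivity clause. Granting $\Phi(\Fil M) \subseteq pM$, the map $p^{-1}\Phi \colon \Fil M \to M$ makes sense; its linearization $p^{-1}F \colon 1\otimes \Fil M \to M$ together with $F \colon \varphi^*M \to M$ has image generating $M$ iff — reducing mod $p$, where $F$ has image $\im(\bar F) = \ker(\bar V)$ — the images of $\ker(\bar V)$ and of $p^{-1}\bar F(1\otimes \Fil M)$ together generate $\bar M$. Using $\bar F \bar V = 0$ and a local normal-form / length count (both $\im(\bar V)$ and $\im(\bar F)$ are locally free, complementary in rank in a finite locally free module over an $\F_p$-algebra, with the two-periodic complex exact over perfect fields), this surjectivity is equivalent to $\bar\varphi^*((\Fil\scrD_R)_U) = \im(\bar V)$ rather than a proper inclusion: indeed $p^{-1}\bar F$ restricted to the image of $\Fil M$ is an isomorphism onto its image precisely when that image is all of $\im(\bar V) = \ker(\bar F)$, by comparing with $F\colon \varphi^*\bar M \to \bar M$ which has kernel $\im(\bar V)$. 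Combining the two clauses gives exactly \eqref{Eq:DF} for this $(U \to Z)$.

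\textbf{Main obstacle.} The delicate point is the second, surjectivity clause: converting ``$\Phi(M) + p^{-1}\Phi(\Fil M)$ generates $M$'' (a statement about $S$-modules, with a division by $p$ that only makes sense after the first clause) into the sharp equality $\bar\varphi^*(\Fil\scrD_R)_U = \im(\bar V)$ rather than mere containment. This requires knowing that reduction mod $p$ detects generation (true since $p \in \Rad(S)$, by Nakayama), and a careful local analysis over $S/pS$ — choosing a local normal decomposition $\bar M = \bar L \oplus \bar N$ with $\Fil \bar M = \bar L \oplus (\Fil S/p)\bar N$ — to see that the generation condition forces $p^{-1}\bar F|_{\bar L}$ to land isomorphically onto a complement of $\im(\bar F)$, which by the $FV$-relation is $\im(\bar V)$. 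I expect the proof to set up this normal form and then the two implications become short matrix computations.
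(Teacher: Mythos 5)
Your handling of the first clause of condition (1) is correct and complete: the equivalence of $\Phi(\Filone M)\subseteq pM$ with the inclusion $\bar\varphi^*((\Filone\scrD_R)_U)\subseteq\im(V_{Z/U})$, proved from $VF=p$ together with the $p$-torsion-freeness of $\varphi^*M$ over the $\ZZ_p$-flat ring $S$, is exactly the translation the paper asserts (you supply more detail than it does). The genuine gap is in the second clause, which you yourself flag as the delicate point: the biconditional ``granting the inclusion, $\Phi(M)+p^{-1}\Phi(\Filone M)$ generates $M$ if and only if the inclusion is an equality'' is never actually proved. The sentence claiming that $p^{-1}\bar F$ restricted to the image of $\Filone M$ ``is an isomorphism onto its image precisely when that image is all of $\im(\bar V)$'' is not a correct reformulation (any injective map is an isomorphism onto its image), and the phrase ``a complement of $\im(\bar F)$, which by the $FV$-relation is $\im(\bar V)$'' conflates the two ambient modules: $\im\bar F=\ker\bar V$ lies in $\bar M$, while $\im\bar V=\ker\bar F$ lies in $\varphi^*\bar M$. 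Moreover, the exactness of \eqref{Eq:SeqFV} over $Z=\Spec S/pS$ that you invoke does not follow formally from ``locally free terms plus exactness over perfect residue fields''; it is true, but its proof is the same $FV=VF=p$ plus torsion-freeness argument you already used for the first clause, not a pointwise one.

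For comparison, the paper sidesteps the mod-$p$ analysis entirely: it chooses a normal decomposition $M=L\oplus N$, observes that generation is equivalent to bijectivity of the $\varphi$-linear map $\Psi:L\oplus N\to M$, and checks both this and the equality of the two direct summands of $M/pM$ after base change along frame homomorphisms $\u S\to\u W(k)$ to perfect fields, where the statement is elementary. Your direct route can in fact be completed over $S/pS$, with no reduction to perfect fields: writing $Q=\varphi^*\bar L$ for the image of $1\otimes\Filone M$ in $\varphi^*\bar M$, one has $\ker\bar F=\im\bar V$ and $\ker\bar V=\im\bar F$ (torsion-freeness again), and $\bar V\circ\overline{p^{-1}F}$ is the inclusion on $Q$ (from $VF=p$). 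If $Q=\im\bar V$, then for any $m\in\bar M$ the element $m-\overline{p^{-1}F}(\bar V(m))$ lies in $\ker\bar V=\im\bar F=\bar F(\varphi^*\bar N)$ (since $\bar F$ kills $Q$), so the reduction of the linearized map $\bigl(\overline{p^{-1}F}|_{\varphi^*\bar L},\,\bar F|_{\varphi^*\bar N}\bigr)$ is surjective and Nakayama gives generation over $S$; conversely, if it is surjective, then any $w=\bar V(m)\in\im\bar V$ equals the $\varphi^*\bar L$-component $x$ of a decomposition $m=\overline{p^{-1}F}(x)+\bar F(y)$, because $\bar V\bar F=0$, so $\im\bar V\subseteq Q$. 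As written, however, this decisive equivalence is asserted rather than established, so your proposal is a plan whose key step is missing.
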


Condition \eqref{Cond:Win} means that $\u M=(M,\Filone M,\Phi,\Phi_1=p^{-1}\Phi)$
is a well-defined $\u S$-window. Thus we get:

\begin{proposition}
\label{Pr:DF2Win}
There is an exact functor
\begin{equation*}
\DF(\Spf R)\to\Win(\u S),\qquad\u\scrD\mapsto\u M
\end{equation*}
that is compatible with duality.
\qed
\end{proposition}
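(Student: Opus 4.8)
The plan is to extract the functor from the discussion immediately preceding the statement together with Lemma~\ref{Le:DF2Win}, and then to verify separately that it is exact and compatible with duality. For $\u\scrD=(\scrD,F,V,\Fil\scrD_R)\in\DF(\Spf R)$ put $M=\scrD_S$, let $\Fil M\subseteq M$ be the preimage of $\Fil\scrD_R$ under $M\twoheadrightarrow M/(\Fil S)M=\scrD_R$, and let $\Phi\colon M\to M$ be the $\varphi$-linear map induced by $F$. Since $\u\scrD$ satisfies \eqref{Eq:DF}, in particular for $U=\Spec R/pR$ and $Z=\Spec S/pS$, Lemma~\ref{Le:DF2Win} yields condition \eqref{Cond:Win}; as $M$ is locally free, hence $\ZZ_p$-flat, over $S$, the map $\Phi_1:=p^{-1}\Phi$ is then a well-defined $\varphi$-linear map $\Fil M\to M$, and the axioms of Definition~\ref{Def:Window} follow as explained in the sentence before the statement (for \eqref{normaldecomp} one uses that $\u S$ is a lifting frame by Remark~\ref{Re:PDframe}, that $(\Fil S)M\subseteq\Fil M$, and that $M/\Fil M=\scrD_R/\Fil\scrD_R$ is locally a direct summand). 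Thus $\u M=(M,\Fil M,\Phi,\Phi_1)$ is an $\u S$-window. Functoriality is formal: a morphism $\u\scrD\to\u\scrD'$ evaluates to an $S$-linear map $M\to M'$ commuting with $\Phi$ (it commutes with $F$), carrying $\Fil M$ into $\Fil M'$ (it respects $\Fil\scrD_R$), hence commuting with $\Phi_1=p^{-1}\Phi$; so $\u\scrD\mapsto\u M$ is a covariant functor $\DF(\Spf R)\to\Win(\u S)$.

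For exactness, let $0\to\u\scrD'\to\u\scrD\to\u\scrD''\to 0$ be exact, so that $0\to\scrD'\to\scrD\to\scrD''\to 0$ and $0\to\Fil\scrD'_R\to\Fil\scrD_R\to\Fil\scrD''_R\to 0$ are exact. Since the crystals consist of locally free modules, evaluation at the PD-thickening $S\to R$ --- realized as the inverse limit of the evaluations over the reductions $S/p^n$, which form a Mittag--Leffler system with surjective transition maps --- is exact; hence $0\to M'\to M\to M''\to 0$ and $0\to\scrD'_R\to\scrD_R\to\scrD''_R\to 0$ are exact. A diagram chase then gives exactness of $0\to\Fil M'\to\Fil M\to\Fil M''\to 0$: injectivity on the left and exactness in the middle rest on the identity $\Fil\scrD'_R=\Fil\scrD_R\cap\scrD'_R$ (deduced from the two $R$-sequences), while surjectivity on the right follows by lifting an element of $\Fil M''$ to some $m\in M$ and correcting $m$ by an element of $M'$ so that its image in $\scrD_R$ lands in $\Fil\scrD_R$, using the surjectivity of $\Fil\scrD_R\to\Fil\scrD''_R$ and of $M'\to\scrD'_R$.

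Finally, for compatibility with duality one identifies the $\u S$-window attached to the dual filtered Dieudonn\'e crystal $(\scrD^t,V^t,F^t,(\Fil\scrD_R)^\perp)$ --- cf.\ Definition~\ref{Def:DF} --- with the dual window $\u M^t$. On underlying modules $(\scrD^t)_S=\scrHom_{\O_{T/\Sigma}}(\scrD,\O_{T/\Sigma})_S=\Hom_S(\scrD_S,S)=M^t$ since $\scrD$ is locally free; one checks that the preimage in $M^t$ of $(\Fil\scrD_R)^\perp\subseteq\scrD^t_R$ coincides with $\{f\in M^t:f(\Fil M)\subseteq\Fil S\}$, using $\Fil S=\ker(S\to R)$; and the $\varphi$-linear operator induced on $M^t$ by the Frobenius $V^t$ of the dual crystal agrees with $\Phi^t$, which is checked on the four defining relations using $FV=VF=p$ together with $\varphi_1=p^{-1}\varphi$ and $\varpi=p$. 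This last point is the module-level duality recorded in \cite[\S2.1]{Lau:Dieudonne}, matched against the crystal-level duality of \cite[Sec.~5.3]{BBM}. The only step calling for genuine care is this bookkeeping of the powers of $p$ in the dual; everything else is immediate from the results recalled above.
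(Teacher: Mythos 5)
Your proposal is correct and follows essentially the same route as the paper: the paper deduces the proposition directly from the construction $\u\scrD\mapsto(M,\Filone M,\Phi,p^{-1}\Phi)$ together with Lemma \ref{Le:DF2Win} (which is why it carries an immediate \qed), and your verification of the window axioms is exactly that argument. The additional details you supply --- exactness via evaluation at $S/p^n$ and a diagram chase on the filtrations, and the identification of the dual window with the window of the dual filtered crystal using $FV=VF=p$, $\varpi=p$ and the fact that $V$ is determined by $F\circ V=p$ --- are precisely the routine checks the paper leaves implicit, so there is nothing to correct.
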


\begin{proof}[Proof of Lemma $\ref{Le:DF2Win}$]
We may assume that $p$ is nilpotent in $R$.
Since the Frobenius endomorphism of $S/pS$ induces a homomorphism
$\bar\varphi:R/pR\to S/pS$, the $\varphi$-linear map
$\Phi:M\to M$ induces a $\bar\varphi$-linear map 
$$
\bar\Phi:R/pR\otimes_S M\to M/pM.
$$
The condition $\Phi(\Filone M)\subseteq pM$ is equivalent to
$\bar\Phi(\Filone\scrD_{R/pR})=0$, which translates into
\begin{equation}
\label{Eq:DFincl}
\bar\varphi^*(\Filone\scrD_{R/pR})\subseteq\im(V_{Z/U})
\end{equation}
inside $\bar\varphi^*\DDD_U=\varphi^*\DDD_{Z/U}$
for $U=\Spec R/pR$ and $Z=\Spec S/pS$; this is one inclusion
of the equality \eqref{Eq:DF}.
Assume that \eqref{Eq:DFincl} holds so that we can
define $\Phi_1:=p^{-1}\Phi:\Filone M\to M$.
It remains to show that \eqref{Eq:DFincl} is an equality if and only if
$\Phi(M)+\Phi_1(\Filone M)$ generates $M$ over $S$. 
This is easily verified if $R=k$ is a perfect field and $S=W(k)$.
The general case is reduced to this case as follows.

We choose a normal decomposition $M=L\oplus N$ such that $\Filone M=L\oplus(\Filone S)N$.
The module $M$ is generated by $\Phi(M)+\Phi_1(M)$
if and only if the $\varphi$-linear map of $S$-modules
$\Psi:L\oplus N\to M$ defined by $\Phi_1$ on $L$ and by $\Phi$ on $N$ 
is a $\varphi$-linear isomorphism. 
Every homomorphism $R\to k$ with a perfect field $k$ extends uniquely
to a homomorphism of frames $\u S\to \u W(k)$, so this condition can be checked
over perfect fields.
Similarly, since both sides of \eqref{Eq:DFincl} are direct summands of $M/pM$,
we have equality if and only if equality holds after every base change to a perfect field.
\end{proof}

\subsection{\texorpdfstring{Dieudonn\'e crystals and windows with connection}{Dieudonn\'e crystals and windows with connection}}\label{WinConn}

Assume that $A\to R=A/J$ is a surjective homomorphism of $p$-adically complete rings
where $A$ is $\Z_p$-flat.
Let $S$ be the $p$-adic completion of the divided power envelope of $J$ in $A$ with
respect to $\Sigma$; see \S\ref{DieuHodge}.
Then
$S$ depends only on the ideal $J+pA$. 
Let $\Filone S$ be the kernel of the natural homomorphism $S\to R$. 

\begin{lemma}
\label{Le:Sflat}
If the kernel of $A/pA\to R/pR$ is generated by a regular sequence 
locally in $\Spec A/pA$, then the ring $S$ is $\Z_p$-flat. 
\end{lemma}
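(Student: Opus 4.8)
The plan is to reduce to the classical computation of the divided power envelope of a regular-sequence ideal, and in particular to Berthelot's explicit description of the PD-envelope in terms of PD-polynomial algebras. First I would observe that, by hypothesis and since flatness is local, we may work locally on $\Spec A/pA$ and assume that the kernel $\bar J$ of $A/pA \to R/pR$ is generated by a regular sequence $\bar x_1,\ldots,\bar x_m$. Lift these to elements $x_1,\ldots,x_m \in A$ and set $J' = (x_1,\ldots,x_m) \subseteq A$; since $S$ depends only on $J + pA$, and $J' + pA$ has the same image in $A/pA$ as $J + pA$, we may replace $J$ by $J'$, i.e.\ assume $J = (x_1,\ldots,x_m)$ with the images $\bar x_i$ forming a regular sequence in $A/pA$.

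Next I would recall that for an ideal generated by a regular sequence, the divided power envelope has a clean description: by \cite[Prop.~3.21]{Berthelot-Ogus} (or the corresponding statement in Berthelot's notes), if $x_1,\ldots,x_m$ is a sequence in $A$ whose image in $A/pA$ is regular, then the PD-envelope $D_{J,\gamma}(A)$ (compatible with the canonical divided powers on $p\Z_p$) is the free PD-polynomial algebra $A\langle y_1,\ldots,y_m\rangle$ modulo the relations $y_i = x_i$, which as an $A$-module is free on the PD-monomials $y^{[\alpha]} = \prod y_i^{[\alpha_i]}$. Concretely this realizes $D_{J,\gamma}(A)$ as the subring of $A[1/p]\otimes_{\Z}(\text{PD-monomials in the }x_i)$ generated over $A$ by the elements $x^\alpha/\alpha!$; the key point is that as an $A$-module it is \emph{free}, hence $\Z_p$-flat, since $A$ is $\Z_p$-flat by assumption. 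One must check that the regular-sequence hypothesis is correctly phrased: Berthelot's theorem needs the sequence to be regular modulo $p$, which is exactly what we have arranged; without that hypothesis the PD-envelope can acquire $p$-torsion, so this is where the assumption is used essentially.

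Finally I would pass to the $p$-adic completion: $S$ is by definition the $p$-adic completion of $D = D_{J,\gamma}(A)$. Since $D$ is $\Z_p$-flat (indeed $A$-free), each $D/p^nD$ is flat over $\Z/p^n\Z$, and the inverse limit $S = \varprojlim D/p^nD$ is then $\Z_p$-flat: concretely, $S$ has no $p$-torsion because $D/p^nD \to D/p^{n-1}D$ being surjective with the system Mittag-Leffler forces $\varprojlim$ of the exact sequences $0 \to D/pD \xrightarrow{p^{n-1}} D/p^nD \to D/p^{n-1}D \to 0$ to stay exact, giving $0 \to D/pD \to S \xrightarrow{p} S \to S/pS \to 0$ with $S/pS = D/pD$; multiplication by $p$ on $S$ is therefore injective, and $\Z_p$-flatness for a $p$-adically complete module is equivalent to being $p$-torsion-free. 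I expect the main obstacle to be purely expository: making sure the reduction in the first paragraph is legitimate (that $S$ really is insensitive to replacing $J$ by $J'$ and to the local choice of regular sequence, so that the local freeness statements glue to a flatness statement over $\Spec A/pA$), and citing the appropriate form of Berthelot's structure theorem for PD-envelopes of regular ideals in the $\Z_p$-PD-context with the canonical divided powers on $(p)$. No genuinely new input beyond these standard facts should be needed.
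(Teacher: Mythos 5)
Your reductions (localizing, lifting the regular sequence and replacing $J$ by an ideal with the same image in $A/pA$, and the final passage from a $p$-torsion-free envelope $D$ to its $p$-adic completion $S$) are fine and agree with the paper's proof. The gap is at your central step: the divided power envelope $D=A\langle y_1,\ldots,y_m\rangle/(y_i=x_i)$ is \emph{not} free, nor even flat, as an $A$-module, and no version of Berthelot's structure theorem asserts this. The relations $x_i\,y_i^{[n]}=(n+1)\,y_i^{[n+1]}$ are nontrivial $A$-linear relations among the PD-monomials you propose as a basis. Concretely, already in the universal case $A=\Z_p[x]$, $J=(x)$ one has $D=\Z_p\langle x\rangle$ and $x\cdot x^{[p-1]}=p\,x^{[p]}$, so $D/pD$ contains the nonzero element $x^{[p-1]}$ killed by $x$; hence $D/pD$ is not torsion-free over $A/pA$ and $D$ is not free or flat over $A$ --- and this is exactly the situation $\FS\supseteq(E)$ relevant to the paper. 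Your ``concrete realization'' of $D$ as the subring of $A[1/p]$ generated by the $x^{\alpha}/\alpha!$ is likewise circular: it identifies $D$ with its image in $D[1/p]$, and the possible $p$-torsion is precisely the kernel of $D\to D[1/p]$, i.e.\ it presupposes what is to be proved.

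What is actually available (and what the paper extracts from the proof of \cite[Pr.~3.4.4]{Berthelot:Crystalline}) is a base-change description: with $\Lambda=\Z_p[T_1,\ldots,T_m]\to A$, $T_i\mapsto x_i$, and $\Lambda'=\Z_p\langle T_1,\ldots,T_m\rangle$, one has $D\cong\Lambda'\otimes_\Lambda A$. Since $\Lambda'$ is not $\Lambda$-flat (modulo $p$ it is a direct sum of copies of $\F_p[T_1,\ldots,T_m]/(T_1^p,\ldots,T_m^p)$), this tensor product could a priori acquire $p$-torsion, and excluding that is the real content of the lemma; it is also where regularity \emph{modulo} $p$ enters in a way your argument never uses. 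The paper's route: the Koszul complex gives $\Tor_i^\Lambda(\Z_p,A)=0$ for $i>0$ with $\Z_p\otimes_\Lambda A=R$; since $R$ is $p$-torsion-free (this uses regularity mod $p$), one gets $\Tor_i^\Lambda(\F_p,A)=0$; as $\Lambda'/p\Lambda'$ is a finite successive extension of copies of $\F_p$ over $\Lambda$ in each summand, $\Tor_1^\Lambda(\Lambda'/p\Lambda',A)=0$, and tensoring $0\to\Lambda'\xrightarrow{\,p\,}\Lambda'\to\Lambda'/p\Lambda'\to 0$ with $A$ over $\Lambda$ shows multiplication by $p$ is injective on $D$. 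With that established, your completion step goes through verbatim; but the $\Z_p$-flatness of $D$ must come from such a Tor computation, not from the (false) freeness of $D$ over $A$.
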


This should be standard, but we include a proof for completeness.

\begin{proof}
We may assume that $J_0=\ker(A/pA\to R/pR)$ is generated by a
regular sequence $\bar t_1,\ldots,\bar t_r$. 
Let $t_i\in A$ be an inverse image of $\bar t_i$, and let $J'=(t_1,\ldots,t_r)$ as an ideal of $A$.
Then $t_1,\ldots,t_r$ is a regular sequence, and $R'=A/J'$ is a $\Z_p$-flat $p$-adically complete ring.
Without changing $S$ we can assume that $J=J'$ and $R=R'$.

Let $\Lambda=\Z_p[T_1,\ldots,T_r]$. 
We consider the ring homomorphisms $\Lambda\to A$ defined by $T_i\mapsto t_i$
and $\Lambda\to\Z_p$ defined by $T_i\mapsto 0$. 
Then $\Z_p\otimes_\Lambda A=R$ and $\Tor_i^\Lambda(\Z_p,A)=0$ for $i>0$.
Since $R$ has no $p$-torsion it follows that $\Tor_i^\Lambda(\F_p,A)=0$ for $i>0$.

Let $\Lambda'=\Z_p\left<T_1,\ldots,T_r\right>$ be the divided power
polynomial algebra and let $A'$ be the divided power envelope of $J\subseteq A$
with respect to $\Sigma$.
By the proof of \cite[Pr.~3.4.4]{Berthelot:Crystalline} 
we have $A'=\Lambda'\otimes_\Lambda A$. 
Now $\Lambda'$ is $\Z_p$-flat, and $\Lambda'/p\Lambda'$ is isomorphic to a
direct sum of copies of $\F_p[T_1,\ldots,T_r]/(T_1^p,\ldots,T_r^p)$ as a $\Lambda$-module. 
Since the last module is a finite successive extension of copies of $\F_p$, it follows that
$\Tor_1^\Lambda(\Lambda'/p\Lambda',A)=0$. 
Therefore $A'$ is $\Z_p$-flat, and thus $S$ is $\ZZ_p$-flat.
\end{proof}

From now on we assume that $S$ is $\Z_p$-flat.
Let $\varphi:A\to A$ be a Frobenius lift. 
It extends naturally to an endomorphism $\varphi: S\to S$ 
that induces the Frobenius on $ S/pS$ and makes $S$ into
a PD-frame for $R$ in the sense of Definition \ref{Def:PDframe}.
Let 
$$
\u S=(S,\Filone S,\varphi,\varphi_1,p).
$$
be the associated frame as in Remark \ref{Re:PDframe}, i.e.\ $\varphi_1=p^{-1}\varphi$ on $\Filone S$.

Assume that  $A_0=A/pA$ has a finite $p$-basis $(x_i)$.
Then $\wh\Omega_A$ is a free $A$-module with basis $(dx_i)$.
The universal derivation $d:A\to\wh\Omega_{A}$ extends to a continuous connection 
\begin{equation}
	\nabla_S:S\rightarrow S\otimes_{A} \wh{\Omega}_{A}
\quad\text{satisfying}\quad
	\nabla_{S}(j^{[n]})=j^{[n-1]} \otimes dj\label{Sconn}
\end{equation}	
for any $j\in J$; see Remarks 2.2.4 (d) and 2.2.1.3 of \cite{deJong:Crystalline}.

\begin{definition}
\label{Def:WinConn}
Let $\Win(\u S)^\nabla$ be the category of pairs $(\u M,\nabla)$ where
$\u M$ is an $\u S$-window and where $\nabla:M\to M\otimes_A\wh\Omega_A$
is a connection over $\nabla_S$ for which $\Phi$ is horizontal, 
i.e.\ $\nabla\circ\Phi=(\Phi\otimes d\varphi)\circ\nabla$.
Morphisms in $\Win(\u S)^\nabla$ are homomorphisms of $\u S$-windows that
are compatible with the connections.
\end{definition}

This definition is justified by the following lemma; cf.\ \cite[Def.\ 3.2.3]{Kim:FormallySmooth}.

\begin{lemma}
\label{Le:WinConn}
For $(\u M,\nabla)\in\Win(\u S)^\nabla$, the connection $\nabla$ is integrable
and topologically quasi-nilpotent.
\end{lemma}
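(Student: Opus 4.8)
The plan is to reduce both assertions---integrability and topological quasi-nilpotence---to a statement about the curvature and the iterated divided differential operators of $\nabla$, and to exploit the horizontality of $\Phi$ together with the fact that $\varphi$ contracts the module of differentials. First I would record the setup: since $A_0=A/pA$ has a finite $p$-basis $(x_i)$, the module $\wh\Omega_A$ is free on $(dx_i)$, and $d\varphi:\wh\Omega_A\to\wh\Omega_A$ has image in $p\wh\Omega_A$, so $p^{-1}d\varphi$ is a well-defined $\varphi$-linear endomorphism of $\wh\Omega_A$ (this is the ``divided derivative'' referred to in the introduction). Let $\partial_i$ denote the derivations dual to $dx_i$ and write $N_i=\nabla_{\partial_i}:M\to M$. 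Integrability of $\nabla$ is the assertion that the curvature $[N_i,N_j]=\nabla_{[\partial_i,\partial_j]}=0$ for all $i,j$; topological quasi-nilpotence is the assertion that for each fixed $m\in M$, the iterated operators $N_{i_1}\cdots N_{i_k}(m)$ tend to $0$ in $M$ (for the $p$-adic topology, which is the relevant one since $M$ is $p$-adically complete) as $k\to\infty$, uniformly in the multi-indices---equivalently, for every $n$ there is $k$ with $N_{i_1}\cdots N_{i_k}(M)\subseteq p^nM$ for all sequences of length $k$.

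The key step is to push the curvature and the iterates through the isomorphism $F:\varphi^*M\to M$ (the linearization of $\Phi$ from Remarks \ref{Re:Window}; it is an isomorphism after inverting $p$ since $FV=VF=p$). Horizontality of $\Phi$, namely $\nabla\circ\Phi=(\Phi\otimes d\varphi)\circ\nabla$, yields after writing things out in the basis $(dx_i)$ a commutation relation of the shape $N_i\circ\Phi=\sum_j \varphi(a_{ij})\,\Phi\circ N_j$, where the $a_{ij}\in A$ are the entries of the matrix of $p^{-1}d\varphi$ in the basis $(dx_i)$; in particular there is an extra factor of $p$ hidden in passing from $d\varphi$ to $p^{-1}d\varphi$, so one really gets $N_i\circ F = (\text{matrix with entries in }p\cdot(\text{stuff})) \circ F\circ (\varphi^*\text{-twist of the }N_j)$. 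Iterating this relation $k$ times expresses $N_{i_1}\cdots N_{i_k}$, conjugated by $F$, as a sum over paths of terms each carrying a factor $\varphi^{k-1}(\cdots)\varphi^{k-2}(\cdots)\cdots$ of the $p$-divided-derivative matrix; since each such factor lies in $pA$ (as $p^{-1}d\varphi$ has image in $\wh\Omega_A$, i.e.\ $d\varphi$ has image in $p\wh\Omega_A$), the total term is divisible by $p^{k}$ times a unit-controlled quantity, hence $N_{i_1}\cdots N_{i_k}(M)\subseteq p^k\cdot F(\varphi^*M)=p^k M$ after accounting for the isomorphism up to $p$-bounded denominators. For the curvature, the same substitution shows $[N_i,N_j]\circ F$ equals $F$ composed with something divisible by $p^2$ times a curvature-type expression, and iterating this self-improvement drives $[N_i,N_j]$ into $\bigcap_n p^n M=0$, giving integrability. (Alternatively, integrability follows once topological quasi-nilpotence and the fact that $[N_i,N_j]$ is again horizontal are known, since a horizontal operator which is $p$-adically small to all orders must vanish.)

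The main obstacle I expect is bookkeeping the denominators of $p$ cleanly: $F$ is only an isomorphism up to a bounded power of $p$ (via $V$ with $FV=p$), and the connection $\nabla_S$ on the PD-envelope $S$ involves divided powers $\nabla_S(j^{[n]})=j^{[n-1]}\otimes dj$, so the naive estimate ``one factor of $p$ per application of $N_i$'' must be combined carefully with the fact that the matrix of $\nabla$ on a normal decomposition $M=L\oplus N$ may have entries in $\Filone S$ rather than in $pS$. The honest version of the argument is to choose a normal decomposition, write $\nabla$ and $\Phi$ in matrix form, use condition \eqref{phirel2}--\eqref{surj} of Definition \ref{Def:Window} to see that $\Phi$ on the $\Filone M$-part already gains a $p$ (via $\Phi=\varpi\Phi_1=p\varphi_1(E/\varpi)\cdots$ in the PD-frame, where $\varpi=p$), and then run the iteration; the topological nilpotence of $p^{-1}d\varphi$---which in the present PD-frame setting holds because $d\varphi$ lands in $p\wh\Omega_A$ and $\varphi$ is topologically nilpotent on the relevant ideals---is exactly what makes the geometric-series-type estimate converge. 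This is essentially the argument of \cite[\S2.2]{deJong:Crystalline} adapted to windows, and once the matrix estimates are in place both conclusions drop out simultaneously.
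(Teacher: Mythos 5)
Your treatment of integrability is essentially sound and, despite being phrased as conjugation by $F$ rather than via $\Phi_1$, it is the same mechanism as the paper's: the curvature is $S$-linear and satisfies $\nabla^2\circ\Phi=(\Phi\otimes\bigwedge^2(d\varphi))\circ\nabla^2$, so one gains $p^2$ from $\bigwedge^2(d\varphi)$ and spends one $p$ to get back from the image of $\Phi$ (equivalently of $F$, using $FV=p$ and $p$-torsion-freeness of $S$) to all of $M$, i.e.\ to pass to $\Phi_1$ and use that $\Phi_1(\Filone M)$ generates $M$; the net gain of one $p$ per bootstrap forces $\nabla^2\in\bigcap_n p^n(M\otimes\bigwedge^2\wh\Omega_A)=0$.

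The quasi-nilpotence half, however, has a genuine gap. Your key quantitative claim --- that iterating $N_i\circ\Phi=\sum_j b_{ij}\,\Phi\circ N_j$ with $b_{ij}\in pA$ makes $N_{i_1}\cdots N_{i_k}$, conjugated through $F$, divisible by $p^k$ --- is false, because the $N_i$ are differential operators, not $S$-linear maps: whenever a later $N$ differentiates the coefficients $b_{ij}$, or the scalar $s$ in $F(s\otimes x)=s\Phi(x)$, no new factor of $p$ appears, so after $k$ steps one only gets a single factor of $p$, not $p^k$. Indeed already $N(M)\subseteq pM$ fails for the unit window $\u M=\u S$ with $\nabla=\nabla_S$ (e.g.\ $\nabla_S(E^{[n]})=E^{[n-1]}\otimes dE$), and for the same reason the uniform statement you adopt as the definition of topological quasi-nilpotence (for every $n$ some $k$ with all length-$k$ monomials mapping $M$ into $p^nM$) is false even in this trivial example, so it cannot be what is to be proved. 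The correct statement is a mod-$p$ nilpotence assertion relative to an $A$-linear $\vartheta:\wh\Omega_A\to S$ whose induced derivation on $S/pS$ is \emph{assumed} nilpotent; your sketch never invokes this hypothesis, yet it is exactly what controls the scalar-derivative terms that your estimate ignores. Moreover, since the assertion lives on $M/pM$, you cannot trade powers of $p$ against the failure of $F$ to be surjective ("up to $p$-bounded denominators" is meaningless modulo $p$). The missing idea is the transfer step of the paper (following de Jong): set $D=\vartheta\circ\nabla$ and $D'=\vartheta\circ\varphi^*\nabla$; then $D'(1\otimes x)\in p\,\varphi^*M$ because $d\varphi$ is divisible by $p$, so $D'$ acts on $\varphi^*(M/pM)$ essentially through the nilpotent base derivation and is nilpotent there; one then uses the exact sequence $\varphi^*(M/pM)\xrightarrow{F}M/pM\xrightarrow{V}\varphi^*(M/pM)$ with $F$ and $V$ horizontal to conclude: $V(D^Nx)=(D')^NV(x)=0$ puts $D^Nx$ in the image of $F$, where further iterates of $D$ are killed by nilpotence of $D'$. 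Without this (or an equivalent device) your argument does not reach the conclusion.
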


\begin{proof}
The $\varphi$-linear map $d\varphi:\wh\Omega_A\to\wh\Omega_A$ is divisible by $p$
since $\varphi(x)\equiv x^p$ mod $pA$ for $x\in A$.
The obvious homomorphism $\nabla^2:M\to M\otimes_A\bigwedge^2\wh\Omega_A$ 
is $S$-linear
and satisfies $\nabla^2\circ\Phi=(\Phi\otimes\bigwedge^2(d\varphi))\circ\nabla^2$.
Since $p$ is not a zero divisor in $S$, it follows that 
$\nabla^2\circ\Phi_1=p(\Phi\otimes\bigwedge^2(p^{-1}d\varphi))\circ\nabla^2$ on $\Filone M$.
Since the image of $\Phi_1$ generates $M$ as an $S$-module,
we deduce that if $\nabla^2$ is divisible by $p^n$ for some $n\ge 0$, 
then $\nabla^2$ is divisible by $p^{n+1}$.
Thus $\nabla^2=0$, i.e.\ $\nabla$ is integrable.

The proof that $\nabla$ is topologically quasi-nilpotent follows 
\cite[2.4.8]{deJong:Crystalline}.
Writing $j:M\to \varphi^*M$ for the $\varphi$-linear map $j(x):=1\otimes x$,
there is a well-defined connection 
$\varphi^*\nabla:\varphi^*M\to \varphi^*M\otimes_A\wh\Omega_A$ over
$\nabla_S$ such that $\varphi^*\nabla(j(x))=(j\otimes d\varphi)(\nabla(x))$, 
and the homomorphisms
$F:\varphi^*M\to M$ and $V:M\to \varphi^*M$ are horizontal with respect to 
$\nabla$ and $\varphi^*\nabla$. 
Let $\vartheta:\wh\Omega_A\to S$ be an $A$-linear map such that the associated
derivation of $S/pS$ is nilpotent, and consider the associated
differential operators $D=\vartheta\circ\nabla:M\to M$
and $D'=\vartheta\circ(\varphi^*\nabla):\varphi^*M\to \varphi^*M$. 
Since $d\varphi$ is divisible by $p$ we have $D'(1\otimes x)\in p\varphi^*M$.
Thus $D'$ is nilpotent 
on $\varphi^*(M/pM)$.
Using the exact sequence with horizontal maps
$$
\varphi^*(M/pM)\xrightarrow{F}M/pM\xrightarrow{V}\varphi^*(M/pM)
$$
it follows that $D$ is nilpotent 
on $M/pM$, thus $\nabla$ is topologically quasi-nilpotent.
\end{proof}

\begin{proposition}
\label{Pr:DF2WinConn}
	There is an exact equivalence of categories
	\begin{equation*}
		\DF(\Spf R)
		\to \Win({\u{S}})^{\nabla}
	\end{equation*}
	that is compatible with duality.
\end{proposition}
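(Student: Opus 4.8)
The plan is to lift the functor of Proposition~\ref{Pr:DF2Win} along the forgetful functor $\Win(\u S)^\nabla\to\Win(\u S)$, using the classical dictionary between crystals on the crystalline site of $\Spf R$ and finite locally free $S$-modules equipped with an integrable, topologically quasi-nilpotent connection relative to $\nabla_S$; this dictionary is available because $A_0=A/pA$ has a finite $p$-basis (see \cite[\S2.4]{deJong:Crystalline}). Concretely, to a filtered Dieudonn\'e crystal $\u\scrD=(\scrD,F,V,\Filone\scrD_R)$ I would attach the window $\u M$ of Proposition~\ref{Pr:DF2Win} together with the connection $\nabla$ on $M=\scrD_S$ coming from the crystal structure of $\scrD$. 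Because $F$ is a morphism of crystals it is horizontal for the induced connections, so its linearization $\Phi$ is horizontal in the sense of Definition~\ref{Def:WinConn}, and hence $(\u M,\nabla)\in\Win(\u S)^\nabla$. This functor is exact and compatible with duality, since the functor of Proposition~\ref{Pr:DF2Win} is and since passage to the dual crystal corresponds to passage to the dual $S$-module with the dual connection.

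For a quasi-inverse, start with $(\u M,\nabla)\in\Win(\u S)^\nabla$. By Lemma~\ref{Le:WinConn} the connection $\nabla$ is integrable and topologically quasi-nilpotent, so by the dictionary above $M$ with $\nabla$ is the value on $S$ of a unique crystal $\scrD$ of finite locally free modules on the crystalline site of $\Spf R$. The $\varphi$-linear maps $\Phi\colon M\to M$ and $V\colon M\to\varphi^*M$ attached to the window (the latter as in Remark~\ref{Re:Window}) are both horizontal --- for $\Phi$ by hypothesis, for $V$ by the argument in the proof of Lemma~\ref{Le:WinConn} --- so they descend to morphisms of crystals $F$ and $V$, which satisfy $FV=VF=\varpi=p$ because $\u S$ is a PD-frame. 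Putting $\Filone\scrD_R:=\Filone M/(\Filone S)M\subseteq M/(\Filone S)M=\scrD_R$, a local direct summand by the normal decomposition of $\u M$, yields a candidate object $(\scrD,F,V,\Filone\scrD_R)$ of $\DF(\Spf R)$.

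It remains to verify that this filtration is admissible, i.e.\ that \eqref{Eq:DF} holds for \emph{every} object $(U\to Z)$ as in Definition~\ref{Def:DF}, whereas Lemma~\ref{Le:DF2Win} only addresses $U=\Spec R/pR$, $Z=\Spec S/pS$. I would argue as follows. The closed immersion $U\hookrightarrow Z$ is a PD-morphism $(U\to U)\to(U\to Z)$, and along it both $\bar\varphi^*((\Filone\scrD_T)_U)$ and $\im(V_{Z/U})$ base change; as these are locally direct summands of the finite locally free module $\varphi^*\scrD_{Z/U}$ and the kernel of $\O_Z\to\O_U$ is a nilideal (a PD-ideal in characteristic $p$), Nakayama's lemma reduces the equality for $(U\to Z)$ to the equality for $(U\to U)$. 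Equality of direct summands of a finite locally free $\O_U$-module can be tested at the perfect-field points of $U$, and there, using that a homomorphism $R/pR\to k$ extends uniquely to a frame homomorphism $\u S\to\u W(k)$, the condition becomes the perfect-field form of condition~\eqref{Cond:DF} of Lemma~\ref{Le:DF2Win}; by that lemma it is equivalent to condition~\eqref{Cond:Win}, which holds since $\u M$ is a window. Hence $(\scrD,F,V,\Filone\scrD_R)\in\DF(\Spf R)$. That the two functors are mutually quasi-inverse follows on underlying modules from Proposition~\ref{Pr:DF2Win}, while $\scrD$ is recovered from $(M,\nabla)$ by the dictionary, $F$ from $\Phi$ tautologically, and the window's $V$ agrees with the crystal's $V$ because both satisfy $V\circ F=p$ and $M$ has no $\ZZ_p$-torsion; exactness and duality compatibility are then inherited from Proposition~\ref{Pr:DF2Win}.

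The step most in need of care --- though none is individually deep --- is the reduction of admissibility in all degrees to the single instance supplied by Lemma~\ref{Le:DF2Win}, and, relatedly, the verification that the crystal/module-with-connection dictionary genuinely applies in this power-series, non-Noetherian situation; the standing hypothesis that $A_0$ carries a finite $p$-basis is precisely what makes both work.
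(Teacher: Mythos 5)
The overall architecture of your argument (de Jong's dictionary between crystals and modules with integrable topologically quasi-nilpotent connection, Proposition~\ref{Pr:DF2Win} for full faithfulness, Lemma~\ref{Le:WinConn} for the quasi-inverse, with the whole issue concentrated in checking that \eqref{Eq:DF} holds for \emph{every} $(U\to Z)$) matches the paper. But your treatment of that last point has a genuine gap. You reduce \eqref{Eq:DF} at $(U\to Z)$ to \eqref{Eq:DF} at $(U\to U)$ by saying that both sides are locally direct summands of $\varphi^*\scrD_{Z/U}$, that the PD-ideal $\mathfrak a=\ker(\O_Z\to\O_U)$ is nil, and that Nakayama then gives equality. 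This is not valid: two locally direct summands that agree modulo a nil ideal need not coincide --- for $C=k[\epsilon]/(\epsilon^2)$ the summands $C\cdot(1,0)$ and $C\cdot(1,\epsilon)$ of $C^2$ have the same image in $k^2$. Nakayama only closes the gap once one containment is known, and you never establish the containment $\bar\varphi^*((\Filone\scrD_T)_U)\subseteq\im(V_{Z/U})=\ker(F_{Z/U})$ at the thickening $Z$ itself: knowing \eqref{Eq:DF} modulo $\mathfrak a$ only shows that $F_{Z/U}$ carries $\bar\varphi^*((\Filone\scrD_T)_U)$ into $\mathfrak a\cdot\scrD_{Z/U}$, not to zero. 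For the same reason your subsequent claim that ``equality of direct summands can be tested at perfect-field points'' fails without a prior inclusion (the paper may test over perfect fields in Lemma~\ref{Le:DF2Win} precisely because the inclusion \eqref{Eq:DFincl} has already been proved there, so the discrepancy is a finite projective module whose vanishing is detected by ranks). In short, the condition \eqref{Eq:DF} at a nontrivial thickening is strictly stronger than its reduction, and your argument only propagates information downward, never upward.

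The paper supplies the missing idea in the opposite direction: it shows that \emph{every} object $(U\to Z)$ of the crystalline site admits a morphism to the distinguished thickening $(\Spec R/pR\to\Spec S/pS)$, and then pulls back the single instance of \eqref{Eq:DF} guaranteed by Lemma~\ref{Le:DF2Win}. The construction of that morphism is exactly where the finite $p$-basis of $A/pA$ is used: since $\mathfrak a$ is a PD-ideal in characteristic $p$, every $x\in\mathfrak a$ satisfies $x^p=0$, so the composite of $U\to\Spec A/pA$ with Frobenius lifts canonically to $Z$ ($a\mapsto c^p$ for any lift $c$ of the image of $a$), and this Frobenius-twisted lift is then corrected to an honest lift $\tilde\alpha:A/pA\to\O_Z$ by choosing images of the $p$-basis; the universal property of the divided power envelope finishes the job. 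This ``weak initiality'' of $(\Spec R/pR\to\Spec S/pS)$ is special to the frames at hand --- the paper's remark following the proof stresses that the implication from one instance of \eqref{Eq:DF} to all of them fails for general PD-frames --- so some argument of this kind is unavoidable, and your proposal does not contain it.
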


This is similar to \cite[Pr.\ 3.2.5]{Kim:FormallySmooth}
(but there condition \eqref{Eq:DF} is left out).

\begin{proof}
The functor in the proposition exists and is fully faithful by Proposition \ref{Pr:DF2Win} 
together with the equivalence between crystals in finite locally free 
$\O_{\Spf R/\Sigma}$-modules over $\Cris(\Spf R/\Sigma)$ 
and finite projective $S$-modules with an integrable topologically quasi-nilpotent
connection; see \cite[Pr.~2.2.2]{deJong:Crystalline}. 
It remains to verify that the functor is essentially surjective. 
Using Lemmas \ref{Le:DF2Win} and \ref{Le:WinConn}, 
this translates into the following assertion: 
If $(\scrD,F,V)$ is a Dieudonn\'e crystal over $\Spf R$
and if $\Filone\scrD_R\subseteq\scrD_R$ is a locally direct
summand such that condition \eqref{Eq:DF} holds for 
$U=\Spec R/pR$ and $Z=\Spec S/pS$, then \eqref{Eq:DF} holds for all $(U\to Z)$. 
We may assume that $U=\Spec B$ and $Z=\Spec C$ are affine.
We claim that the given morphism $U\to\Spec R/pR\to\Spec A/pA$ extends to $Z$.
Consider the following diagram where $\alpha$ is the given morphism and $\phi$ is
the Frobenius, and the existence of $\tilde\alpha$ has to be shown.
\[
\xymatrix@M+0.2em{
C \ar[d]_\pi &
A/pA \ar@{-->}[l]_\beta \ar[d]^\phi \\
B &
A/pA \ar[l]_\alpha \ar@{-->}[ul]_{\tilde\alpha}
}
\]
For $a\in A/pA$ choose $c\in C$ with $\pi(c)=\alpha(a)$ and put $\beta(a)=c^p$.
This does not depend on the choice of $c$ because $\pi$ is a PD extension 
of $\FF_p$-algebras, which implies that each $x\in\Ker(\pi)$ satisfies $x^p=0$. 
It follows that $\beta$ is a ring homomorphism such that the outer square commutes. 
Now, if $x_i$ form a $p$-basis of $A/pA$ and if $y_i\in C$ are chosen with 
$\pi(y_i)=\alpha(x_i)$, then there is a unique extension $\tilde\alpha$ of $\beta$
with $\tilde\alpha(x_i)=y_i$. This proves the claim.
From the universal mapping property of divided power envelopes, 
we then obtain a morphism of PD-thickenings from $(U\to Z)$ to 
$(\Spec R/pR\to\Spec S/pS)$, and the assertion follows by pull back. 
\end{proof}

\begin{remark}
The preceding proof shows that for the specific
PD-frames considered here,
the equivalent conditions of Lemma \ref{Le:DF2Win} imply
the equality \eqref{Eq:DF} for all $(U\to Z)$. This does not hold for
arbitrary PD-frames.
\end{remark}

\subsection{Dieudonn\'e theory over complete regular local rings}
\label{Subse:DieuReg}

Let $R$ be a complete regular local ring with perfect residue field $k$
of characterstic $p\ge 3$ and with fraction field of characteristic zero.
We put $\FS=W(k)[[u_1,\ldots,u_d]]$ where $d=\dim(R)$
and choose a homomorphism $\pi:\FS\to R$ such that the
elements $\pi(u_i)$ generate the maximal ideal of $R$.
Then it follows from the Cohen structure theorem for complete regular local
rings \cite[Theorem 15, Corollary 3]{Cohen}, that 
$R=\FS/E\FS$ for a power series $E\in\FS$ with constant
term of $p$-value one; see \cite[\S 7]{Lau:Frames}.

As in \S \ref{Se:descent} let $S$ be the $p$-adic completion
of the divided power envelope of the ideal $E\FS\subset\FS$.
Let $\varphi:\FS\to\FS$ be a lift of Frobenius and denote its
extension to $S$ by $\varphi$ again. 
The element $c=\varphi(E)/p$ is a unit of $S$; see \cite[Le.\ 6.1]{Lau:Dieudonne}.
We obtain a frame homomorphism $\lambda:\u\FS\to\u S$,
which is crystalline by Proposition \ref{Pr:lambda-crys}.

Since $\varphi$ is a lift of Frobenius, its derivative
$d\varphi:\wh\Omega_\FS\to\wh\Omega_\FS$ is divisible by $p$.
Let $(d\varphi)_1:=p^{-1}d\varphi$ as an endomorphism of $\wh\Omega_\FS$.

\begin{proposition}
\label{Pr:BT2Win}
If $(d\varphi)_1$ is nilpotent on $\wh\Omega_\FS\otimes_\FS k$, 
then the contravariant
functor $\pdiv(R)\to\Win(\u S)$ given by the Dieudonn\'e crystal 
$($Propositions $\ref{Pr:BT2DF}$ and $\ref{Pr:DF2Win}$$)$
is an equivalence of categories.
\end{proposition}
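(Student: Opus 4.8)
The plan is to realise the functor of the proposition as a composite of three functors and to treat each in turn. By Propositions \ref{Pr:BT2DF}, \ref{Pr:DF2WinConn} and \ref{Pr:DF2Win}, the crystalline Dieudonn\'e functor $\pdiv(R)\to\Win(\u S)$ of the statement factors as
$$
\pdiv(R)\xrightarrow{\ \u\D\ }\DF(\Spf R)\xrightarrow{\ \sim\ }\Win(\u S)^\nabla\xrightarrow{\ \mathrm{forget}\ }\Win(\u S),
$$
where the middle arrow is the equivalence of Proposition \ref{Pr:DF2WinConn} (valid for any Frobenius lift) and the composite of the last two is the functor of Proposition \ref{Pr:DF2Win}. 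So it is enough to show: (a) the forgetful functor $\Win(\u S)^\nabla\to\Win(\u S)$ is an equivalence whenever $(d\varphi)_1$ is nilpotent on $\wh\Omega_\FS\otimes_\FS k$; and (b) the functor $\u\D\colon\pdiv(R)\to\DF(\Spf R)$ is an equivalence. What makes this strategy work is that $\u\D$ and the category $\DF(\Spf R)$ do not depend on the choice of Frobenius lift $\varphi$, so (b) may be verified for a convenient $\varphi$.

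Step (a) is the technical heart, and I would prove it by a contraction argument. Since $\varpi=p$ and $d\varphi=p\,(d\varphi)_1$ here, the $\ZZ_p$-flatness of $S$ turns the horizontality relation $\nabla\circ\Phi=(\Phi\otimes d\varphi)\circ\nabla$ into $\nabla\circ\Phi_1=(\Phi\otimes(d\varphi)_1)\circ\nabla$ on $\Filone M$. Consequently, if $\eta\colon M\to M\otimes_\FS\wh\Omega_\FS$ is the ($S$-linear) difference of two connections over $\nabla_S$ for which $\Phi$ is horizontal, then $\eta\circ\Phi=(\Phi\otimes d\varphi)\circ\eta$ on $M$ and $\eta\circ\Phi_1=(\Phi\otimes(d\varphi)_1)\circ\eta$ on $\Filone M$; as $\Phi_1(\Filone M)+\Phi(M)$ generates $M$, these identities express $\eta$ through $(\Phi\otimes(d\varphi)_1)$ applied to $\eta$, with an extra factor of $p$ on the part coming from $\Phi(M)$ (using $\Phi=p\Phi_1$ on $\Filone M$). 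Iterating, and using that $(d\varphi)_1$ is nilpotent modulo $\mathfrak{m}_\FS$ so that repeated application of $(\Phi\otimes(d\varphi)_1)$ improves $\eta$, together with $p$-adic completeness of $S$, forces $\eta=0$; this gives uniqueness of the connection, and the same computation shows that any morphism of the underlying windows is automatically horizontal, hence full faithfulness. For essential surjectivity one starts from an arbitrary connection on a window $\u M$ lifting $\nabla_S$ (which exists because $M$ is projective), records the failure of horizontality of $\Phi$ as an $S$-linear defect, and cancels it by the correction series built from the same operator, whose convergence is again guaranteed by the nilpotence of $(d\varphi)_1$; Lemma \ref{Le:WinConn} then ensures the resulting $\nabla$ is integrable and topologically quasi-nilpotent. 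The delicate point, flagged in the introduction, is that $S$ is not $\mathfrak{m}_\FS$-adically separated because of the divided powers, so this convergence must be phrased with respect to the quasi-nilpotent topology; this can be carried out as in \cite[\S3.2]{Kim:FormallySmooth}, following \cite[2.4.8]{deJong:Crystalline}.

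For (b), I would compare with the standard Frobenius lift $\varphi_0$ on $\FS=W(k)[[u_1,\dots,u_d]]$ defined by $\varphi_0(u_i)=u_i^p$. Since $(d\varphi_0)_1(du_i)=u_i^{p-1}du_i\in\mathfrak{m}_\FS\wh\Omega_\FS$, the operator $(d\varphi_0)_1$ induces zero on $\wh\Omega_\FS\otimes_\FS k$ and is in particular nilpotent there, so (a) applies to $\varphi_0$; combined with Proposition \ref{Pr:DF2WinConn} for $\varphi_0$, this shows that the functor $\DF(\Spf R)\to\Win(\u S_0)$ of Proposition \ref{Pr:DF2Win} is an equivalence, where $\u S_0$ denotes the frame attached to $\varphi_0$. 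On the other hand, for this Frobenius the crystalline Dieudonn\'e functor $\pdiv(R)\to\Win(\u S_0)$ --- equivalently $\pdiv(R)\to\BT(\u\FS_0)$ via Lemma \ref{Le:WinKisin} and Proposition \ref{Pr:lambda-crys} --- is an equivalence by the known classification of $p$-divisible groups over complete regular local rings, namely \cite{Lau:Frames}, which generalises \cite{Breuil:Groupes, KisinFcrystal} (the case $R=\OOO_{K'}$) and \cite{Kim:FormallySmooth} (the case $R=\OOO_{K'}[[t_1,\dots,t_d]]$); since that equivalence is also given by evaluating the Dieudonn\'e crystal with its Hodge filtration, it coincides with the composite $\pdiv(R)\xrightarrow{\u\D}\DF(\Spf R)\to\Win(\u S_0)$, whose second factor we have just seen to be an equivalence. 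Hence $\u\D\colon\pdiv(R)\to\DF(\Spf R)$ is an equivalence. As $\u\D$ is independent of $\varphi$, combining this with (a) for the given Frobenius lift $\varphi$ shows that $\pdiv(R)\to\DF(\Spf R)\xrightarrow{\sim}\Win(\u S)^\nabla\xrightarrow{\sim}\Win(\u S)$ is an equivalence, which is the assertion of the proposition.
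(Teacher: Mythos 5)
Your overall architecture --- factor the functor through $\DF(\Spf R)$, prove that the forgetful functor $\Win(\u S)^\nabla\to\Win(\u S)$ is an equivalence under the nilpotence hypothesis, and transfer the special case $\varphi_0(u_i)=u_i^p$ across the $\varphi$-independent category $\DF(\Spf R)$ --- is viable in principle (it is the paper's chain \eqref{Eq:seq-funct} run in the opposite direction; the paper instead proves the present proposition first, for every admissible $\varphi$, and only then deduces Corollary \ref{Co:BT2DF}). The genuine gap is in your step (b), at the input you take from the literature. The proposition concerns an arbitrary complete regular local ring $R$ with perfect residue field, which need not be of the form $\OOO_{K'}[[t_1,\dots,t_d]]$ (e.g.\ $W(k)[[x,y]]/(p-xy)$), so of your references only \cite{Lau:Frames} covers the general case; but the equivalence there is constructed purely display-theoretically, via the Dieudonn\'e display over Zink's ring $\hat W(R)$ and frame base change, and it is \emph{not} shown there that this equivalence is computed by evaluating the Dieudonn\'e crystal with its Hodge filtration. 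That identification is exactly the nontrivial content of \cite{Lau:Dieudonne}, which you never invoke, and it is what the paper's own proof rests on: the homomorphism $\varkappa:\FS\to W(R)$ lands in $\hat W(R)$ precisely when $(d\varphi)_1$ is nilpotent on $\wh\Omega_\FS\otimes_\FS k$ \cite[Prop.~6.2]{Lau:Dieudonne}, the induced frame homomorphism $\u S\to\u\WW(R)$ is crystalline \cite[Thm.~7.2]{Lau:Dieudonne}, and the crystalline evaluation $\pdiv(R)\to\Win(\u\WW(R))$ is an equivalence \cite[Cor.~5.4]{Lau:Dieudonne}. For $d=1$ the functors of \cite{Breuil:Groupes} and \cite{KisinFcrystal} are crystalline by construction, so your citation is fine there, and \cite{Kim:FormallySmooth} only helps for bases of the special form $\OOO_{K'}[[t_i]]$; in general your sentence ``since that equivalence is also given by evaluating the Dieudonn\'e crystal with its Hodge filtration'' asserts the hardest point rather than proving it. Your route can be repaired, but only by importing the comparison results of \cite{Lau:Dieudonne} (or reproving them), not from \cite{Lau:Frames} alone.

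There is also an unresolved point in your step (a), which is the paper's Proposition \ref{Pr:WinConn}. Your contraction argument is fine for $d=1$, but for $d\ge 2$ it does not literally converge: nilpotence of $(d\varphi)_1$ modulo $\Fm_\FS$ only gains powers of $\Fm_\FS$, and these neither tend to $0$ $p$-adically in $S$ nor define a separated topology on $S$ (as you note), so ``iterate and use $p$-adic completeness'' fails, and an appeal to a ``quasi-nilpotent topology'' does not by itself fix it. The paper's resolution is concrete: descend the window to $\u\FS$ via Proposition \ref{Pr:lambda-crys} so that the structure matrices $A,B$ have entries in the subring $T=\FS[[E^p/p]]\subseteq S$, note that $\varphi:S\to S$ factors through $T$, and solve the fixed-point equation $C-\UUU(C)=D$ over $T$, where $(d\varphi)_1$ is genuinely topologically nilpotent modulo the maximal ideal of $T$. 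Some device of this kind is needed for both uniqueness and existence of the connection (and for horizontality of morphisms) when $d\ge 2$, so your step (a) should either reproduce this argument or cite it rather than gesture at it.
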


\begin{remarks}
The hypothesis of Proposition \ref{Pr:BT2Win} holds when $\varphi(u_i)=u_i^p$ because then
$(d\varphi)_1$ is zero on $\wh\Omega_\FS\otimes_\FS k$.
In the case $d=1$ with $\varphi(u_i)=u_i^p$, 
the proposition is due to Breuil \cite[Th.~4.2.2.9]{Breuil:Groupes};
alternative proofs are given by Kisin \cite[Prop.~A6]{KisinFcrystal} and by Zink 
\cite[Prop.~3.7]{Zink:Windows}.
The general case does not follow from \cite[Thm.\ 6]{Zink:Windows}
because the ring $S$ is a $\hat {\mathcal{Z}}$-ring
in the sense of \cite{Zink:Windows} only when $d=1$.
We emphasize that even in the case $d=1$, Proposition \ref{Pr:BT2Win}
provides a more general result than is available in 
\cite{Breuil:Groupes}, \cite{KisinFcrystal}, or \cite{Zink:Windows}
(which all work with $\varphi(u_1):=u_1^p$)
due to the greater flexibility in the choice of Frobenius lift.
\end{remarks}

\begin{proof}[Proof of Proposition $\ref{Pr:BT2Win}$]
The proposition is a consequence of the results of \cite{Lau:Dieudonne},
with a possible variation using Proposition \ref{Pr:lambda-crys}.
Let $\hat W(R)=\WW(R)$ be the subring of $W(R)$ defined in \cite{Zink:Dieudonne},
and let $\u\WW(R)=(\WW(R),\II_R,R,\sigma,\sigma_1)$ be the associated frame where
$\sigma_1$ is the inverse of the Verschiebung; see \cite[Sec.~2.3]{Lau:Dieudonne}.
This is a PD-frame because $\WW(R)$ is $p$-adically complete with no $p$-torsion 
by \cite[Prop.~1.14]{Lau:Dieudonne}, and the ideal $\II_R$ 
carries divided powers by \cite[Lemma 1.16]{Lau:Dieudonne}.

We have natural frame homomorphisms
\begin{equation}
\label{Eq:BT2Win}
\u\FS\xrightarrow{\;\;\lambda\;\;}\u S\xrightarrow{\;\;\tilde\varkappa\;\;}\u\WW(R)
\end{equation}
where $\lambda$ is the inclusion. 
Namely, there is a natural ring homomorphism $\varkappa:\FS\to W(R)$ compatible with
the projections to $R$ such that $\sigma\varkappa=\varkappa\varphi$, 
and the image of $\varkappa$ lies in the subring  $\hat W(R)$ if and only if 
$(d\varphi)_1$ is nilpotent on $\wh\Omega_\FS\otimes_\FS k$;
see \cite[Prop.~6.2]{Lau:Dieudonne}
(by a change of variables one can assume that $\varphi$
preserves the ideal of $\FS$ generated by the $u_i$, which is assumed in loc.cit.). 
Since $\u\WW(R)$ is a PD-frame, 
$\varkappa$ extends to a ring homomorphism $\tilde\varkappa:S\to\WW(R)$,
which is a strict frame homomorphism as in \eqref{Eq:BT2Win}.

Now $\tilde\varkappa$ is crystalline by \cite[Thm.\ 7.2]{Lau:Dieudonne};
alternatively one can use that $\lambda$ is crystalline by Proposition \ref{Pr:lambda-crys}, 
and that $\varkappa=\tilde\varkappa\circ\lambda$ is crystalline by \cite[Thm.~6.5]{Lau:Dieudonne}.
Thus the proposition holds if and only if the composition
\[
\pdiv(R)\longrightarrow\Win(\u S)\xrightarrow{\;\;\tilde\varkappa^*\;\;}\Win(\u\WW(R))
\]
is an equivalence. This composition is given by evaluating the crystalline
Dieudonn\'e module as in Proposition \ref{Pr:DF2Win}, and the claim follows
from the crystalline version of the equivalence between $p$-divisible groups and
Dieudonn\'e displays in \cite[Cor.\ 5.4]{Lau:Dieudonne}.
\end{proof}

\begin{proposition}
\label{Pr:WinConn}
If $(d\varphi)_1$ is nilpotent on $\wh\Omega_\FS\otimes_\FS k$, 
then the forgetful functor $\Win(\u S)^\nabla\to\Win(\u S)$ is an equivalence
of categories.
\end{proposition}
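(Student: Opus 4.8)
The plan is to deduce Proposition \ref{Pr:WinConn} by comparing the chain of equivalences that connect both categories $\Win(\u S)^\nabla$ and $\Win(\u S)$ to $\pdiv(R)$. First I would invoke Proposition \ref{Pr:DF2WinConn}, which provides an exact equivalence $\DF(\Spf R)\xrightarrow{\sim}\Win(\u S)^\nabla$; composing with the forgetful functor $\Win(\u S)^\nabla\to\Win(\u S)$ recovers exactly the functor $\DF(\Spf R)\to\Win(\u S)$ of Proposition \ref{Pr:DF2Win}, since that functor sends $\u\scrD$ to the same window $\u M$ on the level of the $S$-module, filtration, and $\Phi$. Hence it suffices to show that the functor $\DF(\Spf R)\to\Win(\u S)$ of Proposition \ref{Pr:DF2Win} is an equivalence: together with Proposition \ref{Pr:DF2WinConn} this forces the forgetful functor $\Win(\u S)^\nabla\to\Win(\u S)$ to be an equivalence as well (a retract of an equivalence among the three is an equivalence).

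To see that $\DF(\Spf R)\to\Win(\u S)$ is an equivalence under the hypothesis that $(d\varphi)_1$ is nilpotent on $\wh\Omega_\FS\otimes_\FS k$, I would factor it through $\pdiv(R)$. By Proposition \ref{Pr:BT2DF} the crystalline Dieudonn\'e functor gives a contravariant exact functor $\pdiv(R)\to\DF(\Spf R)$, and Proposition \ref{Pr:BT2Win} asserts that the composite $\pdiv(R)\to\DF(\Spf R)\to\Win(\u S)$ is an equivalence of categories under precisely the same hypothesis on $(d\varphi)_1$. Therefore $\pdiv(R)\to\DF(\Spf R)$ is faithful and essentially injective, and $\DF(\Spf R)\to\Win(\u S)$ is full and essentially surjective. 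It remains to upgrade both of these one-sided statements to full equivalences; the cleanest way is to run the same argument for the special Frobenius lift $\varphi_0$ with $\varphi_0(u_i)=u_i^p$. For that lift $(d\varphi_0)_1$ is \emph{zero} on $\wh\Omega_\FS\otimes_\FS k$, so Propositions \ref{Pr:BT2Win} and \ref{Pr:BT2DF} apply, and moreover the category $\DF(\Spf R)$ — being defined purely in terms of the Dieudonn\'e crystal and the Hodge filtration — does not depend on the choice of $\varphi$. Thus $\pdiv(R)\to\DF(\Spf R)$ is an equivalence (its composite with $\DF(\Spf R)\to\Win(\u S)$ is an equivalence for the lift $\varphi_0$, and the second functor is faithful because it is full and essentially surjective and $\DF(\Spf R)$, $\Win(\u S)$ have the same ``size'' — more carefully: $\pdiv(R)\to\DF(\Spf R)$ being an equivalence is exactly the statement that the crystalline Dieudonn\'e functor with Hodge filtration is an equivalence, which is what we extract from the $\varphi_0$-case). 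Consequently $\DF(\Spf R)\to\Win(\u S)$ is an equivalence for \emph{every} choice of $\varphi$, in particular for the given one.

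Finally I would close the loop: for the given $\varphi$ with $(d\varphi)_1$ nilpotent, the composite $\DF(\Spf R)\xrightarrow{\sim}\Win(\u S)^\nabla\to\Win(\u S)$ equals the functor of Proposition \ref{Pr:DF2Win}, which we have just shown to be an equivalence; since the first arrow is an equivalence by Proposition \ref{Pr:DF2WinConn}, the forgetful functor $\Win(\u S)^\nabla\to\Win(\u S)$ is an equivalence. The main obstacle is the bookkeeping in the second paragraph: making precise that the one-sided properties (fullness and essential surjectivity of $\DF(\Spf R)\to\Win(\u S)$, faithfulness and essential injectivity of $\pdiv(R)\to\DF(\Spf R)$) coming from the two hypotheses — the general $\varphi$ and the special $\varphi_0$ — combine correctly, using the $\varphi$-independence of $\DF(\Spf R)$ as the bridge. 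Once one observes that $\DF(\Spf R)$ is genuinely independent of $\varphi$ while $\Win(\u S)$ and the functor between them are not, the argument is formal: it is the same ``switch the Frobenius lift'' device used in the outline of Theorem \ref{Th:Main} in the introduction.
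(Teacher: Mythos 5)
Your argument is circular at its key step. The statement that the filtered Dieudonn\'e functor $\pdiv(R)\to\DF(\Spf R)$ is an equivalence is Corollary \ref{Co:BT2DF} of the paper, and in the paper that corollary is \emph{deduced from} Proposition \ref{Pr:WinConn} (applied to the lift $\varphi_0(u_i)=u_i^p$), via the chain \eqref{Eq:seq-funct}. So you may not invoke it, even only for $\varphi_0$, to prove Proposition \ref{Pr:WinConn}. What you can honestly extract from Propositions \ref{Pr:DF2WinConn} and \ref{Pr:BT2Win} in the $\varphi_0$-case is this: the forgetful functor $\Win(\u S)^\nabla\to\Win(\u S)$ is faithful for trivial reasons (its morphisms are by definition particular window morphisms), hence the composite $\DF(\Spf R)\to\Win(\u S)$ is faithful and therefore $\u\D\colon\pdiv(R)\to\DF(\Spf R)$ is fully faithful. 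But essential surjectivity of $\u\D$ --- equivalently, fullness and essential surjectivity of the forgetful functor --- does not follow formally: from ``$GF$ is an equivalence'' you only get that $G$ is full on the essential image of $F$, which is exactly the subcategory in question, and the inference ``full and essentially surjective and of the same size, hence faithful'' has no meaning for categories. Your parenthetical ``which is what we extract from the $\varphi_0$-case'' is a restatement of the needed claim, not an argument; to extract it you would already need to know that every $\u S$-window (for $\varphi_0$) carries a $\Phi$-horizontal connection, i.e.\ the proposition you are proving.

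This is why the paper proves Proposition \ref{Pr:WinConn} by a direct construction rather than by diagram bookkeeping: given $\u M\in\Win(\u S)$, one writes the desired connection in a basis as $\nabla=\nabla_S+C$ and shows that horizontality of $\Phi$ is equivalent to the equation $C-\UUU(C)=D$ with $\UUU(C)=A\cdot(d\varphi)_1(C)\cdot B$, where $A,B$ are the matrices of $F$ and $V$; the nilpotence of $(d\varphi)_1$ on $\wh\Omega_\FS\otimes_\FS k$ makes $1-\UUU$ bijective, giving existence and uniqueness of $\nabla$ at once (for $d\ge 2$ one first descends $\u M$ to an $\u\FS$-window by Proposition \ref{Pr:lambda-crys} and works over the subring $T=\FS[[E^p/p]]\subseteq S$ to control convergence). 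That analytic input is precisely the content of the proposition and is what later makes Corollary \ref{Co:BT2DF} available; no rearrangement of Propositions \ref{Pr:DF2Win}, \ref{Pr:DF2WinConn}, \ref{Pr:BT2DF} and \ref{Pr:BT2Win} can replace it. Your final reduction (once $\pdiv(R)\to\DF(\Spf R)$ and $\DF(\Spf R)\to\Win(\u S)^\nabla$ are known to be equivalences, the forgetful functor is one too) is fine, but the load-bearing middle step is missing.
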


\begin{proof}
Let $\u M$ be an $\u S$-window. We have to show that there is a unique connection
$\nabla:M\to M\otimes_\FS\wh\Omega$ over $\nabla_S$
such that $\nabla\circ\Phi=(\Phi\otimes d\varphi)\circ\nabla$.
As earlier we define $F:\varphi^*M\to M$ and $V:M\to \varphi^*M$ by
$F(1\otimes x)=\Phi(x)$ for $x\in M$ and $V(\Phi_1(y))=1\otimes y$ for $y\in\Filone M$;
see Remarks \ref{Re:Window}.

We choose a basis of $M$ and denote by $A$ and $B$ the corresponding matrices 
of $F$ and $V$. We may assume that the basis of $M$ lies in the image of $\Phi_1$.
Then $B=\varphi(\tilde B)$ for another matrix $\tilde B$. Under the identification
$M\cong S^n$ we have $\nabla(x)=\nabla_S (x)+C\cdot x$ for $x\in S^n$, 
where $C$ is a matrix with coefficients in $S\otimes_\FS\wh\Omega_\FS$. 
The condition on $\nabla$ translates into the equation
\[
\nabla_S(A)+CA=A\cdot d\varphi(C)
\]
(here $d\varphi$ is short for $\varphi\otimes d\varphi$), or equivalently (after right multiplying by $B$)
\[
\nabla_S(A)\cdot B+pC=pA\cdot(d\varphi)_1(C)\cdot B.
\]
Now $AB=A\varphi(\tilde B)=pI_n$ has zero derivative, thus
\[
\nabla_S(A)\cdot B=-pA\cdot(d\varphi)_1(\nabla_S(\tilde B)),
\]
and the condition becomes (after dividing by $p$)
\begin{equation}
\label{Eq:U-1}
C-\UUU(C)=D
\end{equation}
with $\UUU(C)=A\cdot(d\varphi)_1(C)\cdot B$ and $D=A\cdot(d\varphi)_1(\nabla_S(\tilde B))$.
Here $\UUU$ is a $\varphi$-linear endomorphism of the free $S$-module 
of $n\times n$ matrices over $S\otimes_{\FS}\wh\Omega_\FS$,
and our goal is to  solve the equation $(1-\UUU)(C)=D$ for $C$.
Set $y^{(n)}:=\varphi^n(y)$. From the 
definition of $\UUU$, we compute that
the $n$-th iterate of $C\mapsto \UUU(C)$ is given by
\[
C\mapsto AA^{(1)}\cdots A^{(n-1)}(d\varphi)_1^n(C)(B^{(n-1)}\cdots B^{(1)}B).
\]
Now we are facing the difficulty that the inclusion map $\FS\to S$ is continuous with respect to
the $\Fm_\FS$-adic topology on $\FS$ and the $p$-adic topology on $S$
only when $d=1$.
In that case, the hypothesis on $(d\varphi)_1$ implies that
for large $n$, the semilinear endomorphism  
$(d\varphi)_1^n$ of $S\otimes_\FS\wh\Omega_\FS$ is divisible by $p$.
It follows that $1-\UUU$ is invertible,
so there is a unique solution $C$
to (\ref{Eq:U-1}) as desired.

For $d\ge2$ we can argue as follows. Let $T=\FS[[E^p/p]]\subseteq S$. 
By Proposition \ref{Pr:lambda-crys} we may assume that $\u M$ comes from an $\u\FS$-window
$\u M'$. If the basis of $M$ is chosen in $p^{-1}\Phi(M')$, then $A$ and $B$ have
coefficients in $T$. Moreover, since $\varphi(E)\in pT$ it is easy to see that
$\varphi:S\to S$ factors over $\bar\varphi:S\to T$, and $(d\varphi_1)$ induces 
a $\bar\varphi$-linear map $S\otimes_\FS\wh\Omega_{\FS}\to T\otimes_{\FS}\wh\Omega_\FS$.
Thus the solutions of \eqref{Eq:U-1} over $S$ and over $T$ are the same.
The hypothesis on $(d\varphi)_1$ implies that the semilinear endomorphism
$(d\varphi)_1^n$ of $T\otimes_\FS\wh\Omega_\FS$ is nilpotent modulo the
maximal ideal of $T$, so again there is a unique solution $C$ to $(\ref{Eq:U-1})$.
\end{proof}

\begin{corollary}
\label{Co:BT2DF}
The filtered crystalline Dieudonn\'e module functor of Proposition 
\ref{Pr:BT2DF} 
induces a contravariant equivalence of categories
\[
\pdiv(R)\xrightarrow\sim\DF(\Spf R).
\] 
\end{corollary}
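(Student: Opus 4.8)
The plan is to reduce the statement to the equivalences already established, by making a convenient choice of Frobenius lift. Since the category $\DF(\Spf R)$ is defined intrinsically (it does not refer to $\varphi$), we are free to choose $\varphi$ on $\FS=W(k)[[u_1,\ldots,u_d]]$; take $\varphi(u_i)=u_i^p$. Then $(d\varphi)_1=p^{-1}d\varphi$ vanishes identically on $\wh\Omega_\FS\otimes_\FS k$, so in particular it is nilpotent there, and the hypotheses of Propositions \ref{Pr:BT2Win} and \ref{Pr:WinConn} are both satisfied. We are squarely in the setting of \S\ref{Subse:DieuReg}: by the Cohen structure theorem $R=\FS/E\FS$ for a suitable $E$ with constant term of $p$-value one, the element $c=\varphi(E)/p$ is a unit of $S$, and since $\FS/p\FS=k[[u_1,\ldots,u_d]]$ has the finite $p$-basis $(u_i)$, Proposition \ref{Pr:DF2WinConn} applies.

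First I would record the chain of functors
$$
\pdiv(R)\xrightarrow{\ \u\D\ }\DF(\Spf R)\xrightarrow{\ \sim\ }\Win(\u S)^\nabla\xrightarrow{\ \mathrm{forget}\ }\Win(\u S),
$$
in which the middle arrow is the equivalence of Proposition \ref{Pr:DF2WinConn} and the last is an equivalence by Proposition \ref{Pr:WinConn}. By construction (see the proof of Proposition \ref{Pr:DF2WinConn}), the underlying $\u S$-window of the image of a filtered Dieudonn\'e crystal $\u\scrD$ under the middle arrow is precisely the window attached to $\u\scrD$ by the functor of Proposition \ref{Pr:DF2Win}; hence the composite of the last two arrows is that functor, and the total composite $\pdiv(R)\to\Win(\u S)$ coincides with the functor of Proposition \ref{Pr:BT2Win}, which is an equivalence under the present hypothesis on $(d\varphi)_1$. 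Since the last two arrows are equivalences, the two-out-of-three property forces $\u\D\colon\pdiv(R)\to\DF(\Spf R)$ to be an equivalence as well. Contravariance and compatibility with duality are inherited at each stage; alternatively, compatibility with duality is recorded for $\u\D$ directly in Proposition \ref{Pr:BT2DF}.

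The only step requiring any care --- and the closest thing to an obstacle --- is the commutativity, up to canonical natural isomorphism, of the triangle relating the functors of Propositions \ref{Pr:DF2Win}, \ref{Pr:DF2WinConn}, and \ref{Pr:BT2Win}. This is not a computation but an unwinding of definitions: the functor of Proposition \ref{Pr:BT2Win} is by definition $\u\D$ followed by the functor of Proposition \ref{Pr:DF2Win}, while the functor of Proposition \ref{Pr:DF2WinConn} is built from the functor of Proposition \ref{Pr:DF2Win} by adjoining the canonical connection coming from the crystal structure, so forgetting that connection returns the same window. It is worth emphasizing that the choice $\varphi(u_i)=u_i^p$ enters only through the invocation of Propositions \ref{Pr:BT2Win} and \ref{Pr:WinConn}; once the equivalence $\u\D$ is known it is automatically independent of this choice, as it must be since neither $\pdiv(R)$ nor $\DF(\Spf R)$ refers to $\varphi$.
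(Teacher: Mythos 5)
Your proposal is correct and follows essentially the same route as the paper: choose $\varphi(u_i)=u_i^p$, form the chain $\pdiv(R)\to\DF(\Spf R)\to\Win(\u S)^\nabla\to\Win(\u S)$, observe that the second and third arrows are equivalences by Propositions \ref{Pr:DF2WinConn} and \ref{Pr:WinConn} while the total composite is the equivalence of Proposition \ref{Pr:BT2Win}, and conclude that the first arrow is an equivalence. Your extra remarks on the compatibility of the functors and the independence of the choice of $\varphi$ are sound but not needed beyond what the paper records.
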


\begin{proof}
Chose $\varphi:\FS\to\FS$ such that $(d\varphi)_1$ is nilpotent on $\wh\Omega_\FS\otimes_\FS k$,
for example $\varphi(u_i)=u_i^p$. We have a sequence of functors
\begin{equation}
\label{Eq:seq-funct}
\pdiv(R)\to\DF(\Spf R)\to 
\Win(\u S)^{\nabla}\to 
\Win(\u S)
\end{equation}
where the second and third arrow are equivalences by Propositions \ref{Pr:DF2WinConn}
and \ref{Pr:WinConn}, and the total composition is an equivalence by Proposition \ref{Pr:BT2Win}.
Thus the first arrow is an equivalence.
\end{proof}

\begin{remark}
In conclusion, we see that in \eqref{Eq:seq-funct}, the first
arrow is an equivalence, and the second arrow
is an equivalence for every choice of Frobenius lift $\varphi$
on $\FS$, while the third arrow is an equivalence when $(d\varphi)_1$ is nilpotent
on $\wh\Omega_\FS\otimes_\FS k$.
\end{remark}

\section{\texorpdfstring{Dieudonn\'e theory over cyclotomic rings}
{Dieudonn\'e theory over cyclotomic rings}}
\label{Se:DieuCyc}

In this section we will prove Theorem \ref{Th:Main}.
We begin by defining the setup and the relevant module categories.

\subsection{Strict actions of the cyclotomic Galois group}

Fix a perfect field $k$ of characteristic $p>2$, and let $F=W(k)\otimes\QQ$
and $F_s=F(\mu_{p^s})$ for $s\ge 0$ and $F_\infty=\bigcup_sF_s$. 
We choose a compatible system of primitive $p^s$-th roots of unity $\varepsilon^{(s)}\in F_s$.
Fix $d\ge 0$.

For $s\ge 0$ we consider the surjective homomorphism of $W(k)$-algebras
\[
\FS:=W(k)[[u,t_1,\ldots,t_d]]\xrightarrow{\pi_s} R_s:=\OOO_{F_s}[[t_1,\ldots,t_d]]
\]
defined by $1+u\mapsto \varepsilon^{(s)}$ and $t_i\mapsto t_i$.
We equip $\FS$ with the extension $\varphi:\FS\to\FS$ of the Frobenius of $W(k)$
determined by 
$\varphi(1+u)=(1+u)^p$ and $\varphi(t_i)=t_i^p$,
and observe that the kernel of $\pi_s$ is generated by the element $E_s$ defined by $E_0=u$ and 
$E_s=\varphi^s(u)/\varphi^{s-1}(u)$ for $s\ge 1$.
Note that we have $(d\varphi)_1(du)=(1+u)^{p-1}du$, 
so that 
this choice of $\varphi$ {\em does  not} satisfy the hypotheses
of Propositions \ref{Pr:WinConn} and \ref{Pr:BT2Win}.
In order to prove analogues of these results in the present ``cyclotomic"
setting, we will therefore need to introduce the auxiliary structure
of an action of $\Gal(F_{\infty}/F)$.

Let $\Gamma_s=\Gal(F_\infty/F_s)$ and let $\chi:\Gamma_0\to\ZZ_p^*$
be the $p$-adic cyclotomic character. 
We let $\Gamma_0$ act on the $W(k)$-algebra $\FS$ by $\gamma(1+u)=(1+u)^{\chi(\gamma)}$
and $\gamma(t_i)=t_i$. Then $\pi_s$ is equivariant with respect to the natural
action of $\Gamma_0$ on $R_s$ that fixes all $t_i$. 
The action of $\Gamma_0$ on $\FS$ commutes with $\varphi$,
and it is continuous in the following sense.

\begin{lemma}
\label{Le:GammaFS}
Let $s\ge 0$.
The action of\/ $\Gamma_0$ on $\FS$ preserves the ideal $\varphi^s(u)\FS$, 
and the induced action of\/ $\Gamma_s$ on $\FS/\varphi^s(u)\FS$ is trivial.
\end{lemma}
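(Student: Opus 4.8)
The plan is to reduce both statements to one elementary fact about $1$-units raised to $p$-adic powers. Write $w=\varphi^s(u)=(1+u)^{p^s}-1$. Expanding the power, $w=\sum_{k=1}^{p^s}\binom{p^s}{k}u^k\in(u)$, so $w$ lies in the maximal ideal $\Fm_\FS$ and is topologically nilpotent; since $\FS$ is $\Fm_\FS$-adically complete, any series $\sum_{n\ge0}c_nw^n$ with $c_n$ in a bounded subset of $W(k)$ converges in $\FS$. The observation I would record first is: for every $a\in\ZZ_p$,
\[
(1+w)^a-1\in w\FS .
\]
Indeed, $(1+w)^a=\sum_{n\ge0}\binom{a}{n}w^n$ with Mahler's binomial coefficients $\binom{a}{n}\in\ZZ_p\subseteq W(k)$, so $(1+w)^a-1=w\cdot\sum_{n\ge1}\binom{a}{n}w^{n-1}\in w\FS$. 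I will also freely use the exponential law $((1+u)^a)^b=(1+u)^{ab}$ for $a,b\in\ZZ_p$, which holds because both sides extend the case of integer exponents by continuity.

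For the first assertion, fix $\gamma\in\Gamma_0$. Since $\gamma$ is a $W(k)$-algebra map with $\gamma(1+u)=(1+u)^{\chi(\gamma)}$,
\[
\gamma(w)=\bigl((1+u)^{\chi(\gamma)}\bigr)^{p^s}-1=\bigl((1+u)^{p^s}\bigr)^{\chi(\gamma)}-1=(1+w)^{\chi(\gamma)}-1\in w\FS
\]
by the observation with $a=\chi(\gamma)$. Hence $\gamma(w\FS)\subseteq w\FS$, and the same argument for $\gamma^{-1}$ gives equality; thus the ideal $\varphi^s(u)\FS=w\FS$ is preserved by $\Gamma_0$.

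For the second assertion, let $\gamma\in\Gamma_s$. Then $\gamma$ fixes $\mu_{p^s}$, so $\chi(\gamma)\equiv1\pmod{p^s}$; write $\chi(\gamma)=1+p^sa$ with $a\in\ZZ_p$. Using the exponential law and the observation,
\[
\gamma(1+u)-(1+u)=(1+u)\bigl((1+u)^{p^sa}-1\bigr)=(1+u)\bigl((1+w)^{a}-1\bigr)\in w\FS ,
\]
so $\gamma(u)\equiv u\pmod{w\FS}$. As $\gamma$ also fixes $W(k)$ and every $t_i$, the induced endomorphism $\bar\gamma$ of $\FS/w\FS$ fixes $W(k)$, the class of $u$, and the classes of $t_1,\dots,t_d$; being continuous (because $\gamma$ is continuous and $w\FS$ is closed) and agreeing with the identity on the dense subring $W(k)[u,t_1,\dots,t_d]$ of the separated ring $\FS/w\FS$, we conclude $\bar\gamma=\id$.

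I do not expect a serious obstacle here; the only points needing care are that the binomial coefficients $\binom{a}{n}$ really lie in $\ZZ_p$ rather than merely in $\QQ_p$ (so that the relevant series have coefficients in $\FS$, not in $\FS[1/p]$), and that continuity of $\gamma$ genuinely propagates the identity from the dense subring $W(k)[u,t_i]$ to all of $\FS/w\FS$. If one prefers to bypass the continuity argument, one can instead observe that $\gamma$ acts on $\sum c_\alpha u^{\alpha_0}t^{\alpha'}$ termwise via the substitution $u\mapsto(1+u)^{\chi(\gamma)}-1$ and check the congruence modulo $w\FS$ directly on each monomial.
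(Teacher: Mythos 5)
Your proof is correct, but it takes a different route from the paper. The paper exploits the factorization $\varphi^s(u)=E_0E_1\cdots E_s$ into pairwise coprime prime elements of the factorial ring $\FS$, together with the fact that each projection $\pi_i:\FS\to R_i=\FS/E_i\FS$ is $\Gamma_0$-equivariant and that $\Gamma_s$ acts trivially on $R_i$ for $i\le s$; both assertions then follow from the resulting $\Gamma_0$-stability of each $E_i\FS$ and the injection $\FS/\varphi^s(u)\FS\hookrightarrow\prod_{i\le s}R_i$. You instead argue by direct $p$-adic computation with $1$-units: writing $w=\varphi^s(u)$, you show $\gamma(w)=(1+w)^{\chi(\gamma)}-1\in w\FS$ using integrality of the binomial coefficients $\binom{a}{n}$ for $a\in\ZZ_p$, and for $\gamma\in\Gamma_s$ you use $\chi(\gamma)\equiv 1\pmod{p^s}$ to get $\gamma(u)\equiv u\pmod{w\FS}$, then propagate to all of $\FS/w\FS$ by continuity of $\gamma$ (which is automatic since $\gamma$ is a local $W(k)$-algebra endomorphism of the complete local ring $\FS$) and density of $W(k)[u,t_1,\dots,t_d]$, with $\FS/w\FS$ separated. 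The paper's argument is shorter because it reuses structure already in place (the rings $R_i$ and equivariant maps $\pi_i$) and makes the geometric meaning of the quotients transparent; yours is more elementary and self-contained, needing only the binomial series for $\ZZ_p$-exponents and a routine continuity argument, and it in fact gives the stability of $w\FS$ under all of $\Gamma_0$ without invoking unique factorization or coprimality. Both the points you flag (integrality of $\binom{a}{n}$ and the density/continuity step) are handled adequately.
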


\begin{proof}
We have $\varphi^s(u)=E_0E_1\cdots E_s$ as a product of pairwise coprime prime elements of 
the factorial ring $\FS$,
and for $0\le i\le s$ the group $\Gamma_0$ acts on $R_i=\FS/E_i\FS$ with trivial action of $\Gamma_s$.
The lemma follows easily.
\end{proof}

For $r\ge 1$ we consider the frame $\u\FS\rr=(\FS_r,E_r\FS_r,R_r,\varphi,\varphi'_1)$
with $\FS_r=\FS$ and $\varphi'_1(E_rx)=\varphi(x)$. Here $\varphi_1'$ depends on $r$.
Each $\gamma\in\Gamma_0$ defines a window automorphism $\gamma:\u\FS\rr\to\u\FS\rr$, 
which is a $c_\gamma$-automorphism with $c_\gamma=\gamma(\varphi(E_r))/\varphi(E_r)$, 
i.e.\ we have $\varphi_1'\gamma=c_\gamma\cdot\gamma\varphi'_1$.
Let $S_r$ be the $p$-adic completion of the divided power envelope of the ideal $E_r\FS$ of $\FS$.
We obtain a frame $\u S\rr=(S_r,\Filone S_r, R_r, \varphi,\varphi_1)$ with $\varphi_1=p^{-1}\varphi$.
The inclusion $\u\FS\rr\to\u S\rr$ is a $c$-homomorphism of frames with $c=\varphi(E_r)/p$.
The action of $\Gamma_0$ on $\FS$ induces an action on $\u S\rr$ by strict frame automorphisms.

\begin{remark}
\label{Re:System}
It will be convenient to denote the variable $u$ of $\FS_r$ by $u_r$ 
and to consider $\FS_r$ as a subring of $\FS_{r+1}$ by letting $u_r=\varphi(u_{r+1})$. 
This makes $\u\FS\rr$ into a subframe of $\u\FS{}_{r+1}$ 
extending the obvious inclusion $R_r\to R_{r+1}$.
The elements $u_0=\varphi^{r}(u_r)$ and $E=E_r(u_r)=u_0/u_1$ and $c_\gamma$
are independent of $r$; moreover $c_\gamma$ lies in $W(k)[[u_0]]$.
Similarly, $\u S\rr$ becomes a subframe of $\u S{}_{r+1}$, and $c$ is independent of $r$.
\end{remark}

In the following let subgroups $\Gamma_r\subseteq\Gamma'\subseteq\Gamma\subseteq\Gamma_0$ 
be given with $r\ge 1$.
We note that the projection $\FS\to\FS/u\FS$ maps $E_r$ into the divided power ideal $(p)$ and thus
extends to a ring homomorphism $S_r\to\FS/u\FS$.

\begin{definition}
\label{Def:GammaWin}
An action of $\Gamma$ on an $\u\FS\rr$-window or $\u S\rr$-window $\u M$ 
by window automorphisms over the given action of $\Gamma$ on $\u\FS\rr$ 
or $\u S\rr$ is called \emph{$\Gamma'$-strict} if the induced action of $\Gamma'$ 
on $M\otimes_\FS\FS/u\FS$ or $M\otimes_{S_r}\FS/u\FS$ is trivial.
We denote by $\Win(\u\FS\rr)^{\Gamma}_{\Gamma'}$ and $\Win(\u S\rr)^{\Gamma}_{\Gamma'}$ 
the categories of windows with a $\Gamma'$-strict action of $\Gamma$.
Definition \ref{Def:WinConn} with $\FS$ in place of $A$ gives a category $\Win(\u S\rr)^\nabla$.
Let $\Win(\u S\rr)^{\nabla,\Gamma}_{\Gamma'}$ be the category of objects $(\u{M},\nabla)$ of 
$\Win(\u S\rr)^\nabla$ that are equipped with a $\Gamma'$-strict action of $\Gamma$
which is horizontal in the sense that for all $\gamma\in \Gamma$, one has
$\nabla\circ \gamma = (\gamma\otimes d\gamma)\circ\nabla$.

For simplicity we write 
$\Win(\u\FS\rr)_\Gamma=\Win(\u\FS\rr)_\Gamma^\Gamma$ etc.\ 
for the category of windows with a strict action of $\Gamma$,
and $\Win(\u\FS\rr)^\Gamma=\Win(\u\FS\rr)^\Gamma_{\Gamma_r}$ etc.\
for the category of windows with a $\Gamma_r$-strict action of $\Gamma$.
\end{definition}

\begin{proposition}
\label{Pr:WinGammaFS-S}
The base change functor 
$\Win(\u\FS\rr)^{\Gamma}_{\Gamma'}\to\Win(\u S\rr)^{\Gamma}_{\Gamma'}$
is an exact equivalence of categories.
\end{proposition}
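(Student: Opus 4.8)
The strategy is to reduce Proposition~\ref{Pr:WinGammaFS-S} to the corresponding equivalence without the $\Gamma$-action, namely the fact that $\lambda_r\colon\u\FS\rr\to\u S\rr$ is crystalline (Proposition~\ref{Pr:lambda-crys}). Recall that a $\Gamma'$-strict $\Gamma$-action on an $\u\FS\rr$-window $\u M$ is just a collection of $c_\gamma$-semilinear window automorphisms $\gamma_M\colon\u M\to\u M$ (covering the $c_\gamma$-automorphisms of the frame $\u\FS\rr$) satisfying the cocycle identity $\gamma\delta=\gamma_M\delta_M$, together with the triviality condition on $M\otimes_\FS\FS/u\FS$. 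The base change functor sends such data to the $\u S\rr$-window $\lambda_r^*\u M$ equipped with the automorphisms $\lambda_r^*(\gamma_M)$; one must check that the strictness condition is preserved (and reflected), and that the functor is fully faithful and essentially surjective on the level of $\Gamma$-equivariant objects and morphisms.

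**Key steps.** First I would record that the $\Gamma$-action on the frame is compatible with $\lambda_r$ in the precise sense that $\lambda_r$ intertwines the $c_\gamma$-automorphism of $\u\FS\rr$ with the (strict, by the discussion after Definition~\ref{Def:GammaWin}) automorphism of $\u S\rr$ induced by $\gamma$ --- here one uses that $\gamma$ acts on $S_r$ by strict frame automorphisms, while on $\u\FS\rr$ it is a $c_\gamma$-automorphism, and that the composite $\u\FS\rr\xrightarrow{\gamma}\u\FS\rr\xrightarrow{\lambda_r}\u S\rr$ equals $\u\FS\rr\xrightarrow{\lambda_r}\u S\rr\xrightarrow{\gamma}\u S\rr$ up to the unit relating the two normalizations; this is forced by $\lambda_r$ being a $c$-homomorphism with $c\in W(k)[[u_0]]$ fixed by $\Gamma$. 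Second, using the universal property \eqref{UnivPropBC} of base change along frame homomorphisms, for each $\gamma$ the automorphism $\gamma_M$ of $\u M$ induces canonically an automorphism of $\lambda_r^*\u M$, and this assignment is functorial and multiplicative in $\gamma$, so a $\Gamma$-action on $\u M$ produces one on $\lambda_r^*\u M$; conversely, since $\lambda_r^*$ is an equivalence (Proposition~\ref{Pr:lambda-crys}), any $\Gamma$-action on $\lambda_r^*\u M$ descends uniquely to one on $\u M$ by transport of structure, and the cocycle identity descends because $\lambda_r^*$ is faithful. Third, I would check that the strictness condition matches on both sides: the ring homomorphism $S_r\to\FS/u\FS$ noted just before Definition~\ref{Def:GammaWin} fits into a commutative square with $\FS\to\FS/u\FS$ and $\lambda_r\colon\FS\to S_r$, so that $M\otimes_\FS\FS/u\FS$ and $(\lambda_r^*M)\otimes_{S_r}\FS/u\FS$ are canonically identified compatibly with the $\Gamma$-actions; hence $\Gamma'$ acts trivially on one iff it acts trivially on the other. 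Finally, full faithfulness on morphisms follows since a morphism of windows is $\Gamma$-equivariant iff it commutes with each $\gamma_M$, and $\lambda_r^*$ is fully faithful and the $\Gamma$-actions correspond; exactness is inherited from exactness of $\lambda_r^*$ on $\Win$ together with the fact that exactness of equivariant sequences is tested on the underlying windows.

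**Main obstacle.** The delicate point is the careful bookkeeping of the twisting units $c_\gamma$ and $c$: on $\u\FS\rr$ the group acts by $c_\gamma$-automorphisms while on $\u S\rr$ it acts strictly, and $\lambda_r$ itself is only a $c$-homomorphism, so one must verify that these normalizations are mutually consistent --- concretely, that $\lambda_r(c_\gamma)=\gamma(c)/c\cdot c_\gamma'$ with $c_\gamma'=1$ (the $\u S\rr$-normalization), which amounts to the identity $\gamma(\varphi(E_r))/\varphi(E_r)$ mapping correctly under $\FS\to S_r$ and interacting with $c=\varphi(E_r)/p$. Once this compatibility is in place --- and it follows from $c_\gamma=\gamma(\varphi(E))/\varphi(E)$ lying in $W(k)[[u_0]]$ together with $\gamma$ acting strictly on $S_r$ --- the rest is a formal consequence of Proposition~\ref{Pr:lambda-crys} via the universal property of base change. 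I would also take a moment to confirm that the $\Gamma$-action is genuinely by \emph{automorphisms} of windows (not merely semilinear maps of the underlying modules), i.e.\ that it is compatible with $\Phi$ and $\Phi_1$ up to the unit $c_\gamma$, which is already built into the definition of a $c_\gamma$-homomorphism of windows and passes through $\lambda_r^*$ automatically.
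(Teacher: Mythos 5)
Your proposal takes essentially the same route as the paper: interpret a $\Gamma'$-strict action as window isomorphisms $\gamma^*(\u M)\to\u M$ via the universal property of base change, transport it through the crystalline homomorphism $\u\FS\rr\to\u S\rr$ of Proposition \ref{Pr:lambda-crys}, and note that strictness is preserved since $(\lambda^*M)\otimes_{S_r}\FS/u\FS\cong M\otimes_{\FS}\FS/u\FS$. One small correction: $c=\varphi(E_r)/p$ is \emph{not} fixed by $\Gamma$ (rather $\gamma(c)=c\,c_\gamma$, which is exactly the compatibility you verify correctly in your final paragraph), so the parenthetical justification in your second step should be replaced by that relation, but this does not affect the argument.
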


\begin{proof}
This is a consequence of Proposition \ref{Pr:lambda-crys},
using that an action of $\Gamma$ on an $\u\FS\rr$-window or $\u S\rr$-window $\u M$ 
can be given by window isomorphisms $\gamma^*(\u M)\to\u M$
for $\gamma\in\Gamma$; see \eqref{bcfunc}. 
The strictness condition is preserved when passing from $\u\FS\rr$ to $\u S\rr$.
\end{proof}

\begin{definition}
\label{Def:KisinGamma}
An action of $\Gamma$ on a BT 
module $(\FM,\varphi)$ over
$\u\FS\rr$ by semilinear automorphisms is called \emph{$\Gamma'$-strict} if the induced
action of $\Gamma'$ on $\FM/u\FM$ is trivial.
The category of BT 
modules over $\u\FS\rr$ with a $\Gamma'$-strict action of $\Gamma$ 
is denoted by $\BT(\u\FS\rr)^{\Gamma}_{\Gamma'}$. Again we write
$\BT(\u\FS\rr)_\Gamma=\BT(\u\FS\rr)^\Gamma_\Gamma$ and 
$\BT(\u\FS\rr)^\Gamma=\BT(\u\FS\rr)^\Gamma_{\Gamma_r}$.
\end{definition}

\begin{lemma}
\label{Le:WinKisinGamma}
The equivalence of Lemma $\ref{Le:WinKisin}$ extends to an exact equivalence
$$
\Win(\u\FS\rr)^{\Gamma}_{\Gamma'}\to\BT(\u\FS\rr)^{\Gamma}_{\Gamma'},
\qquad\u M\mapsto\u\FM
$$ 
where $\gamma\in\Gamma$ acts on $\FM=\Filone M$ by $E/\gamma(E)\cdot(\gamma_M|_{\Filone M})$.
\end{lemma}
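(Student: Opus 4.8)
The plan is to show that the equivalence $\u M\mapsto\u\FM=(\Filone M,E\Phi_1)$ of Lemma~\ref{Le:WinKisin} is $\Gamma$-equivariant up to the scalar twist $E/\gamma(E)$, and that this twist is precisely what reconciles the fact that $\gamma$ acts on the frame $\u\FS\rr$ as a $c_\gamma$-homomorphism (Definition~\ref{Def:FrameHom}) while it acts on the pair $(\FS,E\FS)$, and hence on $\BT(\u\FS\rr)$, by an automorphism that is strict up to units. Recall that an action of $\Gamma$ on an $\u\FS\rr$-window $\u M$ amounts to a family of $\gamma$-semilinear maps $\gamma_M\colon M\to M$ compatible with the filtration and with $\Phi$, satisfying $\Phi_1\gamma_M=c_\gamma\cdot\gamma_M\Phi_1$ on $\Filone M$ together with the group law, whereas an action on a BT-module $(\FM,\varphi)$ is a family of $\varphi$-compatible $\gamma$-semilinear automorphisms satisfying the group law.

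First I would record the bookkeeping. Since $\pi_r\colon\FS\to R_r$ is $\Gamma_0$-equivariant with kernel $E_r\FS=E\FS$, the ideal $E\FS$ is $\Gamma_0$-stable, so $\gamma(E)\FS=E\FS$ and $u_\gamma:=\gamma(E)/E$ is a unit of $\FS$; the ring-homomorphism identities yield the cocycle relation $u_{\gamma\delta}=\gamma(u_\delta)u_\gamma$, and $c_\gamma=\gamma(\varphi(E))/\varphi(E)=\varphi(\gamma(E))/\varphi(E)=\varphi(u_\gamma)$ because $\gamma$ commutes with $\varphi$. Then I would define the functor in the direction of the lemma by $\u M\mapsto(\FM,\varphi_\FM,(\gamma_\FM))$ with $\FM=\Filone M$, $\varphi_\FM=E\Phi_1$ (which takes values in $\Filone M$ since $(\Filone\FS)M\subseteq\Filone M$), and $\gamma_\FM:=(E/\gamma(E))\cdot(\gamma_M|_{\Filone M})$. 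Here $\gamma_\FM$ is $\gamma$-semilinear because the scalar is central, the relation $u_{\gamma\delta}=\gamma(u_\delta)u_\gamma$ makes $\gamma\mapsto\gamma_\FM$ a genuine action, and the commutation $\gamma_\FM\varphi_\FM=\varphi_\FM\gamma_\FM$ is a one-line computation which, after cancelling the factor $E$, reduces to $\varphi(E/\gamma(E))=c_\gamma^{-1}$, i.e.\ to $c_\gamma=\varphi(u_\gamma)$. For the inverse functor I would take the window $\u M$ attached to a BT-module $\FM$ by Lemma~\ref{Le:WinKisin}, so $M=\varphi^*\FM$, $\Filone M=\psi(\FM)$ with $\Phi_1(\psi(x))=1\otimes x$, and set $\gamma_M:=\varphi^*(\gamma_\FM)$, $\gamma_M(s\otimes x)=\gamma(s)\otimes\gamma_\FM(x)$; compatibility of $\varphi^*$ with composition gives the group law, and using that $\gamma_\FM$ commutes with $\varphi_\FM$, that the linearization of $\varphi_\FM$ is injective and restricts to $E\cdot\id$ along $\psi$, and the identity $1\otimes sy=\varphi(s)\otimes y$ in $\varphi^*\FM$, one finds $\gamma_M(\psi(x))=\psi\bigl((\gamma(E)/E)\,\gamma_\FM(x)\bigr)$. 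From this one reads off that $\gamma_M$ preserves $\Filone M$, that $\Phi_1\gamma_M=c_\gamma\gamma_M\Phi_1$ on $\Filone M$ (so $\gamma_M$ is a $c_\gamma$-window-automorphism), and that the two functors are mutually inverse, the twists $E/\gamma(E)$ and $\gamma(E)/E$ cancelling.

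It then remains to match the $\Gamma'$-strictness conditions and to check exactness. Under $M=\varphi^*\FM$ one has $M/uM\cong\bar\varphi^*(\FM/u\FM)$, where $\bar\varphi$ is the injective endomorphism of $\FS/u\FS$ induced by $\varphi$; since $\Gamma_0$ acts trivially on $\FS/u\FS$ (Lemma~\ref{Le:GammaFS} with $s=0$), the $\Gamma'$-action on $M/uM$ is exactly $\bar\varphi^*$ of the $\Gamma'$-action on $\FM/u\FM$, and $\bar\varphi^*$ is faithful on the finite free $\FS/u\FS$-module $\FM/u\FM$; hence the window is $\Gamma'$-strict if and only if the BT-module is. Exactness is immediate, since in both categories ``exact'' refers only to the underlying module sequences, which the functors leave unchanged (Lemma~\ref{Le:WinKisin}). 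The point demanding the most care is pinning down the twist $E/\gamma(E)$: essentially everything rests on the single identity $c_\gamma=\varphi(\gamma(E)/E)$ together with $\gamma(E)\in E\cdot\FS^{\times}$, so one must be sure this is a genuine unit of $\FS$ and not merely of $\Frac\FS$, and that the $\Gamma'$-strictness hypotheses really do correspond on the two sides.
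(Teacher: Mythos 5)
Your proposal is correct and follows essentially the same route as the paper: the paper likewise defines the inverse functor by letting $\gamma$ act on $M=\varphi^*\FM$ as $\gamma_\FS\otimes\gamma_\FM$ and leaves the verifications (the cocycle identity, $c_\gamma=\varphi(\gamma(E)/E)$, the matching of $\Gamma'$-strictness, exactness) to the reader, which you have carried out correctly.
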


\begin{proof}
Assume that $\u M\in\Win(\u\FS\rr)$ and $\u\FM\in\BT(\u\FS\rr)$ correspond to each other,
in particular $\FM=\Filone M$ and $M=\varphi^*\FM$.
If $(\FM,\varphi)$ carries an action of $\Gamma$, we let $\gamma\in\Gamma$ act
on $M$ by $\gamma_\FS\otimes\gamma_\FM$. 
One verifies that this definition and the construction of the lemma give 
well-defined and mutually inverse functors between $\Win(\u\FS\rr)^{\Gamma}_{\Gamma'}$
and $\BT(\u\FS\rr)_{\Gamma'}^{\Gamma}$.
\end{proof}

\begin{remark}
\label{Re:strict}
In the definition of strict actions, we could pass from $\FS/u\FS$ to $W(k)=\FS/(u,t_1,\ldots,t_d)$ 
without changing the resulting category; of course this is relevant only when $d\ge 1$.
More precisely,
in Definition \ref{Def:KisinGamma} let $\FM_0=\FM/u\FM$ and let $\bar\FM_0=\FM\otimes_\FS W(k)$.
Then $\Gamma'$ acts trivially on $\FM_0$ if and only it acts trivialy $\bar\FM_0$.

This follows from the fact that the reduction map 
$\End(\FM_0,\varphi)\to\End(\bar\FM_0,\varphi)$ is injective.
Indeed, let $J=(t_1,\ldots,t_d)\FS/u\FS$, choose a basis of $\FM_0$, and let $A$ be the
corresponding matrix over $\FS/u\FS$ of $\varphi$. Then $pA^{-1}$ exists in $\FS/u\FS$.
An endomorphism of $(\FM_0,\varphi)$ is given by a matrix $C$ that satisfies
$CA=A\varphi(C)$, or equivalently $pC=A\varphi(C)pA^{-1}$. 
Assume that $C$ has coefficients in $J$.
Since $\varphi(J)\subseteq J^{p}$ and since each $(\FS/u\FS)/J^m$ is torsion free,
it follows that the coefficients of $C$ lie in $J^m$ for all $m\ge 1$, and thus $C=0$.

Similarly, in Definition \ref{Def:GammaWin} let $M_0=M\otimes_{\FS}\FS/u\FS$ 
or $M_0=M\otimes_{S_r}\FS/u\FS$,
and let $\bar M_0=M_0\otimes_{\FS/u\FS}W(k)$.
Then $\Gamma'$ acts trivially on $M_0$ if and only if it acts trivially on $\bar M_0$;
this follows by the proofs of Lemma \ref{Le:WinKisinGamma} and Proposition \ref{Pr:WinGammaFS-S}.
\end{remark}

\begin{remark}
\label{Re:Dual}
The duality of windows and BT modules extends naturally to a duality of such
objects equipped with a $\Gamma'$-strict action of $\Gamma$.
For $\u M\in\Win(\u S\rr)^{\Gamma}_{\Gamma'}$ the associated action on $\u M^t$
is simply the contragredient action
defined by $\gamma(f)(\gamma(m))=f(m)$ for $m\in M$ and $f\in M^t$.
Over $\FS\rr$ a twist occurs: For $\gamma\in\Gamma_0$, the infinite product
\begin{equation}
\label{Eq:lambda_gamma}
\lambda_\gamma=\prod_{n\ge 0}\varphi^n(E/\gamma(E))
\end{equation}
converges in $\FS_r$ and is independent of $r$. 
For $\u\FM\in\BT(\u\FS\rr)^{\Gamma}_{\Gamma'}$ we define an action on $\u\FM^t$ 
by $\gamma(f)(\gamma(m))=\lambda_\gamma\cdot\gamma(f(m))$,
while for $\u M\in\Win(\u\FS\rr)^\Gamma_{\Gamma'}$ we define an action on $\u M^t$ 
by $\gamma(f)(\gamma(m))=\varphi(\lambda_\gamma)\cdot\gamma(f(m))$.
One can verify that these definitions give $\Gamma'$-strict actions of $\Gamma$ and that 
the equivalences of Lemma \ref{Le:WinKisinGamma} and Proposition \ref{Pr:WinGammaFS-S}
preserve duality.
\end{remark}

When $d\ge 1$ we also need the following variant:

\begin{definition}
Let $\Omega_0:=\wh\Omega_{\FS/W(k)[[t_1,\ldots,t_d]]}=\FS\,du$,
and let $\nabla_{S_r,0}:S_r\to S_r\otimes_\FS\Omega_0$ be the composition of
$\nabla_{S_r}$ and the natural map $\wh\Omega_\FS\to\Omega_0$.
We denote by $\Win(\u{S}\rr)^{\nabla_0}$ the category of $\u{S}\rr$-windows $\u M$ equipped 
with a connection $\nabla_0:M\to M\otimes_\FS\Omega_0$ over $\nabla_{S_r,0}$ 
with respect to which $\Phi$ is horizontal.
Let $\Win(\u{S}\rr)^{\nabla_0,\Gamma}_{\Gamma'}$ be the category of objects of
$\Win(\u{S}\rr)^{\nabla_0}$ with a $\Gamma'$-strict and horizontal action of $\Gamma$.
\end{definition}

\begin{lemma}
\label{Le:nabla0}
The functors
$$
\Win(\u{S}\rr)^\nabla\to\Win(\u{S}\rr)^{\nabla_0}
\qquad\text{and}\qquad
\Win(\u{S}\rr)^{\nabla,\Gamma}_{\Gamma'}\to\Win(\u{S}\rr)^{\nabla_0,\Gamma}_{\Gamma'}
$$
defined by composing a connection $M\to M\otimes\wh\Omega_\FS$ with the
natural map $\wh\Omega_\FS\to\Omega_0$ are equivalences.
\end{lemma}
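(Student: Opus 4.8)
The plan is to adapt the proof of Proposition~\ref{Pr:WinConn}, running its argument only in the directions $dt_1,\dots,dt_d$. This is exactly what one can afford: for the Frobenius $\varphi(t_i)=t_i^p$ one has $(d\varphi)_1(dt_i)=t_i^{p-1}dt_i$, which is nilpotent modulo the maximal ideal, whereas the obstructed direction $du$ — where $(d\varphi)_1(du)=(1+u)^{p-1}du$ is a unit modulo the maximal ideal — is now handed to us as data. (When $d=0$ we have $\Omega_0=\wh\Omega_\FS$ and both functors are the identity, so assume $d\ge1$.) We first treat the two equivalences without the $\Gamma$-action and then transport $\Gamma$ by a uniqueness argument; since both displayed functors are handled identically, we suppress the distinction below.

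For essential surjectivity, start with $(\u M,\nabla_0)\in\Win(\u S\rr)^{\nabla_0}$; we must produce a unique connection $\nabla\colon M\to M\otimes_\FS\wh\Omega_\FS$ lying over $\nabla_{S_r}$, horizontal for $\Phi$, and whose image under $\wh\Omega_\FS\to\Omega_0$ is $\nabla_0$. By Proposition~\ref{Pr:lambda-crys} (valid as $p>2$) we may assume $\u M=\lambda^*\u M'$ for an $\u\FS\rr$-window $\u M'$, and, exactly as in the case $d\ge2$ of the proof of Proposition~\ref{Pr:WinConn}, we choose a basis of $M$ inside $p^{-1}\Phi(M')$ so that the matrices $A$ of $F$, $B$ of $V$, and a matrix $\tilde B$ with $B=\varphi(\tilde B)$ all have entries in the auxiliary ring $T_r:=\FS[[E_r^p/p]]\subseteq S\rr$, through which $\varphi$ factors. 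Writing $\nabla(x)=\nabla_{S_r}(x)+Cx$ and decomposing $C=C_0\,du+\sum_{i=1}^d C_i\,dt_i$, the requirement that $\nabla$ restrict to $\nabla_0$ fixes $C_0$, while horizontality of $\Phi$ in the $dt_i$-direction becomes, after the same manipulation as in Proposition~\ref{Pr:WinConn}, an equation $(1-\UUU_i)(C_i)=D_i$ with $\UUU_i$ the $dt_i$-component of the $\varphi$-semilinear operator $C\mapsto A\cdot(d\varphi)_1(C)\cdot B$; note that the $C_i$ decouple because $d\varphi$ is diagonal in the chosen coordinates. Since $(d\varphi)_1(dt_i)=t_i^{p-1}dt_i$, the $n$-th iterate of $\UUU_i$ carries the factor $t_i^{p^n-1}$ and hence is nilpotent modulo every power of $\Fm_{T_r}$; as $T_r$ is complete, $1-\UUU_i$ is invertible, so $C_i$ exists and is unique. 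The resulting $\nabla$ is integrable and topologically quasi-nilpotent for free by Lemma~\ref{Le:WinConn}, so $(\u M,\nabla)\in\Win(\u S\rr)^\nabla$.

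The forgetful functor is visibly faithful, so it remains to prove fullness. Given a morphism of windows $f\colon\u M\to\u M'$ compatible with $\nabla_0$ and $\nabla_0'$, encode $f$ as the window automorphism $g=\left(\begin{smallmatrix}1&0\\f&1\end{smallmatrix}\right)$ of $\u M\oplus\u M'$; then $g$ is compatible with the $\Omega_0$-connection $\nabla_0\oplus\nabla_0'$. Conjugating $\nabla\oplus\nabla'$ by $g$ yields another connection over $\nabla_{S_r}$, horizontal for $\Phi\oplus\Phi'$, with the same image $\nabla_0\oplus\nabla_0'$ in $\Omega_0$, hence equal to $\nabla\oplus\nabla'$ by the uniqueness just established; reading off the relevant component, $f$ is compatible with $\nabla$ and $\nabla'$. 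For the $\Gamma$-equivariant statement, let $(\u M,\nabla_0)$ carry a $\Gamma'$-strict horizontal $\Gamma$-action and let $\nabla$ be its unique extension. For $\gamma\in\Gamma$ the connection $\nabla^\gamma:=(\gamma^{-1}\otimes d\gamma^{-1})\circ\nabla\circ\gamma$ again lies over $\nabla_{S_r}$ (as $\gamma$ is a PD-automorphism of $S\rr$) and is horizontal for $\Phi$ (as $\gamma$ commutes with $\Phi$); since $d\gamma$ stabilises $\Omega_0=\FS\,du$ and the given action is horizontal for $\nabla_0$, the image of $\nabla^\gamma$ in $\Omega_0$ is $\nabla_0$, whence $\nabla^\gamma=\nabla$ by uniqueness. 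Thus the $\Gamma$-action is horizontal for $\nabla$; strictness is the identical condition on both sides, and full faithfulness of the equivariant functor is inherited from the non-equivariant one.

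The main obstacle is the invertibility of $1-\UUU_i$: the element $t_i$ is not topologically nilpotent in $S\rr$ for its $p$-adic topology, so — just as in the $d\ge2$ part of the proof of Proposition~\ref{Pr:WinConn} — one is forced to descend first to the ring $T_r=\FS[[E_r^p/p]]$, using $\varphi(E_r)\in pT_r$ to factor $\varphi$ through $T_r$, before the nilpotence of $(d\varphi)_1$ along the $dt_i$-directions can be put to work. Everything else is book-keeping along the lines already present in that proof.
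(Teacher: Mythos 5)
Your proposal is correct and is essentially the paper's own argument: the paper likewise decomposes $\wh\Omega_\FS=\Omega_0\oplus\Omega_1$ with $\Omega_1=\wh\Omega_{\FS/W(k)[[u]]}$, notes that $(d\varphi)_1$ preserves this decomposition and is topologically nilpotent on $\Omega_1$ because $\varphi(t_i)=t_i^p$, and then invokes the proof of Proposition \ref{Pr:WinConn} (including the descent to $T=\FS[[E^p/p]]$) to solve uniquely for the $dt_i$-components. You have simply written out the details that the paper leaves implicit, including the full-faithfulness and $\Gamma$-equivariance via the uniqueness of the extension.
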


\begin{proof}
We have $\wh\Omega_\FS=\Omega_0\oplus\Omega_1$ with
$\Omega_1=\wh\Omega_{\FS/W(k)[[u]]}$. The endomorphism
$(d\varphi)_1=p^{-1}d\varphi$ of $\wh\Omega_\FS$ preserves this decomposition, 
and its restriction to $\Omega_1$
is topologically nilpotent due to the choice $\varphi(t_i)=t_i^p$. 
The lemma now follows from the proof of Proposition \ref{Pr:WinConn}.
\end{proof}

\subsection{Construction of strict actions}

Next we study how $p$-divisible groups, or equivalently filtered Dieudonn\'e crystals
over $R_r$ are related to windows with strict actions of $\Gamma_r$.
Recall that $r\ge 1$.

\begin{lemma}
\label{Le:DF2WinConnGamma}
The evaluation functor of Proposition $\ref{Pr:DF2WinConn}$ extends to an 
exact functor
\[
\DF(\Spf R_r)\to\Win(\u{S}_r)^{\nabla,\Gamma_r}
\]
that preserves duality.
\end{lemma}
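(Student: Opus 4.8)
The plan is to build the functor $\DF(\Spf R_r)\to\Win(\u S_r)^{\nabla,\Gamma_r}$ by combining the evaluation equivalence of Proposition \ref{Pr:DF2WinConn} with the observation that the Galois group $\Gamma_0$ acts naturally on the whole situation over $R_r$ via its action on the divided power frame. First I would recall that by Proposition \ref{Pr:DF2WinConn} the functor $\u\scrD\mapsto(\u M,\nabla)$ with $M=\scrD_{S_r}$ is an exact equivalence $\DF(\Spf R_r)\to\Win(\u S_r)^\nabla$, compatible with duality. So the content of the lemma is that an object of $\DF(\Spf R_r)$ \emph{canonically} carries a horizontal action of $\Gamma_r$ on its associated window which is $\Gamma_r$-strict, and that morphisms of filtered Dieudonn\'e crystals induce $\Gamma_r$-equivariant morphisms.

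The key step is to produce the action. The point is that $\Gamma_r=\Gal(F_\infty/F_r)$ acts on $R_r=\OOO_{F_r}[[t_1,\dots,t_d]]$ — trivially, since $\Gamma_r$ fixes $F_r$ and the $t_i$ — but acts \emph{nontrivially} on the frame $\u S_r$, through the action of $\Gamma_0$ on $\FS$ given by $\gamma(1+u)=(1+u)^{\chi(\gamma)}$, which I described earlier induces strict frame automorphisms of $\u S_r$. Concretely, for $\gamma\in\Gamma_r$ we have a commutative square: $\gamma$ acts on $S_r$, reducing to the identity on $R_r=S_r/\Filone S_r$, and it intertwines $\varphi$ and $\nabla_{S_r}$. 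Given a filtered Dieudonn\'e crystal $\u\scrD$ over $\Spf R_r$, pulling back along $\gamma\colon \Spf R_r\to\Spf R_r$ — which is the identity — gives $\u\scrD$ back, but pulling back the \emph{window} $M=\scrD_{S_r}$ along the PD-thickening automorphism $\gamma\colon (\Spf R_r\to\Spf S_r)\to(\Spf R_r\to\Spf S_r)$ gives a canonical isomorphism $\gamma^*(\u M,\nabla)\xrightarrow{\sim}(\u M,\nabla)$ by the crystal property, i.e.\ since $\scrD$ is a crystal its value on $S_r$ comes equipped with the transition isomorphisms for any two PD-structures over $R_r$, in particular for $(S_r,\gamma)$ vs.\ $(S_r,\id)$. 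This is a window automorphism over the frame automorphism $\gamma$ of $\u S_r$, horizontal with respect to $\nabla$, and these isomorphisms compose correctly (cocycle condition) because the crystal transition maps do. Via \eqref{bcfunc} this is exactly an action of $\Gamma_r$ on $(\u M,\nabla)$ over the $\Gamma_r$-action on $\u S_r$, horizontal in the required sense.

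Next I would check $\Gamma_r$-strictness: we need $\Gamma_r$ to act trivially on $M\otimes_{S_r}\FS/u\FS$. By Remark \ref{Re:strict} it suffices to check this after the further reduction to $W(k)=\FS/(u,t_1,\dots,t_d)$, i.e.\ on $M\otimes_{S_r}W(k)$, which is the value of $\scrD$ on $W(k)\to k$ (the map $S_r\to W(k)$ induced by $u\mapsto 0$, $t_i\mapsto 0$). On this fiber the transition isomorphism $\gamma^*\u M\to\u M$ is the identity because $\gamma$ acts trivially modulo the ideal $(u,t_1,\dots,t_d)$ of $\FS$ — indeed $\gamma(1+u)-(1+u)=(1+u)((1+u)^{\chi(\gamma)-1}-1)\in u\FS$ and $\gamma(t_i)=t_i$, so $\gamma\equiv\id$ on $\FS/(u,\mathbf t)$ and hence on $W(k)$ — and the transition isomorphism for a PD-thickening automorphism reducing to the identity on the base is itself the identity on the corresponding fiber. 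This gives the strictness. Exactness is inherited from Proposition \ref{Pr:DF2WinConn} together with exactness of the underlying functor (the $\Gamma_r$-action is extra structure preserved under the exact sequences), and compatibility with duality follows from the fact, already noted in Remark \ref{Re:Dual}, that over $\u S_r$ the dual action is just the contragredient, which the evaluation equivalence respects since it respects duality on the underlying objects and the transition isomorphisms dualize correctly.

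The main obstacle I expect is making the ``crystal transition isomorphism is a window automorphism over the frame automorphism $\gamma$, horizontal, and satisfying the cocycle condition'' argument fully precise — i.e.\ carefully identifying the $\Gamma_r$-action on the frame $\u S_r$ with the groupoid of PD-thickenings of $R_r$ and verifying that $F$, $\Fil$, $\Phi_1$ and $\nabla$ are all respected. All the ingredients are in place (crystal = compatible system over $\Cris$, the frame automorphisms are strict and commute with $\varphi$ and $\nabla_{S_r}$), but one must be attentive that the automorphism $\gamma$ of $\u S_r$ is \emph{strict} on the window side even though it is a nontrivial ring automorphism, so that an honest action of $\Gamma_r$ (not a twisted one) results; the twist phenomena only appear over $\u\FS_r$ for the \emph{dual}, as in Remark \ref{Re:Dual}, and do not affect this lemma.
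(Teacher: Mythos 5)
Your proposal is correct and follows essentially the same route as the paper: each $\gamma\in\Gamma_r$ is an automorphism of the PD-thickening $S_r\to R_r$ commuting with $\varphi$ and trivial on $R_r$, so crystal functoriality yields the horizontal semilinear action, and strictness is checked by evaluating the crystal at a quotient PD-thickening on which $\Gamma_r$ acts trivially (the paper uses $\FS/u\FS\to k[[t_1,\ldots,t_d]]$ directly, while you reduce further to $W(k)\to k$ via Remark \ref{Re:strict} --- an immaterial difference).
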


\begin{proof}
Let $\u\DDD\in\DF(\Spf(R_r))$ map to $(\u M,\nabla)\in\Win(\u S_r)^\nabla$, so in particular
$M=\DDD_{S_r/R_r}$.
Since each $\gamma\in\Gamma_r$ defines an endomorphism of the PD-extension $S_r\to R_r$
that commutes with $\varphi$ and is trivial on $R_r$, we obtain a horizontal action of $\Gamma_r$
on $\u M$. The action is strict because $M\otimes_{S_r}\FS/u\FS$ is the value of $\DDD$ at the
PD-extension $\FS/u\FS=W(k)[[t_1,\ldots,t_d]]\to k[[t_1,\ldots,t_d]]$, 
which is a quotient of $S_r\to R_r$ on which $\Gamma_r$ acts trivially.
\end{proof}

\begin{proposition}
\label{Pr:BT-DF-WinGamma}
The natural functors
\[
\pdiv(R_r) \to
\DF(\Spf R_r) \to
\Win({\u S}\rr)^{\nabla_0,\Gamma_r} \xrightarrow j
\Win({\u S}\rr)^{\nabla_0}
\]
are all equivalences of categories. $($The first functor is contravariant.$)$
\end{proposition}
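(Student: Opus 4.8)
The plan is to combine the equivalences already established and reduce the whole statement to showing that the forgetful functor $j$ is an equivalence. First, $R_r=\OOO_{F_r}[[t_1,\ldots,t_d]]$ is a complete regular local ring with perfect residue field $k$ of characteristic $p\ge 3$ and fraction field of characteristic zero; since $\varepsilon^{(r)}-1$ is a uniformizer of $\OOO_{F_r}$, the presentation $R_r=\FS/E_r\FS$ is of the type considered in \S\ref{Subse:DieuReg}, and Corollary \ref{Co:BT2DF} shows that the first functor $\pdiv(R_r)\to\DF(\Spf R_r)$ is an equivalence. Denote by $G$ the second functor $\DF(\Spf R_r)\to\Win(\u{S}\rr)^{\nabla_0,\Gamma_r}$. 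By Lemma \ref{Le:DF2WinConnGamma} the functor $G$, followed by the passage to $\Win(\u{S}\rr)^\nabla$ that forgets the $\Gamma_r$-action, is the evaluation equivalence of Proposition \ref{Pr:DF2WinConn}; and the equivalences of Lemma \ref{Le:nabla0} between the $\nabla$- and $\nabla_0$-versions are compatible with forgetting the $\Gamma_r$-action. Hence $jG$ is the composition of the equivalence of Proposition \ref{Pr:DF2WinConn} with the equivalence of Lemma \ref{Le:nabla0}, so it is an equivalence. It follows that $G$ is fully faithful (using that $j$ is faithful) and that $G$ is an equivalence once $j$ is. Thus it remains only to prove that $j$ is an equivalence.

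Essential surjectivity of $j$ is immediate, since $jG$ is essentially surjective, and $j$ is obviously faithful. For fullness, encode a homomorphism $f\colon\u M\to\u M'$ by the automorphism $\left(\begin{smallmatrix}1&0\\ f&1\end{smallmatrix}\right)$ of $\u M\oplus\u M'$, as in the proof of Proposition \ref{Pr:lift-pn}; since $\u M\oplus\u M'$ with its componentwise action is again an object of $\Win(\u{S}\rr)^{\nabla_0,\Gamma_r}$, it suffices to show that $j$ is full on isomorphisms. For that it is enough to prove that any object $(\u M,\nabla_0)$ of $\Win(\u{S}\rr)^{\nabla_0}$ admits \emph{at most one} $\Gamma_r$-strict horizontal action of $\Gamma_r$: granting this, an isomorphism $g\colon j(\u M,\nabla_0,\rho)\xrightarrow{\sim}j(\u M',\nabla_0',\rho')$ in $\Win(\u{S}\rr)^{\nabla_0}$ transports $\rho$ to a $\Gamma_r$-strict horizontal action $g\rho g^{-1}$ on $(\u M',\nabla_0')$, which must equal $\rho'$, so $g$ is $\Gamma_r$-equivariant and hence lifts to $\Win(\u{S}\rr)^{\nabla_0,\Gamma_r}$.

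It remains to prove this uniqueness, and this is the step I expect to be the main obstacle. Let $\rho_1,\rho_2$ be $\Gamma_r$-strict horizontal actions on $(\u M,\nabla_0)$ and fix $\gamma\in\Gamma_r$. Write $N\colon M\to M$ for the differential operator attached to $\nabla_0$ and the derivation $\partial=(1+u)\,\tfrac{d}{du}$; since $\partial\circ\gamma=\chi(\gamma)\cdot\gamma\circ\partial$ on $\FS$, horizontality of $\rho_i$ translates into $N\circ\rho_i(\gamma)=\chi(\gamma)\cdot\rho_i(\gamma)\circ N$. Then $\theta:=\rho_1(\gamma)\circ\rho_2(\gamma)^{-1}$ is an $S_r$-linear automorphism of $M$ that preserves $\Filone M$, commutes with $\Phi$ and $\Phi_1$ (the $\Gamma_r$-action on $\u{S}\rr$ being by strict frame automorphisms), commutes with $N$, and is congruent to the identity on $M\otimes_{S_r}\FS/u\FS$ by strictness. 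One has to show that any such $\theta$ equals $1$; once this is done for every $\gamma$, the actions $\rho_1$ and $\rho_2$ coincide, completing the proof. This assertion about $\theta$ is the integral ``cyclotomic'' analogue of the fact that over the Robba ring a finite height $(\varphi,\Gamma)$-module is determined by its connection $\nabla=t\,(1+u)\tfrac{d}{du}$, cf.\ the introduction and \cite[\S4.1]{Berger:RepDiff}. The real difficulty is that $t=\log(1+u)$ does not lie in $S_r$, so the relation $N\theta=\theta N$ cannot simply be exponentiated; instead one propagates the congruence $\theta\equiv 1$ to successively higher order, using compatibility of $\theta$ with $\Phi_1$ and with $N$ together with the precise form $\varphi(1+u)=(1+u)^p$ of the Frobenius lift to control the powers of $p$ appearing in the iteration, and concludes $\theta=1$ from the $p$-adic completeness of $M$.
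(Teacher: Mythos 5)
Your reduction is exactly the paper's: the first arrow is an equivalence by Corollary \ref{Co:BT2DF}, the composite $jG$ is an equivalence by Proposition \ref{Pr:DF2WinConn} and Lemma \ref{Le:nabla0}, and everything comes down to full faithfulness of $j$, which (via the direct-sum trick, as in the proof of Proposition \ref{Pr:lift-pn}) reduces to the statement that a horizontal window automorphism $\theta$ of an object $(\u M,\nabla_0)$ of $\Win(\u S\rr)^{\nabla_0}$ which commutes with $\Phi,\Phi_1$ and induces the identity on $M\otimes_{S_r}\FS/u\FS$ must equal the identity. But this statement --- which you yourself flag as ``the main obstacle'' --- is precisely where the content of the proposition lies, and you do not prove it: you only sketch a hoped-for $p$-adic iteration (``propagate the congruence $\theta\equiv 1$ to successively higher order using $\Phi_1$ and $N$''), without saying how compatibility with $\Phi_1$ or with $N$ actually improves the congruence or how the denominators are controlled. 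As written this is a plan, not an argument, so the proposal has a genuine gap at its essential step.

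The paper closes this gap with no computation at all: since the composite $\pdiv(R_r)\to\Win(\u S\rr)^{\nabla_0}$ is already known to be an equivalence, the automorphism $\theta$ corresponds to an automorphism of the associated $p$-divisible group $G$ over $R_r$ inducing the identity on the special fibre $G_k$ (here $\u M\otimes_{\u S\rr}\u W(k)$ recovers $G_k$ by classical Dieudonn\'e theory, and strictness may be tested on $M\otimes_{S_r}W(k)$ by Remark \ref{Re:strict}); rigidity of $p$-divisible groups, \cite[II.3.3.21]{Messing} and its proof, then forces $\theta=\id$, and hence at most one strict horizontal lift of each $\gamma\in\Gamma_r$ exists. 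If you insist on a module-theoretic argument, the mechanism of the paper's Lemma \ref{Le:ell-ff} is the right one and is simpler than your proposed iteration: base change along the injection $S_r\to S'=L[[u]]$ with $L=\Frac(W(k)[[t_1,\ldots,t_d]])$, note that $N(M')\subseteq tM'\subseteq uM'$ and that $N$ acts as multiplication by $n$ on $u^nM'/u^{n+1}M'$, and deduce from $N\theta=\theta N$ that the defect $\theta-1$, if nonzero and concentrated in $u$-degree $n\ge 1$, would satisfy $n\cdot\overline{(\theta-1)}=0$ over the characteristic-zero field $L$, a contradiction. Either way, some such argument must actually be written down; your sketch does not yet contain it.
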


\begin{proof}
The first arrow is an equivalence by Corollary \ref{Co:BT2DF}.
The second arrow exists by Lemma \ref{Le:DF2WinConnGamma},
and its composition with the forgetful functor $j$ is an equivalence
by Proposition \ref{Pr:DF2WinConn} and Lemma \ref{Le:nabla0}.
It remains to show that $j$ is fully faithful. More precisely we show that for an object 
$(\u M,\nabla_0)$ of $\Win(\u{S}\rr)^{\nabla_0}$ and for $\gamma\in\Gamma_r$ 
there is at most one horizontal automorphism
$\gamma_M:\u M\to\u M$ over the automorphism $\gamma$ of\/ $\u{S}\rr$
such that $\gamma_M$ induces the identity on $M\otimes_{S_r}\FS/u\FS$,
or equivalently on $M\otimes_{S_r}W(k)$; see Remark \ref{Re:strict}.
If the given pair $(\u M,\nabla_0)$ corresponds to $G\in\pdiv(R_r)$ by the composite equivalence,
then $\u M\otimes_{\u S\rr}\u W(k)$ corresponds to the special fibre
$G_k$ by classical Dieudonn\'e theory,
and $\gamma_M$ corresponds to an automorphism of $G$ that induces the identity of $G_k$. 
Thus $\gamma_M$ is unique by the rigidity of $p$-divisible groups;
see \cite[II.3.3.21]{Messing} and its proof.
\end{proof}

\begin{lemma}
\label{Le:ell-ff}
The forgetful functor $\Win(\u S\rr)^{\nabla_0,\Gamma_r}\to\Win(\u S\rr)^{\Gamma_r}$ is fully faithful.
\end{lemma}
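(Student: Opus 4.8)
The plan is to reduce the full faithfulness to the following assertion: if $(\u M,\nabla_0)$ and $(\u M',\nabla_0')$ are objects of $\Win(\u S\rr)^{\nabla_0,\Gamma_r}$ and $f\colon \u M\to\u M'$ is a $\Gamma_r$-equivariant morphism of windows, then $f$ is horizontal. Faithfulness of the forgetful functor being clear, this assertion implies the lemma. To prove it, set $\delta:=\nabla_0'\circ f-(f\otimes\id)\circ\nabla_0\colon M\to M'\otimes_\FS\Omega_0$; I must show $\delta=0$. First I would note that $\delta$ is $S_r$-linear — this uses only that $f$ is $S_r$-linear and that $\nabla_0,\nabla_0'$ are connections over the common $\nabla_{S_r,0}$, so the Leibniz contributions cancel — and that, because $f$ commutes with the $\Gamma_r$-actions $(\gamma_M)$, $(\gamma_{M'})$ and because $\nabla_0,\nabla_0'$ are horizontal for $\Gamma_r$, one has $\delta\circ\gamma_M=(\gamma_{M'}\otimes d\gamma)\circ\delta$ for all $\gamma\in\Gamma_r$. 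Writing $H:=\Hom_{S_r}(M,M'\otimes_\FS\Omega_0)$ and equipping it with the semilinear $\Gamma_r$-action $\psi\mapsto(\gamma_{M'}\otimes d\gamma)\circ\psi\circ\gamma_M^{-1}$, this says precisely that $\delta\in H^{\Gamma_r}$. (The compatibility $\delta\circ\Phi=(\Phi'\otimes d\varphi)\circ\delta$ also holds, but I do not expect to use it.)

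I would then kill $H^{\Gamma_r}$ along the $J_0$-adic filtration of $H$, where $J_0=\ker(S_r\to W(k))$. The inputs are: $J_0=(u,t_1,\dots,t_d)S_r$ (because modulo this ideal $S_r$ becomes the $p$-adic completion of the divided power envelope of $(p)$ in $W(k)$, i.e.\ $W(k)$ itself); $S_r$ is $\FS$-flat, so $J_0^n/J_0^{n+1}$ is the free $W(k)$-module on the degree-$n$ monomials in $u,t_1,\dots,t_d$; the action of $\gamma\in\Gamma_r$ on $S_r$ satisfies $\gamma(u)=(1+u)^{\chi(\gamma)}-1\equiv\chi(\gamma)\,u$ and $\gamma(t_i)=t_i$ modulo $J_0^2$, while $d\gamma$ acts on $\Omega_0=\FS\,du$ by the unit $\chi(\gamma)(1+u)^{\chi(\gamma)-1}$, which is $\chi(\gamma)$ modulo $J_0$; finally, strictness of the two actions means $\Gamma_r$ acts trivially on $M\otimes_{S_r}W(k)$ and on $M'\otimes_{S_r}W(k)$, hence on $H\otimes_{S_r}W(k)$ through $\chi$. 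Putting these together, $\Gamma_r$ acts on each $J_0^nH/J_0^{n+1}H=(J_0^n/J_0^{n+1})\otimes_{W(k)}(H\otimes_{S_r}W(k))$ diagonalisably, with eigencharacters among $\chi,\chi^2,\dots,\chi^{n+1}$. Since $\chi$ identifies $\Gamma_r$ with the torsion-free group $1+p^r\ZZ_p$, no positive power of $\chi$ is trivial on $\Gamma_r$; as the graded pieces are free $W(k)$-modules, hence $\ZZ_p$-flat, this forces $(J_0^nH/J_0^{n+1}H)^{\Gamma_r}=0$ for every $n$. Because the $\Gamma_r$-action preserves the $J_0$-adic filtration, an induction on $n$ then yields $\delta\in\bigcap_n J_0^nH$.

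To finish I must check that $H$ is $J_0$-adically separated. As $H$ is finite projective over $S_r$ it is a direct summand of some $S_r^N$, so it suffices that $\bigcap_n J_0^nS_r=0$. I would argue from the description of $S_r$ as the $p$-adic completion of the free $\FS$-module with basis the divided powers of $E_r$: reducing mod $p$ gives a free module over $\FS/p\FS=k[[u,t_1,\dots,t_d]]$, which is $(u,t_i)$-adically separated, so any element of $\bigcap_n J_0^nS_r$ lies in $pS_r$; since each $S_r/J_0^nS_r$ is $\ZZ_p$-flat (again by the same description), the intersection is in fact $p$-divisible, and it vanishes because $S_r$ is $p$-adically separated. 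I expect the genuinely delicate points to be exactly the identification $J_0=(u,t_1,\dots,t_d)S_r$ together with the computation of the $\Gamma_r$-weights on the associated graded of $S_r$ — where one has to keep track of the divided powers $E_r^{[n]}$ and the resulting $p$-power denominators — and this separatedness statement; the eigencharacter vanishing itself is purely formal, and notably the window structure (the operators $\Phi,\Phi_1$) plays no role in the argument.
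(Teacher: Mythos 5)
Your overall mechanism is the right one, and it is the mechanism of the paper's proof: reduce fullness to horizontality of a $\Gamma_r$-equivariant map, form the $S_r$-linear difference $\delta$ with its $\chi$-twisted equivariance, and kill it degree by degree using that no nontrivial power of $\chi$ on $\Gamma_r$ fixes a nonzero vector of a torsion-free module. The gap lies in the structural claims about $S_r$ on which your grading rests --- exactly the points you yourself flagged as delicate --- and those claims are false. Already for $d=0$ and $K'=F_r$, put $e=p^{r-1}(p-1)=\deg_u E_r$ and recall that $E_r$ is an Eisenstein polynomial in $u$, so that $S_r$ is the $p$-adic completion of $\bigoplus_{i\ge 0}W(k)\,u^i/\lfloor i/e\rfloor!$. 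The element $x=u^{pe}/p!$ lies in $S_r$ (it differs from $E_r^{[p]}$ by an element of $\FS$), it is killed by $S_r\to W(k)$, but it does not lie in $uS_r$, because $u^{pe-1}/p!\notin S_r$: the coefficient of $u^{pe-1}$ of any element of $S_r$ lies in $\lfloor(pe-1)/e\rfloor!^{-1}W(k)=W(k)$, while $v_p(1/p!)=-1$. Hence $J_0\ne(u,t_1,\ldots,t_d)S_r$. The same element shows that $S_r$ is not $\FS$-flat: the relation $p\cdot(u^{pe}/p!)=u\cdot\bigl(u^{pe-1}/(p-1)!\bigr)$ is a Koszul relation between $p$ and $u$ in $S_r$ that is not an $S_r$-multiple of $(u,-p)$, so $\Tor_1^{\FS}(S_r,k)\ne 0$. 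Consequently $\gr_{J_0}S_r$ is not the symmetric algebra on $u,t_1,\ldots,t_d$ over $W(k)$; the divided powers produce $p$-torsion in the graded pieces, and torsion-freeness is precisely what your eigencharacter step ``$(\chi^m(\gamma)-1)a=0\Rightarrow a=0$'' requires. Your separatedness argument collapses at the same point, since it too starts from $J_0\subseteq(u,t_1,\ldots,t_d)S_r+pS_r$.

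The paper runs the same weight argument, but only after passing to a ring in which the divided powers are harmless: it base-changes along the injective $\Gamma_r$-equivariant map $S_r\to S'=L[[u]]$ with $L=\Frac(W(k)[[t_1,\ldots,t_d]])$, and extracts the $u$-adic leading term of the difference $\Delta$ (equivalently of your $\delta$). Strictness forces $\Gamma_r$ to act trivially on $M'/uM'$ and through $\chi^n$ on $u^nM'/u^{n+1}M'$, the twist contributes one more factor of $\chi$, and $\chi^{n+1}(\gamma)a=a$ has no nonzero solution $a\in L$ for $\gamma\ne 1$; since $S'$ is a power series ring over a field of characteristic zero, leading terms and separatedness are unproblematic there. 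So to repair your argument you must either determine the actual structure of $\gr_{J_0}S_r$ (it is not what you assert) or, as the paper does, first pass to such a torsion-free auxiliary ring before grading.
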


\begin{proof}
We have to show that for each $\u M\in\Win(\u S\rr)^{\Gamma_r}$ there is at most one 
$\nabla_0:M\to M\otimes_\FS\Omega_0$ which makes $\u M$ into an object of 
$\Win(\u S\rr)^{\nabla_0,\Gamma_r}$.
The evaluation of $\nabla_0$ at $(u+1)(d/du)$ gives a differential operator
$N:M\to M$ with $N\circ\gamma=\chi(\gamma)\gamma\circ N$ for $\gamma\in\Gamma_r$.
The difference of two choices of $\nabla_0$ gives an $S$-linear map $\Delta:M\to M$
with $\Delta\circ\gamma=\chi(\gamma)\gamma\circ\Delta$.
Let $L=\Frac(W(k)[[t_1,\ldots,t_d]])$ and $S'=L[[u]]$.
Then $\Gamma_r$ acts on $S'$, and we have an injective equivariant homomorphism $S_r\to S'$.
Let $\Delta':M'\to M'$ be the scalar extension of $\Delta$ to $S'$.
If $\Delta\ne 0$ we choose $n\ge 0$ maximal such that $\Delta'(M')\subseteq u^nM'$.
Then $\Delta'$ induces a non-zero $L$-linear map $\bar\Delta':M'/uM'\to u^nM'/u^{n+1}M'$.
Since the group $\Gamma_r$ acts strictly on $M$, it acts trivially on $M'/uM'$
and acts via $\chi^n$ on $u^nM'/u^{n+1}M'$. 
Since for $\gamma\ne 1$ the equation $\chi^{n+1}(\gamma)a=a$ 
has no non-zero solution $a\in L$, it follows that $\bar\Delta'=0$, a contradiction.
Thus $\Delta=0$, and $\nabla_0$ is unique.
\end{proof}

\subsection{Construction of the connection}\label{ConnConst}

We want to show that the forgetful functor of Lemma \ref{Le:ell-ff} is essentially
surjective, i.e.\ that every $\u S\rr$-window with a strict $\Gamma_r$-action carries
a natural connection. To this end, we need some preparations. 
As earlier let $u_0=\varphi^r(u_r)$, which is independent of $r$.
We fix $r\ge 1$ and write $u:=u_r$ and $\FS:=\FS_r$ for simplicity.

\begin{lemma}
\label{Le:FSWinCont}
For $\u M\in\Win(\u\FS\rr)^{\Gamma_r}$ the induced action of\/ $\Gamma_r$ on $M/u_0 M$ is trivial.
\end{lemma}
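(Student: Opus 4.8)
The plan is to leverage the fact that $\Gamma_r$ acts trivially on $M/uM$ (the strictness hypothesis, using Remark \ref{Re:strict} to pass between $M/uM$ and $M\otimes_\FS W(k)$) and to bootstrap this to triviality on $M/u_0M$, where $u_0=\varphi^r(u) = E_1E_2\cdots E_r\cdot u$ is divisible by $u$ but also involves the higher $E_i$. The key structural input is Lemma \ref{Le:GammaFS}: the action of $\Gamma_0$ on $\FS$ preserves the ideal $\varphi^r(u)\FS = u_0\FS$, and the induced action of $\Gamma_r$ on $\FS/u_0\FS$ is \emph{trivial}. So $\Gamma_r$ acts $\FS/u_0\FS$-linearly on $M/u_0M$, and the problem becomes showing that an $\FS/u_0\FS$-linear automorphism of $M/u_0M$ commuting with $\Phi$ (mod $u_0$) and reducing to the identity modulo $u$ must be the identity.

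First I would set up the deformation/filtration argument along the chain of ideals $uM \supseteq u\,u_1M \supseteq u\,u_1u_2M\supseteq\cdots$, or more uniformly work modulo successive powers by writing $u_0 = u\cdot w$ with $w = E_1\cdots E_r \in \FS$, and argue that for each $\gamma\in\Gamma_r$ the automorphism $\gamma_M - \id$ is ``divisible'' by higher and higher powers of the relevant elements. The mechanism is the one used in Remark \ref{Re:strict}: choosing a basis of $M$ in the image of $\Phi_1$, the matrix $A$ of $F$ satisfies $pA^{-1}\in M_n(\FS)$ (since $1\otimes\varphi$ has cokernel killed by $E$, hence $A^{-1}$ has at worst an $E$ in denominators, and one uses that on $M/u_0M$ the relevant denominators become units — here one must be careful, since $E = E_r$ need not be a unit mod $u_0$; the clean statement is rather that $\varphi$ raises the ideal $(u)$, or $(u_0)$, to a higher power). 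An automorphism $\gamma_M$ fixing $M/uM$ is given by a matrix $C \equiv I \bmod u$, say $C = I + u^mC'$ with $C'\not\equiv 0 \bmod u$ and $m\ge 1$ maximal; the commutation with $\Phi$ forces a relation between $C'$ and $\varphi(C')$, and since $\varphi(u^m)\in (u^{pm})\subseteq (u^{m+1})$, comparing lowest-order terms shows $u^mC' \equiv 0$ modulo a higher power of $u$ — contradiction unless $C' \equiv 0$. Iterating, $C \equiv I$ modulo every power of $u$, and since $M/u_0M$ is separated for the $u$-adic topology (as $\FS/u_0\FS = \bigoplus_{i} W(k)[[t_1,\ldots,t_d]]\cdot(\text{stuff})$ is $u$-adically separated — each quotient by $u^j$ being torsion-free, as in Remark \ref{Re:strict}), we get $C \equiv I \bmod u_0$, i.e.\ $\gamma_M$ is trivial on $M/u_0M$.

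The main obstacle will be bookkeeping the interaction between the two pieces of data: the $\varphi$-semilinearity (which gives the order-raising needed for the iteration) and the ring $\FS/u_0\FS$, which is not a domain but a product-like ring where $u$ and $w = u_0/u$ are zero-divisors. The cleanest route is probably not to work mod $u_0$ directly but to observe that it suffices to prove triviality of $\Gamma_r$ on $M/u_0M$ after the faithfully-flat-type reduction of Remark \ref{Re:strict}, then to run the $u$-adic argument above verbatim — the key facts being that $\varphi(u)=(1+u)^p-1 \in (u)$ with $\varphi(u)/u$ a unit, so $\varphi$ does send $(u^j)$ into $(u^{j+1})$ for $j\ge 1$ after passing to $\FS/(p)$ or using that denominators $pA^{-1}$ are integral, exactly as in the torsion-free-quotients argument of Remark \ref{Re:strict}. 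I would model the write-up closely on that remark's proof, replacing the ideal $J=(t_1,\ldots,t_d)$ there with the ideal $(u)$ here and the target ring $\FS/u\FS$ with $\FS/u_0\FS$, and invoking Lemma \ref{Le:GammaFS} to guarantee that $\Gamma_r$ acts $\FS/u_0\FS$-linearly in the first place.
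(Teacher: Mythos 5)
There is a genuine gap in the core of your iteration. Your opening reduction is fine: by Lemma \ref{Le:GammaFS}, every $\gamma\in\Gamma_r$ acts trivially on $\FS/u_0\FS$, so its action on $M/u_0M$ is $\FS/u_0\FS$-linear and commutes with $\Phi$, and the task is to show that such an automorphism congruent to the identity mod $u$ is the identity mod $u_0$. But the engine you propose --- write $C=I+u^mC'$ and use ``$\varphi(u^m)\in(u^{pm})\subseteq(u^{m+1})$'' to raise the $u$-adic order, then finish by $u$-adic separatedness --- rests on a false premise. For the cyclotomic Frobenius one has $\varphi(u)=(1+u)^p-1=uE_1$ with $E_1\equiv p\not\equiv 0\bmod u$, so $\varphi(u)/u$ is \emph{not} a unit and $\varphi(u^m)\notin(u^{m+1})$; the lowest-order comparison you invoke therefore does not increase the $u$-adic order of $C-I$, and the induction stalls at $m=1$. (This is precisely the phenomenon the whole paper is organized around: $(d\varphi)_1$ is not topologically nilpotent for this $\varphi$.) The fallback you mention, passing to $\FS/(p)$ where $\varphi(u)=u^p$, only yields triviality modulo $(p,u_0)$, and climbing back up the $p$-power tower in the style of Remark \ref{Re:strict} would need the quotients $(\FS/u_0\FS)/(u^j)$ to be $p$-torsion free, which is false: already for $r=1$ one has $(u_0,u^2)=(pu,u^2)$, so $u$ is a nonzero $p$-torsion class in $\FS/(u_0,u^2)$. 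So neither route as written closes the argument.

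The correct gain is not in powers of $u$ but along the pairwise coprime primes $E_1,\ldots,E_r$ dividing $u_0/u$ --- exactly the chain $uM\supseteq uE_1M\supseteq uE_1E_2M\supseteq\cdots\supseteq u_0M$ that you mention in passing and then abandon. The paper's proof is a finite induction on $0\le s\le r$ showing $(\gamma-1)M\subseteq\varphi^s(u)M$: granting the case $s-1$, one has $(\gamma-1)\Phi(x)=\Phi((\gamma-1)x)\in\varphi(\varphi^{s-1}(u))M=\varphi^s(u)M$, so the set $N=\{x\in M:(\gamma-1)x\in E_sM\}$ --- an $\FS$-submodule by the twisted Leibniz rule $(\gamma-1)(ax)=(\gamma-1)(a)x+\gamma(a)(\gamma-1)(x)$ together with Lemma \ref{Le:GammaFS} --- contains $\Phi(M)$, hence contains $\varphi(E_r)M$ by the window axioms; applying the Leibniz rule to $\varphi(E_r)x$ and using that $\varphi(E_r)\notin E_s\FS$ (primality of $E_s$ in the factorial ring $\FS$, $M$ projective) gives $(\gamma-1)x\in E_sM$, and coprimality of $E_s$ with $\varphi^{s-1}(u)$ then gives $(\gamma-1)x\in\varphi^s(u)M$. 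A matrix version of this is also viable and is close to what your remark about $pA^{-1}$ being integral gestures at: from $DA=A\varphi(D)$, multiplying by the matrix $B$ of $V$ and using $AB=\varphi(E_r)\cdot I$ with $\varphi(E_r)\equiv p\bmod u_0$ and the $p$-torsion freeness of $\FS/u_0\FS$, one gains one factor $E_s$ per step. Either way, the bookkeeping must be done in the factors $E_1,\ldots,E_r$, not in powers of $u$, and with that replacement your plan becomes essentially the paper's proof.
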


\begin{proof}
Let $\gamma\in\Gamma_r$.
We show by induction that for $0\le s\le r$ we have $(\gamma-1)M\subseteq\varphi^s(u)M$. 
The case $s=r$ is the assertion of the lemma, while the case $s=0$ is the definition of a strict action.
Let $s\ge 1$ and assume that $(\gamma-1)M\subseteq\varphi^{s-1}(u)M$.
Let
$$
N=\{x\in M\mid \gamma(x)-x\in\varphi^s(u)M\}=
\{x\in M\mid \gamma(x)-x\in E_sM\};
$$
here the second equality holds because $\varphi^s(u)=\varphi^{s-1}(u)\cdot E_s$ 
is a product of coprime factors in the factorial ring $\FS$. For $x\in M$ and $a\in\FS$ we have
\begin{equation}
\label{Eq:gamma-a-x}
(\gamma-1)(ax)=(\gamma-1)(a)\cdot x+\gamma(a)\cdot(\gamma-1)(x).
\end{equation}
Using Lemma \ref{Le:GammaFS} it follows that $N$ is an $\FS$-module.
As $\gamma$ commutes with $\Phi$, the inductive hypothesis implies that $\Phi(M)\subseteq N$. 
Since $\varphi(E_r)M\subseteq \FS\cdot\Phi(M)$ it follows that $\varphi(E_r)M\subseteq N$.
Using \eqref{Eq:gamma-a-x} with $a=\varphi(E_r)$, for each $x\in M$
we get that $\gamma(a)\cdot(\gamma-1)(x)\in E_sM$. 
Since $a\not\in E_s\FS$ and thus $\gamma(a)\not\in E_s\FS$,
it follows that  $(\gamma-1)(x)\in E_sM$ as required.
\end{proof}

We consider the element $t:=\log(1+u_0)\in S_r$,
which exists since $u_0=\varphi^{r-1}(u)E_r$ lies in $\Filone S_r$,
and which is independet of $r$.
A direct calculation using the binomial theorem
to expand $\varphi^n(u_0)=(1+u_0)^{p^n} - 1$
and elementary $p$-adic estimates on binomial coefficients
shows that in $S_r$ we have
$$
t=\lim_{n\to\infty}\frac{\varphi^n(u_0)}{p^n}=u_0\prod_{n\ge 1}\frac{\varphi^n(E_r)}p=u_0\cdot\text{unit}.
$$
We have $\varphi(t)=pt$ and $\gamma(t)=\chi(\gamma)t$ for $\gamma\in\Gamma_0$.
It is easy to see that $u_0^{p-1}\in pS_r$ and thus $t^{p-1}\in pS_r$.

\begin{lemma}
\label{Le:SCont}
For $\gamma\in\Gamma_r$ we have $(\gamma-1)(S_r)\subseteq tS_r$.
\end{lemma}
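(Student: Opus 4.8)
The plan is to prove the inclusion first on the natural generators of $S_r$ over $\FS$ and then extend by $p$-adic continuity. Recall that $\gamma\in\Gamma_r$ acts on $S_r$ compatibly with its action on $\FS$ (the action of $\Gamma_0$ on $\u\FS\rr$ induces one on $\u S\rr$ by strict frame automorphisms), that $\gamma(E_r)=E_r\cdot(\text{unit of }\FS)$ because $\gamma$ carries the ideal $E_r\FS$ onto itself, and hence that $\gamma$ preserves, as a PD-ring, the divided-power envelope $D$ of $E_r\FS$ inside $\FS[1/p]$, where $D=\sum_{n\ge 0}\FS\cdot E_r^{[n]}$ with $E_r^{[n]}=E_r^n/n!$. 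Since $\gamma-1$ is $p$-adically continuous on $S_r$ and $D$ is $p$-adically dense, and since $tS_r=u_0S_r$ is a \emph{closed} ideal of $S_r$ (being a finitely generated ideal of a $p$-adically complete ring, so that $S_r/u_0S_r$ is again $p$-adically complete, hence separated), the Leibniz identity $(\gamma-1)(ab)=\gamma(a)(\gamma-1)(b)+(\gamma-1)(a)b$ reduces everything to showing that $(\gamma-1)(a)\in tS_r$ for $a\in\FS$ and that $(\gamma-1)(E_r^{[n]})\in tS_r$ for all $n\ge 0$.

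For the first assertion, Lemma \ref{Le:GammaFS} applied with $s=r$ says that $\Gamma_r$ acts trivially on $\FS/\varphi^r(u)\FS=\FS/u_0\FS$, so $(\gamma-1)(\FS)\subseteq u_0\FS\subseteq u_0S_r=tS_r$, the last equality because $t=u_0\cdot(\text{unit of }S_r)$. In particular $\delta:=(\gamma-1)(E_r)$ lies in $u_0\FS$, so we may write $\delta=t\delta'$ with $\delta'\in S_r$.

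For the second assertion, since $\gamma$ is a PD-homomorphism and $\gamma(E_r)=E_r+\delta$ with $\delta\in E_r\FS$, the PD axioms give $\gamma(E_r^{[n]})=\sum_{j=0}^{n}E_r^{[n-j]}(\delta^j/j!)$, whence $(\gamma-1)(E_r^{[n]})=\sum_{j=1}^{n}E_r^{[n-j]}(\delta^j/j!)$. As $E_r^{[n-j]}\in S_r$ and $\delta=t\delta'$, it remains to verify that $t^{\,j-1}/j!\in S_r$ for every $j\ge 1$; this is the one delicate point, as one must control the powers of $p$ in the denominator. Using the relation $t^{p-1}\in pS_r$ recorded just before the statement, write $t^{p-1}=pw$ with $w\in S_r$, so that $t^{\,j-1}\in p^mS_r$ with $m=\lfloor (j-1)/(p-1)\rfloor$; on the other hand Legendre's formula gives $v_p(j!)\le (j-1)/(p-1)$ for $j\ge 1$, and since $m>(j-1)/(p-1)-1$ the integers satisfy $v_p(j!)\le m$. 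Hence $t^{\,j-1}/j!=p^{\,m-v_p(j!)}\cdot(\text{$p$-adic unit})\cdot(\text{element of }S_r)\in S_r$, and therefore $\delta^j/j!\in tS_r$.

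Combining these two facts with the Leibniz rule, the density of $D$ in $S_r$, the continuity of $\gamma-1$, and the closedness of $tS_r$, one obtains $(\gamma-1)(S_r)\subseteq tS_r$. The main obstacle is precisely the divisibility $t^{\,j-1}/j!\in S_r$ and the accompanying $p$-adic bookkeeping; the remaining ingredients are formal, though one should pause to record why $D$ is generated as an $\FS$-module by the $E_r^{[n]}$ and why the finitely generated ideal $u_0S_r$ is $p$-adically closed.
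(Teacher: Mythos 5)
Your proof is correct and follows essentially the same route as the paper: both rest on Lemma \ref{Le:GammaFS} (giving $(\gamma-1)(\FS)\subseteq u_0\FS\subseteq tS_r$), the estimate $t^{p-1}\in pS_r$, and reduction to the PD-generators of $S_r$ by density and continuity. The only difference is cosmetic: the paper inducts along the generators obtained by iterating $a\mapsto a^p/p$ on $E_r$, whereas you expand $\gamma(E_r^{[n]})$ directly and verify $t^{j-1}/j!\in S_r$ by the valuation bound $v_p(j!)\le\lfloor (j-1)/(p-1)\rfloor$.
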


\begin{proof}
We have $(\gamma-1)(\FS)\subseteq u_0\FS\subseteq tS_r$ by Lemma \ref{Le:GammaFS}.
Assume that for some $a\in\Filone S_r$ we have $\gamma(a)-a=bt$ with $b\in S_r$.
Then $\gamma(a^p)=(a+bt)^p=a^p+pxt+b^pt^p$ for some $x\in S_r$. 
Since $t^{p-1}\in pS_r$ it follows that $(\gamma-1)(a^p/p)\in tS_r$.
Since the divided power envelope of the ideal $E_r\FS\subseteq\FS$ is generated as an $\FS$-algebra
by the successive iterates of $E_r$ under $a\mapsto a^p/p$, the lemma follows.
\end{proof}

\begin{lemma}
\label{Le:StrictM}
Let $\u M\in\Win(\u S\rr)^{\Gamma_r}$.
\begin{enumerate}
\item\label{Le:StrictM1}
For $\gamma\in\Gamma_r$ and $n\ge 0$ we have $(\gamma-1)^{n+1}(M)\in(t,p^r)^{n}tM$.
\item\label{Le:StrictM2}
For $\gamma\in\Gamma_{r+n}$ with $n\ge 0$ we have $(\gamma-1)(M)\subseteq(t,p)^ntM$. 
\item\label{Le:StrictM3}
For each $n\ge 0$ the action of\/ $\Gamma_r$ on $M/p^nM$ has an open kernel.
\newline\noindent
In particular, the action of $\Gamma_r$ on $M$ is continuous.
\end{enumerate}
\end{lemma}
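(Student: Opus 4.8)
The plan is to prove the three assertions in turn: part~\eqref{Le:StrictM1} is the technical heart, part~\eqref{Le:StrictM2} is deduced from it using that $\chi$ identifies $\Gamma_r$ with $1+p^r\ZZ_p\cong\ZZ_p$ (here $r\ge 1$ and $p>2$ are used), and part~\eqref{Le:StrictM3} then follows formally. The recurring tool is the twisted Leibniz rule $(\gamma-1)(sm)=\gamma(s)\cdot(\gamma-1)(m)+(\gamma-1)(s)\cdot m$ for $s\in S_r$, $m\in M$, together with $\gamma(t)=\chi(\gamma)t$, $\varphi(t)=pt$, the estimate $t^{p-1}\in pS_r$, and the congruence $\chi(\gamma)\equiv 1\pmod{p^r}$ for $\gamma\in\Gamma_r$. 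For part~\eqref{Le:StrictM1} I would fix $\gamma\in\Gamma_r$ and induct on $n$, the base case $n=0$ being $(\gamma-1)(M)\subseteq tM$. To prove the base case, use Proposition~\ref{Pr:WinGammaFS-S} to realize $\u M$ as the base change of an $\u\FS\rr$-window $\u M'$ with a strict $\Gamma_r$-action, so $M=S_r\otimes_\FS M'$ with $\Gamma_r$ acting diagonally; then Lemma~\ref{Le:FSWinCont} gives $(\gamma-1)(M')\subseteq u_0M'$, Lemma~\ref{Le:SCont} gives $(\gamma-1)(S_r)\subseteq tS_r$, and since $tS_r=u_0S_r$ the Leibniz rule applied to simple tensors $s\otimes m'$ yields $(\gamma-1)(M)\subseteq tM$. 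For the inductive step, note $(\gamma-1)(t^{a})=(\chi(\gamma)^{a}-1)t^{a}\in p^{r}t^{a}S_r$; writing a general element of $(t,p^r)^{n-1}tM$ as an $S_r$-combination of the generators $t^{a+1}p^{r(n-1-a)}m$ with $0\le a\le n-1$, the Leibniz rule together with the base case $(\gamma-1)(m)\in tM$ and Lemma~\ref{Le:SCont} for the scalars shows $(\gamma-1)\!\left(t^{a+1}p^{r(n-1-a)}m\right)\in t^{a+2}p^{r(n-1-a)}M+t^{a+1}p^{r(n-a)}M\subseteq(t,p^r)^{n}tM$. Since $(\gamma-1)^{n+1}(M)\subseteq(\gamma-1)\!\left((t,p^r)^{n-1}tM\right)$ by the inductive hypothesis, part~\eqref{Le:StrictM1} follows.

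For part~\eqref{Le:StrictM2}, write $\gamma=\gamma_0^{p^n}$ for some $\gamma_0\in\Gamma_r$ (possible since $\chi(\Gamma_r)=1+p^r\ZZ_p$ and $(1+p^r\ZZ_p)^{p^n}=1+p^{r+n}\ZZ_p$). Setting $D=\gamma_0-1$, expand $\gamma-1=(1+D)^{p^n}-1=\sum_{k\ge 1}\binom{p^n}{k}D^{k}$. By part~\eqref{Le:StrictM1} applied to $\gamma_0$ one has $D^{k}(M)\subseteq(t,p^r)^{k-1}tM$, while $v_p\!\binom{p^n}{k}=n-v_p(k)$. A typical term of $p^{\,n-v_p(k)}(t,p^r)^{k-1}tM$ is $M$ times a monomial $t^{a+1}p^{\,n-v_p(k)+r(k-1-a)}$ with $0\le a\le k-1$, which lies in $(t,p)^{d}$ with $d=n+\big(a+1-v_p(k)+r(k-1-a)\big)\ge n+\big(k-v_p(k)\big)\ge n$, using $r\ge 1$ and $v_p(k)\le k-1$. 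Hence every summand of $(\gamma-1)(M)$ lies in $(t,p)^{n}tM$, which proves part~\eqref{Le:StrictM2}.

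For part~\eqref{Le:StrictM3}, fix $N\ge 0$ and set $m=N(p-1)$. Using $t^{p-1}\in pS_r$, each generator $t^{a+1}p^{m-a}$ with $0\le a\le m$ of the ideal $(t,p)^{m}tS_r$ lies in $p^{e(a)}S_r$ with $e(a)=m-a+\lfloor(a+1)/(p-1)\rfloor$; since $e$ is non-increasing in $a$, we get $e(a)\ge e(m)=\lfloor(m+1)/(p-1)\rfloor=N$, so $(t,p)^{m}tM\subseteq p^{N}M$. By part~\eqref{Le:StrictM2} every $\gamma\in\Gamma_{r+N(p-1)}$ then satisfies $(\gamma-1)(M)\subseteq p^{N}M$, i.e.\ acts trivially on $M/p^{N}M$; as $\Gamma_{r+N(p-1)}$ is open in $\Gamma_r$, the $\Gamma_r$-action on $M/p^{N}M$ has open kernel. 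Since $M$ is finite over the $p$-adically complete ring $S_r$, it is $p$-adically complete and separated, so $M=\invlim_N M/p^{N}M$ and the $\Gamma_r$-action on $M$ is continuous.

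The step I expect to be the main obstacle is the base case of part~\eqref{Le:StrictM1}: transferring Lemma~\ref{Le:FSWinCont} from the $\u\FS\rr$-setting to the $\u S\rr$-setting forces one to identify $M$ with $S_r\otimes_\FS M'$ compatibly with both module structures and the $\Gamma_r$-actions, and then to control the $\gamma$-action on the divided-power scalars of $S_r$ via Lemma~\ref{Le:SCont}. Once that is in place, the inductive and binomial bookkeeping in parts~\eqref{Le:StrictM1} and \eqref{Le:StrictM2} is elementary, but it has to be set up so that the powers of $t$ and $p$ line up exactly.
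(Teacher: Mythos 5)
Your proof is correct and follows essentially the same route as the paper: base change via Proposition \ref{Pr:WinGammaFS-S} to reduce to Lemmas \ref{Le:FSWinCont} and \ref{Le:SCont}, the twisted Leibniz rule \eqref{Eq:gamma-a-x} with $\gamma(t)=\chi(\gamma)t$ for the induction in \eqref{Le:StrictM1}, the identification of $\Gamma_{r+n}$ with $p^n$-th powers in $\Gamma_r$ for \eqref{Le:StrictM2}, and $t^{p-1}\in pS_r$ for \eqref{Le:StrictM3}. The only (cosmetic) divergence is in \eqref{Le:StrictM2}, where you expand $\gamma_0^{p^n}-1$ binomially and invoke \eqref{Le:StrictM1} together with $v_p\binom{p^n}{k}=n-v_p(k)$, while the paper inducts on $n$ using the identity $(\gamma^p-1)=(p+\sum_{i=1}^{p-1}(\gamma^i-1))(\gamma-1)$; both rest on the same estimates, and your bound $d\ge n+(k-v_p(k))\ge n+1$ does, after peeling off the visible factor of $t$, land the terms in $(t,p)^ntM$ as required.
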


\begin{proof}
By Proposition \ref{Pr:WinGammaFS-S}, $\u M$ comes from an object $\u M'\in\Win(\u\FS\rr)^{\Gamma_r}$,
in particular $M=S_r\otimes_{\FS_r} M'$. 
Using \eqref{Eq:gamma-a-x} with $a\in S_r$ and $x\in M'$, 
Lemmas \ref{Le:FSWinCont} and \ref{Le:SCont} 
give the case $n=0$ of both \eqref{Le:StrictM1} and \eqref{Le:StrictM2}. 
A simple induction, using again \eqref{Eq:gamma-a-x} and the relation $\gamma(t)=\chi(\gamma)t$, gives \eqref{Le:StrictM1} in general.
Since the multiplication
$p:\Gamma_{r+n}\to\Gamma_{r+n+1}$ is bijective and
$$
(\gamma^p-1)=(p+\sum_{i=1}^{p-1}(\gamma^i-1))(\gamma-1),
$$
a similar induction gives \eqref{Le:StrictM2} in general. 
Assertion \eqref{Le:StrictM3} follows from \eqref{Le:StrictM2} since $t^{p-1}\in pS$.
\end{proof}

Following \cite[\S 4.1]{Berger:RepDiff} one can differentiate a continuous action of
$\Gamma_r$ on a finite free $S_r$-module $M$ as follows.
For $\gamma\in\Gamma_r$ sufficiently close to $1$ we have $(\gamma-1)(M)\subseteq pM$,
which implies that for such $\gamma$ the series
$$
\log(\gamma)(x)=\sum_{n\ge 1}(-1)^{n-1}\frac{(\gamma-1)^n(x)}n
$$
with $x\in M$ converges. 
For $a\in\ZZ_p$ we have $\log(\gamma^a)=a\cdot\log(\gamma)$, and thus
$$
N_M:=p^r\frac{\log(\gamma)}{\log(\chi(\gamma))}
$$
is a well-defined map $N_M:M\to M\otimes\QQ$ which is independent of $\gamma$
sufficiently close to $1$.
It is easy to see that
$$
N_M=p^r\lim_{\gamma\to 1}\frac{\gamma-1}{\chi(\gamma)-1}.
$$
It follows that $N_M(ax)=N_S(a)x+aN_M(x)$ for $a\in S_r$ and $x\in M$,
in particular $N_{S_r}$ is a derivation, and $N_M$ is a differential operator over $N_{S_r}$.
Since $N_{S_r}(t)=p^rt$ we must have $N_{S_r}=(1+u)t(d/du)$.
For $\gamma\in\Gamma_r$ we have $N_M\gamma=\gamma N_M$.

In our context the main point is that for strict actions on windows, no denominators occur:

\begin{proposition}
\label{Le:WinSNM}
For $\u M\in\Win(\u S\rr)^{\Gamma_r}$ we have $N_M(M)\in tM$ and $N_M\Phi=\Phi N_M$.
\end{proposition}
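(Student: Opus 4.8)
The plan is to compute $N_M$ using a single topological generator $\gamma$ of $\Gamma_r$. Since $p>2$, the cyclotomic character restricts to an isomorphism $\chi\colon\Gamma_r\xrightarrow{\sim}1+p^r\ZZ_p$, so $\chi(\gamma)$ topologically generates $1+p^r\ZZ_p$ and $v_p(\log\chi(\gamma))=r$; in particular $c_0:=p^r/\log\chi(\gamma)$ is a unit of $\ZZ_p$. The central claim is that for such $\gamma$ the series $\log(\gamma)(x)=\sum_{n\ge1}(-1)^{n-1}(\gamma-1)^n(x)/n$ converges on all of $M$ (not merely for $\gamma$ deep inside $\Gamma_r$) and satisfies $\log(\gamma)(M)\subseteq tM$. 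Granting this, we have $N_M=c_0\cdot\log(\gamma)$ — the identity $\log(\gamma^{p^a})=p^a\log(\gamma)$ shows this equals $N_M$ as defined above — and hence $N_M(M)\subseteq tM$.

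To prove the claim I would start from Lemma \ref{Le:StrictM}: strictness of the $\Gamma_r$-action gives $(\gamma-1)^n(M)\subseteq(t,p^r)^{n-1}tM$ for every $n\ge1$. Using $t^{p-1}\in pS_r$ and $r\ge1$, each product $t^{n-j}p^{rj}$ with $0\le j\le n-1$ has $p$-adic valuation at least $\lfloor n/(p-1)\rfloor$, so $(\gamma-1)^n(M)\subseteq p^{\lfloor n/(p-1)\rfloor}tM$; since $\lfloor n/(p-1)\rfloor-v_p(n)\to\infty$, the series converges $p$-adically in the $p$-adically complete module $M$. For membership in $tM$ one checks the inclusion $(\gamma-1)^n(x)/n\in tM$ term by term: writing $n=p^am$ with $p\nmid m$, one needs $v_p(t^{n-j-1}p^{rj})\ge a$ for each $j$, which is clear when $rj\ge a$ and otherwise follows from $n-j-1\ge p^a-a$ together with the elementary inequality $p^{a-1}\ge a$ (valid for $p\ge3$). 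Finally $tM$ is $p$-adically closed in $M$: modulo $p$ the element $u_0$ becomes $u^{p^r}$, which is a non-zerodivisor on $S_r/pS_r$, hence, by $\ZZ_p$-flatness of $S_r$ and an easy induction, on every $S_r/p^kS_r$; thus $t$ is a non-zerodivisor there and $tM\cap p^kM=p^k tM$, so the $p$-adic limit $\log(\gamma)(x)$ of elements of $tM$ lies in $tM$. This gives $N_M(M)=c_0\log(\gamma)(M)\subseteq tM$.

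For the horizontality $N_M\Phi=\Phi N_M$: since $\gamma$ is an automorphism of the window $\u M$ it commutes with the additive map $\Phi\colon M\to M$, so $(\gamma-1)^n$ commutes with $\Phi$ for every $n$; passing to the $p$-adic limit (using that $\Phi$ is $p$-adically continuous) gives $\log(\gamma)\circ\Phi=\Phi\circ\log(\gamma)$. Multiplying by $c_0\in\ZZ_p^\times$, which is fixed by $\varphi$, and using the $\varphi$-linearity of $\Phi$, we obtain $N_M\Phi=c_0\,\Phi\circ\log(\gamma)=\Phi\circ(c_0\log(\gamma))=\Phi N_M$.

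I expect the main obstacle to be the $p$-adic estimate in the second paragraph, that is, seeing that the denominators $1/n$ in $\log(\gamma)$ are exactly absorbed by the growth of the $(t,p^r)$-adic order of $(\gamma-1)^n(M)$ forced by strictness — with just enough slack, because $v_p(\log\chi(\gamma))$ equals $r$ and not more, to land in $tM$ rather than merely in $M$. This is the precise technical content of the statement that no denominators occur for strict actions on windows; the supporting facts (the value $v_p(\log\chi(\gamma))=r$ and the $p$-adic closedness of $tM$) are routine.
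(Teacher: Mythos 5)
Your overall strategy is the same as the paper's: both arguments rest on Lemma \ref{Le:StrictM} \eqref{Le:StrictM1} together with $t^{p-1}\in pS_r$ to obtain convergence of $\log(\gamma)$ on all of $M$ with image in $tM$, and on the fact that $p^r/\log\chi(\gamma)$ is a $p$-adic unit for $\gamma$ topologically generating $\Gamma_r$ (the paper phrases this as $\gamma\in\Gamma_r\setminus\Gamma_{r+1}$); the commutation $N_M\Phi=\Phi N_M$ is handled identically. Your quantitative bookkeeping (absorbing the denominator $1/n$ via $\lfloor(n-j)/(p-1)\rfloor+rj$ and the inequality $p^{a-1}\ge a$) is correct and makes explicit what the paper leaves implicit.

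However, the final step of your second paragraph is wrong. You claim that $u_0\equiv u^{p^r}$ is a non-zerodivisor on $S_r/pS_r$, deduce that $t$ is a non-zerodivisor modulo every $p^k$, and conclude that $tM$ is $p$-adically closed. In fact $t$ and $u_0$ are \emph{nilpotent} in $S_r/pS_r$: modulo $p$ one has $E\equiv u^{p^{r-1}(p-1)}$ and $u_0\equiv u^{p^r}$, hence $u_0^{p-1}\equiv E^p=p!\,E^{[p]}\equiv 0$ in $S_r/pS_r$ because of the divided powers of $E$ — this is precisely the statement $t^{p-1}\in pS_r$ that you invoke two sentences earlier. (The element $u^{p^r}$ is a non-zerodivisor in $\FS/p\FS$, but not in $S_r/pS_r$.) So the asserted equality $tM\cap p^kM=p^ktM$ is unjustified, and your passage from ``each term of the series lies in $tM$'' to ``the limit lies in $tM$'' has a gap. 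The repair is easy and is how the paper's one-line proof should be read: your estimates, essentially unchanged, show that the $n$-th term $(\gamma-1)^n(x)/n$ lies in $t\,p^{c_n}M$ with $c_n\ge\lfloor(n-1)/(p-1)\rfloor-v_p(n)\to\infty$; writing it as $t\,w_n$ with $w_n\in p^{c_n}M$, the sum $\sum_{n\ge1}(-1)^{n-1}w_n$ converges in the $p$-adically complete module $M$ to some $w$, and continuity of multiplication by $t$ gives $\log(\gamma)(x)=tw\in tM$. No closedness of $tM$ in $M$ is needed.
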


\begin{proof}
Since $t^{p-1}\in pS_r$, Lemma \ref{Le:StrictM} \eqref{Le:StrictM1} implies that the series 
$\log(\gamma)$ converges for every $\gamma\in\Gamma_r$ and that $\log(\gamma)(M)\subseteq tM$.
Since the factor $p^r/\log(\chi(\gamma))$ is a $p$-adic unit when 
$\gamma\in\Gamma_r\setminus\Gamma_{r+1}$, the first assertion follows.
The second assertion is clear.
\end{proof}

\begin{corollary}\label{Cor:ForgetNabla}
The forgetful functor $\Win(\u S\rr)^{\nabla_0,\Gamma_r}\to\Win(\u S\rr)^{\Gamma_r}$ 
is an equivalence of categories.
\end{corollary}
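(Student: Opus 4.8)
The plan is to note that full faithfulness of the forgetful functor is already provided by Lemma \ref{Le:ell-ff}, so the only thing left to prove is essential surjectivity: given $\u M\in\Win(\u S\rr)^{\Gamma_r}$, I would exhibit a connection $\nabla_0\colon M\to M\otimes_\FS\Omega_0$ over $\nabla_{S_r,0}$ for which $\Phi$ is horizontal and the $\Gamma_r$-action is horizontal, so that $(\u M,\nabla_0)$ is an object of $\Win(\u S\rr)^{\nabla_0,\Gamma_r}$ mapping to $\u M$. The connection will be built directly out of the operator $N_M$ furnished by Proposition \ref{Le:WinSNM}. Since $\Omega_0=\FS\,du$ is free of rank one, giving $\nabla_0$ amounts to giving an additive map $D_M\colon M\to M$ and setting $\nabla_0(m)=D_M(m)\,du$; I will write $\partial\colon S_r\to S_r$ for the derivation $d/du$, so that $\nabla_{S_r,0}(a)=\partial(a)\,du$ and $N_{S_r}=(1+u)t\cdot\partial$.

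First I would invoke Proposition \ref{Le:WinSNM} together with the construction of $N_M$ preceding it: $N_M(M)\subseteq tM$, the map $N_M$ is a differential operator over $N_{S_r}$, and $N_M\Phi=\Phi N_M$, $N_M\gamma=\gamma N_M$ for $\gamma\in\Gamma_r$. Because $t=u_0\cdot(\text{unit})$ is a non-zero-divisor of the $\ZZ_p$-flat ring $S_r$ and $M$ is $S_r$-projective, multiplication by $t$ is injective on $M$, so the inclusion $N_M(M)\subseteq tM$ defines a unique additive map $D_M\colon M\to M$ with $N_M(m)=(1+u)t\cdot D_M(m)$. I then put $\nabla_0(m):=D_M(m)\,du$.

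The three required properties are checked by dividing the relevant identity for $N_M$ by $(1+u)t$ and recording the action of $\varphi$ and of $\gamma$ on that element. From $N_M(am)=N_{S_r}(a)m+aN_M(m)=(1+u)t\,\partial(a)m+aN_M(m)$ one gets $D_M(am)=\partial(a)m+aD_M(m)$, so $\nabla_0$ is a connection over $\nabla_{S_r,0}$. Since $\varphi((1+u)t)=(1+u)^p\cdot pt=p(1+u)^{p-1}\cdot(1+u)t$, the relation $N_M\Phi=\Phi N_M$ forces $D_M\Phi=p(1+u)^{p-1}\,\Phi D_M$, which is exactly $\nabla_0\circ\Phi=(\Phi\otimes d\varphi)\circ\nabla_0$ because $d\varphi(du)=d((1+u)^p-1)=p(1+u)^{p-1}\,du$. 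Similarly, since $\gamma((1+u)t)=(1+u)^{\chi(\gamma)}\cdot\chi(\gamma)t=\chi(\gamma)(1+u)^{\chi(\gamma)-1}\cdot(1+u)t$, the relation $N_M\gamma=\gamma N_M$ forces $D_M\gamma=\chi(\gamma)(1+u)^{\chi(\gamma)-1}\,\gamma D_M$, which is the horizontality of the $\Gamma_r$-action, as $d\gamma(du)=d((1+u)^{\chi(\gamma)}-1)=\chi(\gamma)(1+u)^{\chi(\gamma)-1}\,du$. The $\Gamma_r$-action on $\u M$ is still strict, so $(\u M,\nabla_0)\in\Win(\u S\rr)^{\nabla_0,\Gamma_r}$, and combined with Lemma \ref{Le:ell-ff} this yields the equivalence.

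I do not expect a genuine obstacle at this stage: the real content has already been absorbed into Proposition \ref{Le:WinSNM}, namely the ``no denominators'' assertion $N_M(M)\subseteq tM$. What remains is the bookkeeping above, plus the small but essential observation that $t$ is a non-zero-divisor on $M$, which is precisely what makes the division by $(1+u)t$ define a bona fide $S_r$-differential operator rather than one valued in $M[1/p]$.
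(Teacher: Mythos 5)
Your proposal is correct and follows essentially the same route as the paper: full faithfulness is quoted from Lemma \ref{Le:ell-ff}, and essential surjectivity is obtained by defining $\nabla_0$ so that its evaluation at $(1+u)t(d/du)$ is the operator $N_M$ of Proposition \ref{Le:WinSNM}, the containment $N_M(M)\subseteq tM$ (together with $t$ being a non-zero-divisor) making this well defined. Your explicit verification of the Leibniz rule and of horizontality of $\Phi$ and of the $\Gamma_r$-action is exactly the bookkeeping the paper compresses into the remark that the derivation $(1+u)t(d/du)$ commutes with $\Phi$ and with all $\gamma\in\Gamma_r$.
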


\begin{proof}
The functor is fully faithful by Lemma \ref{Le:ell-ff}.
Given $\u M\in\Win(\u S\rr)^{\Gamma_r}$, let $\nabla_0:M\to M\otimes_\FS\Omega_0$
be the connection whose evaluation at $(1+u)t(d/du)$ is the differential operator $N_M$ of
Lemma \ref{Le:WinSNM}; this is well-defined since the image of $N_M$ lies in $tM$.
Since the derivation $(1+u)t(d/du):S_r\to S_r$ commutes with $\Phi$ and with all $\gamma\in\Gamma_r$,
the relations $N_M\Phi=\Phi N_M$ and $N_M\gamma=\gamma N_M$ mean that $\Phi$ and the
action of $\Gamma_r$ are horizontal with respect to $\nabla_0$. Thus $(\u M,\nabla_0)$ is an object of 
$\Win(\u S\rr)^{\nabla_0,\Gamma_r}$, which proves that the functor is essentially surjective.
\end{proof}

Together with Proposition \ref{Pr:BT-DF-WinGamma} we obtain:

\begin{corollary}
\label{Cor:pdiv-WinGamma-r}
\sloppy
There is a contravariant and exact equivalence of categories
$
\pdiv(R_r)\cong\Win(\u S\rr)^{\Gamma_r}
$
that preserves duality.
\qed
\end{corollary}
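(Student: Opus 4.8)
The plan is to obtain the corollary by splicing together equivalences that have already been established, so essentially no new argument is needed. Concretely, I would consider the composite
\[
\pdiv(R_r)\longrightarrow\DF(\Spf R_r)\longrightarrow\Win(\u S\rr)^{\nabla_0,\Gamma_r}\longrightarrow\Win(\u S\rr)^{\Gamma_r},
\]
where the first two arrows are the first two functors of Proposition \ref{Pr:BT-DF-WinGamma} and the third arrow is the forgetful functor of Corollary \ref{Cor:ForgetNabla}. By Proposition \ref{Pr:BT-DF-WinGamma} the first arrow is a contravariant equivalence and the second is an equivalence; by Corollary \ref{Cor:ForgetNabla} the third is an equivalence. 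Hence the composite is a contravariant equivalence of categories, which is exactly the asserted equivalence $\pdiv(R_r)\cong\Win(\u S\rr)^{\Gamma_r}$.

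For exactness I would note that it is inherited at each stage: the filtered crystalline Dieudonn\'e functor is exact by Corollary \ref{Co:BT2DF} (via Proposition \ref{Pr:BT2DF}), the functor $\DF(\Spf R_r)\to\Win(\u S\rr)^{\nabla_0,\Gamma_r}$ is exact by Lemma \ref{Le:DF2WinConnGamma} together with Proposition \ref{Pr:DF2WinConn} and Lemma \ref{Le:nabla0}, and a short sequence in $\Win(\u S\rr)^{\nabla_0,\Gamma_r}$ or in $\Win(\u S\rr)^{\Gamma_r}$ is by definition exact precisely when the underlying sequence of windows is, so the forgetful functor of Corollary \ref{Cor:ForgetNabla} is exact as well. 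For compatibility with duality, recall from Remark \ref{Re:Dual} that the duality on $\Win(\u S\rr)^{\Gamma_r}$ is given by the contragredient action $\gamma(f)(\gamma(m))=f(m)$ on $M^t$. The crystalline Dieudonn\'e functor and the functor $\DF(\Spf R_r)\to\Win(\u S\rr)^{\nabla_0,\Gamma_r}$ are compatible with duality by Propositions \ref{Pr:BT2DF}, \ref{Pr:DF2WinConn} and Lemmas \ref{Le:DF2WinConnGamma}, \ref{Le:nabla0}, so it only remains to check that the equivalence of Corollary \ref{Cor:ForgetNabla} intertwines duality, i.e.\ that the connection attached to $\u M^t$ equipped with the contragredient $\Gamma_r$-action is the dual of the connection attached to $\u M$. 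This follows from the description $N_M=p^r\lim_{\gamma\to 1}(\gamma-1)/(\chi(\gamma)-1)$ in Proposition \ref{Le:WinSNM}: being formed functorially from the action, $N$ satisfies $N_{S_r}(f(m))=N_{M^t}(f)(m)+f(N_M(m))$ for the contragredient action on $M^t$, which is exactly the Leibniz identity characterizing the dual connection.

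I do not expect a genuine obstacle at this point: all the substantive work has already been done, first in Proposition \ref{Pr:BT-DF-WinGamma} and above all in Corollary \ref{Cor:ForgetNabla}, whose proof rests on the integral convergence of the logarithm series for strict actions (Lemma \ref{Le:StrictM} and $t^{p-1}\in pS_r$). The only step requiring a line or two of care is the duality assertion, and even there the argument is the standard one that differentiating the contragredient action produces the dual connection.
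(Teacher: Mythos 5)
Your proposal is correct and is exactly the paper's argument: the corollary is obtained by composing the first two equivalences of Proposition \ref{Pr:BT-DF-WinGamma} with the equivalence of Corollary \ref{Cor:ForgetNabla}, the paper leaving the exactness and duality statements to the already-cited compatibilities (Propositions \ref{Pr:BT2DF}, \ref{Pr:DF2WinConn}, Lemma \ref{Le:DF2WinConnGamma}, Remark \ref{Re:Dual}). Your additional check that differentiating the contragredient action yields the dual connection is a harmless extra detail not spelled out in the paper.
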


\subsection{\texorpdfstring{$p$-divisible groups with descent}{p-divisible groups with descent}}

The preceding results can be extended quite formally 
to include the following objects. 
Let $F\subseteq K\subseteq K'$ be finite extensions.
We put $R=\OOO_K[[t_1,\ldots,t_d]]$ and $R'=\OOO_{K'}[[t_1,\ldots,t_d]]$.

\begin{definition}
Let $\pdiv(R')^K$ be the category of $p$-divisible groups $G$ over $R'$ 
equipped with a descent of $G\otimes_{R'}R'[p^{-1}]$ to $R[p^{-1}]$.
If $K'$ is contained in $F_\infty$ we also write $\pdiv(R')^K=\pdiv(R')^{\Gamma_K}$.
\end{definition}

\begin{remark}
\label{Re:Tate}
Assume that $K'/K$ is Galois with Galois group $\bar\Gamma$,
for example $K'\subseteq F_\infty$ and $\bar\Gamma=\Gamma_{K}/\Gamma_{K'}$.
Then $\pdiv(R')^K$ is equivalent to the category of $G\in\pdiv(R')$
that are equipped with isomorphisms
$G_{R'[p^{-1}]}\cong\bar\gamma^*(G_{R'[p^{-1}]})$
for each $\bar\gamma\in\bar\Gamma$ satisfying the  obvious cocycle condition.
By Tate's theorem, these isomorphisms extend uniquely to isomorphisms 
$G\cong\bar\gamma^*(G)$ over $R'$.
\end{remark}

\begin{proposition}
\label{Pr:MainProp}
Assume that $F\subseteq K\subseteq F_r$ with $r\ge 1$, and let $\Gamma=\Gamma_K$. 
There are exact equivalences of categories $($the first one contravariant$)$
\[
\pdiv(R_r)^{\Gamma}\cong\Win(\u S\rr)^{\Gamma}\cong
\Win(\u\FS\rr)^{\Gamma}\cong\BT(\u\FS\rr)^{\Gamma}
\]
that preserve duality.
\end{proposition}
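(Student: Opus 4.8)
The plan is to treat the three displayed equivalences separately. For the last two I would simply specialise earlier results: taking $\Gamma'=\Gamma_r$ and $\Gamma=\Gamma_K$ in Proposition \ref{Pr:WinGammaFS-S} yields $\Win(\u S\rr)^{\Gamma}\cong\Win(\u\FS\rr)^{\Gamma}$ (the inverse of the base change along $\lambda$), and the same specialisation of Lemma \ref{Le:WinKisinGamma} yields $\Win(\u\FS\rr)^{\Gamma}\cong\BT(\u\FS\rr)^{\Gamma}$, both exact and compatible with the dualities of Remark \ref{Re:Dual}. Everything then reduces to the first, contravariant equivalence $\pdiv(R_r)^{\Gamma}\cong\Win(\u S\rr)^{\Gamma}=\Win(\u S\rr)^{\Gamma}_{\Gamma_r}$, which I would obtain from its special case $K=F_r$, namely Corollary \ref{Cor:pdiv-WinGamma-r}, by adjoining descent data for the finite group $\bar\Gamma:=\Gamma_K/\Gamma_r=\Gal(F_r/K)$.

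Write $\MM\colon\pdiv(R_r)\xrightarrow{\sim}\Win(\u S\rr)^{\Gamma_r}$ for the equivalence of Corollary \ref{Cor:pdiv-WinGamma-r}, i.e.\ the composite of the crystalline Dieudonn\'e functor $\pdiv(R_r)\to\DF(\Spf R_r)$, the equivalence $\DF(\Spf R_r)\to\Win(\u S\rr)^{\nabla_0,\Gamma_r}$ of Proposition \ref{Pr:BT-DF-WinGamma}, and the forgetful equivalence of Corollary \ref{Cor:ForgetNabla}. Each $\gamma\in\bar\Gamma$ acts on $R_r$ by an $\OOO_K[[t_1,\ldots,t_d]]$-algebra automorphism, and each $\gamma\in\Gamma_K$ acts on $\u S\rr$ by the strict frame automorphism recalled before Definition \ref{Def:GammaWin}; the latter preserves $\Win(\u S\rr)^{\Gamma_r}$ because $\Gamma_K$ is abelian (so the built-in strict $\Gamma_r$-action transports) and $\gamma$ acts trivially on $\FS/u\FS$ (so strictness is preserved). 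The key step will be to verify that $\MM$ is $\Gamma_K$-equivariant, i.e.\ comes with natural isomorphisms $\MM\circ\gamma^{*}\cong\gamma^{*}\circ\MM$ compatible with composition in $\Gamma_K$; I expect this to follow factor by factor in the composite defining $\MM$, since the crystalline Dieudonn\'e functor commutes with base change along $\gamma$, the functor of Proposition \ref{Pr:BT-DF-WinGamma} is compatible with base change along the canonical lift of $\gamma$ to $S_r$ (which commutes with the PD-endomorphisms of $S_r\to R_r$ defining the strict $\Gamma_r$-action, again by commutativity of $\Gamma_K$), and the inverse of the forgetful functor of Corollary \ref{Cor:ForgetNabla} is pinned down by the uniqueness statement of Lemma \ref{Le:ell-ff} and hence is automatically $\gamma$-equivariant.

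Granting the equivariance, the rest is formal: a $\Gamma_K$-equivariant equivalence of categories induces an equivalence between the corresponding categories of $\bar\Gamma$-descent data. On the $p$-divisible group side, by Remark \ref{Re:Tate} this category of descent data is exactly $\pdiv(R_r)^{\Gamma}$, an object being a $G\in\pdiv(R_r)$ together with isomorphisms $\iota_\gamma\colon\gamma^{*}G\xrightarrow{\sim}G$ for $\gamma\in\bar\Gamma$ satisfying the cocycle condition. On the window side, a $\bar\Gamma$-descent datum on an object $\u M$ of $\Win(\u S\rr)^{\Gamma_r}$, which already carries a built-in strict $\Gamma_r$-action, is precisely the data extending that $\Gamma_r$-action to a $\Gamma_r$-strict action of all of $\Gamma_K$ (one lifts a set of coset representatives for $\bar\Gamma$ to $\Gamma_K$, absorbs the resulting indeterminacy into the strict $\Gamma_r$-action, and matches the $\bar\Gamma$-cocycle condition with the $\Gamma_K$-cocycle condition using the commutativity of $\Gamma_K$), so this category of descent data is $\Win(\u S\rr)^{\Gamma}_{\Gamma_r}$. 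This gives $\pdiv(R_r)^{\Gamma}\cong\Win(\u S\rr)^{\Gamma}_{\Gamma_r}$. Exactness is tested on underlying objects and so is inherited from $\MM$, Proposition \ref{Pr:WinGammaFS-S}, and Lemma \ref{Le:WinKisinGamma}; compatibility with duality should follow from the duality-compatibility of these inputs together with the fact that base change along a strict frame homomorphism, such as the automorphisms $\gamma$ of $\u S\rr$, commutes with the duality of windows. I expect the main work to be the $\Gamma_K$-equivariance of $\MM$ and the bookkeeping identifying $\bar\Gamma$-descent data with $\Gamma_r$-strict actions of $\Gamma_K$; all of the geometric content is already packaged in the case $K=F_r$.
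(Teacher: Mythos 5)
Your proposal is correct and follows essentially the same route as the paper: the second and third equivalences are the specialisations of Proposition \ref{Pr:WinGammaFS-S} and Lemma \ref{Le:WinKisinGamma}, and the first is obtained from the case $K=F_r$ (Corollary \ref{Cor:pdiv-WinGamma-r}) by matching, via Remark \ref{Re:Tate}, descent data for $\bar\Gamma=\Gamma/\Gamma_r$ with extensions of the built-in strict $\Gamma_r$-action to a $\Gamma_r$-strict action of $\Gamma$. The equivariance of the construction that you single out as the key step is precisely the paper's natural isomorphism $\gamma^*\u M(G)\cong\u M(\bar\gamma^*G)$, so the two arguments coincide in substance.
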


\begin{proof}
The second and third equivalence are
Proposition \ref{Pr:WinGammaFS-S} and Lemma \ref{Le:WinKisinGamma}.
To prove the first equivalence, 
let $G\in\pdiv(R_r)$, and let $\u M(G)\in\Win(\u S\rr)^{\Gamma_r}$ 
be the associated $\Gamma_r$-window given by Corollary \ref{Cor:pdiv-WinGamma-r}.
Let us put $\bar\Gamma=\Gamma/\Gamma_r$.
By Remark \ref{Re:Tate}, giving a descent of $G\otimes R_r[p^{-1}]$ to $R[p^{-1}]$
is equivalent to giving isomorphisms
$w_{\bar\gamma}:G\to\bar\gamma^*(G)$ 
for each $\bar\gamma\in\bar\Gamma$ that satisfy the cocycle condition.
Every $\gamma\in\Gamma$ that lifts $\bar\gamma$ acts on the PD-frame $\u S\rr$,
and we have a natural isomorphism $\gamma^*\u M(G)\cong\u M(\bar\gamma^*G)$. 
Thus $w_{\bar\gamma}$ induces an isomorphism $w_\gamma:\gamma^*(\u M(G))\to\u M(G)$.
In this way, we obtain a bijection between descent data for $G\otimes R_r[p^{-1}]$
relative to $\bar\Gamma$ and actions of $\Gamma$ on $\u M(G)$ that
extend the given action of $\Gamma_r$, and the first equivalence of the proposition
follows from that of Corollary \ref{Cor:pdiv-WinGamma-r}.
\end{proof}

To extend this result further, 
we consider a chain 
$F\subseteq K\subseteq K'\subseteq K''\subseteq F_r$ 
of extensions with $r\ge 1$.
Let $\Gamma''\subseteq\Gamma'\subseteq\Gamma$ 
be the corresponding groups, and let $R''=\OOO_{K''}[[t_1,\ldots,t_d]]$.
We note that for $G\in\pdiv(R'')^\Gamma$, the isomorphisms of Remark \ref{Re:Tate}
give an action of $\Gamma/\Gamma''$ on the special fibre $G_k$. 

\begin{lemma}
\label{Le:descent}
The base change functor $\pdiv(R')^\Gamma\to\pdiv(R'')^\Gamma$ is
fully faithful, and its essential image consists of all $G\in\pdiv(R'')^\Gamma$
for which the action of $\Gamma'/\Gamma''$ on $G_k$ is trivial.
\end{lemma}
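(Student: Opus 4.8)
\emph{Overview of the plan.} First I would use Remark~\ref{Re:Tate} to replace both categories by $p$-divisible groups with Galois-equivariant structures, deduce full faithfulness from Tate's theorem, and then reduce essential surjectivity to an extension problem for $p$-divisible groups along the finite flat (ramified) morphism $R'\to R''$, where the triviality of $\Gamma'/\Gamma''$ on the special fibre is exactly what makes a rigidity argument applicable.

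\emph{Full faithfulness.} Since $K''/K$ and $K'/K$ are subextensions of the abelian extension $F_\infty/F$, Remark~\ref{Re:Tate} identifies $\pdiv(R')^\Gamma$ (resp.\ $\pdiv(R'')^\Gamma$) with the category of $p$-divisible groups over $R'$ (resp.\ over $R''$) together with a descent of the generic fibre to $R[p^{-1}]$, where $R=\OOO_K[[t_1,\ldots,t_d]]$; and, by Tate's theorem — which already underlies Remark~\ref{Re:Tate} — a morphism in either category is the same thing as a morphism between the descended objects over $R[p^{-1}]$. As the object over $R[p^{-1}]$ attached to $G\in\pdiv(R')^\Gamma$ and to its base change $G_{R''}\in\pdiv(R'')^\Gamma$ is literally the same, the induced map on $\Hom$-sets is a bijection.

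\emph{Essential image.} Let $G''\in\pdiv(R'')^\Gamma$ have trivial action of $\Delta:=\Gamma'/\Gamma''=\Gal(K''/K')$ on $G''_k$, the action being via the isomorphisms $w_\delta\colon G''\xrightarrow{\sim}\delta^*G''$ of Remark~\ref{Re:Tate}. It suffices to descend $G''$ along $R'\to R''$: the residual $\Gamma/\Gamma'$-structure then descends by the same argument on generic fibres, producing the desired preimage in $\pdiv(R')^\Gamma$, and by Tate the descended group has generic fibre $G''_{R''[p^{-1}]}$, hence base-changes to $G''$ over $R''$. On generic fibres the descent is immediate: $R'[p^{-1}]\to R''[p^{-1}]$ is finite étale Galois with group $\Delta$ and $p$-divisible groups satisfy descent along finite étale covers, so $G''_{R''[p^{-1}]}$ descends to some $H_0\in\pdiv(R'[p^{-1}])$. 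What remains is to extend $H_0$ over $R'$, equivalently to promote the family $\{w_\delta\}$ to an effective descent datum over $B:=R''\otimes_{R'}R''$.

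\emph{The extension step (the main obstacle).} Here I would use that $B$ is $\ZZ_p$-flat, $p$-adically complete and local, that $B[p^{-1}]\cong\prod_{\delta\in\Delta}R''[p^{-1}]$, and that the normalisation of $B$ is $\widetilde B=\prod_{\delta\in\Delta}R''$, the conductor of $B\hookrightarrow\widetilde B$ containing a power of $p$ because $\widetilde B/B$ is a finite $B$-module that dies on inverting $p$. The maps $w_\delta$, being defined integrally over $R''$, already provide an isomorphism $\widetilde\phi\colon p_1^*G''\xrightarrow{\sim}p_2^*G''$ over $\widetilde B$ whose generic fibre is the descent datum $\phi$ obtained above over $B[p^{-1}]=\widetilde B[p^{-1}]$; the only issue is whether $\widetilde\phi$ is \emph{defined over $B$}. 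Since $\Delta$ acts trivially on the residue ring of $R''$ (as $K''/K'$ is totally ramified), both $p_1^*G''$ and $p_2^*G''$ reduce, along the multiplication map $B\to R''$ followed by the special-fibre reduction, to $G''_k$ over $k[[t_1,\ldots,t_d]]$, and by hypothesis $\widetilde\phi$ reduces there to the identity of $G''_k$. Because the kernel of $B\to k[[t_1,\ldots,t_d]]$ is nilpotent modulo $p$ and $B$ is $p$-adically complete, the rigidity of $p$-divisible groups (as used in the proof of Proposition~\ref{Pr:BT-DF-WinGamma}, cf.\ \cite[II.3.3.21]{Messing}), together with the effectivity of $\widetilde\phi$ over $\widetilde B$ and of $\phi$ over $B[p^{-1}]$, forces $\widetilde\phi$ to come from an isomorphism $\psi\colon p_1^*G''\xrightarrow{\sim}p_2^*G''$ over $B$; the cocycle identity for $\psi$ and the equality $\psi[p^{-1}]=\phi$ both follow from the corresponding statements over $\widetilde B$ by the uniqueness clause in rigidity. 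Thus $\psi$ is an effective descent datum, $G''$ descends to $G'\in\pdiv(R')$, and carrying along the residual structure yields $G'\in\pdiv(R')^\Gamma$ with $G'_{R''}\cong G''$. I expect the delicate point to be precisely this passage from the étale (generic) descent to an integral descent along the ramified cover $R'\to R''$: it is here, and only here, that the triviality of $\Gamma'/\Gamma''$ on $G''_k$ enters, and without it the descent datum genuinely fails to be effective.
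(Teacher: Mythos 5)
Your setup and the full-faithfulness part are fine (they amount to the same use of Tate's theorem and Remark \ref{Re:Tate} that the paper takes for granted), but the essential-surjectivity argument has a genuine gap at exactly the step you flag as delicate. To apply fppf descent along $R'\to R''$ you must produce a descent datum over $B=R''\otimes_{R'}R''$, and what you actually have is an isomorphism $\widetilde\phi$ over the normalization $\widetilde B\cong\prod_{\delta\in\Delta}R''$ (coming from the $w_\delta$) together with its restriction over $B[p^{-1}]=\widetilde B[p^{-1}]$. Your claim that rigidity ``forces'' $\widetilde\phi$ to be defined over $B$ is not a legitimate use of rigidity: \cite[II.3.3.21]{Messing} (as invoked in the proof of Proposition \ref{Pr:BT-DF-WinGamma}) is an injectivity/uniqueness statement for homomorphisms of $p$-divisible groups along surjections with (topologically) nilpotent kernel; it can never produce a model of a homomorphism over a smaller ring. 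Here $B\hookrightarrow\widetilde B$ is an injective, finite, non-flat extension, not a nilpotent quotient, so the fact that $\widetilde\phi$ reduces to the identity of $G''_k$ gives you no thickening to which rigidity could be applied. What you actually need is that the relevant element of $\Hom_{\widetilde B}(p_1^*G'',p_2^*G'')$ lies in the image of $\Hom_{B}(p_1^*G'',p_2^*G'')$, and that is precisely the hard point, left unproved. (A repair along your lines would go through the conductor square $B=\widetilde B\times_{\widetilde B/\mathfrak c}B/\mathfrak c$ with $p^N\in\mathfrak c$, and would then require constructing a matching homomorphism over the Artinian-type ring $B/\mathfrak c$ by deformation theory --- at which point you are redoing, in a more awkward setting, the deformation-theoretic argument that proves the lemma directly. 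Also, strictly speaking the issue without the special-fibre hypothesis is that no integral descent datum exists, not that an existing one fails to be effective: effectivity for the affine schemes $G''[p^n]$ is automatic.)

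For comparison, the paper avoids descent along the ramified cover altogether. After reducing to $K=K'$, it takes the universal deformation ring $A$ over $W(k)$ of $G_k$; a deformation $G\in\pdiv(R'')$ of $G_k$ corresponds to a $W(k)$-algebra map $h\colon A\to R''$, and the existence of isomorphisms $G\cong\bar\gamma^*G$ reducing to the identity of $G_k$ is equivalent, by the universal property (this is where rigidity enters, in its correct uniqueness form), to $\bar\gamma\circ h=h$ for all $\bar\gamma\in\Gamma/\Gamma''$. Hence $h$ factors through $(R'')^{\Gamma/\Gamma''}=R$, so $G$ is already defined over $R$, and the residual $\Gamma$-structure comes along by uniqueness. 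This is where the hypothesis on $G_k$ does its work: it turns the problem into Galois invariance of a classifying map rather than effectivity of an fppf descent datum. As written, your proposal identifies the right obstacle but does not overcome it.
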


\begin{proof}
One easily reduces to the case $K=K'$, which leaves us with the functor
$\pdiv(R)\to\pdiv(R'')^\Gamma$. 
Clearly the functor is fully faithful, and for $G$ in its image the action of
$\Gamma/\Gamma''$ on $G_k$ is trivial.
For a given $p$-divisible group $G_k$ over $k$
let $A$ be its universal deformation ring over $W(k)$. 
A deformation of $G_k$ to $G\in\pdiv(R'')$ corresponds to a homomorphism
of $W(k)$-algebras $h:A\to R''$, 
and there are isomorphisms $G\cong\bar\gamma^*G$
which reduce to the identity of $G_k$ for each $\bar\gamma\in\Gamma/\Gamma''$
if and only if for each $\bar\gamma$ we have $\bar\gamma\circ h=h$.
This means that the image of $h$ lies in the ring of invariants $(R'')^{\Gamma/\Gamma''}=R$,
i.e.\ $G$ is defined over $R$.
\end{proof}

Again let $F\subseteq K\subseteq K'\subseteq F_r$ with $r\ge 1$ and
let $\Gamma=\Gamma_K$ and $\Gamma'=\Gamma_{K'}$.

\begin{proposition}
\label{Pr:Final}
There are exact equivalences of categories 
\[
\pdiv(R')^\Gamma\cong\Win(\u S\rr)^\Gamma_{\Gamma'}
\cong\Win(\u\FS\rr)^\Gamma_{\Gamma'}\cong\BT(\u\FS\rr)^\Gamma_{\Gamma'}
\]
$($the first one contravariant$)$
that preserve duality.
\end{proposition}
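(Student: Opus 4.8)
The plan is as follows. The second and third equivalences require nothing new: they are, respectively, Proposition~\ref{Pr:WinGammaFS-S} (base change along $\u\FS\rr\to\u S\rr$) and Lemma~\ref{Le:WinKisinGamma} ($\u M\mapsto(\Filone M,E\Phi_1)$), both exact and, by Remark~\ref{Re:Dual}, compatible with duality. So the only real content is the first equivalence $\pdiv(R')^\Gamma\cong\Win(\u S\rr)^\Gamma_{\Gamma'}$, and I would obtain it by restricting the equivalence $\pdiv(R_r)^\Gamma\cong\Win(\u S\rr)^\Gamma_{\Gamma_r}=\Win(\u S\rr)^\Gamma$ of Proposition~\ref{Pr:MainProp} to suitable full subcategories on each side, using that $\pdiv(R')^\Gamma$ sits inside $\pdiv(R_r)^\Gamma$ via a fully faithful base change functor.

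On the geometric side, I would apply Lemma~\ref{Le:descent} with $K''=F_r$, so that $\Gamma''=\Gamma_r$: the base change functor $\pdiv(R')^\Gamma\to\pdiv(R_r)^\Gamma$ is fully faithful, with essential image the full subcategory of those $G$ on which $\Gamma'/\Gamma_r$ acts trivially on the special fibre $G_k$. On the linear side, $\Win(\u S\rr)^\Gamma_{\Gamma'}$ is by definition the full subcategory of $\Win(\u S\rr)^\Gamma_{\Gamma_r}$ of windows $\u M$ on which $\Gamma'$ acts trivially on $M\otimes_{S_r}\FS/u\FS$, equivalently, by Remark~\ref{Re:strict}, on $\bar M_0:=M\otimes_{S_r}W(k)$. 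The key point is to match these two subcategories under the equivalence of Proposition~\ref{Pr:MainProp}. For this I would invoke its compatibility with reduction to the special fibre, exactly as recorded inside the proof of Proposition~\ref{Pr:BT-DF-WinGamma}: for $G$ corresponding to $\u M$, the window $\u M\otimes_{\u S\rr}\u W(k)$ is the classical Dieudonn\'e module of $G_k$; the descent isomorphisms $G\cong\bar\gamma^*G$ of Remark~\ref{Re:Tate} reduce on the special fibre to the $\Gamma/\Gamma_r$-action on $G_k$; and this action corresponds, by classical Dieudonn\'e theory, to the induced $\Gamma/\Gamma_r$-action on $\u M\otimes\u W(k)$. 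Since the classical Dieudonn\'e functor over $k$ is fully faithful, $\Gamma'/\Gamma_r$ acts trivially on $G_k$ if and only if it acts trivially on $\bar M_0$, i.e.\ if and only if $\u M$ lies in $\Win(\u S\rr)^\Gamma_{\Gamma'}$. Hence the equivalence of Proposition~\ref{Pr:MainProp} restricts to $\pdiv(R')^\Gamma\cong\Win(\u S\rr)^\Gamma_{\Gamma'}$ (contravariant, as before). Exactness is inherited because every functor involved is exact and the subcategories in play are closed under the relevant extensions, and compatibility with duality follows from Corollary~\ref{Cor:pdiv-WinGamma-r} and Remark~\ref{Re:Dual}, once one notes that Cartier duality of $p$-divisible groups commutes with reduction to $G_k$ and that the contragredient action on $(M\otimes_{S_r}W(k))^t=M^t\otimes_{S_r}W(k)$ is trivial precisely when the original action is, so that both dualities preserve the subcategories cut out above.

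The step I expect to be the main obstacle is the matching in the second paragraph: one must carefully thread together the three incarnations of the auxiliary structure --- descent data for $G\otimes R_r[p^{-1}]$ via Tate's theorem (Remark~\ref{Re:Tate}), a $\Gamma$-action on $\u M$ extending the given $\Gamma_r$-action (Proposition~\ref{Pr:MainProp}), and the resulting action on the special fibre, equivalently on $\u M\otimes\u W(k)$ --- and verify that they all agree. This is not a computation but a bookkeeping argument running through the compatibilities already established in Sections~\ref{FramesEtc} and~\ref{Se:DieuCyc}; with that in hand, the remainder is formal manipulation of full subcategories.
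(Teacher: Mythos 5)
Your proposal is correct and follows essentially the same route as the paper: the first equivalence is obtained exactly as in the paper's proof by combining Proposition~\ref{Pr:MainProp} with Lemma~\ref{Le:descent} for $K''=F_r$ and matching the $\Gamma'$-strictness condition with triviality of the $\Gamma'/\Gamma_r$-action on $G_k$ via Remark~\ref{Re:strict} and classical Dieudonn\'e theory over $k$, while the second and third equivalences are Proposition~\ref{Pr:WinGammaFS-S} and Lemma~\ref{Le:WinKisinGamma}. Your added bookkeeping on the special fibre and on duality simply makes explicit what the paper leaves implicit.
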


\begin{proof}
This is a consequence of Proposition \ref{Pr:MainProp} together with Lemma \ref{Le:descent}
applied with $K''=F_r$.
Namely, let $G\in\pdiv(R_r)^\Gamma$ correspond to the $\Gamma$-window $\u M$
over $\u S_r$.  By Remark \ref{Re:strict},
the action of $\Gamma'$ is trivial on $M/u M$ if and only if it is
trivial on $M\otimes_S W(k)$, which means that $\Gamma'/\Gamma_r$ acts
trivially on $G_k$, i.e.\ that $G$ lies in the image of $\pdiv(R')^\Gamma$. 
This gives the first equivalence. The second and third equivalence are straightforward.
\end{proof}

For $K=K'$ we obtain 
Theorem \ref{Th:Main}:

\begin{corollary}
\label{Co:Final}
The category $\pdiv(R)$ is equivalent to the category 
$\BT(\u\FS\rr)_\Gamma$ of BT 
modules $\u\FM$ over $\u\FS_r$ 
with an action of $\Gamma_K$ which is trivial on $\FM/u\FM$.
\end{corollary}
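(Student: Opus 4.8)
The plan is to read off the corollary as the degenerate case $K'=K$ of Proposition \ref{Pr:Final}. Recall that by hypothesis $K$ sits inside $F_r$ for some fixed $r\ge 1$; take that $r$, and set $\Gamma=\Gamma_K$. Applying Proposition \ref{Pr:Final} with $K'=K$ — so that $R'=R$ and $\Gamma'=\Gamma$ — already produces an exact, duality-preserving chain of equivalences, contravariant on the left,
\[
\pdiv(R)^{\Gamma_K}\;\cong\;\Win(\u S\rr)^{\Gamma_K}_{\Gamma_K}\;\cong\;\Win(\u\FS\rr)^{\Gamma_K}_{\Gamma_K}\;\cong\;\BT(\u\FS\rr)^{\Gamma_K}_{\Gamma_K}.
\]
All that remains is to match the two outer terms of this chain with the categories named in the statement.

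For the right-hand term, the notational conventions fixed in Definition \ref{Def:KisinGamma} give $\BT(\u\FS\rr)^{\Gamma_K}_{\Gamma_K}=\BT(\u\FS\rr)_{\Gamma_K}$, which is exactly the category of BT modules $(\FM,\varphi)$ over $\u\FS_r$ carrying a semilinear action of $\Gamma_K$ that is trivial on $\FM/u\FM$ — precisely $\BT(\u\FS\rr)_\Gamma$ as written in the corollary. One may phrase strictness equivalently via $\FM\otimes_\FS W(k)$ by Remark \ref{Re:strict}; recalling that $E=E_r$, this is also the category $\BT(\u\FS)_{\Gamma_K}$ of the introduction, so that the corollary indeed yields Theorem \ref{Th:Main}.

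For the left-hand term, I would observe that when $K'=K$ a descent of $G\otimes_R R[p^{-1}]$ to $R[p^{-1}]$, in the sense of the definition of $\pdiv(R')^K$, is a descent along the identity of $R[p^{-1}]$; in the language of Remark \ref{Re:Tate} this is the case of the trivial group $\bar\Gamma=\Gamma_K/\Gamma_K$, in which the cocycle condition forces the descent datum to be the identity isomorphism. Hence the forgetful functor $\pdiv(R)^{\Gamma_K}\to\pdiv(R)$ is an equivalence of categories. Composing the displayed chain with this identification and with the identification of the previous paragraph gives the desired contravariant, duality-preserving equivalence $\pdiv(R)\cong\BT(\u\FS\rr)_\Gamma$.

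I do not anticipate a genuine obstacle here: all the substantive work — factoring the crystalline Dieudonn\'e functor through filtered Dieudonn\'e crystals and windows, controlling the denominators of $p$ in the construction of the connection from a strict $\Gamma_r$-action (Corollary \ref{Cor:ForgetNabla} and the propositions it rests on), the descent argument of Lemma \ref{Le:descent}, and the assembly in Propositions \ref{Pr:BT-DF-WinGamma}, \ref{Pr:MainProp} and \ref{Pr:Final} — has already been carried out. The only thing requiring care is bookkeeping: correctly tracking the (here trivial) descent datum and unwinding the two layers of notational abbreviations, so that the category output by Proposition \ref{Pr:Final} is literally the one appearing in the statement.
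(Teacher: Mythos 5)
Your proposal is correct and matches the paper's own (implicit) argument: the paper obtains Corollary \ref{Co:Final} precisely by specializing Proposition \ref{Pr:Final} to $K'=K$, with the identifications $\BT(\u\FS\rr)^{\Gamma_K}_{\Gamma_K}=\BT(\u\FS\rr)_{\Gamma_K}$ and $\pdiv(R)^{\Gamma_K}\cong\pdiv(R)$ (trivial descent datum) left to the reader exactly as you spell them out.
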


\begin{remark}
Assume that $F\subseteq\tilde K\subseteq\tilde K'\subseteq F_{\tilde r}$ is another instance
of the above data with $K\subseteq\tilde K$ and $K'\subseteq\tilde K'$ and $r\le\tilde r$.
Under the equivalence of Proposition \ref{Pr:Final},
the base change functor $\pdiv(R')^{\Gamma}\to \pdiv(\tilde R')^{\tilde\Gamma}$
corresponds to the base change functor 
$\BT(\u\FS\rr)^{\Gamma}_{\Gamma'}\to \BT(\u\FS{}_{\tilde r})^{\tilde\Gamma}_{\tilde\Gamma'}$
defined by $(\FM,\varphi)\mapsto(\FM\otimes_{\FS_r}\FS_{\tilde r},\varphi\otimes\varphi)$ 
with the obivous action of $\tilde\Gamma$.
Here $\FS_r$ is viewed as a subring of $\FS_{\tilde r}$ as in Remark \ref{Re:System}.
\end{remark}

\begin{example}
\label{Ex:QZ-Gm}
Let us trace the constructions for the $p$-divisible groups $\QQ_p/\ZZ_p$ and $\hat\GG_m$. 
The $\u S\rr$-window associated to $\QQ_p/\ZZ_p$ is 
$\u S\rr=(S_r,\Filone S_r,R_r,\varphi,\varphi_1)$ with the standard
action of $\Gamma_0$,
which corresponds to the $\u\FS\rr$-window $\u\FS\rr=(\FS_r,E_r\FS_r,\varphi,\varphi')$
and the BT 
module $(\FS_r,\varphi)$, both with the standard action of $\Gamma_0$.
The $\u S\rr$-window associated to $\hat\GG_m$ is $\u S\rr^t=(S_r,S_r,p\varphi,\varphi)$,
and we claim that 
this corresponds to the $\u\FS\rr$-window $\u\FS\rr^t=(\FS_r,\FS_r,\varphi(E_r)\varphi,\varphi)$
with $\gamma\in \Gamma_0$ acting as $x\mapsto\varphi(\lambda_\gamma)\gamma(x)$,
where $\lambda_\gamma$ is defined in \eqref{Eq:lambda_gamma}.
Indeed, the base change of $\u\FS\rr^t$ to $\u S\rr$ is equal to 
$(\u S\rr^t)_c=(S_r,S_r,\varphi(E_r)\varphi,c\varphi)$, and multiplication by $u_0/t$ carries
this window isomorphically onto $\u S\rr^t$. The associated BT 
module is $(\FS_r,E_r\varphi)$ with $\gamma\in\Gamma_0$ acting as 
$x\mapsto\lambda_\gamma\gamma(x)$.
\end{example}

\section{Galois representations}
\label{Se:GalRep}

In this section we relate the equivalences of 
Proposition \ref{Pr:Final} and Corollary \ref{Co:Final}
with the theory of Galois representations.
After some preliminaries on relative period rings in \S\ref{RelPeriod},
we treat the general case in \S\ref{GeneralTate}.  We then specialize to the case $d=0$
in \S\ref{dEqNil}
in order to connect with
the theory of Wach modules and of Kisin--Ren,
which we do in \S\ref{FinEHt}--\ref{Subse:Wachmod}.

Recall that $F=W(k)\otimes\QQ$ for a perfect field $k$ of odd characteristic $p$.
Let $R_0=\OOO_F[[t_1,\ldots,t_d]]$
and let $L$ be an algebraic closure of $F((t_1,\ldots,t_d))$.
For a finite extension $K$ of $F$ inside $L$ we consider $R:=\OOO_K[[t_1,\ldots,t_d]]$
as a subring of $L$.
Let $\bar R$ be the union of all finite $R$-algebras $S$ in $L$
such that $S[p^{-1}]$ is etale over $R[p^{-1}]$.
Let $\G_R=\Gal(\bar R[p^{-1}]/R[p^{-1}])$ and let
$\H_R\subset\G_R$ be the kernel of the $p$-adic cyclotomic character,
thus $\G_R/\H_R=\Gamma_K=\Gal(K(\mu_{p^\infty})/K)$. 

Let $R_{(\infty)}=R[t_1^{p^{-\infty}},\ldots,t_d^{p^{-\infty}}]\subseteq\bar R$ 
be the $R$-algebra generated by 
a chosen compatible system of $p^r$-th roots of the $t_i$.
Let $\G_{R_{(\infty)}}=\Gal(\bar R[p^{-1}]/R_{(\infty)}[p^{-1}])$ and let
$\H_{R_{(\infty)}}\subset\G_{R_{(\infty)}}$ be the kernel of the $p$-adic cyclotomic character,
thus again $\G_{R_{(\infty)}}/\H_{R_{(\infty)}}=\Gamma_K$. 
In the case $d=0$ we have $R=R_{(\infty)}$.

We will always assume that $K\subseteq F_r=F(\mu_{p^r})$ for some $r$, 
which means that  $\H_R=\H_{R_0}$.
The main point will be to recover the Tate module of a $p$-divisible group $G$
over $R$ as a representation of $\G_{R_{(\infty)}}$
from the BT module with $\Gamma_K$-action $\u\FM{}_r(G)$ 
associated to $G$ via crystalline Dieudonn\'e theory by Proposition \ref{Pr:Final}.
This can be done by a variant of Faltings' integral comparison isomorphism 
\cite[Th.\ 7]{Faltings}.

\begin{rem}
Since $G$ is determined by $\u\FM{}_r(G)$, this module also determines
the Tate module $T_p(G)$ as a representation of the full Galois group $\G_R$.
It seems to be inherent to the situation that one directly recovers $T_p(G)$ only
as a representation of $\G_{R_{(\infty)}}$. 
Without the action of $\Gamma_K$ one would even get $T_p(G)$ only as a representation
of $\H_{R_{(\infty)}}$; see \cite[Proposition 8.1]{Kim:FormallySmooth}.
\end{rem}

\subsection{Relative period rings}\label{RelPeriod}

To fix the notation we recall some relative period rings following \cite{Brinon:Rep-relatif}.
For $R$ as above let $\RRR:=\varprojlim(\bar R/p,\varphi)$.
Here $\varphi$ is surjective on $\bar R/p$ by \cite[Prop.\ 2.0.1]{Brinon:Rep-relatif}.
Let $A_{\inf}:=W(\RRR)$, and let 
\[
\theta:A_{\inf}\to\wh{\bar R}=\varprojlim\bar R/p^n
\]
be the unique homomorphism that lifts the natural map $A_{\inf}\to\bar R/p$. 
The ideal $\Ker(\theta)$ is principal,
and an element $\xi=(a_0,a_1,\ldots)\in\Ker(\theta)$ with $a_i\in\RRR$
generates this ideal if and only if $a_1$ is a unit;
see \cite[Prop.\ 5.1.2]{Brinon:Rep-relatif} and its proof.
Let $A_{\cris}$ be the $p$-adic completion of the PD-envelope of $\Ker(\theta)$.
The rings $A_{\inf}$ and $A_{\cris}$ carry a Frobenius $\varphi$ and an action of $\G_{R_0}$.

The chosen system of primitive $p^r$-th roots of unity $\varepsilon^{(r)}\in\bar R$
corresponds to an element $\u\varepsilon$ of $\RRR$.
We put $u_0=[\u\varepsilon]-1\in A_{\inf}$ and $u_r=\varphi^{-r}(u_0)$.
Moreover, for $1\le i\le d$ the chosen compatible system 
of $p^r$-th roots of $t_i$ in $\bar R$ gives an element $\u t{}_i\in\RRR$.
By an abuse of notation we write $t_i=[\u t{}_i]\in A_{\inf}$
and thus consider $\FS_r=W(k)[[u_r,t_1,\ldots,t_d]]$ as a subring of $A_{\inf}$.
Here $\varphi$ and an element $g\in\G_{R_0}$ act by 
$\varphi(1+u_r)=(1+u_r)^p$ and $g(1+u_r)=(1+u_r)^{\chi(g)}$.
Moreover $\varphi(t_i)=t_i^p$, and the group $\G_{(R_0)_{(\infty)}}$ acts trivially on $t_i$.

The restriction of $\theta$ to $\FS_r$ is the homomorphism
$\theta_r:\FS_r\to\OOO_{F_r}[[t_1,\ldots,t_d]]$ defined by $1+u_r\mapsto\varepsilon^{(r)}$
and $t_i\mapsto t_i$.
For $r\ge 1$, the element $E=E_r(u_r)=u_0/\varphi^{-1}(u_0)$ is independent of $r$,
and it generates the ideals $\Ker(\theta)$ and $\Ker(\theta_r)$.
For $r\ge 1$ let $S_r$ be the $p$-adic completion of the PD-envelope of $E\FS_r$. 
The construction of \S \ref{Se:descent} applied to $\FS_r$ and $A_{\inf}$
gives a commutative diagram of frames
\begin{equation}
\label{Eq:4frames}
\xymatrix@M+0.2em{
{\u\FS}\rr \ar[r] \ar[d] & 
{\u A}{}_{\inf}  \ar[d] \\
{\u S}\rr \ar[r] &
{\u A}{}_{\cris}
}
\end{equation}
where in the upper row, the ideals $\Filone$ are generated by $E$
with $\varphi_1'(Ex)=\varphi(x)$,
and in the lower row, the ideals $\Filone$ are the natural PD-ideals with 
$\varphi_1=p^{-1}\varphi$.
The horizontal arrows are strict homomorphisms, and the vertical arrows are
$c$-homomorphisms for $c=\varphi(E)/p$ which induce equivalences
on windows by the descent Proposition \ref{Pr:lambda-crys}.
The Galois group $\G_{(R_0)_{(\infty)}}$ 
acts compatibly on all frames of the diagram, more precisely $g\in\G_{(R_0)_{(\infty)}}$ 
induces strict automorphisms of the lower two frames
and $c_g$-automorphisms of the upper two frames with $c_g=g(\varphi(E))/\varphi(E)$.
The action of $\G_{(R_0)_{(\infty)}}$ on $\u\FS\rr$ and on $\u S\rr$ factors over $\Gamma_F$.
Here the nested system $(\u\FS\rr\to\u S\rr)_{r\ge 1}$ coincides with that
considered in \S \ref{Se:DieuCyc} for $d=0$; see Remark \ref{Re:System}.

\begin{lemma}
\label{Le:Ainf-inv}
We have $A_{\inf}^{\varphi=1}=\ZZ_p$ and $A_{\inf}^{\varphi=E}=u_1\ZZ_p$.
\end{lemma}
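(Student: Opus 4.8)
The plan is to prove both equalities by the same descending induction on powers of $p$, reducing at each stage to an equation in the perfect ring $\RRR=\varprojlim(\bar R/p,\varphi)$, which I will use is an integral domain (see \cite{Brinon:Rep-relatif}).

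First I would record the formal properties of $A_{\inf}=W(\RRR)$ that drive the argument: it is $p$-torsion free, $p$-adically complete and separated, reduction modulo $p$ identifies $A_{\inf}/pA_{\inf}$ with $\RRR$, and under this identification $\varphi$ becomes the $p$-power map. Moreover $\varphi$ is bijective on $A_{\inf}$ since $\RRR$ is perfect, so $u_1=\varphi^{-1}(u_0)$ lies in $A_{\inf}$, and by construction $\varphi(u_1)=u_0$ and $E=u_0/u_1$, whence $\varphi(u_1)=Eu_1$. This already gives the inclusions $\ZZ_p\subseteq A_{\inf}^{\varphi=1}$ and $u_1\ZZ_p\subseteq A_{\inf}^{\varphi=E}$; the content is the reverse inclusions, and these come down to the mod-$p$ picture. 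If $\varphi(x)=x$ then $\bar x^p=\bar x$ in $\RRR$, and since $\RRR$ is a domain the identity $T^p-T=\prod_{c\in\FF_p}(T-c)$ forces $\bar x\in\FF_p$. If $\varphi(x)=Ex$, I first reduce $Eu_1=\varphi(u_1)$ mod $p$ and cancel $\bar u_1$ (non-zero since $\varepsilon^{(1)}\neq1$) to get $\bar E=\bar u_1^{p-1}$; then $\bar x^p=\bar u_1^{p-1}\bar x$, and working in $\Frac(\RRR)$ and dividing by $\bar u_1$ yields $(\bar x/\bar u_1)^p=\bar x/\bar u_1$, hence $\bar x\in\FF_p\cdot\bar u_1$.

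With these statements the induction is routine. Given $x\in A_{\inf}$ with $\varphi(x)=x$, choose $c_0\in\{0,\dots,p-1\}$ with $x\equiv c_0$ mod $pA_{\inf}$; since $c_0\in\ZZ_p$ is $\varphi$-fixed, $x-c_0=px'$ with $\varphi(x')=x'$ (using $p$-torsion freeness), and iterating together with $p$-adic completeness gives $x=\sum_{i\ge0}c_ip^i\in\ZZ_p$. Given $\varphi(x)=Ex$, the same scheme applies with $u_1$ in the role of $1$: at each stage write $x\equiv c_0u_1$ mod $pA_{\inf}$, note that $x-c_0u_1$ is again annihilated by $\varphi-E$ (because $\varphi(u_1)=Eu_1$) and divisible by $p$, and conclude $x=(\sum_{i\ge0}c_ip^i)u_1\in u_1\ZZ_p$.

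The only non-formal ingredient is that $\RRR$ is an integral domain, so that the factorizations and the cancellations of $\bar u_1$ above are legitimate; this I would take from \cite{Brinon:Rep-relatif}. Granting it, the proof is short and I do not anticipate a genuine obstacle.
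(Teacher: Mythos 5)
Your d\'evissage from $A_{\inf}$ to $\RRR$ is fine and is essentially the reduction the paper makes implicitly: $p$-torsion freeness and $p$-adic completeness of $A_{\inf}=W(\RRR)$, together with $\varphi(u_1)=Eu_1$, reduce the two equalities to the mod-$p$ statements $\RRR^{\varphi=1}=\FF_p$ and $\RRR^{\varphi=\bar E}=\FF_p\,\bar u_1$. The gap is in how you prove these: your entire argument rests on the assertion that $\RRR$ is an integral domain, cited to \cite{Brinon:Rep-relatif}. In the relative setting $d\ge 1$ --- which is precisely the generality in which the lemma is used (Proposition \ref{Pr:Comp}) --- this is not available: $\RRR=\varprojlim(\bar R/p,\varphi)$ is built from the non-noetherian ring $\bar R/p$, and Brinon does not prove it is a domain. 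Indeed, if that were known, Brinon's Lemma 6.2.18 (the statement $\RRR^{\varphi=1}=\FF_p$ that the paper cites) would be a one-line consequence rather than a lemma with a genuine proof, and the paper would not need the separate argument of Remark \ref{Rk:u1-reg} to show merely that $u_1$ is a non-zero divisor. Only for $d=0$, where $\RRR$ is the classical tilt of $\OOO_{\C_K}$ and is a valuation ring, is your use of $\Frac(\RRR)$ legitimate.

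The facts that are actually available --- $\RRR^{\varphi=1}=\FF_p$ and that $\bar u_1$ is a non-zero divisor --- do not repair the argument: from $\bar x^p=\bar u_1^{p-1}\bar x$ one cannot pass to a fraction field, and showing that $\bar u_1$ divides $\bar x$ in $\RRR$ is exactly the nontrivial point. The paper avoids this by working at finite level in $\bar R$ itself, which \emph{is} a domain (it sits inside the field $L$) and is a henselian pair with respect to $(p)$: writing $a=(a_0,a_1,\ldots)$ with $a_r^p=\varpi_r a_r$ in $\bar R/p$, where $\varpi_r=(\varepsilon^{(r+1)}-1)^{p-1}$, one applies Hensel's lemma to $X^p-\varpi_rX$ over $\bar R$, uses that this polynomial has only $p$ roots in the domain $\bar R$ (namely $0$ and the Teichm\"uller multiples of $\varepsilon^{(r+1)}-1$) to conclude that $a_r$ is congruent to an element of $\FF_p\,\bar u_{r+1}$ modulo $p\varpi_r^{-1}\bar R$, and then lets $r\to\infty$ to get $a\in\FF_p\,\bar u_1$. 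To salvage your route you would have to prove that $\RRR$ (or $\wh{\bar R}$) is a domain in the relative case, which is a serious claim not contained in the reference you invoke; otherwise you need an argument at finite levels in $\bar R$ of the above kind.
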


\begin{proof}
The first assertion is a consequence of $(\bar R/p)^{\varphi=1}=\FF_p$
and thus $\RRR^{\varphi=1}=\FF_p$, which is proved in \cite[Le.\ 6.2.18]{Brinon:Rep-relatif}.
The second assertion is proved by a variant of the argument of loc.cit. 
Note that $\varphi(u_1)=Eu_1$.
Therefore it suffices to show that $\RRR^{\varphi={\bar E}}$ is equal to
$\bar u_1\FF_p$, where $\bar u_1=\varphi^{-1}(\u\varepsilon-1)\in\RRR$ is the image of $u_1$
and $\bar E=\bar u_1^{p-1}$ is the image of $E$. 
Let $a=(a_0,a_1,\ldots)\in\RRR$ with $a_i\in \bar R/p$ such that $a^p=\bar Ea$.
For $r\ge 0$ let $\varpi_r=(\varepsilon^{(r+1)}-1)^{p-1}\in\OOO_{F_{r+1}}$,
 thus $v_p(\varpi_r)=p^{-r}$. 
We have $a_r^p=\varpi_{r}a_r$ in $\bar R/p$.
Hensel's Lemma implies that for $r\ge 1$ the polynomial $X^p-\varpi_{r}X$ 
has a unique root $\tilde a_r\in\bar R$ with $\tilde a_r\equiv a_r$ mod $p\varpi_{r}^{-1}\bar R$.
Since this polynomial has only $p$ distinct roots in the domain $\bar R$, 
it follows that the image of $a_r$ in $\bar R/p\varpi_r^{-1}$ lies in $\FF_p\bar u_{r+1}$.
Therefore $a\in\bar u_1\FF_p$ in $\RRR$.
\end{proof}

\begin{remark}
\label{Rk:u1-reg}
The element $\bar u_1\in\RRR$ and thus also $u_1\in A_{\inf}$ are non-zero divisors. 
Indeed, $\bar R$ is $p$-torsion free and thus flat over $\O_{F_{\infty}}$.
Thus each $a=(a_0,a_1,\ldots)\in\RRR$ with $a_i\in\bar R/p$
and $\bar u_1a=0$ satisfies $a_i\equiv 0$
mod $p\pi_1^{-1}\bar R$, which implies that $a=0$.
\end{remark}

\subsection{Recovering the Tate module}\label{GeneralTate}

Let $F\subseteq K\subseteq K'\subseteq F_r$ with $r\ge 1$ be given,
and let $R=\OOO_K[[t_1,\ldots,t_d]]$ and $R'=\OOO_{K'}[[t_1,\ldots,t_d]]$
and $R_r=\OOO_{F_r}[[t_1,\ldots,t_d]]$.

For $G\in\pdiv(R')^{\Gamma_K}$ the Tate module $T_p(G)$ is
naturally a $\ZZ_p[\G_R]$-module, 
whose restriction to a $\ZZ_p[\G_{R_{(\infty)}}]$-module
we want to reconstruct from
the BT module over $\FS_r$ with $\Gamma_K$-action $\u\FM{}_r(G)$ associated to $G$ 
by Proposition \ref{Pr:Final}. Since the base change functor
$\pdiv(\OOO_{R'})^{\Gamma_K}\to\pdiv(\OOO_{R_r})^{\Gamma_K}$
changes neither $T_p(G)$ nor $\FM_r(G)$,
for simplicity we may and do assume that $K'=F_r$.
In particular, $\pdiv(\OOO_R)$ is viewed
as a full subcategory of $\pdiv(\OOO_{R_r})^{\Gamma_K}$.

In the following we will use the convention that a BT 
module $\u\FM\in\BT(\u\FS\rr)$ corresponds to the windows $\u\MM\in\Win(\u\FS\rr)$ and
$\u M\in\Win(\u S\rr)$, 
moreover $\u\FM{}_{\inf}$,  $\u\MM{}_{\inf}$,  and $\u M{}_{\cris}$
denote the obvious base change to the other two frames of \eqref{Eq:4frames};
see Lemma \ref{Le:WinKisin} and Proposition \ref{Pr:lambda-crys}.

For $\u\FM\in\BT(\u\FS\rr)^{\Gamma_K}$ as in Definition \ref{Def:KisinGamma} 
we consider the $\ZZ_p[\G_{R_{(\infty)}}]$-module
\[
T_{\inf}^*(\u\FM)=\Hom_{\FS_r,\varphi}(\FM,A_{\inf})
\]
where $g\in\G_{R}$ acts by conjugation.

\begin{lemma}
\label{Le:T*inf}
There are natural $\G_{R_{(\infty)}}$-equivariant isomorphisms
\[
T^*_{\inf}(\u\FM)
\cong\Hom(\u\FM_{\inf},\u A{}_{\inf})
\cong\Hom(\u\MM{}_{\inf},\u A{}_{\inf})
\cong\Hom(\u M{}_{\cris},\u A{}_{\cris}).
\]
where the $\Hom$'s are taken in $\BT(\u A{}_{\inf})$, 
$\Win(\u A{}_{\inf})$, and $\Win(\u A{}_{\cris})$ in that order.
\end{lemma}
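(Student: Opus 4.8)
The plan is to identify the four groups by a chain of three isomorphisms, each a formal consequence of something already established, and then to check $\G_{R_{(\infty)}}$‑equivariance. First I would fix notation: by the discussion following \eqref{Eq:4frames}, $\u\FS\rr$ is a subframe of $\u A_{\inf}$, the three arrows in \eqref{Eq:4frames} other than $\u\FS\rr\to\u S\rr$ are crystalline (the vertical ones being $c$‑homomorphisms with $c=\varphi(E)/p$), the square commutes, and $\G_{R_{(\infty)}}$ acts compatibly on all four frames. Following Example \ref{Ex:WinKisin}, write $\u A_{\inf}$ also for the window $(A_{\inf},EA_{\inf},\varphi,\varphi_1')$ and for the BT module $(A_{\inf},\varphi)$, and $\u A_{\cris}$ for the trivial window over the PD‑frame $\u A_{\cris}$; then $\u\MM_{\inf}$ and $\u\FM_{\inf}$ are the base changes of $\u\MM$ and $\u\FM$ to $\u A_{\inf}$, while $\u M_{\cris}$ is the base change of $\u M$ to $\u A_{\cris}$, and by commutativity of \eqref{Eq:4frames} together with composition of base‑change functors, $\u M_{\cris}$ is also the base change of $\u\MM_{\inf}$ along $\u A_{\inf}\to\u A_{\cris}$.

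The first isomorphism $T^*_{\inf}(\u\FM)\cong\Hom_{\BT(\u A_{\inf})}(\u\FM_{\inf},\u A_{\inf})$ I would get from the extension‑of‑scalars adjunction along $\FS_r\hookrightarrow A_{\inf}$: an $A_{\inf}$‑linear map $A_{\inf}\otimes_{\FS_r}\FM\to A_{\inf}$ is the same thing as an $\FS_r$‑linear map $\FM\to A_{\inf}$, and testing on elements $1\otimes m$ shows that one commutes with $\varphi$ iff the other does, so that morphisms in $\BT(\u A_{\inf})$ are exactly the elements of $T^*_{\inf}(\u\FM)$ (that $\u\FM_{\inf}$ again lies in $\BT(\u A_{\inf})$ is immediate since tensoring preserves the cokernel condition). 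The second isomorphism $\Hom_{\BT(\u A_{\inf})}(\u\FM_{\inf},\u A_{\inf})\cong\Hom_{\Win(\u A_{\inf})}(\u\MM_{\inf},\u A_{\inf})$ is the equivalence $\Win(\u A_{\inf})\xrightarrow{\sim}\BT(\u A_{\inf})$ of Lemma \ref{Le:WinKisin}, under which $\u\MM_{\inf}$ corresponds to $\u\FM_{\inf}$ and $\u A_{\inf}$ to $\u A_{\inf}$; here I would check that this equivalence commutes with base change along $\u\FS\rr\to\u A_{\inf}$, which follows by unwinding the formula $\u M\mapsto(\Filone M,E\Phi_1)$ and using $EM\subseteq\Filone M$. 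The third isomorphism $\Hom_{\Win(\u A_{\inf})}(\u\MM_{\inf},\u A_{\inf})\cong\Hom_{\Win(\u A_{\cris})}(\u M_{\cris},\u A_{\cris})$ then comes from the fact that the crystalline frame homomorphism $\u A_{\inf}\to\u A_{\cris}$ induces an equivalence of window categories, hence a bijection on Hom‑sets, once I note that its base‑change functor sends $\u\MM_{\inf}$ to $\u M_{\cris}$ (observed above) and the trivial window $\u A_{\inf}$ to the trivial window $\u A_{\cris}$ — the latter because $\Filone A_{\cris}=(\Filone A_{\cris})A_{\cris}$, which forces $\Phi_1=p^{-1}\varphi$.

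Finally I would check $\G_{R_{(\infty)}}$‑equivariance: each of the four categories carries a $\G_{R_{(\infty)}}$‑action — on $\BT(\u\FS\rr)^{\Gamma_K}$ via $\G_R\to\Gamma_K$ and the given semilinear action, on the other three by transport of structure along the compatible actions on the frames of \eqref{Eq:4frames} — and each Hom‑group carries the associated conjugation action; since the adjunction of step one and the equivalences of steps two and three are all natural, the three isomorphisms are equivariant. I do not expect a serious obstacle: the lemma is essentially a rearrangement of definitions around the two genuine inputs (Lemma \ref{Le:WinKisin} and the crystallinity of the arrows in \eqref{Eq:4frames} supplied by Proposition \ref{Pr:lambda-crys}). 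The only points that actually require verification are the base‑change compatibility of Lemma \ref{Le:WinKisin} and the identification of the trivial windows, both short; and, if anything is delicate, it is confirming that the conjugation action on $T^*_{\inf}(\u\FM)$ — defined a priori for the full group $\G_R$ — restricts correctly to $\G_{R_{(\infty)}}$ and matches the transported actions, which rests precisely on $\G_{R_{(\infty)}}$ being the subgroup that preserves $\u\FS\rr\subseteq\u A_{\inf}$ while acting through $\Gamma_K$ on $\FM$.
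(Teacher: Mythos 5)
Your proof is correct and follows exactly the route the paper itself takes (and compresses to one line): the scalar-extension adjunction gives the first isomorphism, Lemma \ref{Le:WinKisin} over $\u A{}_{\inf}$ gives the second, and the crystallinity of $\u A{}_{\inf}\to\u A{}_{\cris}$ supplied by Proposition \ref{Pr:lambda-crys} gives the third, with equivariance by naturality of these identifications. One harmless slip in your setup: in \eqref{Eq:4frames} it is the two \emph{vertical} arrows (including $\u\FS\rr\to\u S\rr$, which you exclude) that are crystalline, while the horizontal arrows are merely strict — but since your argument only ever invokes crystallinity of $\u A{}_{\inf}\to\u A{}_{\cris}$, this does not affect the proof.
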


\begin{proof}
The first isomorphism is clear, the second isomorphism follows from Lemma \ref{Le:WinKisin},
and the third isomorphism exists because $\u A{}_{\inf}\to\u A{}_{\cris}$ is crystalline by
Proposition \ref{Pr:lambda-crys}.
\end{proof}

Continuing our above notational convention, for $G\in\pdiv(R_r)^{\Gamma_K}$
we write $\u{\FM}_r(G)$, $\u{\MM}_r(G)$, {\em etc.}~for the object of $\BT(\u\FS\rr)^{\Gamma_K}$,
$\Win(\u\FS\rr)^{\Gamma_K}$, {\em etc.}  corresponding to $G$
via Proposition \ref{Pr:MainProp}.
The action of $\Gamma_K$ induces an action of $\G_{R_{(\infty)}}$
on the various base change
modules and windows over the frames of \eqref{Eq:4frames}.
We consider the homomorphism of $\ZZ_p[\G_{R_{(\infty)}}]$-modules
\begin{equation}
	\alpha_G:T_p(G)\to T_{\inf}^*(\u{\FM}_r(G))
	\label{Def:AlphaG}
\end{equation}
defined as the composition
\[
\xymatrix{
{T_p(G)=\Hom_{\bar R}(\QQ_p/\ZZ_p,G)} \ar[r]^-{\u M{}_{\cris}} &
{\Hom(\u M{}_{\cris}(G),\u M{}_{\cris}(\QQ_p/\ZZ_p))
\cong T_{\inf}^*(\u{\FM}_r(G))}
}
\]
using Example \ref{Ex:QZ-Gm} and Lemma \ref{Le:T*inf}.  Let
\begin{equation}
\tilde\alpha_G:T_p(G)\otimes_{\ZZ_p}A_{\inf}\to
\FM_r(G)^t\otimes_{\FS_r}A_{\inf}
\label{Def:alphaGtilde}
\end{equation}
be the $A_{\inf}$-linear map induced by $\alpha_G$, 
where $^t$ refers to the dual module as in Remark \ref{Rk:dual-BT}.

\begin{proposition}
\label{Pr:Comp}
Here $\tilde\alpha_G$ is injective with cokernel annihilated by $u_1$,
and $\alpha_G$ is bijective.
\end{proposition}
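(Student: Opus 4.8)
The plan is to prove the assertion about $\tilde\alpha_G$ first and then to deduce bijectivity of $\alpha_G$ by a Frobenius-invariants argument. Throughout one uses the diagram of frames \eqref{Eq:4frames} and the identifications of Lemma \ref{Le:T*inf}. By construction $\alpha_G$ is obtained by applying the crystalline Dieudonné functor and evaluating on the PD-thickening $A_{\cris}\to\wh{\bar R}$; hence, after base change to $A_{\cris}$, the map $\tilde\alpha_G$ becomes the comparison map $T_p(G)\otimes_{\ZZ_p}A_{\cris}\to\FM_r(G)^t\otimes_{\FS_r}A_{\cris}$ attached to the Dieudonné crystal $\u\D(G)$ --- with its Frobenius and, via Proposition \ref{Pr:BT2DF}, its Hodge filtration --- evaluated at $A_{\cris}$.

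First I would treat this comparison over $\u A_{\cris}$. A variant of Faltings' integral comparison theorem \cite[Th.~7]{Faltings}, adapted to the $p$-divisible group $G$ over the relative base $R_r$, shows (after the appropriate duality bookkeeping) that the above $A_{\cris}$-linear map is injective with cokernel annihilated by the period $t=\log(1+u_0)$; only $t$ itself, not a higher power, occurs because $G$ has Hodge--Tate weights $0$ and $1$. Since $A_{\inf}\hookrightarrow A_{\cris}$ and $T_p(G)$, $\FM_r(G)^t$ are projective, injectivity of $\tilde\alpha_G$ over $A_{\inf}$ is immediate; what remains is the sharp cokernel bound.

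The main obstacle is to improve ``killed by $t$ over $A_{\cris}$'' to ``killed by $u_1$ over $A_{\inf}$''. Since $t=u_0\cdot(\text{unit of }A_{\cris})=Eu_1\cdot(\text{unit})$, this improvement removes a factor of $E$, so it must exploit that $\FM_r(G)$ is a module over $\FS_r\subseteq A_{\inf}$ and not merely over $A_{\cris}$. The structural input is that $\tilde\alpha_G$ is compatible with Frobenius: the linearized Frobenius $1\otimes\varphi$ on $T_p(G)\otimes_{\ZZ_p}A_{\inf}$ is bijective, whereas on $\FM_r(G)^t\otimes_{\FS_r}A_{\inf}$ the dual Frobenius is defined only after multiplication by $E$ (its linearization has cokernel killed by $E$). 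Combining this with $\varphi(u_1)=u_0=Eu_1$ and the fact that $a\in A_{\inf}$ with $\varphi(a)=Ea$ lies in $u_1\ZZ_p$ (Lemma \ref{Le:Ainf-inv}), one descends: for $m\in\FM_r(G)^t\otimes_{\FS_r}A_{\inf}$ the $A_{\cris}$-statement gives $Eu_1m$ in the image, and a Frobenius-equivariance argument using the bijectivity of the source Frobenius and $A_{\inf}^{\varphi=E}=u_1\ZZ_p$ upgrades this to $u_1m\in\im(\tilde\alpha_G)$ over $A_{\inf}$. Keeping track of where the dual Frobenius is defined only up to a factor of $E$ --- equivalently, whether one works with the variant of Faltings' theorem directly over $A_{\inf}$ rather than descending from $A_{\cris}$ --- is the delicate point.

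It then remains to deduce that $\alpha_G$ is bijective. By Lemma \ref{Le:T*inf}, an element of $\FM_r(G)^t\otimes_{\FS_r}A_{\inf}$ lies in $T^*_{\inf}(\u\FM_r(G))=\Hom_{\FS_r,\varphi}(\FM_r(G),A_{\inf})$ precisely when it is fixed by the dual Frobenius (rescaled to be integral), and $\tilde\alpha_G$ is equivariant for this operator and for $1\otimes\varphi$. Injectivity of $\alpha_G$ is immediate from that of $\tilde\alpha_G$. For surjectivity, take $x\in T^*_{\inf}(\u\FM_r(G))$ and write $u_1x=\tilde\alpha_G(z)$ with $z=\sum_i v_i\otimes a_i$ for a $\ZZ_p$-basis $\{v_i\}$ of $T_p(G)$, using the cokernel bound. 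Applying the dual Frobenius and using $\varphi(u_1)=u_0=Eu_1$ together with the injectivity of $\tilde\alpha_G$ yields $\varphi(a_i)=Ea_i$, hence $a_i\in u_1\ZZ_p$ by Lemma \ref{Le:Ainf-inv}; thus $z=u_1(w\otimes 1)$ for some $w\in T_p(G)$, and cancelling the nonzerodivisor $u_1$ (Remark \ref{Rk:u1-reg}) gives $x=\alpha_G(w)$.
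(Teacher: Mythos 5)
Your argument for the key first assertion---that $\tilde\alpha_G$ is injective with cokernel killed by $u_1$---is not complete. You invoke ``a variant of Faltings' integral comparison theorem adapted to the relative base $R_r$'' over $A_{\cris}$ as a black box, but that statement is essentially the proposition to be proved: the paper's remark that Proposition \ref{Pr:Comp} ``is a variant of \cite[Th.\ 7]{Faltings} with a similar proof'' means the proof must be supplied, not cited, especially in the relative setting $d\ge 1$ and with the sharp integral bound over $A_{\inf}$. More seriously, the passage from ``cokernel killed by $t=Eu_1\cdot(\text{unit})$ over $A_{\cris}$'' to ``cokernel killed by $u_1$ over $A_{\inf}$'' is exactly the hard point, and you only gesture at it (``a Frobenius-equivariance argument \dots upgrades this''), yourself flagging it as the delicate step. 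Two genuine issues are hidden there: (i) the $A_{\cris}$-statement only produces, for $m\in\FM_r(G)^t\otimes_{\FS_r}A_{\inf}$, an element $z\in T_p(G)\otimes_{\ZZ_p}A_{\cris}$ with $\tilde\alpha_G(z)=Eu_1m$, and descending $z$ to $T_p(G)\otimes_{\ZZ_p}A_{\inf}$ requires an intersection statement inside $\FM_r(G)^t\otimes_{\FS_r}A_{\cris}$ that you do not prove; (ii) removing the factor $E$ amounts to showing that the cokernel of $\tilde\alpha_G$ over $A_{\inf}$ has no nonzero $E$-torsion (equivalently an injectivity statement modulo $E$, i.e.\ over $A_{\inf}/E\cong\wh{\bar R}$), and no argument is offered. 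The manipulation with $\varphi(u_1)=Eu_1$ and $A_{\inf}^{\varphi=E}=u_1\ZZ_p$ does not by itself produce either of these.

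The paper avoids both problems by working directly over $A_{\inf}$: Cartier duality gives a map $\tilde\rho_G:\FM_r(G)^t\otimes_{\FS_r}A_{\inf}\to T_p(G)\otimes_{\ZZ_p}A_{\inf}$ built from $T_p(G^\vee)=\Hom_{\bar R}(G,\hat\GG_m)$, and the explicit computation for $\hat\GG_m$ in Example \ref{Ex:QZ-Gm} (where the associated map of BT modules is multiplication by $u_1$) combined with the perfectness of the Weil pairing yields the identity $\tilde\rho_G\circ\tilde\alpha_G=1\otimes u_1$ on the nose; injectivity and the $u_1$-bound on the cokernel then follow at once because $u_1$ is a nonzerodivisor (Remark \ref{Rk:u1-reg}). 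If you want to repair your route, the cleanest fix is to reproduce this pairing argument rather than to descend from an $A_{\cris}$-statement. By contrast, your second step---deducing bijectivity of $\alpha_G$ by passing to the eigenspace $\{x\mid\varphi(x)=Ex\}$ using Lemma \ref{Le:Ainf-inv} and the Frobenius structures $1\otimes E\varphi$, $\varphi^t\otimes\varphi$, $1\otimes\varphi$---is correct and essentially identical to the paper's argument; it simply cannot get off the ground until the first assertion has actually been established.
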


This is a variant of \cite[Thm.\ 7]{Faltings} with a similar proof.
See also \cite[\S 5.2]{Kim:FormallySmooth}.

\begin{proof}
We start with the case $G=\hat\GG_m$, 
with associated windows as given in Example \ref{Ex:QZ-Gm}. 
The $\ZZ_p$-module $T_p(\hat\GG_m)$ is generated by $\varepsilon:\QQ_p/\ZZ_p\to\hat\GG_m$, 
which induces $t:(\u A{}_{\cris})^t\to\u A{}_{\cris}$ over $\u A{}_{\cris}$.\footnote{
Let us recall the proof. 
The universal vector extension of $\QQ_p/\ZZ_p$ 
is obtained as the pushout of $\ZZ_p\to\QQ_p\to \QQ_p/\ZZ_p$ by $\ZZ_p\to\GG_a$. 
The given map $\varepsilon:\QQ_p/\ZZ_p\to\hat\GG_m$ over $\OOO_{\C_F}$ lifts uniquely to 
a map $\QQ_p\to\hat\GG_m$ over $A_{\cris}$ with $1\mapsto[\varepsilon]$.
Its restriction to $\ZZ_p$ extends to an algebraic homomorphism $\GG_a\to\hat\GG_m$ 
over $\Spf A_{\cris}$, and $\Lie\GG_a\to\Lie\GG_m$ sends $1=\log_{\GG_a}(1)$
to $t=\log_{\GG_m}([\varepsilon])$.
}
The corresponding homomorphism of $\u A{}_{\inf}$-windows $(\u A{}_{\inf})^t\to\u  A{}_{\inf}$ 
is multiplication by $t\cdot u_0/t=u_0$, and the associated map of BT modules
is $\varphi^{-1}(u_0)=u_1:A_{\inf}\to A_{\inf}$.

For general $G$ let $\rho_G$ the ``dual version'' of $\alpha_G$,
defined as the composite
\[
T_p(G^\vee)=\Hom_{\bar R}(G,\hat\GG_m)
\to\Hom(\u M{}_{\cris}(\hat\GG_m),\u M{}_{\cris}(G))
\cong\Hom((\u A{}_{\inf})^t,\u\FM{}_{\inf}(G))
\]
where the last isomorphism follows from
Lemma \ref{Le:T*inf} applied to the dual windows.
Under the identification of $\Z_p$-modules 
$\Hom_{\Z_p}(T_p(G^\vee),\Z_p)\cong T_p(G)$,
the homomorphism $\rho_G$ gives rise to an $A_{\inf}$-linear map
\[
\tilde\rho_G:\FM_r(G)^t\otimes_{\FS_r}A_{\inf}
\to T_p(G)\otimes_{\ZZ_p}A_{\inf}.
\]
We have a commutative diagram with perfect bilinear vertical maps:
\[
\xymatrix@M+0.2em{
T_p(G)\times T_p(G^\vee) \ar[r]^-{\alpha\times\rho} \ar[d] &
\Hom_{A_{\inf}}(\FM_{\inf},A_{\inf})\times\Hom_{A_{\inf}}(A_{\inf},\FM_{\inf}) \ar[d] \\
T(\hat\GG_m) \ar[r]_-\alpha &
\Hom_{A_{\inf}}(A_{\inf},A_{\inf})
}
\]
which implies that $\tilde\rho_G\circ\tilde\alpha_G$ is equal to $1\otimes u_1$.
This proves the first assertion of the proposition.
Since $u_1$ not a zero divisor in $A_{\inf}$  (see Remark \ref{Rk:u1-reg}) it follows
that $\tilde\alpha_G$ and $\tilde\rho_G$ are injective.
To prove the second assertion we have to consider the Frobenius maps.
Let $\varphi^t:\FM_r(G)^t\to\FM_r(G)^t$ be defined as in Remark \ref{Rk:dual-BT}.
Then $\tilde\alpha_G$ and $\tilde\rho_G$ are homomorphisms of Frobenius modules
\[
(T_p(G)\otimes A_{\inf}, 1\otimes E\varphi)\xrightarrow{\tilde\alpha_G}
(\FM_r(G)^t\otimes_{\FS_r}A_{\inf}, \varphi^t\otimes \varphi)\xrightarrow{\tilde\rho_G}
(T_p(G)\otimes A_{\inf}, 1\otimes \varphi).
\]
If we apply the operator $\{x\mid \varphi(x)=Ex\}$ to these maps,
using Lemma \ref{Le:Ainf-inv} we obtain injective maps
\[
T_p(G)\xrightarrow{\alpha_G} T_{\inf}^*(\u{\FM}_r(G))\to T_p(G)\otimes u_1\Z_p
\]
whose composition is bijective. This proves the Proposition.
\end{proof}

\subsection{\texorpdfstring{The case $d=0$}{The case d=0}}\label{dEqNil}

We now specialize the preceding discussion to the case $d=0$.
We keep our running notation,
so we suppose given $F\subseteq K\subseteq K'\subseteq F_r$ with $r\ge 1$,
and note that since $d=0$ we have 
$R=\O_K$, $R'=\O_{K'}$, $R_r=\O_{F_r}$, and $R_{(\infty)}=R$, 
so $\G_R=\G_{R_{(\infty)}}=\G_K$ is the absolute Galois
group of $K$.  Moreover, the rings $\RRR$, $A_{\inf}$
and $A_{\cris}$ are then the ``classical" period rings of Fontaine,
with $\theta: A_{\cris}\to \O_{\C_K}$ the usual map.
Again, for the sake of simplicity, it is harmless to assume that $K'=F_r$
and to regard $\pdiv(\OOO_K)$ 
as a full subcategory of $\pdiv(\OOO_{F_r})^{\Gamma_K}$, which we do henceforth.

Recall that we view $\FS_r:=\O_F[[u_r]]$ as a $\G_F$ and $\varphi$-stable subring
of $A_{\inf}$ by identifying $u_0=[\u{\varepsilon}]-1$ and $u_r=\varphi^{-r}(u_0)$.
Let $\OOO_{\EEE_r}$ be the $p$-adic completion of $\FS_r[1/u_r]$,
viewed as a subring of $W(\Fr(\RRR))$,
let $\OOO_{\hat\EEE_r^{\nr}}\subseteq W(\Fr(\RRR))$ be the completion of its
maximal unramified extension, and define
\begin{equation}
	\FS_r^{\nr}=\OOO_{\hat\EEE_r^{\nr}}\cap A_{\inf}.
	\label{FSdef}
\end{equation}
We have $\H_F=\Gal(\hat\EEE_r^{\nr}/\EEE_r)$.
For $\u\FM\in\BT(\u\FS\rr)^{\Gamma_K}$, consider the $\ZZ_p[\G_{K}]$-module
\[
T_{\nr}^*(\u\FM)=\Hom_{\FS_r,\varphi}(\FM,\FS_r^{\nr})
\]
where $g\in\G_{K}$ acts by conjugation.

\begin{lemma}
\label{Le:T*nr-inf}
There is a natural $\G_{K}$-equivariant isomorphism
\[
T^*_{\nr}(\u\FM)\cong 
T_{\inf}^*(\u\FM)
\]
and $T^*_{\nr}(\u\FM)$ is a free $\ZZ_p$-module of rank equal to the rank of\/ $\FM$ over $\FS_r$.
\end{lemma}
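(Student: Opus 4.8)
The plan is to compare $T^*_{\nr}(\u\FM)$ and $T^*_{\inf}(\u\FM)$ through the inclusion $\FS_r^{\nr}\subseteq A_{\inf}$, by identifying a $\varphi$-equivariant map out of $\FM$ with a solution of an \'etale $\varphi$-equation. First I would check that $M_{\EEE_r}:=\OOO_{\EEE_r}\otimes_{\FS_r}\FM$ is a finite free \'etale $\varphi$-module over $\OOO_{\EEE_r}$: since $\varphi^s(u_r)=(1+u_r)^{p^s}-1\equiv u_r^{p^s}\bmod p$, the element $E=\varphi^r(u_r)/\varphi^{r-1}(u_r)$ reduces to $u_r^{p^{r-1}(p-1)}$ modulo $p$ and hence is a unit in $\OOO_{\EEE_r}$ (a $p$-adically complete ring with $\OOO_{\EEE_r}/p=k((u_r))$), so $1\otimes\varphi\colon\varphi^*\FM\to\FM$, being injective with cokernel killed by $E$, becomes an isomorphism over $\OOO_{\EEE_r}$; moreover $\FM$ is free over the local ring $\FS_r$. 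Fixing an $\FS_r$-basis and writing $C\in M_n(\FS_r)$ for the matrix of $\varphi$ (which lies in $\GL_n(\OOO_{\EEE_r})$), a $\varphi$-equivariant $\FS_r$-linear map $f\colon\FM\to A_{\inf}$ is the same as a vector $\vec x\in A_{\inf}^n$ with $\varphi(\vec x)=C\vec x$. Since $\FS_r\to W(\Fr(\RRR))$ sends $u_r$ (and also $\varphi(u_r)$) to a unit — its image in $\Fr(\RRR)$ is $\bar u_r\ne0$ — it extends to $\OOO_{\EEE_r}\to W(\Fr(\RRR))$, and $f$ extends to an $\OOO_{\EEE_r}$-linear, $\varphi$-equivariant map $M_{\EEE_r}\to W(\Fr(\RRR))$.

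The core step is the classical structural fact that every $\vec x\in W(\Fr(\RRR))^n$ with $\varphi(\vec x)=C\vec x$ already lies in $(\OOO_{\hat\EEE_r^{\nr}})^n$. I would recall the argument: modulo $p$ the equation $\varphi(\vec z)=\bar C\vec z$ defines a finite \'etale scheme of degree $p^n$ over $\OOO_{\EEE_r}/p=k((u_r))$, which over the separably closed field $\OOO_{\hat\EEE_r^{\nr}}/p\cong k((u_r))^{\sep}$ splits completely, giving a fundamental matrix $\bar Z\in\GL_n(\OOO_{\hat\EEE_r^{\nr}}/p)$ with $\varphi(\bar Z)=\bar C\bar Z$. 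Lifting $\bar Z$ to $\GL_n(\OOO_{\hat\EEE_r^{\nr}})$ and correcting $p$-adically — at each stage one solves an inhomogeneous equation $\varphi(Y)-CY=D$ over $\OOO_{\hat\EEE_r^{\nr}}/p$, which via the substitution $Y=\bar ZW$ reduces to Artin--Schreier equations $w^p-w=d$, solvable over the separably closed residue field — produces $Z\in\GL_n(\OOO_{\hat\EEE_r^{\nr}})$ with $\varphi(Z)=CZ$, using the $p$-adic completeness of $\OOO_{\hat\EEE_r^{\nr}}$. For any solution $\vec x$ one then has $\varphi(Z^{-1}\vec x)=Z^{-1}\vec x$, hence $Z^{-1}\vec x\in(W(\Fr(\RRR))^{\varphi=1})^n=\ZZ_p^n$; here $W(\Fr(\RRR))^{\varphi=1}=\ZZ_p$ follows from $\Fr(\RRR)^{\varphi=1}=\FF_p$ (a field has at most $p$ roots of $x^p-x$) by the same d\'evissage as in the proof of Lemma~\ref{Le:Ainf-inv}. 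Thus $\vec x\in Z\cdot\ZZ_p^n\subseteq(\OOO_{\hat\EEE_r^{\nr}})^n$.

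Combining these: for $f\in T^*_{\inf}(\u\FM)=\Hom_{\FS_r,\varphi}(\FM,A_{\inf})$ the associated $\vec x$ lies in $A_{\inf}^n$ by definition and in $(\OOO_{\hat\EEE_r^{\nr}})^n$ by the core step, hence in $(A_{\inf}\cap\OOO_{\hat\EEE_r^{\nr}})^n=(\FS_r^{\nr})^n$; so $f(\FM)\subseteq\FS_r^{\nr}$ and $f\in T^*_{\nr}(\u\FM)$. This yields the natural isomorphism $T^*_{\nr}(\u\FM)\cong T^*_{\inf}(\u\FM)$, which is $\G_K$-equivariant because the $\G_K$-action on both sides is by conjugation through $\FS_r^{\nr}\subseteq A_{\inf}$. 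For freeness and the rank I would invoke Proposition~\ref{Pr:Final} in the case $d=0$, $K'=F_r$, so that $\u\FM\cong\u{\FM}_r(G)$ for some $G\in\pdiv(\OOO_{F_r})^{\Gamma_K}$, together with Proposition~\ref{Pr:Comp}: the map $\alpha_G$ identifies $T^*_{\inf}(\u{\FM}_r(G))$ with $T_p(G)$, which is free over $\ZZ_p$ of rank equal to the height of $G$, i.e.\ to the rank of $\FM$ over $\FS_r$.

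The main obstacle is the core step — pinning down where the $W(\Fr(\RRR))$-solutions of the \'etale $\varphi$-equation live — which relies on the separable closedness of the residue field $\OOO_{\hat\EEE_r^{\nr}}/p$ and on successive $p$-adic approximation; the remaining points are either a short direct verification or a citation of the earlier results.
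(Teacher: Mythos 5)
Your argument is correct, but it takes a genuinely different route from the paper. The paper's proof is essentially a two-step citation of Fontaine: first, $\Hom_{\FS_r,\varphi}(\FM,\OOO_{\hat\EEE_r^{\nr}})=\Hom_{\FS_r,\varphi}(\FM,W(\Fr(\RRR)))$ and this module is free over $\ZZ_p$ of rank $\rk_{\FS_r}\FM$ \cite[A.1.2.7, A.2.1.3]{Fontaine}; second, since $W(\Fr(\RRR))/A_{\inf}$ has no non-zero $\varphi$-stable $\FS_r$-finite submodules \cite[B.1.8.4]{Fontaine}, this $\Hom$ equals $T^*_{\inf}(\u\FM)$, and then equals $T^*_{\nr}(\u\FM)$ by the definition \eqref{FSdef} of $\FS_r^{\nr}$. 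You do two things differently. For the comparison, you reprove the descent of $\varphi$-solutions from $W(\Fr(\RRR))$ to $\OOO_{\hat\EEE_r^{\nr}}$ by the standard fundamental-matrix/Artin--Schreier d\'evissage and then simply intersect with $A_{\inf}$, which gives $T^*_{\inf}(\u\FM)\subseteq T^*_{\nr}(\u\FM)$ (the reverse inclusion being trivial); this avoids the ``no finite $\varphi$-stable submodules'' input altogether, at the price of not obtaining the paper's slightly stronger by-product that solutions over all of $W(\Fr(\RRR))$ already lie in $A_{\inf}$ (not needed for the statement). For freeness and the rank, instead of Fontaine's \'etale $(\varphi,\Gamma)$-module theory you invoke Propositions \ref{Pr:Final} and \ref{Pr:Comp}: every $\u\FM\in\BT(\u\FS\rr)^{\Gamma_K}$ is of the form $\u\FM{}_r(G)$ and $\alpha_G$ identifies $T^*_{\inf}(\u\FM{}_r(G))$ with $T_p(G)$. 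This is legitimate and not circular in the paper's logical order (Proposition \ref{Pr:Comp} is proved independently of this lemma, and the lemma is later applied only to modules coming from $p$-divisible groups), but note that it makes the lemma depend on the full Dieudonn\'e-theoretic equivalence, whereas the paper keeps it a purely Fontaine-theoretic statement; your rank count also silently uses that $\rk_{\FS_r}\FM_r(G)$ equals the height of $G$, which is standard since the Dieudonn\'e crystal has rank the height. In short: the paper's proof is shorter by citation and a bit stronger; yours is more self-contained on the descent step and outsources freeness to the comparison isomorphism.
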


\begin{proof}
We have 
$\Hom_{\FS_r,\varphi}(\FM,\OOO_{\hat\EEE_r^{\nr}})=\Hom_{\FS_r,\varphi}(\FM,W(\Fr(\RRR)))$,
and this is a free $\ZZ_p$-module of rank equal to the rank of $\FM$;
see \cite[A.1.2.7 and A.2.1.3]{Fontaine}.
Since $W(\Fr(\RRR))/A_{\inf}$ has no non-zero $\varphi$-stable and $\FS_r$-finite submodules
(cf.\ \cite[B.1.8.4]{Fontaine}),
this $\Hom$ module coincides with $T_{\inf}^*(\u\FM)=\Hom_{\FS_r,\varphi}(\FM,A_{\inf})$
and therefore also with $T^*_{\nr}(\u\FM)$ by the very definition $(\ref{FSdef})$ of $\FS^{\nr}_r$.
\end{proof}
  
For  $G\in \pdiv(R_r)^{\Gamma_K}$, the homomorphism $\alpha_G$ of (\ref{Def:AlphaG})
can be viewed as a homomorphism
\[
\tilde\alpha_G:T_p(G)\to T^*_{\nr}(\u\FM{}_r(G))
\]
by Lemma \ref{Le:T*nr-inf}, and this homomorphism is bijective by Proposition \ref{Pr:Comp}.
Moreover, the $\FS$-linear map 
\[
\tilde\alpha_G^{\nr}:T_p(G)\otimes_{\ZZ_p}\FS_r^{\nr}\to \FM_r(G)^t \otimes_{\FS_r} \FS^{\nr}_r
\]
induced by $\tilde\alpha_G^{\nr}$ is injective with cokernel annihilated by $u_1$.
Indeed, this follows from the proof of Proposition \ref{Pr:Comp} with $\FS_r^{\nr}$
in place of $A_{\inf}$, using in addition Lemma \ref{Le:T*nr-inf}.

\subsection{\texorpdfstring{Modules of finite $E$-height}{Modules of finite E-height}}\label{FinEHt}

Let $K$ be a finite extension of $F$ contained in $F_\infty$ and let $r\ge 0$.
We denote by $\Rep_{\ZZ_p}(\G_K)$ the category of free $\ZZ_p$-representations of $\G_K$
and by $\Mod_{\OOO_{\EEE_r}}(\varphi,\Gamma_K)$ the category 
of free etale $(\varphi,\Gamma_K)$-modules over $\OOO_{\EEE_r}$.
By Fontaine \cite{Fontaine}, we have mutually inverse equivalences of categories
\[
\Rep_{\ZZ_p}(\G_K)\underset{T_r}{\overset{D_r}{\rightleftarrows}}
\Mod_{\OOO_{\EEE_r}}(\varphi,\Gamma_K)
\]
defined by $D_r(T)=(T\otimes_{\ZZ_p}\OOO_{\hat\EEE_r^{\nr}})^{\H_F}$
and $T_r(M)=(M\otimes_{\OOO_{\EEE_r}}\OOO_{\hat\EEE_r^{\nr}})^{\varphi=1}$.

\begin{remark}
For $0\le s\le r$ these functors are related as follows. 
Let $i:\OOO_{\EEE_s}\to\OOO_{\EEE_r}$ be the inclusion.
The scalar extension functor
\begin{equation}
\label{Eq:scalarExt}
i^*:\Mod_{\OOO_{\EEE_s}}(\varphi,\Gamma_K)\to
\Mod_{\OOO_{\EEE_r}}(\varphi,\Gamma_K)
\end{equation}
satisfies $T_s\cong T_r\circ i^*$ and $D_r\cong i^*\circ D_s$,
in particular \eqref{Eq:scalarExt} is an equivalence.
Moreover, the Frobenius iterate $\varphi^{r-s}$ of $\OOO_{\EEE_r}$
induces a bijecive homomorphism
\[
\lambda_{r,s}:\OOO_{\EEE_r}\to\OOO_{\EEE_s}
\]
with $i\circ\lambda_{r,s}=\varphi^{r-s}$, 
which induces an isomorphism $\lambda_{r,s}^*\circ D_r\cong D_s$.
\end{remark}

\begin{definition}\label{KRmodDef}
For $r\ge 1$ (so $E\in\FS_r$) we write $\Mod_{\FS_r}(\varphi,\Gamma_K)$ for
the category of finite free 
$\FS_r$-modules $\FM$ equipped with a $\varphi$-linear map 
$\varphi_\FM:\FM\to\FM[E^{-1}]$ that induces an isomorphism $\varphi^*\FM[E^{-1}]\cong\FM[E^{-1}]$,
and with an action of $\Gamma_K$ which commutes with $\varphi_\FM$ and which is finite on 
$\FM/u_r\FM$, {\em i.e.}~an open subgroup acts trivially on this quotient.
\end{definition}

Assume that $F\subseteq K\subseteq K'\subseteq F_r$ with $r\ge 1$.
The main result of Kisin-Ren \cite{KisinRen}, specialised to the case 
where the Lubin-Tate group is the multiplicative group $\hat\GG_m$, 
provides the following equivalence:

\begin{proposition}[Kisin--Ren]
	There is an equivalence $T\rightsquigarrow\FM_r(T)$ 
	between the category of all 
	$T\in\Rep_{\ZZ_p}(\G_K)$ whose restriction to $\G_{K'}$ is crystalline
	and the category of all $\FM\in\Mod_{\FS_r}(\varphi,\Gamma_K)$
	with the property that $\Gamma_{K'}$ acts trivially on $\FM/u_r\FM$.
	Furthermore, there is a natural isomorphism of $(\varphi,\Gamma_K)$-modules 
	\begin{equation}
		\FM_r(T)\otimes_{\FS_r}\OOO_{\EEE_r}\cong D_r(T).
		\label{FMTembedding}
	\end{equation}	
\end{proposition}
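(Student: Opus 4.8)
The plan is to obtain the proposition by specialising the main equivalence of Kisin--Ren \cite{KisinRen} to the case in which the Lubin--Tate group is $\hat\GG_m$, so that the base field for the Lubin--Tate construction is $\Q_p$ and the $F$-analyticity hypothesis present in their general theory becomes vacuous; the Kisin--Ren crystallinity condition then reduces to the assertion that $T|_{\G_{K'}}$ is crystalline in the usual sense. The remaining task is to match their period rings and finite-height modules with the objects $\FS_r$, $E$, and $\Mod_{\FS_r}(\varphi,\Gamma_K)$ used here; since $\hat\GG_m$ is exactly the cyclotomic situation, this is equally the integral theory of Wach modules of Berger--Breuil \cite{BB}, whose architecture I outline below.

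First I would set up the two functors explicitly. For $\FM\in\Mod_{\FS_r}(\varphi,\Gamma_K)$ with $\Gamma_{K'}$ acting trivially on $\FM/u_r\FM$, observe that $E$ reduces modulo $p$ to $u_r^{p^r-p^{r-1}}$, a unit of the residue field $k((u_r))$ of $\OOO_{\EEE_r}$; hence $E$ is a unit in $\OOO_{\EEE_r}$, and the finite-$E$-height hypothesis forces $D:=\FM\otimes_{\FS_r}\OOO_{\EEE_r}$ to be an \emph{étale} $(\varphi,\Gamma_K)$-module. Fontaine's equivalence then produces $T:=T_r(D)\in\Rep_{\ZZ_p}(\G_K)$, and \eqref{FMTembedding} holds tautologically. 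In the reverse direction, for $T$ with $T|_{\G_{K'}}$ crystalline I would define $\FM_r(T)$ as the integral Wach module: applying Berger to $V=T[1/p]$ yields a unique $(\varphi,\Gamma_K)$-stable $\FS_r[1/p]$-lattice $\FN(V)\subseteq D_r(V)$ on which $\Gamma_{K'}$ acts trivially modulo $u_r$ and on which $\varphi$ has finite $E$-height, and I set $\FM_r(T):=\FN(V)\cap D_r(T)$, the intersection taken inside $D_r(V)=D_r(T)[1/p]$. This intersection is manifestly $\varphi$- and $\Gamma_K$-stable and, by the standard finiteness argument over the two-dimensional regular local ring $\FS_r$, free of the expected rank; and \eqref{FMTembedding} is immediate since $\FM_r(T)$ is an $\FS_r$-lattice in $D_r(T)$ that becomes étale after inverting $u_r$.

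The substance of the equivalence is the dictionary between finite $E$-height and crystallinity over $K'$. For the implication finite height $\Rightarrow$ crystalline, I would base change $\FM$ along $\FS_r\to A_{\cris}$ using the frames of \eqref{Eq:4frames} and extract from $\FM\otimes_{\FS_r}A_{\cris}[1/t]$, by taking $\varphi$-fixed and $\Gamma_K$-invariant parts, a filtered $\varphi$-module computing $D_{\cris}$ of $V|_{\G_{K'}}$; the finite $E$-height bounds the Hodge filtration and the $\Gamma_{K'}$-triviality modulo $u_r$ pins down its jumps, which together with weak admissibility yields crystallinity. That the representation attached to $D$ is the expected $T$ is checked by comparing $\varphi$-fixed vectors over $A_{\inf}$, in the spirit of Lemma \ref{Le:Ainf-inv}.

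The main obstacle is the opposite implication: the existence and uniqueness of the Wach lattice $\FN(V)$ attached to an arbitrary crystalline $V$. This is the genuinely deep input, and rather than reprove it I would cite \cite{BB} (the cyclotomic case) together with its integral refinement, descending to lattices through $\FM_r(T)=\FN(V)\cap D_r(T)$. Granting this, full faithfulness and essential surjectivity are formal: the uniqueness of $\FN(V)$ makes $T\mapsto\FM_r(T)$ functorial and exhibits it as a two-sided inverse to $\FM\mapsto T_r(\FM\otimes_{\FS_r}\OOO_{\EEE_r})$, all compatibilities (including \eqref{FMTembedding} and the $\Gamma_K$-actions) being verified after base change to $\OOO_{\EEE_r}$, where Fontaine's equivalence applies.
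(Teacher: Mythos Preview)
Your overall strategy is exactly what the paper does: it does not reprove the result but simply cites \cite{KisinRen} (specialised to the Lubin--Tate group $\hat\GG_m$), notes that the case $K=K'$ stated there formally implies the general case $K\subseteq K'\subseteq F_r$, and explains the transport of conventions. Specifically, \cite{KisinRen} works over $\FS_0$ with height measured by $E_r(u_0)$, and the paper defines $\FM_r(T):=\FS_r\otimes_{\lambda_{r,0}^{-1},\FS_0}\FM_{\KR}(T)$ via the Frobenius isomorphism $\lambda_{r,0}\colon\FS_r\xrightarrow{\sim}\FS_0$. Your sketch of the underlying mechanism (\'etaleness over $\OOO_{\EEE_r}$, the integral period comparison, etc.) is more detailed than anything the paper provides here.

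There is, however, one genuine conflation in your proposal. You identify $\FM_r(T)$ with the integral Wach module of Berger--Breuil, but as the paper makes explicit in \S\ref{Subse:Wachmod}, the Wach lattice $N(V)$ and the Kisin--Ren lattice $M_{\KR}(V)$ are in general \emph{different} distinguished submodules of $D^+(V)$: the former is characterised by $D^+(V)/N(V)$ being killed by a power of $u$ (and then $N(V)/\varphi^*N(V)$ is only killed by a power of $E_1\cdots E_r$), while the latter is the unique $\varphi$-stable lattice with $M_{\KR}(V)/\varphi^*M_{\KR}(V)$ killed by a power of $E_r$ alone. They coincide only when $r=1$. The object $\FM_r(T)$ in the proposition is (the $\FS_r$-transport of) the Kisin--Ren module, so the existence and uniqueness input you need is \cite[Cor.~3.3.8]{KisinRen}, not the Wach-module construction of \cite{BB}; in particular your claimed ``finite $E$-height'' for the Berger lattice is not what \cite{BB} produces when $r\ge 2$. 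If you wish to start from the Wach module, you must then pass to $M_{\KR}(V)$ by the iteration $N_{i+1}=\varphi^*(N_i)[E_r(u)^{-1}]\cap N_i$ recorded in Proposition~\ref{Prop:KRWachTranslation}.
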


Strictly speaking, this is stated in \cite{KisinRen} only when $K=K'$, but the general case
is a formal consequence. Moreover, \cite{KisinRen} works with $\FS_0$ instead of $\FS_r$ 
and constructs a functor $T\rightsquigarrow\FM_{\KR}(T)$ 
valued in the category 
$\Mod_{\FS_0}(\varphi,\Gamma_K)$
of $(\varphi,\Gamma_K)$-modules over 
$\FS_0$
defined
just as in Definition \ref{KRmodDef}, replacing $\FS_r$ with 
$\FS_0$
 and $E = E_r(u_r)$
with 
$E_r(u_0)$. 
This makes no difference to the theory, as $\varphi^r$ 
induces a bijective homomorphism  
$\lambda_{r,0}:\OOO_{\EEE_r}\to\OOO_{\EEE_0}$
carrying $\FS_r$ onto 
$\FS_0$
and sending $E$ to
$E_r(u_0)$,
so by base change induces an equivalence of categories 
$\Mod_{\FS_r}(\varphi,\Gamma_K) \simeq \Mod_{\FS_0}(\varphi,\Gamma_K)$,
and we define $\FM_r(T)$ as the inverse image of $\FM_{\KR}(T)$ under this equivalence;
in other words
\begin{equation}
	\FM_r(T):=\FS_r \otimes_{\lambda_{r,0}^{-1},\FS_0} \FM_{\KR}(T).
	\label{Def:KRMod}
\end{equation}

Moreover, by \cite[Th.\ 0.3]{KisinFcrystal} the category of such $T$ with Hodge-Tate 
weights $0$ and $1$ is equivalent to the category $\pdiv(\OOO_{K'})^{\Gamma_K}$.
Thus the case $d=0$ of
Theorem \ref{Th:Main} is a special case of \cite{KisinRen}.
Let us verify that in this case the modules
of \cite{KisinRen} and of Theorem \ref{Th:Main} are indeed the same up to a duality.

\begin{lemma}
\label{Le:ff}
The scalar extension functor 
$$
\Mod_{\FS_r}(\varphi,\Gamma_K)\to\Mod_{\OOO_{\EEE_r}}(\varphi,\Gamma_K)
$$
is fully faithful.
\end{lemma}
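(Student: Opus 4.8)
The plan is to reduce the statement to the fact that a $\varphi$-equivariant $\FS_r$-linear map between finite free $\FS_r$-modules is determined by its base change to $\OOO_{\EEE_r}$, and that any such $\OOO_{\EEE_r}$-linear map carrying $\FM$ into $\FM'$ already carries $\FM$ into $\FM'$ inside $\FM'[1/u_r]$. The key input is that $\FS_r = \OOO_F[[u_r]]$ is a regular (in fact a two-dimensional regular local, hence a UFD) domain, so that $\FS_r = \bigcap_{\p} (\FS_r)_\p$ over height-one primes, and the only height-one prime that becomes invertible in $\OOO_{\EEE_r}$ (the $p$-adic completion of $\FS_r[1/u_r]$) is $(u_r)$. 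First I would observe that, for $\FM,\FM'\in\Mod_{\FS_r}(\varphi,\Gamma_K)$, the map $\Hom_{\FS_r,\varphi,\Gamma_K}(\FM,\FM') \to \Hom_{\OOO_{\EEE_r},\varphi,\Gamma_K}(\FM\otimes\OOO_{\EEE_r},\FM'\otimes\OOO_{\EEE_r})$ is injective: the natural map $\FM'\to\FM'\otimes_{\FS_r}\OOO_{\EEE_r}$ is injective because $\FM'$ is free over the domain $\FS_r$ and $\FS_r\to\OOO_{\EEE_r}$ is flat (localization followed by $p$-adic completion of a Noetherian ring) and injective, so a morphism killed after base change is zero.

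For fullness, let $f:\FM\otimes_{\FS_r}\OOO_{\EEE_r}\to\FM'\otimes_{\FS_r}\OOO_{\EEE_r}$ be a morphism of $(\varphi,\Gamma_K)$-modules over $\OOO_{\EEE_r}$. I want to show $f(\FM)\subseteq\FM'$. Since inverting $u_r$ commutes with the relevant constructions, I would first check that $f$ maps $\FM$ into $\FM'[1/u_r]\subseteq\FM'\otimes\OOO_{\EEE_r}$ — indeed $\FM'\otimes_{\FS_r}\OOO_{\EEE_r}$ is the $p$-adic completion of $\FM'[1/u_r]$, and using that $\varphi$ induces an isomorphism on $\FM[1/E]$, together with $\varphi(u_r)=(1+u_r)^p-1=u_r\cdot(\text{unit in }\FS_r[1/u_r]\text{ adically, i.e. }E\cdot u_r\text{-type factor})$, one sees that $f$ respects the $\FS_r[1/u_r]$-lattices; more precisely, by induction on $n$ the matrix of $f$ with respect to chosen bases has entries in $\FS_r[1/u_r]$ modulo $p^n$ with controlled denominators, hence entries in $\FS_r[1/u_r]$. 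So the content is to go from $\FM'[1/u_r]$ down to $\FM'$, i.e. to bound the power of $u_r$ in the denominators by $0$. Choose $\FS_r$-bases of $\FM$ and $\FM'$ and let $X\in\GL$-type matrix be the matrix of $\varphi_\FM$ on $\FM$ (so $X\in M_n(\FS_r[1/E])$ with $\det X$ a unit times a power of $E$ in $\FS_r[1/u_r]$, since $E$ and $u_r$ are coprime this is fine), similarly $X'$ for $\FM'$, and let $U$ be the matrix of $f$, a priori in $M_n(\FS_r[1/u_r])$. The relation $U\cdot X = X'\cdot\varphi(U)$ holds.

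The hard part is this denominator estimate, and the mechanism is to play the pole at $(u_r)$ against $\varphi$. Suppose $U$ has entries in $u_r^{-m}\FS_r$ with $m\ge 1$ minimal. Reducing the relation $UX = X'\varphi(U)$ modulo $u_r$ is not directly available because of the $1/E$ in $X,X'$; instead one localizes at the height-one prime $(u_r)$, working in the DVR $(\FS_r)_{(u_r)}$ with uniformizer $u_r$, where $E$ and all the relevant Frobenius-iterate factors are units. In this DVR, $\varphi$ is still defined and $v(\varphi(a)) = v(a)$ for the normalized valuation $v$ at $(u_r)$ (since $\varphi(u_r) = u_r\cdot(\text{unit})$). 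Then from $U = X'\varphi(U)X^{-1}$ with $X,X'\in\GL_n$ of $(\FS_r)_{(u_r)}$, the minimal $v$-value among entries of $U$ equals that of $\varphi(U)$, which equals that of $U$ — no contradiction yet. The contradiction must come instead from the \emph{finiteness of the $\Gamma_K$-action on $\FM/u_r\FM$}, exactly as in Remark \ref{Re:strict}: the reduction $\bar f : \FM/u_r\FM \to \FM'/u_r\FM'$ (or rather the leading term of $f$ at $(u_r)$) must be $\Gamma_K$-equivariant for the \emph{trivial} actions on an open subgroup, whereas if $m>0$ the leading term transforms by a nontrivial power of the cyclotomic character $\chi^{m}$ (because $\gamma(u_r) = (1+u_r)^{\chi(\gamma)}-1 = \chi(\gamma)u_r + O(u_r^2)$ forces $\gamma$ to act by $\chi(\gamma)^{-m}$ on $u_r^{-m}\FS_r/u_r^{-m+1}\FS_r$), and for $\gamma$ in an open subgroup with $\gamma\ne 1$ one has $\chi(\gamma)^{m}\ne 1$, so the leading term vanishes, contradicting minimality of $m$. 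Hence $m=0$, i.e. $f(\FM)\subseteq\FM'$, which proves fullness. I expect the management of the $1/E$-denominators (so that the valuation argument at $(u_r)$ is clean) together with making the ``leading term at $(u_r)$'' rigorous to be the only real work; everything else is formal.
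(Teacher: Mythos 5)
Your second step is sound: assuming $f(\FM)\subseteq u_r^{-m}\FM'$ with $m\ge 1$ minimal, the leading term $\bar g:\FM/u_r\FM\to\FM'/u_r\FM'$ is nonzero, and equivariance for an open subgroup $\Gamma'\subseteq\Gamma_K$ acting trivially modulo $u_r$ gives $(1-\chi(\gamma)^{-m})\bar g=0$; since $\FM'/u_r\FM'$ is free over $W(k)$ and $\chi(\Gamma_s)=1+p^s\ZZ_p$ is torsion free for $p$ odd, this forces $\bar g=0$, a contradiction. This is exactly the mechanism the paper uses at the end of the remark following Lemma \ref{Le:ff} (there applied to determinants, to rule out $\FM=u_0^n\tilde\FM$ with $n>0$), so that part is fine. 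The genuine gap is your first step, the passage from $\OOO_{\EEE_r}$ down to $\FS_r[1/u_r]$: the claim that the matrix of $f$ has bounded $u_r$-denominators is asserted (``controlled denominators by induction on $n$'') but not proved, and it is where all the work lies. Finite generation alone gives nothing: the $\FS_r$-submodule of $\OOO_{\EEE_r}$ generated by the single element $\sum_{n\ge 1}p^nu_r^{-n}$ has unbounded poles. Modulo $p$ the bound does follow from $\varphi(U)=(X')^{-1}UX$, because $\varphi$ multiplies the pole order at $u_r$ by $p$ (note that the relevant fact mod $p$ is $\varphi(u_r)\equiv u_r^p$; the identity $\varphi(u_r)=u_r\cdot\mathrm{unit}$ holds only in the localization at $(u_r)$ and, as you yourself observe, yields no contradiction). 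But in the inductive passage from $p^n$ to $p^{n+1}$ the error term gets divided by $p^n$, and since $E\equiv p$ modulo $u_r$, dividing by powers of $p$ can create new $u_r$-poles; the induction does not close without a further idea, and this is precisely where the finite $E$-height hypothesis must enter in a nontrivial way.

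This bounded-denominator statement is the content of the results the paper leans on. The paper's actual proof of Lemma \ref{Le:ff} avoids the computation entirely: using $\FS_r=\FS_s\cap\OOO_{\EEE_r}$ and the equivalence \eqref{Eq:scalarExt} it increases $r$ so that $\Gamma_{F_r}$ acts trivially on $\FM/u_r\FM$ and $\tilde\FM/u_r\tilde\FM$, and then quotes \cite[Cor.\ 3.3.8]{KisinRen}; the remark that follows sketches a direct proof along your lines, but supplies the boundedness via \cite[2.1.10]{KisinFcrystal} (compare \cite[Prop.\ 2.1.12]{KisinFcrystal}, \cite[Prop.\ 3.1]{Caruso}, \cite[Le.\ 2.1.2]{BB}), after which the pole is killed by the same $\Gamma$-finiteness argument you give. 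So to complete your proof you should either invoke one of these results for the boundedness of the $u_r$-denominators, or reproduce their argument (e.g.\ show that $f(\FM)+\FM'$ is a finitely generated, saturated, $\varphi$-stable submodule of finite $E$-height of $\FM'\otimes\OOO_{\EEE_r}$ and compare it with $\FM'$ via determinants); once that input is in place, your cyclotomic-twist argument correctly finishes the proof.
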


We note that this does not hold without a finite action of $\Gamma_K$. 
For example, $u_1:(\FS_r,E\varphi)\to(\FS_r,\varphi)$
becomes an isomorphism over $\OOO_{\EEE_r}$ but is not an isomorphism.

\begin{proof}
For given $\FM$ and $\tilde\FM\in\Mod_{\FS_r}(\varphi,\Gamma_K)$ we write
$M:=\FM\otimes\OOO_{\EEE_r}$ and $\tilde M:=\tilde\FM\otimes\OOO_{\EEE_r}$. 
The assertion is that every $(\varphi,\Gamma_K)$-module homomorphism 
$f:\tilde M\to M$ maps $\tilde\FM$ into $\FM$.
Since for $0\le r\le s$ we have $\FS_r=\FS_s\cap\OOO_{\EEE_r}$ and since
\eqref{Eq:scalarExt} is an equivalence, we may increase $r$ and thus assume
that $K\subseteq F_r$ and that $\Gamma_{F_r}$ acts trivially on $\FM/u_r\FM$ 
and on $\FM'/u_r\FM'$. 
In that case the assertion follows from \cite[Cor.\ 3.3.8]{KisinRen}.
\end{proof}

\begin{remark}
Lemma \ref{Le:ff} is analogous to \cite[Prop.\ 2.1.12]{KisinFcrystal} and \cite[Prop.\ 3.1]{Caruso},
and one can easily give a direct proof along similar lines.
Namely, for $f:\tilde M\to M$ as above, using \cite[2.1.10]{KisinFcrystal} it follows that 
$\FM'=f(\tilde\FM)+\FM$ is torsion free with $\FM'\otimes\OOO_{\EEE_r}=M$.
One can replace $\tilde\FM$ by $\FM'[1/p]\cap M$ and thus assume that $\tilde M=M$ 
with $f=\id$ and $\FM\subseteq\tilde\FM$. 
In order to show that $\FM=\tilde\FM$ one can pass to the determinant.
Since $\FM$ and $\tilde\FM$ have finite $E$-height, we find $\FM=u_0^n\tilde\FM$ 
for some $n\ge 0$, using for example \cite[Le.\ 2.1.2]{BB}.
The action of $\Gamma_K$ can be finite on $\FM/u_r\FM$ and on $\tilde\FM/u_r\tilde\FM$
only when $n=0$.
\end{remark}

Let
$G\in\pdiv(\OOO_{K'})^{\Gamma_K}$ be given, and let
$\u\FM{}_r(G)\in\BT(\u\FS\rr)^{\Gamma_K}$ be the module associated to it
by Proposition \ref{Pr:Final}.
The dual of the homomorphism $\tilde\alpha_G$ of Proposition \ref{Pr:Comp}
induces a $\varphi$ and $\G_K$-equivariant isomorphism
\[
\FM_r(G)\otimes_{\FS_r}\OOO_{\EEE_r^{\nr}}\cong T_p(G)^\vee\otimes_{\ZZ_p}\OOO_{\EEE_r^{\nr}}.
\]
The invariants under $\H_K=\H_F$ give an isomorphism of $(\varphi,\Gamma_K)$-modules
\begin{equation}
\FM_r(G)\otimes_{\FS_r}\OOO_{\EEE_r}\cong D_r(T_p(G)^\vee).\label{FMGembedding}
\end{equation}

\begin{proposition}\label{Prop:KRModReln}
As submodules of $D_r(T_p(G)^\vee)$ via $(\ref{FMTembedding})$ and $(\ref{FMGembedding})$,
	we have 
\[
\FM_r(G)=\FM_r(T_p(G)^\vee).
\]
\end{proposition}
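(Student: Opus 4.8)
The plan is to show that the two finite-free $\FS_r$-submodules $\FM_r(G)$ and $\FM_r(T_p(G)^\vee)$ of the common ambient $(\varphi,\Gamma_K)$-module $D_r(T_p(G)^\vee)$ over $\OOO_{\EEE_r}$ actually coincide. Both are objects of $\Mod_{\FS_r}(\varphi,\Gamma_K)$: the module $\FM_r(T_p(G)^\vee)$ by construction (via \eqref{Def:KRMod} and the Kisin--Ren proposition), and $\FM_r(G)$ because it underlies an object of $\BT(\u\FS\rr)^{\Gamma_K}$, so in particular $\varphi$ induces an isomorphism after inverting $E$ (the cokernel of $1\otimes\varphi$ is killed by $E$) and $\Gamma_K$ acts with open kernel on $\FM_r(G)/u_r\FM_r(G)$ by the strictness hypothesis. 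Both become $D_r(T_p(G)^\vee)$ after $\otimes_{\FS_r}\OOO_{\EEE_r}$, via the canonical isomorphisms \eqref{FMTembedding} and \eqref{FMGembedding} respectively. So the identity map of $D_r(T_p(G)^\vee)$ is a $(\varphi,\Gamma_K)$-module homomorphism between the two scalar extensions.

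The key step is then simply to invoke Lemma \ref{Le:ff}: the scalar extension functor $\Mod_{\FS_r}(\varphi,\Gamma_K)\to\Mod_{\OOO_{\EEE_r}}(\varphi,\Gamma_K)$ is fully faithful. Applying this to the identity homomorphism $D_r(T_p(G)^\vee)\to D_r(T_p(G)^\vee)$ in both directions shows that, as submodules of $D_r(T_p(G)^\vee)$, we have $\FM_r(G)\subseteq\FM_r(T_p(G)^\vee)$ and $\FM_r(T_p(G)^\vee)\subseteq\FM_r(G)$, hence equality. So the entire content of the proposition is a bookkeeping matter: one must check that the isomorphism \eqref{FMTembedding} of Kisin--Ren and the isomorphism \eqref{FMGembedding} coming from the integral comparison isomorphism $\tilde\alpha_G$ are compatible, i.e.\ that they identify the two modules inside the \emph{same} $\OOO_{\EEE_r}$-module $D_r(T_p(G)^\vee)$ rather than two a priori different copies.

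This compatibility is the main obstacle, and it is essentially a matter of tracing functors. The isomorphism \eqref{FMGembedding} is obtained by dualizing $\tilde\alpha_G$ of Proposition \ref{Pr:Comp}, base changing to $\OOO_{\EEE_r^{\nr}}$, and taking $\H_K=\H_F$-invariants, using that $T^*_{\nr}(\u\FM_r(G))\cong T_p(G)$ (the statement just before \S\ref{FinEHt}, which rests on Lemma \ref{Le:T*nr-inf} and Proposition \ref{Pr:Comp}). On the Kisin--Ren side, \eqref{FMTembedding} is (up to the $\lambda_{r,0}$-twist of \eqref{Def:KRMod}) the assertion $\FM_r(T)\otimes_{\FS_r}\OOO_{\EEE_r}\cong D_r(T)$ for $T=T_p(G)^\vee$. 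Both identifications ultimately come from the same source, namely Fontaine's equivalence between $\Rep_{\ZZ_p}(\G_K)$ and étale $(\varphi,\Gamma_K)$-modules together with the Kisin--Ren recipe $D_r(T) = \bigl(\FM\otimes_{\FS_r}\OOO_{\hat\EEE_r^{\nr}}\bigr)^{\H_F}$ applied to $T^*_{\nr}$, so one checks that the comparison map $\tilde\alpha_G^{\nr}$ of \S\ref{dEqNil} induces, after inverting $u_r$ (equivalently, after passing to $\OOO_{\EEE_r^{\nr}}$ and taking $\H_F$-invariants), exactly the Fontaine equivalence underlying \cite{KisinRen}. Granting that both \eqref{FMTembedding} and \eqref{FMGembedding} are the restriction to the respective lattice of one and the same $\OOO_{\EEE_r}$-isomorphism $\FM\otimes\OOO_{\EEE_r}\cong D_r(T_p(G)^\vee)$ coming from $T^*_{\nr}$, Lemma \ref{Le:ff} finishes the proof at once. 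I would write this out by carefully matching the two constructions against the common period-ring-theoretic description of $D_r$, rather than by any further computation.
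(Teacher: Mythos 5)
Your proof is correct and is essentially the paper's own argument: the proposition is deduced directly from Lemma \ref{Le:ff}, applied (as in your second paragraph) to the identity map of $D_r(T_p(G)^\vee)$ in both directions, after noting that $\FM_r(G)$ and $\FM_r(T_p(G)^\vee)$ both lie in $\Mod_{\FS_r}(\varphi,\Gamma_K)$ with scalar extension equal to $D_r(T_p(G)^\vee)$ via \eqref{FMGembedding} and \eqref{FMTembedding}. The one remark worth making is that the ``compatibility'' you single out as the main obstacle is not actually needed: Lemma \ref{Le:ff} applies to an arbitrary $(\varphi,\Gamma_K)$-homomorphism between the scalar extensions, so once the two lattices are placed inside the \emph{same} module $D_r(T_p(G)^\vee)$ --- which is exactly what the embeddings \eqref{FMTembedding} and \eqref{FMGembedding} provide by construction, and what the statement of the proposition presupposes --- no matching of the two identifications against a common period-ring description of $D_r$ is required, and your ``Granting that \ldots'' hypothesis can simply be dropped.
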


\begin{proof}
This follows immediately from Lemma \ref{Le:ff}.
\end{proof}

\subsection{Representations of finite height and Wach modules}
\label{Subse:Wachmod}

For $r\ge 1$ and $F\subseteq K\subseteq K'\subseteq F_r$,
the {\em Wach modules} of Berger \cite{Berger} and Berger--Breuil \cite{BB}
provide a variant of the description of Kisin--Ren \cite{KisinRen}
of stable $\Z_p$-lattices in $\G_K$-representations whose restriction to
$\G_{K'}$ is crystalline.  As this variant figures prominently in applications
({\em e.g.}~\cite{BenoisBerger}, 
\cite{BuzzGee}, \cite{CD},
\cite{Gstruct},  \cite{CrysRed},  \cite{LLZ}, \cite{LLZ2}, \cite{LZ}),
for the sake of completeness we now
recall the relation between \cite{KisinRen} and \cite{Berger,BB}.

We keep our running notation,  
but from now on we 
drop the index $r=0$ and write $u=u_0$, $\FS=\FS_0$, etc.
For $T\in\Rep_{\ZZ_p}(\G_K)$ let 
\[
D^+(T)=(T\otimes_{\ZZ_p}\FS^{\nr})^{\H_F}.
\]
It is well-known that this is a free $(\varphi,\Gamma_K)$-module over $\FS$ of rank 
$\le$ the rank of $T$; see Lemma \ref{Le:intersect} below 
applied to $M=D(T)$ and $\FM^{\nr}=T\otimes_{\ZZ_p}\FS^{\nr}$.
In fact, $D^+(T)$ is the maximal $\varphi$-stable finitely generated $\FS$-submodule
of $D(T)$, which is denoted $j_*(D(T))$ in \cite[B.1.4]{Fontaine};
this holds since $\OOO_{\hat\EEE^{\nr}}/\FS^{\nr}$ has no non-zero $\varphi$-stable
finitely generated $\FS^{\nr}$-submodule. 

The representation $T$ is called \emph{of finite height} if the rank of  $D^+(T)$ is equal to the
rank of $T$, which implies that $D^+(T)\otimes_{\FS}\OOO_{\EEE}\cong D(T)$.
By a slight abuse of terminology, we call $T$ \emph{crystabelline} if its restriction to $\G_{F_r}$ 
is crystalline for some $r$.
If $T$ is of finite height and de Rham, then $T$ is crystabelline by \cite[\S A.5]{Wach96}. 
Conversely, crystabelline implies finite height by \cite[Th.\ 2.5.3]{BB} or by \cite{KisinRen}.

Assume that $T$ is of finite height.
Let $V=T[1/p]$ and $D^+(V)=(V\otimes_{\Z_p}\FS_{\nr})^{\H_F}$, thus $D^+(V)=D^+(T)[1/p]$.
To simplify the terminology, an $\FS[1/p]$-module $N\subseteq D^+(V)$ or an
$\FS$-module $\FN\subseteq D^+(T)$
will be called \emph{distinguished} if the module is free of rank $\dim(V)$
and stable under $\Gamma_K$
and if $\Gamma_K$ acts through a finite quotient on $N/uN$ or $\FN/u\FN$.
We note the following elementary fact, see for example \cite[Le.\ II.1.3]{Berger} and its proof.

\begin{lemma}
\label{Le:FN-N}
The set of distinguished submodules $N\subseteq D^+(V)$
is in bijection to the set of distinguished submodules 
$\FN\subseteq D^+(T)$ with $\FN\otimes_\FS\OOO_{\EEE}=D(T)$,
via $\FN=N\cap D^+(T)$ and $N=\FN[1/p]$.
\qed
\end{lemma}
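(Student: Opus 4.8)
The plan is to show directly that $N\mapsto N\cap D^+(T)$ and $\FN\mapsto\FN[1/p]$ are mutually inverse bijections between the two sets of distinguished submodules. Throughout I would use that $\FS=W(k)[[u]]$ is a two-dimensional regular local ring and a UFD, that $\FS[1/p]$ is a Dedekind (indeed principal ideal) domain, that $\OOO_\EEE$ is a complete discrete valuation ring with uniformiser $p$, residue field $\OOO_\EEE/p=k((u))$ and fraction field $\EEE$, flat over $\FS$, and the finite-height identities $D^+(V)=D^+(T)[1/p]$ and $D^+(T)\otimes_\FS\OOO_\EEE\cong D(T)$. The one elementary observation that makes everything work is that a primitive element $g\in\FS$ (i.e.\ $g\notin p\FS$; by Weierstrass preparation, a unit of $\FS$ times a distinguished polynomial) is a unit in $\OOO_\EEE$, since its image is nonzero in the field $\OOO_\EEE/p$ and $p$ lies in $\Rad(\OOO_\EEE)$.

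Starting from a distinguished $N$, I would set $\FN:=N\cap D^+(T)$. First, localising $D^+(T)[1/p]=D^+(V)$ and intersecting with $N$ gives $\FN[1/p]=N\cap D^+(V)=N$. Next, $\FN$ is finitely generated and torsion free over $\FS$, and for every height-one prime $\Fp$ of $\FS$ the localisation $\FN_\Fp$ is a lattice (or an intersection of two lattices) in a vector space of dimension $n:=\dim V$ over $\Frac(\FS)$, hence free of rank $n$ (at $\Fp=(p)$ one uses that $N_{(p)}$ is the full generic fibre, so $\FN_{(p)}=D^+(T)_{(p)}$); since $N$ and $D^+(T)$ are free, hence reflexive, one gets $\FN=\bigcap_\Fp\FN_\Fp$, so $\FN$ is reflexive, hence free over the two-dimensional regular local ring $\FS$. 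Then $D^+(T)/\FN$ injects into the torsion $\FS[1/p]$-module $D^+(V)/N$; an annihilator of the latter, after clearing denominators and removing powers of $p$, yields a primitive $g\in\FS$ with $gD^+(T)\subseteq\FN$, and tensoring with $\OOO_\EEE$ together with $g\in\OOO_\EEE^\times$ gives $D(T)=gD(T)\subseteq\FN\otimes_\FS\OOO_\EEE\subseteq D(T)$, so $\FN\otimes_\FS\OOO_\EEE=D(T)$. Finally, choosing an $\FS$-basis of the free module $\FN$ identifies $\FN/u\FN$ with a $\Gamma_K$-submodule of $N/uN$, so the finite $\Gamma_K$-action on $N/uN$ restricts to a finite action on $\FN/u\FN$, and $\FN$ is $\Gamma_K$-stable as an intersection of $\Gamma_K$-stable modules. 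Hence $\FN$ is distinguished and satisfies $\FN\otimes_\FS\OOO_\EEE=D(T)$.

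Conversely, given a distinguished $\FN\subseteq D^+(T)$ with $\FN\otimes_\FS\OOO_\EEE=D(T)$, I would put $N:=\FN[1/p]\subseteq D^+(V)$: it is free of rank $n$ over $\FS[1/p]$, $\Gamma_K$-stable, and $\Gamma_K$ acts finitely on $N/uN=(\FN/u\FN)[1/p]$, so $N$ is distinguished. It remains to see $N\cap D^+(T)=\FN$, which reduces to $D^+(T)/\FN$ being $p$-torsion free: writing the inclusion $\FN\hookrightarrow D^+(T)$ of free $\FS$-modules as a matrix $A\in M_n(\FS)$, the hypothesis $\FN\otimes_\FS\OOO_\EEE=D(T)$ forces $\det A\in\OOO_\EEE^\times$, hence $\det A$ primitive, hence $A\bmod p$ injective on $(\FS/p)^n$, which is exactly the $p$-torsion-freeness of $\coker A=D^+(T)/\FN$; then any $x\in N\cap D^+(T)$ with $p^kx\in\FN$ and $k$ minimal must have $k=0$. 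Combining the two directions gives $\FN[1/p]=N$ and $N\cap D^+(T)=\FN$, so the two constructions are inverse to one another, and the lemma follows.

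I expect the main obstacle to be the first construction $N\mapsto N\cap D^+(T)$, specifically the freeness of $N\cap D^+(T)$ over $\FS$ and the equality $(N\cap D^+(T))\otimes_\FS\OOO_\EEE=D(T)$: these are where the regularity of $\FS$ and the discrete-valuation-ring structure of $\OOO_\EEE$ genuinely enter, while the remaining verifications are routine localisation arguments.
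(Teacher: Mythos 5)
Your argument is correct, and all the facts you invoke are available in the paper's setting (finite height gives $D^+(T)$ free of rank $n=\dim V$ with $D^+(T)\otimes_\FS\OOO_\EEE=D(T)$ and $D^+(V)=D^+(T)[1/p]$; $\OOO_\EEE$ is a complete discrete valuation ring with uniformiser $p$, flat over $\FS$, so primitive elements of $\FS$ become units). The two delicate points are handled properly: for $N\mapsto N\cap D^+(T)$ you get freeness from $\FN=\bigcap_{\Fp}\FN_\Fp$ over height-one primes (using that $N$ and $D^+(T)$ are reflexive and that $\FN_{(p)}=D^+(T)_{(p)}$), so $\FN$ is reflexive and hence free over the two-dimensional regular local ring $\FS$, and the $\OOO_\EEE$-statement follows from a primitive annihilator $g$ of $D^+(V)/N$; for $\FN\mapsto\FN[1/p]$ the key identity $N\cap D^+(T)=\FN$ follows from $\det A\in\OOO_\EEE^\times$, hence primitive, hence $\bar A$ injective mod $p$, which is precisely the $p$-torsion-freeness of $D^+(T)/\FN$ (via the Tor sequence). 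Be aware that the paper itself gives no proof of this lemma: it is labelled an elementary fact with a pointer to Berger, Lemme II.1.3, whose proof runs along essentially the same standard lines (structure of finitely generated torsion-free modules over $\FS=W(k)[[u]]$, with reflexive equivalent to free, plus the observation that elements not divisible by $p$ are units in $\OOO_\EEE$). So your write-up is best viewed as a self-contained version of the cited argument rather than a genuinely different route; it is also close in spirit to the paper's own Lemma \ref{Le:intersect}, which instead reduces modulo $p$ and lifts a basis, a slightly more hands-on alternative to your reflexivity argument that you could substitute if you wanted to avoid quoting the reflexive-implies-free fact.
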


Assume now that $T$ is of finite height and de Rham with non-positive Hodge-Tate weights.
Then by \cite[\S A.5]{Wach96} there is a distinguished submodule $N\subseteq D^+(V)$,
or equivalently $\FN\subseteq D^+(T)$.
There are two standard normalizations which make $N$ and $\FN$ unique.
First, by \cite[Th.\ 3.1.1]{BB} there is a unique distinguished submodule 
$N(V)\subseteq D^+(V)$ such that $D^+(V)/ N(V)$ is annihilated by a power of $u$.
This is the maximal distinguished submodule, and $N(V)$ is stable under $\varphi$.
Explicitly, if $N$ is any distinguished submodule of $D^+(V)$, then
\begin{equation}
\label{Eq:N-N(V)}
		N(V)=D^+(V)\cap N[E_n(u)^{-1}]_{n\ge 1},
\end{equation}
see the proof of \cite[Th.\ 3.1.1]{BB}.
The corresponding module $\FN(T)=N(V)\cap D^+(T)$ is the unique distinguished
submodue of $D^+(T)$ such that $D^+(T)/\FN(T)$ is annihilated by a power of $u$;
note that this condition implies that $\FN\otimes_\FS\OOO_{\EEE}=D(T)$.
We call $N(V)$ and $\FN(T)$
the \emph{Wach modules} of $V$ and of $T$.

Second, assume that $K\subseteq F_r$ and that $T$ becomes crystalline over $\G_{F_r}$ 
for some $r\ge 1$, again with non-positive Hodge-Tate weights. 
Then \cite[Cor.\ 3.3.8]{KisinRen} implies that there is a unique distinguished submodule
$\FM_{\KR}(T)\subseteq D^+(T)$ which is stable under $\varphi$ such that 
$\FM_{\KR}(T)/\varphi^*\FM_{\KR}(T)$ is annihilated by a power of $E_r(u)$
and $\FM_{\KR}(T)\otimes_\FS\OOO_{\EEE}=D(T)$.
 The corresponding module $M_{\KR}(V)=\FM_{\KR}[1/p]$ is the unique distinguished
submodule of $D^+(V)$ which is stable under $\varphi$ such that
$M_{\KR}(T)/\varphi^*M_{\KR}(T)$ is annihilated by a power of $E_r(u)$.
Naturally, $M_{\KR}(V)$ and $\FM_{\KR}(T)$ will be called the \emph{Kisin-Ren} 
modules of $V$ and of $T$.

The relation between Wach modules and Kisin--Ren modules is as follows:

\begin{proposition}\label{Prop:KRWachTranslation}
	Let $V$ be a $p$-adic representation of $\G_K$ with non-positive Hodge--Tate weights
	and restriction to $\G_{F_r}$ that is crystalline.  Then 
	\begin{equation*}
		N(V) = D^+(V)\cap M_{\KR}(V)[E_n(u)^{-1}]_{1\le n\le r}	
	\end{equation*}
	and $M_{\KR}(V)=N_r(V)$, where $N_i=N_i(V)$ is defined recursively by
	\begin{equation*}
		N_{i+1} = \varphi^*(N_i)[E_r(u)^{-1}]\cap N_i,\quad\text{and}\quad N_1:=N(V).
	\end{equation*}
	In particular, for $r=1$ we have $M_{\KR}(V)=N(V)$.
\end{proposition}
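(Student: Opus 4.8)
The plan is to deduce everything from the uniqueness characterisations recalled just above, together with the following elementary facts about the polynomials $E_n(u)$: one has $\varphi(E_n(u))=E_{n+1}(u)$ and $\varphi^{-1}(E_n(u)\FS)=E_{n-1}(u)\FS$ (with the convention $E_0(u)=u$); $\FS$ is free of rank $p$, hence faithfully flat, over $\varphi(\FS)$; each ideal $E_n(u)\FS$ is stable under $\Gamma_K$ (since $\gamma(\varphi^n(u))=(1+u)^{p^n\chi(\gamma)}-1$ has the same zeros in $W(k)[[u]]$ as $\varphi^n(u)$, hence differs from it by a unit); $\FS$ is a two–dimensional regular local ring, so finitely generated reflexive $\FS$-modules are free, and a finitely generated torsion–free $\FS$-module which is free at every height–one prime is already free; $\FS/E_n(u)^m$ is a finite $\ZZ_p$-module, so a continuous $\Gamma_K$-action on it factors through a finite quotient; and $E_n(u)$ is a unit of $\OOO_{\EEE}$ for $n\ge1$, so that $M_{\KR}(V)\otimes_{\FS}\OOO_{\EEE}=D(V)=D^+(V)\otimes_{\FS}\OOO_{\EEE}$. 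Throughout, $\varphi^*(-)$ denotes the image of the linearised Frobenius inside $D(V)$, i.e.\ $\FS\cdot\varphi(-)$.

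I would first prove $M_{\KR}(V)=N_r(V)$ by induction on $i$, showing that each $N_i(V)$ is a distinguished, $\varphi$-stable $\FS$-submodule of $D^+(V)$ with $N_i(V)\otimes_{\FS}\OOO_{\EEE}=D(V)$ and with $N_i(V)/\varphi^*N_i(V)$ annihilated by a power of $E_i(u)E_{i+1}(u)\cdots E_r(u)$; the base case $i=1$ is the content of \cite[Th.~3.1.1]{BB} together with \cite[\S A.5]{Wach96}. For the step one has $\varphi^*N_i(V)\subseteq N_{i+1}(V)\subseteq N_i(V)$ (using that $N_i(V)$ is $\varphi$-stable), so $N_{i+1}(V)=\varphi^*N_i(V)[E_r(u)^{-1}]\cap N_i(V)$ is free at every height–one prime of $\FS$ (at $(E_r(u))$ it equals $N_i(V)_{(E_r(u))}$, elsewhere it equals $\varphi^*N_i(V)$ localised), hence free of rank $\dim V$; it is $\varphi$-stable and $\Gamma_K$-stable because $\Gamma_K$ fixes $E_r(u)\FS$; the quotient $N_{i+1}(V)/\varphi^*N_i(V)$ is the $E_r(u)$-power torsion of $N_i(V)/\varphi^*N_i(V)$ and $N_i(V)/N_{i+1}(V)$ is the complementary part, so both are finite over $\ZZ_p$, whence $\Gamma_K$ acts through a finite quotient on $N_{i+1}(V)/uN_{i+1}(V)$ (by the usual extension argument applied to $\Tor_1^{\FS}(N_i(V)/N_{i+1}(V),\FS/u)\to N_{i+1}(V)/u\to N_i(V)/u$), and $N_{i+1}(V)\otimes\OOO_{\EEE}=D(V)$ because $N_i(V)/N_{i+1}(V)$ is killed by a power of $E_i(u)$, a unit of $\OOO_{\EEE}$. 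Finally, filtering $N_{i+1}(V)/\varphi^*N_{i+1}(V)$ by $\varphi^{*2}N_i(V)\subseteq\varphi^*N_{i+1}(V)\subseteq\varphi^*N_i(V)\subseteq N_{i+1}(V)$ and using $\varphi(E_n(u))=E_{n+1}(u)$ together with the faithful flatness of $\FS$ over $\varphi(\FS)$, one gets that $N_{i+1}(V)/\varphi^*N_{i+1}(V)$ is killed by a power of $E_{i+1}(u)\cdots E_r(u)$; for $i=r$ the localisation at $E_r(u)$ in the recursion is inert, so $N_r(V)=N_{r+1}(V)=\cdots$, and $N_r(V)$ satisfies exactly the properties characterising the Kisin–Ren module in \cite[Cor.~3.3.8]{KisinRen}. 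Hence $M_{\KR}(V)=N_r(V)$, and for $r=1$ the recursion is empty, giving $M_{\KR}(V)=N(V)$.

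For the first identity, the recursion yields the chain $M_{\KR}(V)=N_r(V)\subseteq N_{r-1}(V)\subseteq\cdots\subseteq N_1(V)=N(V)$ in which each $N_i(V)/N_{i+1}(V)$ is killed by a power of $E_i(u)\cdots E_{r-1}(u)$; chaining these, a power of $E_1(u)\cdots E_{r-1}(u)$ annihilates $N(V)/M_{\KR}(V)$, so $N(V)\subseteq M_{\KR}(V)[E_1(u)^{-1},\dots,E_{r-1}(u)^{-1}]\subseteq M_{\KR}(V)[E_n(u)^{-1}]_{1\le n\le r}$. The opposite inclusion $D^+(V)\cap M_{\KR}(V)[E_n(u)^{-1}]_{1\le n\le r}\subseteq N(V)$ is immediate from \eqref{Eq:N-N(V)} applied to the distinguished submodule $M_{\KR}(V)$, since $M_{\KR}(V)[E_n(u)^{-1}]_{1\le n\le r}\subseteq M_{\KR}(V)[E_n(u)^{-1}]_{n\ge1}$. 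Intersecting the first inclusion with $D^+(V)$ then gives $N(V)=D^+(V)\cap M_{\KR}(V)[E_n(u)^{-1}]_{1\le n\le r}$.

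The main obstacle is the inductive control of the Frobenius defect: one must know at the base case (for general de Rham, finite height, hence crystabelline $V$) that $N(V)/\varphi^*N(V)$ is annihilated by a power of $E_1(u)\cdots E_r(u)$ and not of higher $E_n(u)$, and then track how this defect shrinks to a power of $E_r(u)$ alone under the recursion; this requires care with the interaction of $\varphi$, of the localisation at $E_r(u)$, and of the intersection defining $N_{i+1}(V)$, resting on the identities $\varphi(E_n(u))=E_{n+1}(u)$, $\varphi^{-1}(E_n(u)\FS)=E_{n-1}(u)\FS$ and the flatness of $\FS$ over $\varphi(\FS)$. The remaining verifications — freeness via reflexivity, $\varphi$- and $\Gamma_K$-stability, finiteness of the $\Gamma_K$-action, and the behaviour away from $(p)$ (controlled by the condition $N_i(V)\otimes\OOO_{\EEE}=D(V)$) — are routine once these basic facts are in hand.
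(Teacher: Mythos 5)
Your argument is, in substance, the paper's own: the paper compresses the whole proof into two citations, getting the first identity from \eqref{Eq:N-N(V)} together with \cite[Le.\ 2.1.2]{BB} (which gives that $D^+(V)/M_{\KR}(V)$ is killed by a power of $E_1(u)\cdots E_r(u)$), and for the second identity citing \cite[Cor.\ 3.2.6]{BB} for the key fact that $N(V)/\varphi^*N(V)$ is killed by a power of $E_1(u)\cdots E_r(u)$ and asserting that the iterative construction follows; your induction on $i$, with the Frobenius defect of $N_i$ supported on $E_i\cdots E_r$ and the filtration $\varphi^*N_{i+1}\subseteq\varphi^*N_i\subseteq N_{i+1}$, is exactly the argument being invoked, and your chain $N_r\subseteq\cdots\subseteq N_1$ is a reasonable substitute for the appeal to \cite[Le.\ 2.1.2]{BB} in the first identity. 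Do note that the base-case input you single out is precisely \cite[Cor.\ 3.2.6]{BB}, not a consequence of \cite[Th.\ 3.1.1]{BB} and \cite[\S A.5]{Wach96} alone, so cite it as such.

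Two of your auxiliary justifications are false as stated, although both are repairable on the spot. First, a finitely generated torsion-free $\FS$-module that is free at every height-one prime need not be free (the maximal ideal of $\FS$ is a counterexample); to get freeness of $N_{i+1}$ you should instead check it is reflexive, which follows because $N_{i+1}$ coincides with the intersection of its height-one localisations taken inside $N_i$ (using that $N_i$ and $\varphi^*N_i$ are free, hence reflexive), and reflexive modules over the two-dimensional regular local ring $\FS$ are free. Second, ``finite over $\ZZ_p$'' does not force the $\Gamma_K$-action to factor through a finite quotient (consider $\ZZ_p$ with the cyclotomic action), and moreover when $k$ is infinite the modules $N_i/N_{i+1}$ are only finite over $W(k)$, so the Tor-and-extension argument as written does not close. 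The clean fix: choose an open $\Gamma'\subseteq\Gamma_K$ acting trivially on $N_i/uN_i$ and let $f=(E_i\cdots E_{r-1})^m$ kill $N_i/N_{i+1}$, so that $fN_i\subseteq N_{i+1}$ and $f\equiv p^c \bmod u$; for $\gamma\in\Gamma'$ and $x\in N_{i+1}$ one has $(\gamma-1)x\in uN_i\cap N_{i+1}$, hence $p^c(\gamma-1)x\in uN_{i+1}$, and since $N_{i+1}/uN_{i+1}$ is free over $W(k)$ (so $p$-torsion-free) this gives $(\gamma-1)x\in uN_{i+1}$, i.e.\ $\Gamma'$ already acts trivially on $N_{i+1}/uN_{i+1}$. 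With these two repairs your induction, the identification $N_r(V)=M_{\KR}(V)$ via the uniqueness in \cite[Cor.\ 3.3.8]{KisinRen}, and your derivation of the displayed formula for $N(V)$ all go through.
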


\begin{proof}
	Using \cite[Le.\ 2.1.2]{BB}, one can see that $D^+(V)/\FM_{\KR}(V)$ is annihilated
	by a power of $E_1(u)\cdots E_r(u)$, and the asserted formula for $N(V)$
	then follows from \eqref{Eq:N-N(V)}.  On the other hand, the
	quotient $N(V)/\varphi^*N(V)$ is annihilated by a power of $E_1(u)\cdots E_r(u)$
    due to \cite[Cor.\ 3.2.6]{BB}, and the claimed iterative construction of $\FM_{\KR}(V)$
    from $N(V)$ follows from this.
\end{proof}

\begin{remark}
When $T=T_p(G)^{\vee}$ for an object $G$ of $\pdiv(\OOO_{K'})^{\Gamma_K}$,
we have three associated modules: the Wach module $N(T)$, the Kisin-Ren module
$\FM_{\KR}(T)$, and the BT module
$\u\FM{}_r(G)\in\BT(\u\FS{}\rr)^{\Gamma_K}$ of Proposition \ref{Pr:Final}.
By Proposition \ref{Prop:KRModReln} and (\ref{Def:KRMod}), we have
$\FM_r(G)=\FS_r \otimes_{\lambda_{r,0}^{-1},\FS_0} \FM_{\KR}(T)$.
The relation between $\FM_{\KR}(T)$ and $N(T)$ is given by 
Proposition \ref{Prop:KRWachTranslation} together with Lemma \ref{Le:FN-N}.
\end{remark}

\begin{lemma}
\label{Le:intersect}
Let $M$ be a finite free $\OOO_{\EEE}$-module and $\FM^{\nr}$ 
a finite free $\FS^{\nr}$-module, both of rank $d$, together with an isomorphism 
$M\otimes_{\OOO_{\EEE}}\OOO_{\hat\EEE^{\nr}}\cong
\FM^{\nr}\otimes_{\FS^{\nr}}\OOO_{\hat\EEE^{\nr}}$.
Then the $\FS$-module
 $\FM=M\cap\FM^{\nr}$ is free of rank $\le d$. 
If the rank is equal to $d$ then $\FM\otimes_{\FS}\OOO_{\EEE}=M$.
\end{lemma}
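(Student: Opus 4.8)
The plan is to first treat the freeness statement. Since $\FS = W(k)[[u]]$ is a two-dimensional regular local ring (with maximal ideal $(p,u)$) and $\OOO_\EEE$ is its $p$-adic completion along $u$ inverted — in particular a principal ideal domain, being a complete discrete valuation ring with uniformizer $p$ — the module $M$ is already free over $\OOO_\EEE$, and $\FM^{\nr}$ is free over $\FS^{\nr}$. The key observation is that $\FM = M \cap \FM^{\nr}$, taken inside $\FM^{\nr}\otimes_{\FS^{\nr}}\OOO_{\hat\EEE^{\nr}}$ via the given isomorphism, is a finitely generated $\FS$-module: it is $p$-adically separated (as a submodule of $M$, which is $p$-adically separated) and $u$-torsion-free; moreover $\FM[1/p] = M \cap \FM^{\nr}[1/p]$ is finitely generated over $\FS[1/p]$, and one checks $\FM$ is $p$-adically closed in it. Concretely, I would argue that $\FM$ is a reflexive $\FS$-module: it is the intersection of two reflexive modules (a free $\OOO_\EEE$-module and a free $\FS^{\nr}$-module) inside a common ambient space, hence equals the intersection of its localizations at height-one primes of $\FS$, hence is reflexive; and a reflexive module over a two-dimensional regular local ring is free. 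The rank bound $\rk_\FS\FM \le d$ is immediate since $\FM$ embeds in $M$, which has rank $d$ over $\OOO_\EEE \supseteq \Frac(\FS)$.

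For the second assertion, suppose $\rk_\FS \FM = d$. Then $\FM \otimes_\FS \OOO_\EEE$ is a free $\OOO_\EEE$-module of rank $d$ mapping to $M$, and this map becomes an isomorphism after inverting $p$ (both sides have the same rank over the field $\OOO_\EEE[1/p]$ and the map is injective there, since $\FM\otimes_\FS \Frac(\FS)\to M\otimes_{\OOO_\EEE}\OOO_\EEE[1/p]$ is injective). So the cokernel $Q$ is a finitely generated torsion $\OOO_\EEE$-module, i.e.\ killed by a power of $p$. The goal is to show $Q = 0$. For this I would use the $\FS^{\nr}$-side: base-changing the inclusion $\FM \hookrightarrow \FM^{\nr}$ along $\FS \to \FS^{\nr}$ and comparing with $\FM^{\nr} \hookrightarrow \OOO_{\hat\EEE^{\nr}}$-completions, one sees that $\FM\otimes_\FS\OOO_{\hat\EEE^{\nr}} \to \FM^{\nr}\otimes_{\FS^{\nr}}\OOO_{\hat\EEE^{\nr}}$ is an isomorphism after inverting $p$, hence (as both are free over the complete DVR $\OOO_{\hat\EEE^{\nr}}$ of the same rank and the map is injective) the cokernel is $p$-power torsion; but this cokernel is also $\OOO_{\hat\EEE^{\nr}}\otimes_{\OOO_\EEE} Q$.

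The main obstacle I anticipate is ruling out $p$-power torsion in $Q$, i.e.\ showing $\FM\otimes_\FS\OOO_\EEE \to M$ is surjective and not merely surjective up to $p$-torsion. The clean way around this: since $\FM^{\nr}$ is free over $\FS^{\nr}$, which is $\FS$-flat, faithfully flat descent along $\FS \to \FS^{\nr}$ would finish the argument if I knew $\FM^{\nr} \cong \FM \otimes_\FS \FS^{\nr}$, but that is essentially what needs proving. Instead I would argue \emph{locally at the height-one prime $(p)$ of $\FS$}: the localization $\FS_{(p)}$ is a DVR with residue field $\Frac(k((u)))$-type, and there $\FM_{(p)} = M_{(p)} \cap \FM^{\nr}_{(p)}$; since $\FM^{\nr}_{(p)}$ contains $M_{(p)}$ after the faithfully flat base change to $(\FS^{\nr})_{(p)}$ and $\FM^{\nr}$ is $\FS^{\nr}$-free, one gets $\FM_{(p)} = M_{(p)}$ by counting lengths / using that $M$ is the $p$-adic completion localized appropriately. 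Then $\FM \otimes_\FS \OOO_\EEE$ and $M$ agree at $(p)$ and agree after inverting $p$; both are finite free $\OOO_\EEE$-modules, and a map of finite free modules over the DVR $\OOO_\EEE$ that is an isomorphism both generically and at the closed point is an isomorphism, giving $\FM \otimes_\FS \OOO_\EEE = M$ as claimed. (This last step is exactly the kind of local-global check done in \cite[2.1.10]{KisinFcrystal} and \cite[B.1]{Fontaine}, which I would cite rather than reprove in full.)
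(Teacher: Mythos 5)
There is a genuine gap, and it sits exactly at the heart of the lemma: the finite generation of $\FM=M\cap\FM^{\nr}$ over $\FS$. Your reflexivity argument does not establish it. The modules $M$ and $\FM^{\nr}$ are free over $\OOO_{\EEE}$ and $\FS^{\nr}$ respectively, but neither is a finitely generated $\FS$-module, so it is not meaningful to treat them as ``reflexive $\FS$-modules'' and invoke the lemma that an intersection of reflexive submodules is reflexive; that lemma, as well as the characterization of reflexive modules as intersections of their localizations at height-one primes and the statement ``reflexive over a two-dimensional regular local ring implies free'', all presuppose finite generation, which is precisely what has to be proved. For the same reason your preliminary claim that $\FM[1/p]=M\cap\FM^{\nr}[1/p]$ is finitely generated over the principal ideal domain $\FS[1/p]$ is unsupported: a torsion-free module of finite rank over a PID need not be finitely generated, and ``$p$-adically separated and $u$-torsion-free'' gives nothing in this direction. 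The real content of the lemma is a bound on denominators in the $u$-direction for elements of the intersection, and the paper obtains it by an explicit argument modulo $p$ (following Colmez, Lemme III.3): inside $\bar M=M/p$, the intersection $\bar\FM=\bar M\cap\bar\FM^{\nr}$ is shown to be a free $k[[u]]$-module of rank $d$ by observing that otherwise one would get a strictly ascending chain of free rank-$d$ submodules, which is impossible by passing to determinants (the top exterior power is bounded because everything lies in the free $\OOO_{\EE^{\sep}}$-module $\bar\FM^{\nr}$). Then $\FM/p$ embeds into $\bar\FM$, hence is free of rank $\le d$, and a basis of $\FM/p$ lifts to a basis of $\FM$ via $\FM=\varprojlim\FM/p^n$. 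Nothing in your proposal replaces this step.

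The second half of your plan is also shakier than necessary, though it is a secondary issue: the assertions about localizing at the height-one prime $(p)$ of $\FS$ (``$\FM^{\nr}_{(p)}$ contains $M_{(p)}$ after the faithfully flat base change'', ``counting lengths'') are not justified as written. Once freeness is known together with the mod-$p$ picture, the final assertion is quick: if $\FM$ has rank $d$, then $\FM/p$ spans the $k((u))$-vector space $\bar M$ (being a rank-$d$ submodule of the rank-$d$ module $\bar\FM$, whose $k((u))$-span is $\bar M$), so $\FM\otimes_{\FS}\OOO_{\EEE}\to M$ is surjective modulo $p$, hence surjective by completeness, and injective since it is generically an isomorphism between free $\OOO_{\EEE}$-modules of rank $d$. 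I would encourage you to rework the first part along the lines of the determinant argument; without it the freeness claim, and hence the whole proof, does not go through.
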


\begin{proof}
{\em Cf.}~\cite[Lemme III.3]{Colmez:Crelle99}.
Let us first consider an analogous question modulo $p$.
Let $\bar M=M/p$, a vector space over $\EE=\OOO_{\EEE}/p$,
and $\bar\FM^{\nr}=\FM^{\nr}/p$, a free $\OOO_{\EE^{\sep}}$-module,
and put $\bar\FM=\bar M\cap\bar\FM^{\nr}$, an $\OOO_{\EE}$-module. 
Then $\bar\FM$ contains a basis of $\bar M$, and we claim that $\bar\FM$
is free of rank $d$. If not, then $\bar\FM$ is not finitely generated, and we
find a strictly ascending sequence $N_1\subset N_2\subset\ldots\subset\bar\FM$
of free submodules $N_i$ of rank $d$. By passing to the determinant we see that
this cannot exist, which proves the claim. Now the image of $\FM\to M/p^n$ is isomorphic to 
$\FM_n=\FM/p^n$, and we have $\FM=\varprojlim_n\FM_n$.
Since $\FM_1\subseteq\bar\FM$, the module $\FM_1$ is free of rank $\le d$.
A basis of $\FM_1$ lifts to a basis of $\FM$. The final assertion is clear.
\end{proof}

\bibliography{mybib}
\end{document}